\newtheorem{thm}{Theorem}[section]
\newtheorem{lemma}[thm]{Lemma}
\newtheorem{prop}[thm]{Proposition}
\theoremstyle{definition}
\newtheorem{rem}[thm]{Remark}
\newtheorem{defn}[thm]{Definition}
\newcommand{\isom}{\overset{\sim}{\rightarrow}}
\title{Iwasawa theory of de Rham $(\varphi,\Gamma)$-modules 
over the Robba ring.}
\author{Kentaro Nakamura}
\date{} 
\begin{document}

\maketitle
\pagestyle{plain}
\footnote{2008 Mathematical Subject Classification 11F80 (primary), 11F85, 11S25 (secondary).
Keywords: $p$-adic Hodge theory, $(\varphi,\Gamma)$-module, $B$-pair.}
\begin{abstract}
The aim of this article is to study Bloch-Kato's exponential map and Perrin-Riou's big exponential map 
purely in terms of $(\varphi,\Gamma)$-modules over the Robba ring. 
We first generalize the definition of Bloch-Kato's exponential map for all the $(\varphi,\Gamma)$-modules 
without using Fontaine's rings $\bold{B}_{\mathrm{crys}}$, $\bold{B}_{\mathrm{dR}}$ of $p$-adic periods, and then generalize the construction of Perrin-Riou's big exponential map for all the de Rham $(\varphi,\Gamma)$-modules 
and prove that this map interpolates our Bloch-Kato's exponential map and the dual exponential map. 
Finally, we prove a theorem concerning the determinant of our big exponential map, which 
is a generalization of the theorem $\delta(V)$ of Perrin-Riou. The key ingredients for our study are
Pottharst's theory of the analytic Iwasawa cohomology and Berger's construction of $p$-adic differential equations associated to 
de Rham $(\varphi,\Gamma)$-modules.
\end{abstract}
\setcounter{tocdepth}{2}
\tableofcontents

\section{Introduction.}
\subsection{Introduction}
Let $p$ be a prime number, $K$  a finite extension of $\mathbb{Q}_p$ and $G_K$  the 
absolute Galois group of $K$. Let $\bold{B}_{\mathrm{crys}}$, $\bold{B}_e:=\bold{B}_{\mathrm{crys}}^{\varphi=1}$, $\bold{B}^+_{\mathrm{dR}}$ and $\bold{B}_{\mathrm{dR}}$ be 
Fontaine's rings of $p$-adic periods (\cite{Fo94}).

By the results of Fontaine (\cite{Fo90}), Cherbonnier-Colmez (\cite{CC98}) and 
Kedlaya (\cite{Ke04}), the category of $p$-adic representations of $G_K$ is naturally embedded in the category of 
$(\varphi,\Gamma_K)$-modules over the Robba ring $\bold{B}^{\dagger}_{\mathrm{rig},K}$. 
 The $(\varphi,\Gamma)$-modules 
corresponding to $p$-adic representations are called \'etale $(\varphi,\Gamma)$-modules.
 
The aim of this article is to study Bloch-Kato's 
exponential map and Perrin-Riou's big exponential map in the framework  of $(\varphi,\Gamma)$-modules. In particular, 
we generalize Perrin-Riou's big exponential map to all the de Rham $(\varphi,\Gamma)$-modules.
\subsection{Bloch-Kato's exponential map}
For a $p$-adic representation $V$ of $G_K$, Bloch-Kato (\cite{BK90}) defined a $\mathbb{Q}_p$-linear map 
$$\mathrm{exp}_{K,V}:=\delta_{1,V}:\bold{D}^K_{\mathrm{dR}}(V)\rightarrow \mathrm{H}^1(K, V),$$
where we put $\bold{D}^K_{\mathrm{dR}}(V):=(\bold{B}_{\mathrm{dR}}\otimes_{\mathbb{Q}_p}V)^{G_K}$, as the first connecting 
homomorphism of the long exact sequence 

\begin{align*}
0 \rightarrow& \mathrm{H}^0(K, V) \rightarrow  \mathrm{H}^0(K, \bold{B}_e\otimes_{\mathbb{Q}_p}V)\oplus \mathrm{H}^0(K, \bold{B}^+_{\mathrm{dR}}\otimes_{\mathbb{Q}_p}V)\rightarrow  \mathrm{H}^0(K, \bold{B}_{\mathrm{dR}}\otimes_{\mathbb{Q}_p}V)\\
  \xrightarrow{\delta_{1,V}} & \mathrm{H}^1(K, V)\rightarrow  \mathrm{H}^1(K, \bold{B}_e\otimes_{\mathbb{Q}_p}V)\oplus \mathrm{H}^1(K, \bold{B}^+_{\mathrm{dR}}\otimes_{\mathbb{Q}_p}V)\rightarrow  \mathrm{H}^1(K, \bold{B}_{\mathrm{dR}}\otimes_{\mathbb{Q}_p}V)\\
  \xrightarrow{\delta_{2,V}}  &\mathrm{H}^2(K, V)\rightarrow \mathrm{H}^2(K, \bold{B}_e\otimes_{\mathbb{Q}_p}V) \rightarrow 0
\end{align*}

associated to the short exact sequence 
obtained by tensoring $V$ with the so called Bloch-Kato's fundamental exact sequence
$$0\rightarrow \mathbb{Q}_p\xrightarrow{x\mapsto (x,x)}\bold{B}_{e}\oplus \bold{B}^+_{\mathrm{dR}}\xrightarrow{(x,y)\mapsto x-y}
\bold{B}_{\mathrm{dR}}\rightarrow 0.$$
When $V$ is a de Rham representation, Kato (\cite{Ka93a}) defined the dual exponential map 
$$\mathrm{exp}^{*}_{K,V^{\lor}(1)}:\mathrm{H}^1(K, V)\rightarrow \bold{D}^K_{\mathrm{dR}}(V)$$ 
using Tate's paring $\cup:\mathrm{H}^1(K, V)\times \mathrm{H}^1(K, V^{\lor}(1))\rightarrow \mathbb{Q}_p$ and 
the canonical paring $\bold{D}^K_{\mathrm{dR}}(V)\times \bold{D}^K_{\mathrm{dR}}(V^{\lor}(1))\rightarrow K$.
These maps describe the mysterious relationship between Galois objects and differential objects. 
 In fact, when $V=\mathbb{Q}_p(1)$ or 
 $V$ is the $p$-adic Tate module of an elliptic curve over $\mathbb{Q}$, Kato ( \cite{Ka93a}, \cite{Ka04}) 
 proved 
 that the values of $\mathrm{exp}^{*}_{\mathbb{Q}_p(\zeta_{p^n}), V^{\lor}(1-k)}$ for suitable $k\leqq 0$ at some special arithmetic elements (i.e. cyclotomic units or Kato's elements obtained from his Euler system
 ) can be described by using the special values of the $L$-functions associated to 
 cyclotomic twists of $V$.

In this article, we first generalize the above long exact sequence and the definition of Bloch-Kato's exponential and  
the dual exponential maps
for $(\varphi,\Gamma_K)$-modules over $\bold{B}^{\dagger}_{\mathrm{rig},K}$.

Fix a set $\{\zeta_{p^n}\}_{n\geqq 1}\subseteq \overline{K}$ such that $\zeta_p\not=1$, $\zeta_p^p=1$ and $\zeta_{p^{n+1}}^p=\zeta_{p^n}$ for each $n\geqq 1$. Set 
$K_n:=K(\zeta_{p^n})$, $K_{\infty}:=\cup_{n}K_n$ and $\Gamma_K:=\mathrm{Gal}(K_{\infty}/K)$. Let $t:=\mathrm{log}(1+T)\in \bold{B}^{\dagger}_{\mathrm{rig},K}$ be the period of $\mathbb{Q}_p(1)$ determined by $\{\zeta_{p^n}\}_{n\geqq 1}$ (see $\S2.1$ for the precise definition).

Let $D$ be a $(\varphi,\Gamma_K)$-module over $\bold{B}^{\dagger}_{\mathrm{rig},K}$. 
Taking the ``stalk at $\zeta_{p^n}-1$" ($n\geqq 1$), 
we can define $K_{\infty}[[t]]:=\cup_{n}K_n[[t]]$-modules $\bold{D}^+_{\mathrm{dif}}(D)$ and $\bold{D}_{\mathrm{dif}}(D):=\bold{D}^+_{\mathrm{dif}}(D)[1/t]$ with semi-linear 
$\Gamma_K:=\mathrm{Gal}(K_{\infty}/K)$-action. Using the $\varphi, \Gamma_K$-actions, we can define cohomologies 
$$\mathrm{H}^q(K, D),\,\, \mathrm{H}^q(K, D[1/t]), \,\,\mathrm{H}^q(K, \bold{D}^+_{\mathrm{dif}}(D))\,\,\text{ and } \,\,\mathrm{H}^q(K, \bold{D}_{\mathrm{dif}}(D))$$ which  correspond to $$\mathrm{H}^q(K, V) ,\,\, \mathrm{H}^q(K, \bold{B}_{e}\otimes_{\mathbb{Q}_p}V), \,\,
\mathrm{H}^q(K, \bold{B}^+_{\mathrm{dR}}\otimes_{\mathbb{Q}_p}V)\,\,\text{ and }\,\,\mathrm{H}^q(K, \bold{B}_{\mathrm{dR}}\otimes_{\mathbb{Q}_p}V)$$ respectively. 

Our first result is the following theorem (Theorem \ref{exp} and Theorem \ref{4.5}), which is the $(\varphi,\Gamma)$-module version of the above 
long exact sequence and its comparison with that of  \'etale-case.
\begin{thm}
\begin{itemize}
\item[(1)]
We have the following functorial exact sequence
\begin{align*}
0 \rightarrow& \mathrm{H}^0(K, D) \rightarrow  \mathrm{H}^0(K, D[1/t])\oplus \mathrm{H}^0(K, \bold{D}^+_{\mathrm{dif}}(D))
\rightarrow  \mathrm{H}^0(K, \bold{D}_{\mathrm{dif}}(D)) \\
   \xrightarrow{\delta_{1,D}}&\mathrm{H}^1(K, D)\rightarrow \mathrm{H}^1(K, D[1/t])\oplus \mathrm{H}^1(K, \bold{D}^+_{\mathrm{dif}}(D))
\rightarrow \mathrm{H}^1(K, \bold{D}_{\mathrm{dif}}(D)) \\
  \xrightarrow{\delta_{2,D}}&\mathrm{H}^2(K, D)\rightarrow \mathrm{H}^2(K, D[1/t]) \rightarrow 0.
\end{align*}
  \item[(2)]Let $D(V)$ be the $(\varphi,\Gamma_K)$-module over $\bold{B}^{\dagger}_{\mathrm{rig},K}$ associated to $V$. Then, 
  we have functorial isomorphisms
  \begin{itemize}
  \item[(i)]$\mathrm{H}^q(K, V)\isom \mathrm{H}^q(K, D(V))$,
   \item[(ii)]$\mathrm{H}^q(K, \bold{B}_e\otimes_{\mathbb{Q}_p}V)\isom \mathrm{H}^q(K, D(V)[1/t])$,
   \item[(iii)]$\mathrm{H}^q(K, \bold{B}^{(+)}_{\mathrm{dR}}\otimes_{\mathbb{Q}_p}V)\isom \mathrm{H}^q(K, \bold{D}^{(+)}_{\mathrm{dif}}(D(V)))$
   \end{itemize}
   for each $q\geqq 0$, and these comparison isomorphisms induce an isomorphism from the long exact sequence associated to $V$ to 
   that associated to $D(V)$.
  
  \end{itemize}

\end{thm}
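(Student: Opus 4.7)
The approach is to construct a short exact sequence of $(\varphi,\Gamma_K)$-modules
\begin{equation*}
0 \to D \xrightarrow{x \mapsto (x,x)} D[1/t] \oplus \bold{D}^+_{\mathrm{dif}}(D) \xrightarrow{(x,y) \mapsto x-y} \bold{D}_{\mathrm{dif}}(D) \to 0
\end{equation*}
in analogy with Bloch--Kato's fundamental sequence, and then derive the claimed six-term-plus-tail by applying the cohomology functor $\mathrm{H}^*(K,-)$. Injectivity on the left is immediate since $t$ is a non-zero-divisor in $\bold{B}^{\dagger}_{\mathrm{rig},K}$. Exactness in the middle is a Mayer--Vietoris style statement: $x \in D[1/t]$ comes from $D$ if and only if, at each of the discrete family of points $\zeta_{p^n}-1$ where $D[1/t]$ could have poles, the image of $x$ in the stalk already lies in the integral part $\bold{D}^+_{\mathrm{dif}}(D)$. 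Surjectivity on the right is a partial-fractions argument in the Robba ring: any Laurent tail supported at the cyclotomic divisors can be lifted to an element of $D[1/t]$, which reduces to the scalar case $D = \bold{B}^{\dagger}_{\mathrm{rig},K}$ by local triviality. The sequence terminates at $\mathrm{H}^2$ because $\bold{D}^{(+)}_{\mathrm{dif}}(D)$ carry only a $\Gamma_K$-action (the $\varphi$-structure being absorbed into the direct limit over $n$), so their cohomology is computed by a single-arrow Koszul complex for $\gamma-1$ and vanishes in degree $\geq 2$.

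\textbf{Plan for part (2).} I would prove each of (i), (ii), (iii) separately and then observe that the comparison isomorphisms are induced by natural transformations, so functoriality automatically intertwines the two long exact sequences. Isomorphism (i) is the Fontaine--Herr--Liu theorem identifying Galois cohomology of $V$ with $(\varphi,\Gamma_K)$-cohomology of $D(V)$; it proceeds by descent of Herr's explicit complex from the étale tower to the overconvergent, and then to the Robba-ring setting via Cherbonnier--Colmez and Kedlaya. Isomorphism (iii) rests on Berger's identification $\bold{D}^{(+)}_{\mathrm{dif}}(D(V)) \isom (\bold{B}^{(+)}_{\mathrm{dR}} \otimes_{\mathbb{Q}_p} V)^{H_K}$: using Hochschild--Serre together with Tate-style acyclicity of $\bold{B}^{(+)}_{\mathrm{dR}}$ as an $H_K$-module, $\Gamma_K$-cohomology on the left recovers the full $G_K$-cohomology on the right. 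For (ii), one needs the identification $D(V)[1/t] = (\bold{B}^{\dagger}_{\mathrm{rig}} \otimes_{\mathbb{Q}_p} V)^{H_K}[1/t]$ combined with a comparison between the $\varphi$-fixed part of $\bold{B}^{\dagger}_{\mathrm{rig}}[1/t]$ and $\bold{B}_e$, so that the Koszul complex $[\varphi-1,\gamma-1]$ on $D(V)[1/t]$ computes $\mathrm{H}^*(G_K, \bold{B}_e \otimes V)$.

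\textbf{Main obstacle.} The hardest step is (ii). Unlike the étale situation over $\bold{B}^{\dagger}_{\mathrm{rig},K}$, Kedlaya's slope theory does not apply verbatim to $(\varphi,\Gamma)$-modules over $\bold{B}^{\dagger}_{\mathrm{rig},K}[1/t]$, so the identification of its cohomology with $\bold{B}_e$-coefficient Galois cohomology must be controlled by a direct analysis of the fine analytic structure of $\bold{B}^{\dagger}_{\mathrm{rig}}[1/t]$ and its explicit relation to Fontaine's crystalline periods; concretely, one must compute $\varphi$-fixed points on both sides and track compatibility under $H_K$-invariants. A secondary subtlety in (1) is the surjectivity of the middle map, which is cleanest after reduction to the scalar case, where it becomes a density/approximation statement for Laurent expansions of Robba-ring elements at the cyclotomic divisors.
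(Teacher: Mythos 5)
Your plan for part (1) has a genuine gap at its very first step: the short exact sequence $0\rightarrow D\rightarrow D[1/t]\oplus \bold{D}^+_{\mathrm{dif}}(D)\rightarrow \bold{D}_{\mathrm{dif}}(D)\rightarrow 0$ does not exist as stated. There is no canonical map $D\rightarrow \bold{D}^+_{\mathrm{dif}}(D)$: the localizations $\iota_n:D^{(n)}\rightarrow \bold{D}^+_{\mathrm{dif},n}(D)$ depend on $n$, and for $x\in D^{(n)}\subseteq D^{(n+1)}$ the images of $\iota_n(x)$ and $\iota_{n+1}(x)$ in the colimit $\bold{D}^+_{\mathrm{dif}}(D)$ differ (the transition map sends $\iota_n(x)$ to $\iota_{n+1}(\varphi(x))$), so "$x\mapsto(x,x)$" is not well defined. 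Moreover $\bold{D}^{(+)}_{\mathrm{dif}}(D)$ carries no $\varphi$-action, so there is no single functor $\mathrm{H}^*(K,-)$ one can apply to all three terms to extract a long exact sequence mixing $(\varphi,\gamma_K)$-cohomology with $\gamma_K$-cohomology; and middle exactness requires the integrality condition at \emph{all} levels $m\geqq n$ simultaneously, not at a single stalk. This is exactly what the paper's construction is designed to repair: it replaces $\bold{D}^{(+)}_{\mathrm{dif}}(D)$ by the products $\prod_{m\geqq n}\bold{D}^{(+)}_{\mathrm{dif},m}(D)$, with the shift $(x_m)\mapsto(x_{m-1}-x_m)$ playing the role of $\varphi-1$ in auxiliary complexes $\tilde{C}^{\bullet}_{\varphi,\gamma_K}$, proves the key exactness statement (Lemma \ref{2.8}, whose nontrivial inputs are Proposition 1.2.2 of \cite{Ber08b} for middle exactness and the isomorphism $\bold{B}^{\dagger,r_n}_{\mathrm{rig},K}/t\isom\prod_{m\geqq n}K_m$ plus d\'evissage for surjectivity), and then shows $\tilde{C}^{\bullet}_{\varphi,\gamma_K}(\bold{D}^{(+)}_{\mathrm{dif},n}(D))$ is quasi-isomorphic to $C^{\bullet}_{\gamma_K}(\bold{D}^{(+)}_{\mathrm{dif},n}(D))$. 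Your "partial fractions" and "Mayer--Vietoris" heuristics point in the right direction, but they are applied to a sequence that has to be reformulated before they can be used.

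For part (2), you correctly identify (ii) as the hard point, but you leave it unresolved, and the compatibility of the two long exact sequences is not "automatic" either. The paper does not compare $(\bold{B}^{\dagger}_{\mathrm{rig}}[1/t])^{\varphi=1}$ with $\bold{B}_e$ head-on; instead it works with $B$-pairs and deduces (ii) from (i) by a limit trick: $\mathrm{H}^q(K,D[1/t])\isom\varinjlim_n\mathrm{H}^q(K,\tfrac{1}{t^n}D)\isom\varinjlim_n\mathrm{H}^q(K,W(\tfrac{1}{t^n}D))$ with $W(\tfrac{1}{t^n}D)=(W_e(D),\tfrac{1}{t^n}W^+_{\mathrm{dR}}(D))$, and the colimit of mapping cones is identified with $C^{\bullet}(G_K,W_e(D))$, which for $D=D(V)$ gives $\bold{B}_e\otimes_{\mathbb{Q}_p}V$; this bypasses the slope-theoretic difficulty you flag, at the cost of quoting (i) in the generality of $B$-pairs (Theorem 5.11 of \cite{Na10}) and $\bold{B}_e=(\widetilde{\bold{B}}^{\dagger}_{\mathrm{rig}}[1/t])^{\varphi=1}$ from \cite{Ber08a}. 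Your route for (iii) via Fontaine's theorem (\cite{Fo03}) agrees with the paper. Finally, because the two long exact sequences arise from very different complexes (Herr-type complexes with the product/shift device on one side, continuous-cochain cones on the other), the compatibility of the connecting maps $\delta_{1}$ and $\delta_{2}$ requires a real argument: the paper verifies the commutativity for $\mathrm{exp}$ by interpreting both sides as extension classes and applying the functor $W(-)$ to an explicitly constructed extension, and for $\delta_2$ via the duality pairings of Lemma \ref{2.15}; your appeal to naturality alone does not supply this.
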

\begin{rem}
The isomorphism of (i) is due to Liu (\cite{Li08}), and that of (iii) is due to Fontaine (\cite{Fo03}).
\end{rem}
\begin{rem}
We construct this long exact sequence purely in terms of $(\varphi,\Gamma)$-modules 
without using Fontaine's rings $\bold{B}_{\mathrm{crys}}$, $\bold{B}^+_{\mathrm{dR}}$ and 
$\bold{B}_{\mathrm{dR}}$. 
As will be shown in this article, this fact enables us to re-prove 
some results concerning Bloch-Kato's or Perrin-Riou's exponential maps more directly.

\end{rem}
\begin{rem}
In fact, in $\S$ 2.5, we prove the above comparison result (2)  in a more general setting.
In \cite{Ber08a}, Berger defined a notion of $B$-pairs using $\bold{B}_e$, $\bold{B}^+_{\mathrm{dR}}$ and $\bold{B}_{\mathrm{dR}}$, whose category naturally contains 
the category of $p$-adic representations of $G_K$, and established an equivalence of categories between the category of $B$-pairs and 
that of $(\varphi,\Gamma_K)$-modules over $\bold{B}^{\dagger}_{\mathrm{rig},K}$. In $\S$ 2.5, we prove the comparison isomorphisms for all the $B$-pairs (see Theorem \ref{4.5}).

\end{rem}
As in the case of $p$-adic representations, we define Bloch-Kato's exponential map of $D$ as the connecting homomorphism 
of the above exact sequence
$$\mathrm{exp}_{K, D}:=\delta_{1,D}:\bold{D}^K_{\mathrm{dR}}(D)\rightarrow \mathrm{H}^1(K, D),$$
where we put $\bold{D}^K_{\mathrm{dR}}(D):=\mathrm{H}^0(K,\bold{D}_{\mathrm{dif}}(D))$. When $D$ is a de Rham $(\varphi,\Gamma)$-module, 
then we also define the dual exponential map 
$$\mathrm{exp}^{*}_{K,D^{\lor}(1)}\mathrm{H}^1(K, D)\rightarrow \bold{D}^K_{\mathrm{dR}}(D)$$
in the same way as in the case of $p$-adic representations.

\subsection{Perrin-Riou's big exponential map}
To construct a $p$-adic $L$-function for a $p$-adic Galois representation $V$ coming from a motive, 
it is crucial to $p$-adically interpolate the special values of the complex $L$-functions associated 
to cyclotomic  twists of $V$. 
Since Bloch-Kato's exponential map and the dual exponential map relate some arithmetic elements in Galois cohomology groups with 
the special values of the $L$-functions, it is crucial to $p$-adically interpolate 
Bloch-Kato's exponential map and dual exponential map for the construction of the $p$-adic $L$-function and for relating 
the $p$-adic $L$-function with the Selmer group.

Let $\Lambda:=\mathbb{Z}_p[[\Gamma_K]]$ be the Iwasawa algebra of $\Gamma_K$,  
$\Lambda_{\infty}$ the $\mathbb{Q}_p$-valued distribution algebra of $\Gamma_K$ 
(see $\S3.1$ for the precise definition). For a $p$-adic representation $V$ of 
$G_K$, Perrin-Riou (\cite{Per92}) defined a $\Lambda$-module
$$\bold{H}^q_{\mathrm{Iw}}(K, V):=(\varprojlim_n \mathrm{H}^q(K_n, T))\otimes_{\mathbb{Z}_p}\mathbb{Q}_p,$$  called
the Iwasawa cohomology of $V$, where $T$ is a $G_K$-stable $\mathbb{Z}_p$-lattice of $V$ and the transition map is the corestriction map. 
This $\Lambda$-module $p$-adically interpolates $\mathrm{H}^q(L, V(k))$ for any 
$L=K, K_n$ and $k\in \mathbb{Z}$, i.e. we have 
a natural projection map 
$$\mathrm{pr}_{L,V(k)}:\bold{H}^q_{\mathrm{Iw}}(K, V)\rightarrow \mathrm{H}^q(L, V(k))$$
for each $L$ and $k$.
When $K$ is unramified over $\mathbb{Q}_p$ and $V$ is a crystalline representation of $G_K$, 
Perrin-Riou (\cite{Per94}) constructed a system of functorial $\Lambda_{\infty}$-morphisms
$$\Omega_{V,h}:(\Lambda_{\infty}\otimes_{\mathbb{Q}_p}\bold{D}^K_{\mathrm{crys}}(V))^{\widetilde{\Delta}=0}\rightarrow \Lambda_{\infty}\otimes_{\Lambda}(\bold{H}^1_{\mathrm{Iw}}(K, V)/\bold{H}^1_{\mathrm{Iw}}(K, V)_{\Lambda-\mathrm{torsion}})$$ 
for each $h\geqq 1$ such that $\mathrm{Fil}^{-h}\bold{D}^K_{\mathrm{dR}}(V)=\bold{D}^K_{\mathrm{dR}}(V)$, and proved that this interpolates $\mathrm{exp}_{L,V(k)}$ and $\mathrm{exp}^{*}_{L,V^{\lor}(1+k)}$ for any $L=K_n, K$ and for 
suitable $k$.  Here, the source of the map $\Omega_{V,h}$ is a $\Lambda_{\infty}$-module which 
$p$-adically interpolates 
$\bold{D}^L_{\mathrm{dR}}(V(k))$ for any $L$ and $k$.
This map $\Omega_{V,h}$ is the most important ingredient for her study of $p$-adic $L$-functions (\cite{Per95}). 

The main purpose of this article is to generalize the map $\Omega_{V,h}$ to all the de Rham $(\varphi,\Gamma)$-modules. For this generalization, the following  two notions are essential;
\begin{itemize}
\item[(1)]Pottharst's theory of the analytic Iwasawa cohomology,
\item[(2)]Berger's construction of $p$-adic differential equations associated to de Rham $(\varphi,\Gamma)$-modules.

\end{itemize}

As for (1), for  each $(\varphi,\Gamma_K)$-module $D$ over $\bold{B}^{\dagger}_{\mathrm{rig},K}$, 
Pottharst (\cite{Po12b}) defined a $\Lambda_{\infty}$-module $$\bold{H}^q_{\mathrm{Iw}}(K, D)$$ 
called the analytic Iwasawa cohomology as a generalization of the Iwasawa cohomology of $p$-adic representations. In fact, he proved that we have 
a functorial $\Lambda_{\infty}$-isomorphism 
$$\bold{H}^q_{\mathrm{Iw}}(K, D(V))\isom \Lambda_{\infty}\otimes_{\Lambda}\bold{H}^q_{\mathrm{Iw}}(K, V)$$
for each $p$-adic representation $V$.

As for (2), let $D$ be a de Rham $(\varphi,\Gamma)$-module. In order to interpolate $\bold{D}^L_{\mathrm{dR}}(D(k))$, we need to generalize the $\Lambda_{\infty}$-module
$\Lambda_{\infty}\otimes_{\mathbb{Q}_p}\bold{D}^K_{\mathrm{crys}}(V)$ for de Rham case. Our
 idea is to use Berger's $p$-adic differential equation 
$\bold{N}_{\mathrm{rig}}(D)$. Let $\nabla_0:=\frac{\mathrm{log}(\gamma)}{\mathrm{log}{\chi(\gamma)}}\in 
\Lambda_{\infty}$, where $\gamma\in \Gamma_K$ is a non-torsion element.  
For each $i\in \mathbb{Z}$, we define $\nabla_i:=\nabla_0-i\in \Lambda_{\infty}$. 
Let $\chi:G_K\rightarrow \mathbb{Z}_p^{\times}$ be the $p$-adic cyclotomic character. 
$\nabla_0$ acts on $D$  as a differential operator and acts on $\bold{B}^{\dagger}_{\mathrm{rig},\mathbb{Q}_p}$ by $t(1+T)\frac{d}{dT}$.

In \cite{Ber02},\cite{Ber08b}, for a de Rham $(\varphi,\Gamma_K)$-module $D$ over 
$\bold{B}^{\dagger}_{\mathrm{rig},K}$, 
Berger defined a $(\varphi,\Gamma_K)$-submodule $\bold{N}_{\mathrm{rig}}(D)\subseteq D[1/t]$ which satisfies that 
$\nabla_0(\bold{N}_{\mathrm{rig}}(D))\subseteq t\bold{N}_{\mathrm{rig}}(D)$. This condition enables us to define 
another better differential  operator 
$$\tilde{\partial}:=\nabla_0\otimes e_{-1}:\bold{N}_{\mathrm{rig}}(D)\rightarrow \bold{N}_{\mathrm{rig}}(D(-1)).$$
The map $\tilde{\partial}$ naturally induces a $\mathbb{Q}_p$-linear map
$$\tilde{\partial}:\bold{H}^1_{\mathrm{Iw}}(K,\bold{N}_{\mathrm{rig}}(D))\rightarrow \bold{H}^1_{\mathrm{Iw}}(K, \bold{N}_{\mathrm{rig}}(D(-1))).$$
In $\S 3.2$, we define a canonical projection map for each $L=K, K_n$,
$$T_{L}:\bold{H}^1_{\mathrm{Iw}}(K, \bold{N}_{\mathrm{rig}}(D))\rightarrow \bold{D}^L_{\mathrm{dR}}(D).$$

The main theorem of this article is the following (Theorem \ref{3.10}), which 
concerns with the existence of a $\Lambda_{\infty}$-morphism $\mathrm{Exp}_{D,h}$ for each  $h\in \mathbb{Z}_{\geqq 1}$ such that $\mathrm{Fil}^{-h}\bold{D}^K_{\mathrm{dR}}(D)=\bold{D}^K_{\mathrm{dR}}(D)$ which interpolates 
$\mathrm{exp}_{L, V(k)}$ for some $k\geqq -(h-1)$ and $\mathrm{exp}^*_{L, D^{\lor}(1-k)}$ for any $k\leqq -h$. 
\begin{thm}
Let $D$ be a de Rham $(\varphi,\Gamma_K)$-module over $\bold{B}^{\dagger}_{\mathrm{rig},K}$. 
Let $h\in \mathbb{Z}_{\geqq 1}$ such that $\mathrm{Fil}^{-h}\bold{D}^K_{\mathrm{dR}}(D)=\bold{D}^K_{\mathrm{dR}}(D)$. 
Then there exists a functorial $\Lambda_{\infty}$-linear map 
$$\mathrm{Exp}_{D, h}:\bold{H}^1_{\mathrm{Iw}}(K, \bold{N}_{\mathrm{rig}}(D))\rightarrow \bold{H}^1_{\mathrm{Iw}}(K, D)$$
such that, for any $x\in \bold{H}^1_{\mathrm{Iw}}(K, \bold{N}_{\mathrm{rig}}(D))$, 
\begin{itemize}
\item[(1)]if $k\geqq 1$ and there exists $x_k\in \mathrm{H}^1(K, \bold{N}_{\mathrm{rig}}(D(k)))$ such that $\widetilde{\partial}^k(x_k)=x$ or if $0\geqq k\geqq -(h-1)$ and $x_k:=\tilde{\partial}^{-k}(x)$, then 
$$\mathrm{pr}_{L,D(k)}(\mathrm{Exp}_{D,h}(x))=\frac{(-1)^{h+k-1}(h+k-1)!|\Gamma_{L, \mathrm{tor}}|}{p^{m(L)}}\mathrm{exp}_{L,D(k)}(T_{L}(x_k))$$
 for each $L=K, K_n$,
\item[(2)]if $-h\geqq k$, then
$$\mathrm{exp}^{*}_{L,D^{\lor}(1-k)}(\mathrm{pr}_{L,D(k)}(\mathrm{Exp}_{D, h}(x))=\frac{|\Gamma_{L,\mathrm{tor}}|}{(-h-k)!  p^{m(L)}}
T_L(\widetilde{\partial}^{-k}(x))$$
 for each $L=K, K_n$,

\end{itemize}
where we put $m(L):=\mathrm{min}\{v_p(\mathrm{log}(\chi(\gamma))|\gamma\in \Gamma_L\}$ for each $L=K, K_n$.

\end{thm}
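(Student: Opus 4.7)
The plan is to mirror Perrin-Riou's original construction in the crystalline case, replacing $\Lambda_\infty\otimes_{\mathbb{Q}_p}\bold{D}^K_{\mathrm{crys}}(V)$ by Berger's $p$-adic differential equation $\bold{N}_{\mathrm{rig}}(D)$ and the classical Iwasawa cohomology by its analytic version due to Pottharst. First I would pass to Pottharst's description of the analytic Iwasawa cohomology as $\psi$-invariants: up to a finite correction, $\bold{H}^1_{\mathrm{Iw}}(K,-)$ is computed by the two-term complex $[-]\xrightarrow{\psi-1}[-]$. Both source and target of $\mathrm{Exp}_{D,h}$ then become controlled by $\psi=1$ submodules, and the task reduces to producing a functorial $\Lambda_\infty$-linear map $\bold{N}_{\mathrm{rig}}(D)^{\psi=1}\to D^{\psi=1}$ of the correct form.

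The key input is the defining property $\nabla_0(\bold{N}_{\mathrm{rig}}(D))\subseteq t\,\bold{N}_{\mathrm{rig}}(D)$ of Berger's module, which inductively implies that the iterated composition $\nabla_{h-1}\circ\nabla_{h-2}\circ\cdots\circ\nabla_0$ sends $\bold{N}_{\mathrm{rig}}(D)$ into $t^h\bold{N}_{\mathrm{rig}}(D)\subseteq D[1/t]$; dividing by $t^h$ (equivalently, composing with $e_{-h}$ and then retwisting) yields an operator with values in $D$ itself. The hypothesis $\mathrm{Fil}^{-h}\bold{D}^K_{\mathrm{dR}}(D)=\bold{D}^K_{\mathrm{dR}}(D)$ is precisely what is needed for the image to land inside $D$ rather than merely in $D[1/t]$, via a check at the stalks $\bold{D}^+_{\mathrm{dif}}(D)$ at $\zeta_{p^n}-1$. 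Commutativity with $\psi$ is formal since each $\nabla_i$ commutes with $\psi$, so this induces the desired map $\mathrm{Exp}_{D,h}$ on analytic Iwasawa cohomology.

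Next I would verify the interpolation formulas (1) and (2). Specialization at the character $\chi^k$ times a finite-order character of $\Gamma_K$ converts each $\nabla_i$ into multiplication by $k-i$ up to a factor of $\log(\chi(\gamma))$; this accounts directly for the factorials $(h+k-1)!$ and $(-h-k)!$ and for the signs and powers of $p^{m(L)}$ appearing in the statement. For $0\geqq k\geqq -(h-1)$, one identifies the specialized $\psi$-cocycle representing $\mathrm{pr}_{L,D(k)}\circ\mathrm{Exp}_{D,h}$ with the image under the connecting map $\delta_{1,D(k)}$ from the first theorem of this paper, applied to $T_L(\widetilde{\partial}^{-k}(x))\in \bold{D}^L_{\mathrm{dR}}(D(k))$; for $k\geqq 1$, the identical argument applies to the given preimage $x_k$. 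Case (2) reduces to a dual form of (1) via the local Tate pairing on $(\varphi,\Gamma)$-module cohomology together with the standard adjointness between $\mathrm{exp}$ and $\mathrm{exp}^{*}$.

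The principal obstacle is the simultaneous control of the iterated differential operator $\prod_i \nabla_i$ and the division by $t^h$: one must certify not only that the output lies in $D$ rather than $D[1/t]$, but also that the construction is independent of auxiliary choices and sufficiently explicit for the interpolation computation. This boils down to a delicate analysis of the interplay between $\bold{N}_{\mathrm{rig}}(D)$, the Hodge filtration on $\bold{D}^K_{\mathrm{dR}}(D)$, and the $\Gamma_K$-action on the $K_\infty[[t]]$-module $\bold{D}^+_{\mathrm{dif}}(D)$, and it is precisely here that the hypothesis $\mathrm{Fil}^{-h}\bold{D}^K_{\mathrm{dR}}(D)=\bold{D}^K_{\mathrm{dR}}(D)$ does real work. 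Once this is established, matching the universal constants with Perrin-Riou's normalisation in the crystalline case and with Kato's normalisation of $\mathrm{exp}^{*}$ is a careful but essentially mechanical bookkeeping of signs, factorials, and factors of $p^{m(L)}$ at the level of $\psi$-cocycles.
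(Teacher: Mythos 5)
Your construction of $\mathrm{Exp}_{D,h}$ and your plan for part (1) follow essentially the route of the paper: identify $\bold{H}^1_{\mathrm{Iw}}$ with $\psi=1$ invariants, use $\nabla_0(\bold{N}_{\mathrm{rig}}(D))\subseteq t\bold{N}_{\mathrm{rig}}(D)$ together with the hypothesis $\mathrm{Fil}^{-h}\bold{D}^K_{\mathrm{dR}}(D)=\bold{D}^K_{\mathrm{dR}}(D)$ to get $\nabla_{h-1}\cdots\nabla_0(\bold{N}_{\mathrm{rig}}(D))\subseteq t^h\bold{N}_{\mathrm{rig}}(D)\subseteq D$ (checked on the stalks $\bold{D}^+_{\mathrm{dif},n}$), and then compare the specialized $\psi$-cocycle with the connecting map $\delta_{1,D(k)}$ applied to $T_L(x_k)$. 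Two caveats. First, there is no ``division by $t^h$'' anywhere: the map is $x\mapsto\iota_D(\nabla_{h-1}\cdots\nabla_0(x))$ with no retwist; dividing by $t^h$ would send you back into $\bold{N}_{\mathrm{rig}}(D)$ rather than $D$ and would ruin both the target and the interpolation constants. Second, for (1) the real content is not merely that each $\nabla_i$ specializes to $k-i$; to match the specialized class with $\mathrm{exp}_{L,D(k)}(T_L(x_k))$ via the explicit formula of Lemma \ref{explicit} one must produce the auxiliary element $\widetilde{x}=\nabla_{h-1}\cdots\nabla_1\cdot\frac{\nabla_0}{\gamma_n-1}(x)$ and prove the congruence $\iota_m(\widetilde{x})\equiv\frac{(-1)^{h-1}(h-1)!}{\mathrm{log}(\chi(\gamma_n))}T_{K_n}(x)$ modulo $t^hK_m[[t]]\otimes\bold{D}^{K_m}_{\mathrm{dR}}(D)$ \emph{uniformly in} $m$, which rests on $(\varphi-1)\widetilde{x}\in t^h\bold{N}_{\mathrm{rig}}(D)$, proved by solving $(\varphi-1)x=(\gamma_n-1)y$ with $y\in\bold{N}_{\mathrm{rig}}(D)^{\psi=0}$ (Theorem \ref{2.3}) and applying Lemma \ref{3.6}. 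Your outline gestures at the right statement but omits this mechanism, which is where the hypothesis on $h$ actually enters the interpolation.

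The genuine gap is your treatment of (2). The adjunction of Proposition \ref{2.16} does not reduce (2) to ``a dual form of (1)'': it only converts the computation of $\mathrm{exp}^{*}_{L,D^{\lor}(1-k)}$ of the specialized class $c=\mathrm{pr}_{L,D(k)}(\mathrm{Exp}_{D,h}(x))$ into the computation of the Tate pairings $<\mathrm{exp}_{L,D^{\lor}(1-k)}(\overline{z}),c>$ for $\overline{z}$ running through $\bold{D}^L_{\mathrm{dR}}(D^{\lor}(1-k))/\mathrm{Fil}^0$. Statement (1) gives no hold on these pairings: it concerns $D$, not $D^{\lor}(1)$, and the twists $k\leqq -h$ of (2) are precisely those excluded from (1). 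Controlling such pairings at all these twists simultaneously is in substance Perrin-Riou's reciprocity law $\mathrm{Rec}(V)$, which this paper deliberately does not use. The paper instead proves (2) by a direct computation: reduce to $h=1$ using $\mathrm{Exp}_{D,h+1}=\nabla_h\mathrm{Exp}_{D,h}$ and the compatibility with $\widetilde{\partial}$ (Lemma \ref{3.9}), expand $\iota_n(x)=\sum_{m\geqq 0}t^mx_m$ in $K_n[[t]]\otimes_{K_n}\bold{D}^{K_n}_{\mathrm{dR}}(D)$, push the cocycle $[\mathrm{log}_0(\chi(\gamma_n))\nabla_0(x)\otimes e_k,0]$ into $\mathrm{H}^1(C^{\bullet}_{\gamma_n}(\bold{D}_{\mathrm{dif}}(D(k))))$ where only the $t^{-k}$-coefficient survives, and read off $\mathrm{exp}^{*}$ directly from its definition via $g_{D(k)}^{-1}$, the factorial coming from $T_L(\widetilde{\partial}^{-k}(x))=(-k)!\,x_{-k}\otimes t^{-k}e_k$. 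You need to replace the duality step by such a direct argument (or else import a reciprocity law, which is a far heavier input than ``standard adjointness'').
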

\begin{rem}
The definition of $\mathrm{Exp}_{D,h}$ is strongly influenced by Berger's work (\cite{Ber03}) concerning the re-interpretation of 
Perrin-Riou's map in terms of $(\varphi,\Gamma)$-modules. 
In particular, this theorem is a generalization of Theorem 2.10 of \cite{Ber03}  to 
all the de Rham $(\varphi,\Gamma_K)$-modules over $\bold{B}^{\dagger}_{\mathrm{rig},K}$ 
for any $p$-adic field $K$.

\end{rem}

\begin{rem}
When $K$ is unramified and $V$ is crystalline, we can easily compare $\Lambda_{\infty}\otimes_{\mathbb{Q}_p}\bold{D}^K_{\mathrm{crys}}(V)$ with 
$\bold{H}^1_{\mathrm{Iw}}(K, \bold{N}_{\mathrm{rig}}(D(V)))$. 
Hence, we can also compare $\Omega_{V,h}$ with $\mathrm{Exp}_{D(V), h}$ by the
Berger's work above . Therefore, the maps $\mathrm{Exp}_{D,h}$ and their interpolation formulae 
can be regarded as a generalization of Perrin-Riou's theorem (Theorem 3.2.3 of \cite{Per94}) on the existence of 
$\Omega_{V,h}$ and 
their interpolation formulae to all the de Rham $(\varphi,\Gamma)$-modules. 
Moreover, Pottharst (\cite{Po12b}) generalized $\Omega_{V,h}$ (precisely, the inverse of 
$\Omega_{V,h}$ called big logarithm) to crystalline $(\varphi,\Gamma)$-modules using the theory of Wach modules. We can also compare Pottharst's map 
with our map. See $\S 3.5$ for more details about the comparison of our big exponential map with their ones  in crystalline case. On the other hands, Colmez (Theorem 7 of \cite{Col98}) generalized Perrin-Riou's map to all the de Rham $p$-adic representations by a completely different method. 
\end{rem}
\begin{rem}
In fact, Perrin-Riou and Comez also proved the uniqueness of their big exponential maps using 
the theory of ``tempered Iwasawa cohomologies". If we can generalize the theory of 
tempered Iwasawa cohomologies for $(\varphi,\Gamma)$-modules, it will be 
possible to prove the uniqueness of our map $\mathrm{Exp}_{D,h}$.
\end{rem}

Finally, we prove a theorem ( Theorem \ref{3.21}) concerning 
the determinant of $\mathrm{Exp}_{D, h}$. 
For a torsion co-admissible $\Lambda_{\infty}$-module $M$, denote by
$\mathrm{char}_{\Lambda_{\infty}}(M)$ the characteristic ideal of $M$, which is a principal ideal of 
$\Lambda_{\infty}$.
\begin{thm}$(\delta(D))$
Let $D$ be a de Rham $(\varphi,\Gamma_K)$-module over $\bold{B}^{\dagger}_{\mathrm{rig},K}$ of rank  $d$
with Hodge-Tate weights $\{h_1, h_2,\cdots,h_d\}$. 
For each $h\geqq 1$ such that $\mathrm{Fil}^{-h}\bold{D}^K_{\mathrm{dR}}(D)=\bold{D}^K_{\mathrm{dR}}(D)$,
we have the following equality of principal fractional ideals of $\Lambda_{\infty}$,
\begin{multline*}
\frac{1}{(\prod_{i=1}^d \prod_{j_i=0}^{h-h_i-1}
\nabla_{h_i+j_i})^{[K:\mathbb{Q}_p]}}\mathrm{det}_{\Lambda_{\infty}}(\bold{H}^1_{\mathrm{Iw}}(K,\bold{N}_{\mathrm{rig}}(D))\xrightarrow{\mathrm{Exp}_{D,h}} \bold{H}^1_{\mathrm{Iw}}(K, D)) \\
=
\mathrm{char}_{\Lambda_{\infty}}(\bold{H}^2_{\mathrm{Iw}}(K, D))(\mathrm{char}_{\Lambda_{\infty}}\bold{H}^2_{\mathrm{Iw}}(K, \bold{N}_{\mathrm{rig}}(D)))^{-1}.
\end{multline*}

\end{thm}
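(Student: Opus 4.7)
The strategy is to exploit the fact that $\bold{N}_{\mathrm{rig}}(D)$ and $D$ both sit inside the common $(\varphi,\Gamma_K)$-module $D[1/t]=\bold{N}_{\mathrm{rig}}(D)[1/t]$, and to compute $\mathrm{det}_{\Lambda_{\infty}}(\mathrm{Exp}_{D,h})$ via the multiplicativity of $\mathrm{det}_{\Lambda_{\infty}}$ and $\mathrm{char}_{\Lambda_{\infty}}$ along the distinguished triangles of Pottharst's analytic Iwasawa cohomology. I would first revisit the construction of $\mathrm{Exp}_{D,h}$ used in the preceding theorem and exhibit it, after appropriate multiplication by iterates of $\widetilde{\partial}$, as the natural composite
\[
\bold{H}^1_{\mathrm{Iw}}(K,\bold{N}_{\mathrm{rig}}(D))\to \bold{H}^1_{\mathrm{Iw}}(K,D[1/t])\leftarrow \bold{H}^1_{\mathrm{Iw}}(K,D);
\]
the ``trivial zero'' produced by these iterates of $\widetilde{\partial}$ is what should eventually match the normalizing factorials $(h+k-1)!$ and $(-h-k)!$ appearing in the interpolation formulas of Theorem 3.10.

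Next, I would apply $\bold{H}^*_{\mathrm{Iw}}(K,-)$ to the two short exact sequences
\[
0\to \bold{N}_{\mathrm{rig}}(D)\to D[1/t]\to Q_N\to 0,\qquad 0\to D\to D[1/t]\to Q_D\to 0,
\]
whose cokernels are $t^{\infty}$-torsion $(\varphi,\Gamma_K)$-modules. The two resulting long exact sequences share the middle term $\bold{H}^*_{\mathrm{Iw}}(K,D[1/t])$, and subtracting them via multiplicativity of $\mathrm{char}_{\Lambda_{\infty}}$ on Pottharst's perfect $(\psi,\Gamma_K)$-complex produces a factorization
\[
\mathrm{det}_{\Lambda_{\infty}}(\mathrm{Exp}_{D,h})=(\text{torsion contribution})\cdot \frac{\mathrm{char}_{\Lambda_{\infty}}(\bold{H}^2_{\mathrm{Iw}}(K,D))}{\mathrm{char}_{\Lambda_{\infty}}(\bold{H}^2_{\mathrm{Iw}}(K,\bold{N}_{\mathrm{rig}}(D)))},
\]
where the torsion contribution is the alternating product of $\mathrm{char}_{\Lambda_{\infty}}(\bold{H}^*_{\mathrm{Iw}}(K,Q_N))$ and $\mathrm{char}_{\Lambda_{\infty}}(\bold{H}^*_{\mathrm{Iw}}(K,Q_D))$. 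The $\bold{H}^0_{\mathrm{Iw}}$-terms should vanish or cancel using the compatibility between $\mathrm{Exp}_{D,h}$ and the classical exponential maps established in Theorem 3.10.

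The heart of the argument is then to identify the torsion contribution with the factor $\bigl(\prod_{i=1}^d\prod_{j_i=0}^{h-h_i-1}\nabla_{h_i+j_i}\bigr)^{[K:\mathbb{Q}_p]}$. Because $D$ is de Rham, localization at the stalks $\bold{D}^+_{\mathrm{dif}}$ at $\zeta_{p^n}-1$ trivializes $\bold{N}_{\mathrm{rig}}(D)$ as a free $K_{\infty}[[t]]$-module whose $\Gamma_K$-action is compatible with the Hodge filtration on $\bold{D}^K_{\mathrm{dR}}(D)$. The successive quotients between $\bold{N}_{\mathrm{rig}}(D)$, $D$, and $D[1/t]$ then decompose as direct sums of $\Gamma_K$-modules of the form $K_n[[t]]/t^{h-h_i}$ on which $\nabla_0$ has the eigenvalue spectrum $\{h_i, h_i+1,\ldots,h-1\}$. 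Computing the analytic Iwasawa cohomology of each such piece yields a single factor $\nabla_{h_i+j_i}$, and restriction of scalars from $K_n$ to $\mathbb{Q}_p$ accounts for the $[K:\mathbb{Q}_p]$-th power.

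The principal obstacle is precisely this last identification. One must carefully exploit Berger's explicit description of $\bold{N}_{\mathrm{rig}}(D)$ relative to the Hodge filtration, verify the eigenvalue combinatorics of $\nabla_0$ on the graded pieces at every $\zeta_{p^n}-1$ simultaneously, and track the $[K:\mathbb{Q}_p]$-th power emerging from scalar restriction. Once this computation is in place, the multiplicativity of $\mathrm{det}_{\Lambda_{\infty}}$ along the two long exact sequences of analytic Iwasawa cohomology is essentially formal and assembles the claimed equality of principal fractional ideals of $\Lambda_{\infty}$.
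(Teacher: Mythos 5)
Your plan is in the right spirit — compare $\bold{N}_{\mathrm{rig}}(D)$ with $D$ inside $D[1/t]$ and extract the $\Gamma$-factor from the Hodge-filtration structure of the dif-modules — but two steps, as written, have genuine gaps. First, the detour through $D[1/t]$ does not make sense at the level of characteristic ideals: $D[1/t]$ is not a finite module over $\bold{B}^{\dagger}_{\mathrm{rig},K}$, and the cokernels $Q_N=D[1/t]/\bold{N}_{\mathrm{rig}}(D)$ and $Q_D=D[1/t]/D$ are colimits $\varinjlim_k \frac{1}{t^k}(\cdot)/(\cdot)$ with unbounded torsion, so neither $\bold{H}^{*}_{\mathrm{Iw}}(K,D[1/t])$ nor $\bold{H}^{*}_{\mathrm{Iw}}(K,Q_N)$, $\bold{H}^{*}_{\mathrm{Iw}}(K,Q_D)$ is a co-admissible (let alone torsion co-admissible) $\Lambda_{\infty}$-module; $\mathrm{char}_{\Lambda_{\infty}}$ of these objects is simply not defined, so "subtracting the two long exact sequences" is not a legitimate operation (note also that $\bold{N}_{\mathrm{rig}}(D)\not\subseteq D$ in general, so the difference of the two torsion contributions is not a quotient of one module by the other). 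The paper avoids this by factoring $\mathrm{Exp}_{D,h}$ as $\nabla_{h-1}\cdots\nabla_0:\bold{N}_{\mathrm{rig}}(D)\to t^h\bold{N}_{\mathrm{rig}}(D)$ followed by the honest inclusion $t^h\bold{N}_{\mathrm{rig}}(D)\hookrightarrow D$ (Lemma \ref{3.6}); the single quotient $D/t^h\bold{N}_{\mathrm{rig}}(D)$ has torsion co-admissible $\psi=1$-invariants and $\psi-1$ is surjective on it, so the determinant of the inclusion equals $\mathrm{char}_{\Lambda_{\infty}}((D/t^h\bold{N}_{\mathrm{rig}}(D))^{\psi=1})$, and your graded-piece computation (which is essentially the paper's: the quotient decomposes via $\iota_m$ into pieces $t^{h_i}K_m[[t]]/t^hK_m[[t]]$, whose $\psi=1$-parts are handled by the dévissage culminating in Lemma \ref{3.15}) then produces exactly $(\prod_i\prod_{j_i}\nabla_{h_i+j_i})^{[K:\mathbb{Q}_p]}$.

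Second, even after repairing the comparison, you must still show that the differential-operator part $\nabla_{h-1}\cdots\nabla_0:\bold{N}_{\mathrm{rig}}(D)\to t^h\bold{N}_{\mathrm{rig}}(D)$ contributes the unit ideal to the determinant; this is not formal. In the paper it rests on Crew's finiteness theorem: each $\nabla_i:t^i\bold{N}_{\mathrm{rig}}(D)\to t^{i+1}\bold{N}_{\mathrm{rig}}(D)$ has finite-dimensional kernel and cokernel because $\bold{N}_{\mathrm{rig}}(D)$ is a $p$-adic differential equation with Frobenius structure, and then Lemma \ref{3.12}(v) applies. Your remark that the "trivial zeros produced by iterates of $\widetilde{\partial}$ match the factorials in Theorem \ref{3.10}" does not address this point: the interpolation constants $(h+k-1)!$, $(-h-k)!$ play no role in the determinant identity, and the vanishing of $\bold{H}^0_{\mathrm{Iw}}$ is Theorem \ref{3.2}(2), not a consequence of Theorem \ref{3.10}. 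Without the $D[1/t]$-subtraction being replaced by the finite comparison above, and without an argument (Crew's theorem or a substitute) for the triviality of the $\nabla$-contribution, the proof does not go through.
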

\begin{rem}
This theorem is a generalization of the theorem $\delta(V)$ which was conjectured by Perrin-Riou (\cite{Per94}) and was proved as a 
consequence of her reciprocity law conjecture $\mathrm{Rec}(V)$ proved by Colmez (\cite{Col98}), Kato-Kurihara-Tsuji(\cite{KKT96}), 
Benois (\cite{Ben00}) and Berger (\cite{Ber03}). The theorem $\delta(V)$ is very important in her works on $p$-adic $L$-functions. For example, this enables us to define 
the ``inverse of $\Omega_{V,h}$", which is a generalization of Coleman homomorphism and 
from which we can conjecturally define the $p$-adic L-functions associated to $V$. In the non-\'etale crystalline case, Pottharst also generalized the theorem $\delta(V)$ and proved his theorem 
$\delta(D)$ for all the crystalline 
$(\varphi,\Gamma)$-modules $D$  by reducing to the \'etale case using a slope filtration argument.
In $\S 3.5$, when $D$ is crystalline, we show that 
 our $\delta(D)$ is equivalent to their $\delta(V)$ or $\delta(D)$. Moreover, our proof does not use 
 $\mathrm{Rec}(V)$ and is  via a direct computation rather than by reducing to the \'etale case, hence gives a new and more direct proof of their theorems.
\end{rem}

Introducing non-\'etale $(\varphi,\Gamma)$-modules to Iwasawa theory was initiated by Pottharst 
in \cite{Po12a} and \cite{Po12b}, where he studied Iwasawa main conjecture for $p$-supersingular 
modular forms by generalizing the notion of Greenberg's Selmer groups using his theories 
of the analytic Iwasawa cohomology and of the big logarithm. Our interpolation formula of
the big exponential map might help study the values of the p-adic L-functions. Moreover, the author 
hopes that the results of this article will shed some light on Iwasawa theory or $p$-adic $L$-functions 
in the case of bad reductions. As another application of this article, in the next article (\cite{Na12}), 
the author generalize Kato's local $\varepsilon$-conjecture (\cite{Ka93b}), which is intimately related with Kato's 
generalized Iwasawa main conjecture (\cite{Ka93a}), 
for families of $(\varphi,\Gamma)$-modules over the Robba ring and prove the conjecture in some special cases using the results of this article.

\subsection*{Acknowledgement.}
The author would like to thank Kenichi Bannai for constantly encouraging the author.
He also would like to thank Ga\"etan Chenevier and Jonathan Pottharst for  discussing 
related topics on $(\varphi,\Gamma)$-modules over the Robba ring.


\subsection*{Notation.}
Let $p$ be a prime number. Let $K$  be a finite extension of $\mathbb{Q}_p$, $K_0$  the 
maximal unramified extension of $\mathbb{Q}_p$ in $K$,
$\overline{K}$  a fixed algebraic closure of $K$, $\mathbb{C}_p$  the $p$-adic completion of $\overline{K}$.
Let $v_p:\mathbb{C}_p^{\times}\rightarrow \mathbb{Q}$ be the valuation such that $v_p(p)=1$. 
Let 
$|-|_p:\mathbb{C}_p^{\times}\rightarrow \mathbb{Q}_{\geqq 0}$ be the $p$-adic absolute value such that 
$|p|_p:=1/p$.
Let $G_K:=\mathrm{Gal}(\overline{K}/K)$ be the absolute Galois group of $K$. 
We fix a set $\{\zeta_{p^n}\}_{n\geqq 1}\subseteq \overline{K}^{\times}$ such that 
$\zeta_p\not=1$ and $\zeta_p^p=1$ and $\zeta_{p^{n+1}}^p=\zeta_{p^n}$ for any $n\geqq 1$. 
We put $K_n:=K(\zeta_{p^n})$ ($n\geqq 1$) and $K_{\infty}:=\cup_{n\geqq 1}K_n$.
Let $\chi:G_K\rightarrow \mathbb{Z}_p^{\times}$ be the $p$-adic cyclotomic character (i.e. 
the character defined by the formula $g(\zeta_{p^n})=\zeta_{p^n}^{\chi(g)}$ for any $n\geqq 1$ and 
$g\in G_K$). 
We put $\Gamma_K:=G_K/\mathrm{Ker}(\chi)\isom \mathrm{Gal}(K_{\infty}/K)$. Denote by the same letter $\chi:\Gamma_K\hookrightarrow 
\mathbb{Z}_p^{\times}$ the map which is naturally induced by $\chi:G_K\rightarrow \mathbb{Z}_p^{\times}$. Define the base $e_1:=(\zeta_{p^n})_{n\geqq 1}\in\mathbb{Z}_p(1):=\varprojlim_n\mu_{p^n}(\overline{K})$. Set $e_k:=e_1^{\otimes k}\in \mathbb{Z}_p(k):=\mathbb{Z}_p(1)^{\otimes k}$ for each $k\in \mathbb{Z}$. In this article, we normalize Hodge-Tate weight such that  that of 
$\mathbb{Q}_p(1)$ is $1$. For a finite group $G$, let denote by $|G|$  the order of $G$.

\section{Bloch-Kato's exponential map for $(\varphi,\Gamma)$-modules}
In this section, we define Bloch-Kato's exponential and the dual exponential maps 
for $(\varphi,\Gamma)$-modules over the Robba ring. 
In $\S2.1$, we first recall the definition of $(\varphi,\Gamma)$-modules over the Robba ring. 
In $\S2.2$, we recall the definitions of some cohomology theories associated to 
$(\varphi,\Gamma)$-modules. The subsection $\S2.3$ is the main part of this section, 
where we generalize Bloch-Kato's fundamental exact sequence to all the 
$(\varphi,\Gamma)$-modules, and then define Bloch-Kato's exponential map for 
them and gives a explicit formula of this map. In $\S2.4$, we define the 
dual exponential map explicitly and then prove that this is the adjoint of our 
Bloch-Kato's exponential map. In the final subsection $\S2.5$, we compare our 
exponential map with classical Bloch-Kato's exponential map using the notion of $B$-pairs.

\subsection{$(\varphi,\Gamma)$-modules over the Robba ring}
In this subsection, we recall the definition of $(\varphi,\Gamma)$-modules 
over the Robba ring.

We first recall the definition of Fontaine's and Berger's rings of $p$-adic periods (\cite{Fo94}, \cite{Ber02}). Almost all rings in this paragraph are used only in $\S2.4$, where we compare 
our exponential map with classical Bloch-Kato's exponential map. Define  $\widetilde{\bold{E}}^+:=\varprojlim_{n\geqq 0}\mathcal{O}_{\mathbb{C}_p}/p$, where all the transition maps are the $p$-th power map.
The ring $\widetilde{\bold{E}}^+$ is equipped with a valuation $v_{\widetilde{\bold{E}}^+}$ defined by $v_{\widetilde{\bold{E}}^+}((\overline{x}_n)_{n\geqq 0}):=\lim_{n\rightarrow \infty}p^nv_p(x_n)$, where 
$x_n\in \mathcal{O}_{\mathbb{C}_p}$ is a lift of $\overline{x}_n\in \mathcal{O}_{\mathbb{C}_p}/p$. 
By this $v_{\widetilde{\bold{E}}^+}$, $\widetilde{\bold{E}}^+$ is a perfect complete valuation ring. We denote by $\widetilde{\bold{E}}:=\mathrm{Frac}(\widetilde{\bold{E}}^+)$ the fraction field of $\widetilde{\bold{E}}^+$.
We define $\varepsilon:=(\overline{\zeta}_{p^n})_{n\geqq 0}\in \widetilde{\bold{E}}^+$ for the fixed set $\{\zeta_{p^n}\}_{n\geqq 0}$.  We have 
$v_{\widetilde{\bold{E}}^+}(\varepsilon-1)=\frac{p}{p-1}$. Define 
$\widetilde{p}:=(\overline{p}_n)_{n\geqq 0}\in \widetilde{\bold{E}}^+$ where $p_0:=p$ and $p^p_{n+1}=p_n$ for any $n\geqq 0$. Let $\widetilde{\bold{A}}^+:=W(\widetilde{\bold{E}}^+), \widetilde{\bold{A}}:=W(\widetilde{\bold{E}})$ be the rings of Witt vectors of $\widetilde{\bold{E}}^+$ and $\widetilde{\bold{E}}$ respectively, 
which are naturally equipped with actions of $\varphi$ and $G_K$. For each $a\in \widetilde{\bold{E}}$, denote by $[a]\in \widetilde{\bold{A}}$  the Teichm\"uller lift of $a$. 
We have a natural $G_K$-equivariant surjective ring homomorphism  $\theta:\widetilde{\bold{A}}^+\rightarrow \mathcal{O}_{\mathbb{C}_p}$ such that 
$\theta([(\overline{x}_n)_{n\geqq 0}]):=\lim_{n\rightarrow \infty} x_n^{p^n}$, where $x_n\in \mathcal{O}_{\mathbb{C}_p}$ is a lift of $\overline{x}_n$.  We have 
$\mathrm{Ker}(\theta)=([\widetilde{p}]-p)$. We define $\bold{B}^+_{\mathrm{dR}}:=\varprojlim_{n\geqq 0}\widetilde{\bold{A}}^{+}[1/p]/(\mathrm{Ker}(\theta)[1/p])^n$ and define an 
element $t:=\mathrm{log}([\varepsilon])=\sum_{n\geqq 1}\frac{(-1)^{n-1}}{n}([\varepsilon]-1)^n\in \bold{B}^+_{\mathrm{dR}}$, then 
$\bold{B}^+_{\mathrm{dR}}$ is a discrete valuation ring with the maximal ideal $t\bold{B}^+_{\mathrm{dR}}$ and with the residue field $\mathbb{C}_p$. We have $\varphi(t)=pt$ and $\gamma(t)=\chi(\gamma)t$ for any $\gamma\in \Gamma_K$.
We put  $\bold{B}_{\mathrm{dR}}:=\mathrm{Frac}(\bold{B}^+_{\mathrm{dR}})=\bold{B}^+_{\mathrm{dR}}[1/t]$.
 These rings $\bold{B}^+_{\mathrm{dR}}$ and $\bold{B}_{\mathrm{dR}}$ are naturally equipped with $G_K$-actions. 
 Next, we define $\widetilde{\bold{B}}^{\dagger}_{\mathrm{rig}}$ and $\bold{B}_{\mathrm{max}}$. For each $0\leqq r\leqq s< +\infty$ such that $r,s\in \mathbb{Q}$, we define 
 a ring $\widetilde{\bold{A}}^{[r,s]}$ as the $p$-adic completion of $\widetilde{\bold{A}}^+[\frac{p}{[\varepsilon-1]^r}, \frac{[\varepsilon-1]^s}{p}]$.
  We define $\widetilde{\bold{B}}^{[r,s]}:=\widetilde{\bold{A}}^{[r,s]}[1/p]$, $\bold{B}^+_{\mathrm{max}}:=\widetilde{\bold{B}}^{[0,\frac{p-1}{p}]}$, 
 $\widetilde{\bold{B}}^{\dagger,r}_{\mathrm{rig}}:=\cap_{r\leqq s<+\infty} \widetilde{\bold{B}}^{[r,s]}$ and $\widetilde{\bold{B}}^{\dagger}_{\mathrm{rig}}:=\cup_{r}\widetilde{\bold{B}}^{\dagger,r}_{\mathrm{rig}}$ 
 and $\widetilde{\bold{B}}^+_{\mathrm{rig}}:=\cap_{s<+\infty}\widetilde{\bold{B}}^{[0,s]}$. These rings are equipped with $G_K$-actions. The ring $\bold{B}^+_{\mathrm{max}}$ is stable by $\varphi$ and $\varphi$ induces isomorphisms $\widetilde{\bold{B}}^{[r,s]}\isom \widetilde{\bold{B}}^{[pr,ps]}$, 
 $\widetilde{\bold{B}}^{\dagger,r}_{\mathrm{rig}}\isom \widetilde{\bold{B}}^{\dagger,pr}_{\mathrm{rig}}$ and $\widetilde{\bold{B}}^{\dagger}_{\mathrm{rig}}\isom \widetilde{\bold{B}}^{\dagger}_{\mathrm{rig}}$. 
 For each $n\geqq 0$, we put $r_n:=p^{n-1}(p-1)=1/v_p(\zeta_{p^n}-1)$.
 Then, we have a natural injection $\widetilde{\bold{B}}^{[r_0,r_0]}\hookrightarrow \bold{B}^+_{\mathrm{dR}}$ and 
 a $G_K$-equivariant injection $\iota_n:\widetilde{\bold{B}}^{\dagger,r_n}_{\mathrm{rig}}\xrightarrow{\varphi^{-n}}\widetilde{\bold{B}}^{\dagger,r_0}_{\mathrm{rig}}\hookrightarrow \widetilde{\bold{B}}^{[r_0,r_0]}\hookrightarrow \bold{B}^+_{\mathrm{dR}}$ for each $n\geqq 0$. The element $t$ is an element of $\widetilde{\bold{B}}^+_{\mathrm{rig}}$ and, since
 we have $\widetilde{\bold{B}}^+_{\mathrm{rig}}\subseteq \bold{B}^+_{\mathrm{max}}$ and $\widetilde{\bold{B}}^{+}_{\mathrm{rig}}\subseteq \widetilde{\bold{B}}^{\dagger}_{\mathrm{rig}}$, $t$ is also contained in $\bold{B}^+_{\mathrm{max}}$ and $\widetilde{\bold{B}}^{\dagger}_{\mathrm{rig}}$. We define $\bold{B}_{\mathrm{max}}:=\bold{B}^+_{\mathrm{max}}[1/t]$ and $\bold{B}_e:=\bold{B}_{\mathrm{max}}^{\varphi=1}=(\widetilde{\bold{B}}^+_{\mathrm{rig}}[1/t])^{\varphi=1}$. One has $\bold{B}_e=(\widetilde{\bold{B}}^{\dagger}_{\mathrm{rig}}[1/t])^{\varphi=1}$ by Lemma 1.1.7 of \cite{Ber08a}.

We next recall the definition of the Robba ring $\bold{B}^{\dagger}_{\mathrm{rig},K}\subseteq \widetilde{\bold{B}}^{\dagger}_{\mathrm{rig}}$. See \cite{Ber02} for more details. We set $T:=[\varepsilon]-1\in \widetilde{\bold{A}}^+$.
We first assume that $K=F$ is unramified over $\mathbb{Q}_p$. 
For each $r\in \mathbb{Q}_{>0}$, we define a subring $\bold{B}^{\dagger, r}_{\mathrm{rig}, F}$ of 
$\widetilde{\bold{B}}^{\dagger,r}_{\mathrm{rig}}$ by 
$$\bold{B}^{\dagger, r}_{\mathrm{rig}, F}:=\{f(T):=\sum_{n\in \mathbb{Z}}a_nT^{n}| a_n\in F\, \text{and}\, f(T) \,\text{is convergent on }\, 
p^{-1/r}\leqq |T|_p<1\}.$$
We define $\bold{B}^{\dagger}_{\mathrm{rig},F}:=\cup_{r>0}\bold{B}^{\dagger,r}_{\mathrm{rig},F}\subseteq \widetilde{\bold{B}}^{\dagger}_{\mathrm{rig}}$. We note that $t=\mathrm{log}(1+T)\in \bold{B}^{\dagger,r}_{\mathrm{rig},\mathbb{Q}_p}$ for any $r$.
As a subring of $\widetilde{\bold{B}}^{\dagger,r}_{\mathrm{rig}}$, 
this definition of $\bold{B}^{\dagger, r}_{\mathrm{rig}, F}$ does not depend on the choice of $T$, i.e does not depend on the choice of $\{\zeta_{p^n}\}_{n\geqq 0}$.
For general $K$,
we put $F:=K_0$, $e_K:=[K_{\infty}:F_{\infty}]$ and denote by $K_0'\subseteq K_{\infty}$ the maximal unramified extension 
of $F$ in $K_{\infty}$. Then, the theory of fields of norm enables us to define the subring $\bold{B}^{\dagger}_{\mathrm{rig},K}$ of $\widetilde{\bold{B}}^{\dagger}_{\mathrm{rig}}$ as a finite Galois extension of $\bold{B}^{\dagger}_{\mathrm{rig},K_0'}$ of 
degree $e_K$ and there exist $r(K)\in \mathbb{Q}_{>0}$ and $\pi_K\in \bold{B}^{\dagger,r(K)}_{\mathrm{rig},K}$ such that 
$\bold{B}^{\dagger}_{\mathrm{rig},K}=\cup_{r\geqq r(K)}\bold{B}^{\dagger,r}_{\mathrm{rig}, K}$ is the union of the subrings
$$\bold{B}^{\dagger,r}_{\mathrm{rig}, K}:=\{f(\pi_K)=\sum_{n\in \mathbb{Z}}a_n\pi_K^n|\, a_n\in F' \,\text{and}\, f(X) \,\text{ is convergent on }\, 
p^{-1/r e_K }\leqq |X|_p <1 \}$$ 
of $\widetilde{\bold{B}}^{\dagger,r}_{\mathrm{rig}}$.
For each finite extension $K\subseteq K'(\subseteq \overline{K})$, $\bold{B}^{\dagger}_{\mathrm{rig},K}$ is 
a subring of $\bold{B}^{\dagger}_{\mathrm{rig},K'}$. For any $n\geqq 1$, 
we have an equality $\bold{B}^{\dagger}_{\mathrm{rig},K}=\bold{B}^{\dagger}_{\mathrm{rig},K_n}$. 
The ring $\bold{B}^{\dagger}_{\mathrm{rig},K}$ is stable by the actions of $\varphi$ and $G_K$ on 
$\widetilde{\bold{B}}^{\dagger}_{\mathrm{rig}}$. More precisely, we have $\varphi(\bold{B}^{\dagger,r}_{\mathrm{rig},K})\subseteq \bold{B}^{\dagger,pr}_{\mathrm{rig},K}$ and the action of $G_K$ factors through that of $\Gamma_K$. 
When $K=F$ is unramified, these actions are explicitly defined by the following formulae, 
 for $f(T)=\sum_{n\in \mathbb{Z}}a_nT^n\in \bold{B}^{\dagger}_{\mathrm{rig},F}$ and 
$\gamma\in \Gamma_F$,
$$\varphi(f(T)):=\sum_{n\in \mathbb{Z}}\varphi(a_n)((T+1)^p-1)^n,\,\,\,\, \, \gamma(f(T)):=\sum_{n\in \mathbb{Z}}a_n((T+1)^{\chi(\gamma)}-1)^n.$$
Next, we define a $\mathbb{Q}_p$-linear map $\psi:
\bold{B}^{\dagger}_{\mathrm{rig},K}\rightarrow \bold{B}^{\dagger}_{\mathrm{rig},K}$ as follows. It is known that $\bold{B}^{\dagger}_{\mathrm{rig},K}$ can be written as a 
direct sum $\bold{B}^{\dagger}_{\mathrm{rig},K}=\bigoplus_{i=0}^{p-1}(T+1)^{i}\varphi(\bold{B}^{\dagger}_{\mathrm{rig},K})$, so each element 
$x\in \bold{B}^{\dagger}_{\mathrm{rig},K}$ is uniquely written as $x=\sum_{i=0}^{p-1}(T+1)^i\varphi(x_i)$, then we define $\psi$ by 
$$\psi:\bold{B}^{\dagger}_{\mathrm{rig},K}\rightarrow \bold{B}^{\dagger}_{\mathrm{rig},K}:x=\sum_{i=0}^{p-1}(T+1)^i\varphi(x_i)\mapsto x_0.$$ 
This operator $\psi$ satisfies that 
$\psi\varphi=\mathrm{id}$ and $\psi$ is surjective and commutes with the action of $\Gamma_K$. 
More precisely, if we define  $n(K):=\mathrm{min}\{n| r_n\geqq r(K)\}$, then we have 
$\psi(\bold{B}^{\dagger,r_{n+1}}_{\mathrm{rig},K})= \bold{B}^{\dagger,r_n}_{\mathrm{rig},K}$ for any 
$n\geqq n(K)$. For each $n\geqq n(K)$, the restriction of $\iota_n:\widetilde{\bold{B}}^{\dagger,r_n}_{\mathrm{rig}}\hookrightarrow \bold{B}^+_{\mathrm{dR}}$ to $\bold{B}^{\dagger,r_n}_{\mathrm{rig},K}$ factors through $K_n[[t]]\subseteq \bold{B}^+_{\mathrm{dR}}$, i.e. $\iota_n$ induces a 
$\Gamma_K$-equivariant injection
$$\iota_n: \bold{B}^{\dagger, r_n}_{\mathrm{rig},K}\hookrightarrow K_n[[t]].$$ 
When $K=F$ is unramified over $\mathbb{Q}_p$, 
$\iota_n:\bold{B}^{\dagger,r_n}_{\mathrm{rig},F}\hookrightarrow F_n[[t]]$ is explicitly defined by 
$$\iota_n(\sum_{m\in \mathbb{Z}}a_mT^m)
:=\sum_{m\in \mathbb{Z}}\varphi^{-n}(a_m)(\zeta_{p^n}\mathrm{exp}(t/p^n)-1)^m.$$ 
One has the following commutative diagrams
$$
\begin{CD}
 \bold{B}^{\dagger,r_n}_{\mathrm{rig},K}@>\iota_n >> K_n[[t]]\,\,\,\,\,\,\,\,\,\,\,\,\,\,\, @.\,\,\bold{B}^{\dagger,r_{n+1}}_{\mathrm{rig},K}@>\iota_{n+1}>> K_{n+1}[[t]] \\
  @VV \varphi V    @VV\text{can} V  @VV \psi V    @VV \frac{1}{p}\mathrm{Tr}_{K_{n+1}/K_n} V \\
\bold{B}^{\dagger,r_{n+1}}_{\mathrm{rig},K}@>\iota_{n+1} >> K_{n+1}[[t]]\,\,\,\,\,\,\,\,\,\,\,\,\,\,\,\,@.\,\,\bold{B}^{\dagger,r_{n}}_{\mathrm{rig},K}@>\iota_{n} >> K_{n}[[t]]. 
  \end{CD}
  $$
  where $\mathrm{can}:K_n[[t]]\hookrightarrow K_{n+1}[[t]]$ is the canonical injection and $\frac{1}{p}\mathrm{Tr}_{K_{n+1}/K_n}$ is defined by 
  $$\frac{1}{p}\mathrm{Tr}_{K_{n+1}/K_n}:K_{n+1}[[t]]
  \rightarrow K_n[[t]]: \sum_{m= 0}^{\infty}a_mt^m\mapsto \sum_{m=0}^{\infty}\frac{1}{p}\mathrm{Tr}_{K_{n+1}/K_n}(a_m)t^m.$$

\begin{defn}
We say that $D$ is a $(\varphi,\Gamma_K)$-module over $\bold{B}^{\dagger}_{\mathrm{rig},K}$ if 
\begin{itemize}
\item[(1)]$D$ is a finite free $\bold{B}^{\dagger}_{\mathrm{rig},K}$-module,
\item[(2)]$D$ is equipped with a $\varphi$-semi-linear map $\varphi:D\rightarrow D$ such that the linearization map 
$\varphi^{*}(D):=\bold{B}^{\dagger}_{\mathrm{rig},K}\otimes_{\varphi,\bold{B}^{\dagger}_{\mathrm{rig},K}}D\rightarrow D:a\otimes x\mapsto a\varphi(x)$ is isomorphism,
\item[(3)]$D$ is equipped with a continuous semi-linear action of $\Gamma_K$ which commutes with $\varphi$,
\end{itemize}
where semi-linear means that $\varphi(ax)=\varphi(a)\varphi(x)$ and $\gamma(ax)=\gamma(a)\gamma(x)$ for 
any $a\in \bold{B}^{\dagger}_{\mathrm{rig},K}, x\in D$ and $\gamma\in \Gamma_K$.

\end{defn}

Let $D$ be a $(\varphi,\Gamma_K)$-module over $\bold{B}^{\dagger}_{\mathrm{rig},K}$. 
For each $k$, we denote by $D(k):=D\otimes_{\mathbb{Q}_p}\mathbb{Q}_p(k)$ the $k$-th Tate 
twist of $D$.
For each 
finite extension $L$ of $K$, the restriction $D|_L$ of $D$ to $L$, which is a $(\varphi,\Gamma_L)$-module 
over $\bold{B}^{\dagger}_{\mathrm{rig},L}$, is defined by 
$$D|_L:=\bold{B}^{\dagger}_{\mathrm{rig},L}\otimes_{\bold{B}^{\dagger}_{\mathrm{rig},K}}D$$ and the actions 
of $\varphi$ and $\Gamma_L(\subseteq \Gamma_K)$ are defined by $\varphi(a\otimes x):=\varphi(a)\otimes \varphi(x), \gamma(a\otimes x):=\gamma(a)\otimes \gamma(x)$ 
for any $a\in \bold{B}^{\dagger}_{\mathrm{rig},L}, x\in D$ and $\gamma\in \Gamma_L$.
We define the dual $D^{\lor}$ of $D$ by 
$$D^{\lor}:=\mathrm{Hom}_{\bold{B}^{\dagger}_{\mathrm{rig},K}}(D, \bold{B}^{\dagger}_{\mathrm{rig},K})$$ and, for any 
$f\in D^{\lor}$ and $\gamma\in \Gamma_K$, $\gamma(f)\in D^{\lor}$ is defined by $\gamma(f)(x):=\gamma(f(\gamma^{-1}x))$ 
for any $x\in D$, and $\varphi(f)\in D^{\lor}$ is defined by $\varphi(f)(\sum_{i=1}^m a_i\varphi(x_i)):=
\sum_{i=1}^ma_i\varphi(f(x_i))$ for any $x=\sum_{i=1}^ma_i\varphi(x_i)\in D$ ($a_i\in \bold{B}^{\dagger}_{\mathrm{rig},K}, x_i\in D$).
Let $D_1, D_2$ be $(\varphi,\Gamma_K)$-modules over $\bold{B}^{\dagger}_{\mathrm{rig},K}$. We define 
the tensor product $D_1\otimes D_2$ by 
$$D_1\otimes D_2:=D_1\otimes_{\bold{B}^{\dagger}_{\mathrm{rig},K}}D_2$$ as  a $\bold{B}^{\dagger}_{\mathrm{rig},K}$-module with $\varphi$ and  $\Gamma_K$ acting diagonally.
Let $D$ be a $(\varphi,\Gamma_K)$-module over $\bold{B}^{\dagger}_{\mathrm{rig},K}$ of rank $d$. By Theorem 1.3.3 of \cite{Ber08b}, 
there exists 
a $n(D)\geqq n(K)$ and there exists a unique finite free 
$\bold{B}^{\dagger,r_{n(D)}}_{\mathrm{rig},K}$-submodule $D^{(n(D))}\subseteq D$ of 
rank $d$ which satisfies 
\begin{itemize}
\item[(1)]$\bold{B}^{\dagger}_{\mathrm{rig},K}\otimes_{\bold{B}^{\dagger,r_{n(D)}}_{\mathrm{rig},K}}D^{(n(D))}=D$,
\item[(2)]if we put $D^{(n)}:=\bold{B}^{\dagger,r_n}_{\mathrm{rig},K}\otimes_{\bold{B}^{\dagger,r_{n(D)}}_{\mathrm{rig},K}}D^{(n(D))}$ for  
each $n\geqq n(D)$, then $\varphi(D^{(n)})\subseteq D^{(n+1)}$ and the natural map 
$\bold{B}^{\dagger, r_{n+1}}_{\mathrm{rig}, K}\otimes_{\varphi, \bold{B}^{\dagger,r_n}_{\mathrm{rig},K}}D^{(n)}
\rightarrow D^{(n+1)}:a\otimes x\mapsto a\varphi(x)$ is isomorphism for any $n\geqq n(D)$.
\end{itemize}
Uniqueness of $D^{(n)}$ implies that  $D^{(n)}$ is preserved by $\Gamma_K$-action for any $n\geqq n(D)$.

Using $D^{(n)}$, we define $\bold{D}^+_{\mathrm{dif}}(D)$ and $\bold{D}_{\mathrm{dif}}(D)$ as follows.
For each $n\geqq n(D)$, we put 
$$\bold{D}^+_{\mathrm{dif}, n}(D):= K_n[[t]]\otimes_{\iota_n, \bold{B}^{\dagger,r_n}_{\mathrm{rig},K}} D^{(n)}\,\,\,(\text{resp}.\,\bold{D}_{\mathrm{dif},n}(D):=K_n((t))\otimes_{K_n[[t]]}\bold{D}^+_{\mathrm{dif},n}(D)),$$
which is a finite free $K_n[[t]]$\,\, (resp.$K_n((t))$)-module of rank $d$ with a semi-linear $\Gamma_K$-action.
Define a transition map 
$$\bold{D}^+_{\mathrm{dif},n}(D) \hookrightarrow
\bold{D}^+_{\mathrm{dif},n+1}(D) : f(t)\otimes x\mapsto f(t)\otimes \varphi(x),$$ and define a map 
$\bold{D}_{\mathrm{dif}, n}(D)\hookrightarrow \bold{D}_{\mathrm{dif}, n+1}(D)$ in the same way. 
Using these transition maps, we define 
$$\bold{D}^+_{\mathrm{dif}}(D):=\varinjlim_{n} \bold{D}^+_{\mathrm{dif}, n}(D)\,\, \, \,\,\,(\text{resp}.\, \bold{D}_{\mathrm{dif}}(D):=\varinjlim_n \bold{D}_{\mathrm{dif},n}(D)),$$
 this is a free $K_{\infty}[[t]]:=\cup_{n=1}^{\infty}K_n[[t]]$\,\,\,(resp. $K_{\infty}((t)):=\cup_{n=1}^{\infty} K_n((t))$)-module of rank $d$
 with a semi-linear $\Gamma_K$-action.
For each $n\geqq n(D)$, define a canonical $\Gamma_K$-equivariant injection 
$$\iota_n: D^{(n)}\hookrightarrow \bold{D}^+_{\mathrm{dif},n}(D): x\mapsto 1\otimes x.$$ 

\subsection{cohomologies of $(\varphi,\Gamma)$-modules}

In this subsection, we recall the definitions of some cohomology theories associated to $(\varphi,\Gamma)$-modules and the 
fundamental properties of them proved by Liu (\cite{Li08}). 

Let $\Delta_K\subseteq \Gamma_K$ be the $p$-torsion subgroup of $\Gamma_K$ 
which is trivial if $p\not=2$ and  at largest cyclic of order two if $p=2$. Choose 
$\gamma_K\in \Gamma_K$ whose image in $\Gamma_K/\Delta_K$ is a topological generator (this choice of $\Delta_K$ is useful for explicit formulas, but if desired one can reformulate everything to eliminate this choice).

For a $\Delta_K$-module $M$, we put $M^{\Delta_K}:=\{x\in M| \gamma'(x)=x $ for all $\gamma'\in \Delta_K \}$. 
For a $\mathbb{Z}[\Gamma_K]$-module $M$, we define a complex $C^{\bullet}_{\gamma_K}(M)$ concentrated in degree $[0,1]$ by 
$$C^{\bullet}_{\gamma_K}(M): [M^{\Delta_K}\xrightarrow{\gamma_K-1} M^{\Delta_K}] .$$
For a $\mathbb{Z}[\Gamma_K]$-module $M$ with a $\varphi$-action which commutes with the action of $\Gamma_K$,  
we define a complex $C^{\bullet}_{\varphi,\gamma_K}(M)$ concentrated in degree $[0,2]$ 
by 
$$C^{\bullet}_{\varphi,\gamma_K}(M):[M^{\Delta_K}\xrightarrow{d_1}M^{\Delta_K}\oplus M^{\Delta_K}\xrightarrow{d_2} M^{\Delta_K}]$$
with $d_1(x):=((\gamma_K-1)x,(\varphi-1)x)$ and $d_2(x,y):=(\varphi-1)x-(\gamma_K-1)y$.

Let $D$ be a $(\varphi,\Gamma_K)$-module over $\bold{B}^{\dagger}_{\mathrm{rig},K}$. We put $D[1/t]:=\bold{B}^{\dagger}_{\mathrm{rig},K}[1/t]\otimes_{\bold{B}^{\dagger}_{\mathrm{rig},K}}D$.
For each $q\in \mathbb{Z}_{\geqq 0}$, we define 
$$\mathrm{H}^{q}(K, D):=\mathrm{H}^q(C^{\bullet}_{\varphi,\gamma_K}(D)), \,\,\,\mathrm{H}^{q}(K, D[1/t])):=\mathrm{H}^q(C^{\bullet}_{\varphi,\gamma_K}(D[1/t]))$$ 
and 
$$\mathrm{H}^q(K, \bold{D}^+_{\mathrm{dif}}(D)):=\mathrm{H}^q(C^{\bullet}_{\gamma_K}(\bold{D}^+_{\mathrm{dif}}(D))), \,\,\,\mathrm{H}^{q}(K, \bold{D}_{\mathrm{dif}}(D))
:=\mathrm{H}^q(C^{\bullet}_{\gamma_K}(\bold{D}_{\mathrm{dif}}(D))).$$

These definitions are independent of the choice of $\gamma_K$. Namely, if $\gamma'_{K}\in \Gamma_K$  
is another one such that the image in $\Gamma_K/\Delta_K$ is a topological generator,  
then 
we have $\frac{\gamma_K'-1}{\gamma_K-1}\in \mathbb{Z}_p[[\Gamma_K/\Delta_K]]$  and 
have  the canonical isomorphism 
$$\mathrm{H}^q(C^{\bullet}_{\varphi,\gamma_K}(D))\isom \mathrm{H}^q(C^{\bullet}_{\varphi,\gamma'_K}(D))$$ given by the map which is 
induced by the following map of complexes
$$
\begin{CD}
C^{\bullet}_{\varphi,\gamma_K}(D):@. [D^{\Delta_K} @> d_1 >> D^{\Delta_K}\oplus D^{\Delta_K} @ > d_2 >> D^{\Delta_K}] \\
  @.@VV \mathrm{id} V    @VV \frac{\gamma_K'-1}{\gamma_K-1}\oplus \mathrm{id} V @VV \frac{\gamma_K'-1}{\gamma_K-1} V \\
  C^{\bullet}_{\varphi,\gamma'_K}(D):@. [D^{\Delta_K}@> d_1 >> D^{\Delta_K}\oplus D^{\Delta_K} @> d_2 >> D^{\Delta_K}],
  \end{CD}
  $$
  where we note that the $\mathbb{Z}_p[\Gamma_K/\Delta_K]$-module structure on $D^{\Delta_K}$ uniquely extends to a continuous $\mathbb{Z}_p[[\Gamma_K/\Delta_K]]$-module structure.
  
  For $(\varphi,\Gamma_K)$-modules $D_1, D_2$  over $\bold{B}^{\dagger}_{\mathrm{rig},K}$, we can define a cup product paring
  $$\cup:\mathrm{H}^{q_1}(K, D_1)\times \mathrm{H}^{q_2}(K, D_2)\rightarrow \mathrm{H}^{q_1+q_2}(K, D_1\otimes D_2). $$ 
  See $\S$ 2.1 of \cite{Li08} for the definition. 
  When $(q_1, q_2)=(0,1), (1,1)$, the paring $\cup$ are defined by
  $$\mathrm{H}^0(K, D_1)\times \mathrm{H}^1(K, D_2)\rightarrow \mathrm{H}^1(K, D_1\otimes D_2):(a, [x,y])\mapsto [a\otimes x, a\otimes y],$$
  $$\mathrm{H}^1(K, D_1)\times \mathrm{H}^1(K, D_2)\rightarrow \mathrm{H}^2(K, D_1\otimes D_2):([x,y], [x',y'])\mapsto [y\otimes \varphi(x')- x\otimes \gamma(y')].$$

 The following theorem was proved by Liu (\cite{Li08}) by reducing to the results of Herr (\cite{Her98}, \cite{Her01}) in the \'etale case.
 \begin{thm}
 Let $D$ be a $(\varphi,\Gamma_K)$-module over $\bold{B}^{\dagger}_{\mathrm{rig},K}$. Then 
 $\mathrm{H}^q(K, D)$ satisfies the following;
 \begin{itemize}
 \item[(0)]$\mathrm{H}^q(K, D)=0$ if $q\not= 0,1,2$,
 \item[(1)]for any $q$, $\mathrm{H}^q(K, D)$ is a finite dimensional $\mathbb{Q}_p$-vector space,
 \item[(2)]$\sum_{q=0}^2 (-1)^q\mathrm{dim}_{\mathbb{Q}_p}\mathrm{H}^q(K, D)=-[K:\mathbb{Q}_p]\mathrm{rank}(D)$,
 \item[(3)]we have  a canonical isomorphism $f_{\mathrm{tr}}:\mathrm{H}^2(K,\bold{B}^{\dagger}_{\mathrm{rig},K}(1))\isom \mathbb{Q}_p$ and 
 the following pairing $<,>$ is perfect for each $q=0,1,2$,
 $$<,>:\mathrm{H}^q(K, D)\times\mathrm{H}^{2-q}(K, D^{\lor}(1))\xrightarrow{\cup}\mathrm{H}^2(K, D\otimes D^{\lor}(1))\xrightarrow{\mathrm{ev}}\mathrm{H}^2(K, \bold{B}^{\dagger}_{\mathrm{rig},K}(1))\xrightarrow{f_{\mathrm{tr}}} \mathbb{Q}_p,$$
\end{itemize}
where
$\mathrm{ev}:\mathrm{H}^2(K, D\otimes D^{\lor}(1))\xrightarrow{\mathrm{ev}}\mathrm{H}^2(K, \bold{B}^{\dagger}_{\mathrm{rig},K}(1))$ is the map induced by the 
evaluation map $D\otimes D^{\lor}(1)\rightarrow \bold{B}^{\dagger}_{\mathrm{rig},K}(1):x\otimes (f\otimes e_1)\mapsto f(x)\otimes e_1$.
 
 \end{thm}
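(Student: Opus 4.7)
The vanishing in part (0) is immediate from the shape of the complex $C^{\bullet}_{\varphi,\gamma_K}(D)$, which has nonzero terms only in degrees $0,1,2$. The plan for (1)--(3) is to reduce to the \'etale case, where everything follows from Herr's theorem together with classical local Tate duality for $p$-adic Galois representations, and then propagate from \'etale to general via Kedlaya's slope filtration and a d\'evissage argument.

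For the \'etale case, suppose $D = D(V)$ for a $p$-adic representation $V$. I would show that $C^{\bullet}_{\varphi,\gamma_K}(D)$ is quasi-isomorphic to the classical Herr complex of the \'etale $(\varphi,\Gamma_K)$-module associated to $V$ over the complete (or overconvergent) field-of-norms ring. This requires two successive comparisons: first, for \'etale $D$ the inclusion $D^{\dagger} \hookrightarrow D$ of overconvergent coefficients induces a quasi-isomorphism on $C^{\bullet}_{\varphi,\gamma_K}$, which reduces to showing $\varphi - 1$ is bijective on the quotient (the slope-zero hypothesis is what makes this work); second, Cherbonnier--Colmez, to pass from overconvergent to the usual $(\varphi,\Gamma_K)$-module. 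Herr's theorem then identifies the resulting cohomology with $\mathrm{H}^q(K,V)$, and (1), (2), (3) follow from the finiteness of local Galois cohomology, the Tate Euler characteristic formula, and classical local Tate duality.

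For general $D$, Kedlaya's slope filtration theorem produces a canonical filtration $0 = D_0 \subset D_1 \subset \cdots \subset D_n = D$ whose successive quotients $D_i/D_{i-1}$ are pure of slope $s_i \in \mathbb{Q}$. For each pure $D' = D_i/D_{i-1}$ of slope $s$, one has a rank-one $(\varphi,\Gamma_K)$-module $L_{-s}$ of slope $-s$ (defined after a finite base change to handle non-integer slopes and descended via Shapiro's lemma) such that $D' \otimes L_{-s}$ is \'etale. The \'etale case then gives (1), (2), (3) for $D' \otimes L_{-s}$, and hence for $D'$ by untwisting. A long-exact-sequence induction along the filtration yields (1) (finiteness is preserved) and (2) (Euler characteristic and rank are both additive in short exact sequences). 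For (3), the trace map $f_{\mathrm{tr}}$ is the \'etale comparison applied to $\mathbb{Q}_p(1)$, giving $\mathrm{H}^2(K, \bold{B}^{\dagger}_{\mathrm{rig},K}(1)) \isom \mathrm{H}^2(K, \mathbb{Q}_p(1)) \isom \mathbb{Q}_p$; perfectness of the pairing follows by functoriality of the cup product and a 5-lemma induction along the slope filtration, using that the filtration on $D^{\lor}(1)$ is the dual of that on $D$.

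The main obstacle is the \'etale comparison in the first reduction: showing that the Robba-ring $(\varphi,\gamma_K)$-cohomology of an \'etale $D$ agrees with the overconvergent (and hence classical Herr) cohomology. This requires a careful analysis of $\varphi - 1$ on the quotient of the Robba ring by its overconvergent subring, and uses crucially that \'etale slope is zero so the operator has no unwanted spectral components. A subsidiary but non-trivial point is the explicit construction of the rank-one twisting modules $L_s$ of arbitrary rational slope, together with the bookkeeping for descent from $K'/K$ when $s \notin \mathbb{Z}$; once these are in place, the remaining reductions are formal consequences of the long exact sequence and the 5-lemma.
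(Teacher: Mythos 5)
Your overall plan coincides with the source of this theorem: the paper itself gives no proof but simply cites Theorem 0.2 of \cite{Li08}, and Liu's proof is exactly the route you describe --- the \'etale case by comparing the Herr complex over the Robba ring with the classical one via overconvergence (\cite{CC98}, \cite{CC99}) and Herr's results, then the general case by Kedlaya's slope filtration and d\'evissage. So you are reconstructing the cited argument rather than an argument written out in the paper, and steps like the quasi-isomorphism between the $(\varphi,\gamma_K)$-complex over $\bold{B}^{\dagger}_{K}$ and over $\bold{B}^{\dagger}_{\mathrm{rig},K}$ for \'etale modules (your ``$\varphi-1$ bijective on the quotient'') are genuine analytic inputs that in \cite{Li08} rest on $\psi$-operator estimates of Berger--Cherbonnier--Colmez; at the level of a sketch that is acceptable provided you treat it as an imported result.

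There is, however, a concrete gap in your reduction of the pure-slope case to the \'etale case. A rank-one $(\varphi,\Gamma_{K'})$-module over $\bold{B}^{\dagger}_{\mathrm{rig},K'}$ has integer degree, hence integer slope, for \emph{every} finite extension $K'/K$, and slopes are unchanged under restriction $D\mapsto D|_{K'}$; consequently no ``finite base change handled by Shapiro's lemma'' can produce the rank-one twist $L_{-s}$ when $s=c/d$ with $d>1$, while the isoclinic constituents of a general $D$ can perfectly well have non-integer slopes and the theorem is asserted for all $D$. The known repairs are genuinely different from what you wrote: one can enlarge the \emph{coefficient} field to a ramified extension $E/\mathbb{Q}_p$ so that a rank-one object of slope $-c/d$ exists (the descent back to $\mathbb{Q}_p$-coefficients is then flat base change, not Shapiro), or tensor with Kedlaya's standard module of rank $d$ and slope $-c/d$ --- but then $D'\otimes M$ is \'etale of rank $d\cdot\mathrm{rank}(D')$ and recovering $\mathrm{H}^q(K,D')$ from $\mathrm{H}^q(K,D'\otimes M)$ is no longer a formal untwisting --- or pass to $\varphi^d$-modules, which requires a separate comparison between $\varphi^d$- and $\varphi$-cohomology. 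As written, your argument only establishes (1)--(3) for modules all of whose slopes are integers; to claim the full theorem you must supply one of these mechanisms (and check that it is compatible with cup products and with the normalization of $f_{\mathrm{tr}}$, which your 5-lemma argument for (3) silently uses).
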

 \begin{proof}
 See Theorem 0.2 of \cite{Li08}
 \end{proof}
 \begin{rem}\label{normalization}
 We remark that Liu proved the existence of functorial comparison isomorphisms 
 $\mathrm{H}^q(K, V)\isom \mathrm{H}^q(K, D(V))$ for all the $p$-adic representations $V$ of $G_K$. 
 Then, the isomorphism $f_{\mathrm{tr}}$ is defined as the composition of 
 the inverse of the comparison isomorphism
  $\mathrm{H}^2(K, \mathbb{Q}_p(1))\isom \mathrm{H}^2(K, D(\mathbb{Q}_p(1)))=
  \mathrm{H}^2(K, \bold{B}^{\dagger}_{\mathrm{rig},K}(1))$ with Tate's trace $f'_{\mathrm{tr}}:\mathrm{H}^2(K,\mathbb{Q}_p(1))\isom \mathbb{Q}_p$.
  In this article, we normalize the isomorphism $f'_{\mathrm{tr}}:\mathrm{H}^2(K, \mathbb{Q}(1))\isom \mathbb{Q}_p$ such that Tate's paring 
 $$<,>:\mathrm{H}^1(\mathbb{Q}_p, \mathbb{Q}_p(1))\times \mathrm{H}^1(\mathbb{Q}_p,\mathbb{Q}_p)\xrightarrow{\cup} \mathrm{H}^2(\mathbb{Q}_p, \mathbb{Q}_p(1))\xrightarrow{f'_{\mathrm{tr}}}\mathbb{Q}_p$$
  satisfies that $<\kappa(a),\tau>=\tau(\mathrm{rec}_{\mathbb{Q}_p}(a))$ for any $a\in \mathbb{Q}_p^{\times}$ and $\tau\in \mathrm{Hom}(G^{\mathrm{ab}}_{\mathbb{Q}_p}, \mathbb{Q}_p)
 =\mathrm{H}^1(\mathbb{Q}_p,\mathbb{Q}_p)$, where $\kappa:\mathbb{Q}_p^{\times}\rightarrow 
 \mathrm{H}^1(\mathbb{Q}_p, \mathbb{Q}_p(1))$ is the Kummer map and 
 $\mathrm{rec}_{\mathbb{Q}_p}:\mathbb{Q}_p^{\times}\rightarrow G_{\mathbb{Q}_p}^{\mathrm{ab}}$ 
 is the reciprocity map of local class field theory.

 \end{rem}
 
 It is important to define the cohomology $\mathrm{H}^q(K, D)$ using $\psi$ instead of $\varphi$, which we recall below.
We define a complex $C^{\bullet}_{\psi,\gamma_K}(D)$ concentrated in degree $[0,2]$ by 
$$C^{\bullet}_{\psi,\gamma_K}(D):[D^{\Delta_K}\xrightarrow{d'_1}D^{\Delta_K}\oplus D^{\Delta_K}\xrightarrow{d'_2}
D^{\Delta_K}]$$
with $d'_1(x):=((\gamma_K-1)x,(\psi-1)x)$ and $d'_2(x,y):=(\psi-1)x-(\gamma_K-1)y$. 
We define a surjective map $C^{\bullet}_{\varphi,\gamma_K}(D)\rightarrow C^{\bullet}_{\psi,\gamma_K}(D)$ of complexes by 
$$
\begin{CD}
C^{\bullet}_{\varphi,\gamma_K}(D):@. [ D^{\Delta_K} @> d_1 >> D^{\Delta_K}\oplus D^{\Delta_K} @ > d_2 >> D^{\Delta_K}] \\
  @.@VV \mathrm{id} V    @VV \mathrm{id}\oplus(-\psi) V @VV -\psi V \\
  C^{\bullet}_{\psi,\gamma_K}(D): @. [D^{\Delta_K}@> d'_1 >> D^{\Delta_K}\oplus D^{\Delta_K} @> d'_2 >> D^{\Delta_K}]. 
  \end{CD}
  $$
  The kernel of this map is the complex $[0\rightarrow 0\oplus D^{\Delta_K,\psi=0}\xrightarrow{0\oplus (\gamma_K-1)}D^{\Delta_K,\psi=0}]$. Concerning this complex, 
   we have the following theorem.
  \begin{thm} \label{2.3}
 The map  $D^{\Delta_K,\psi=0}\xrightarrow{\gamma_K-1}D^{\Delta_K,\psi=0}$ is isomorphism.
  In particular, the map 
  $C^{\bullet}_{\varphi,\gamma_K}(D)\rightarrow C^{\bullet}_{\psi,\gamma_K}(D)$ defined above is quasi isomorphism.
      
  \end{thm}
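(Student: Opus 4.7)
The quasi-isomorphism is immediate from the bijectivity claim: unwinding the surjection $C^{\bullet}_{\varphi,\gamma_K}(D) \twoheadrightarrow C^{\bullet}_{\psi,\gamma_K}(D)$ displayed in the excerpt, the kernel is the two-term complex $[D^{\Delta_K,\psi=0} \xrightarrow{\gamma_K-1} D^{\Delta_K,\psi=0}]$ concentrated in degrees $1$ and $2$, which is acyclic iff $\gamma_K-1$ is bijective. So the real content is this bijectivity, and I first reduce to showing $\gamma_K - 1$ is bijective on all of $D^{\psi=0}$ (not only on the $\Delta_K$-invariants): because $\Gamma_K$ is abelian, $\gamma_K$ commutes with $\Delta_K$, hence any inverse built as a continuous limit of polynomial expressions in $\gamma_K$ will commute with $\Delta_K$ and restrict to the required inverse on $D^{\Delta_K,\psi=0}$.

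The plan is d\'evissage to the \'etale case. Kedlaya's slope filtration theorem furnishes a $(\varphi,\Gamma_K)$-stable filtration of $D$ whose graded pieces are isoclinic, and after a Tate twist each graded piece becomes \'etale, i.e.\ corresponds under the Fontaine--Cherbonnier--Colmez--Kedlaya equivalence to a $p$-adic representation of $G_K$. The next ingredient is exactness of $D \mapsto D^{\psi=0}$ on short exact sequences $0 \to D' \to D \to D'' \to 0$ of $(\varphi,\Gamma_K)$-modules: left-exactness is formal, and right-exactness follows from $\psi\varphi=\mathrm{id}$ via the standard trick that, given $y'' \in (D'')^{\psi=0}$ and any lift $y \in D$, the corrected element $y-\varphi(\psi(y))$ lies in $D^{\psi=0}$ and still lifts $y''$ (note that $\psi(y)$ lies in $D'$ because its image in $D''$ equals $\psi(y'')=0$). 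Combined with the $\gamma_K$-equivariance of the sequence, a five-lemma applied to the complex $[D^{\psi=0} \xrightarrow{\gamma_K-1} D^{\psi=0}]$ reduces bijectivity of $\gamma_K-1$ for $D$ to bijectivity on each \'etale graded piece.

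On \'etale $(\varphi,\Gamma_K)$-modules, the bijectivity of $\gamma_K-1$ on $D^{\psi=0}$ is essentially Herr's theorem in the $\bold{B}_K$-setting, transported to the Robba ring via Cherbonnier--Colmez's overconvergence result; the inverse of $\gamma_K-1$ is built directly from the explicit decomposition $D^{\psi=0}=\bigoplus_{i=1}^{p-1}(1+T)^i\varphi(D)$ (coming from $\varphi^{*}D\isom D$ and the analogous decomposition of $\bold{B}^{\dagger}_{\mathrm{rig},K}$ over $\varphi(\bold{B}^{\dagger}_{\mathrm{rig},K})$) together with a convergent series in $\gamma_K$, using that $\gamma_K$ is close to the identity on a high-level $\Gamma_K$-invariant lattice. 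The main obstacle I anticipate is verifying the $\Gamma_K$-compatibility of Kedlaya's slope filtration and the good behavior of $\psi$ with respect to it; both are known to specialists but require care with slope theory for $(\varphi,\Gamma_K)$-modules. An alternative route avoiding slope filtration is to establish bijectivity on $(\bold{B}^{\dagger}_{\mathrm{rig},K})^{\psi=0}$ first by explicit computation on each summand $(1+T)^i\varphi(\bold{B}^{\dagger}_{\mathrm{rig},K})$ using the formula $\gamma_K(1+T)=(1+T)^{\chi(\gamma_K)}$, then bootstrap to arbitrary finite free $D$, but that route carries its own convergence subtleties in the Fr\'echet topology of each model $D^{(n)}$.
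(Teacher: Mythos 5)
Your reduction to bijectivity of $\gamma_K-1$ on $D^{\psi=0}$, the exactness of $(-)^{\psi=0}$ via $y\mapsto y-\varphi(\psi(y))$, and the appeal to the \'etale case (Cherbonnier--Colmez/Herr, transported to the Robba ring as in Liu) are all fine. Note, however, that the paper does not prove this statement at all: it simply cites Lemma 2.4 of [Li08] for the \'etale case and Theorem 2.6 of [Po12b] for the general case, so the whole burden of your argument is the d\'evissage from general $D$ to \'etale $D$ --- and that is exactly where there is a genuine gap. Kedlaya's slope filtration has isoclinic graded pieces of arbitrary rational slope $c/d$, whereas a Tate twist (indeed any twist by a rank-one $(\varphi,\Gamma_K)$-module with $\mathbb{Q}_p$-coefficients) shifts the slope only by a bounded-denominator rational number (an integer for $K=\mathbb{Q}_p$). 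So the assertion ``after a Tate twist each graded piece becomes \'etale'' is false in general: a pure piece of slope $1/2$, say, is never a Tate twist of an \'etale module. Your reduction therefore collapses precisely in the non-\'etale situation that the theorem is meant to cover, the \'etale case being the classical one.

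To repair this one has to do what the sources actually do: either argue directly on $D^{\psi=0}$ with convergence estimates over the rings $\bold{B}^{\dagger,r}_{\mathrm{rig},K}$, using the decomposition $D^{\psi=0}=\bigoplus_{i\in(\mathbb{Z}/p\mathbb{Z})^{\times}}(1+T)^{i}\varphi(D)$ and the fact that $\gamma_K$ acts ``close to the identity'' there (this is the Colmez/Kedlaya--Pottharst--Xiao style argument, which avoids slopes altogether and hence also works in families), or use Frobenius descent: an isoclinic piece of slope $c/d$ becomes, after twisting by a rank-one $\varphi^{d}$-module on which $\varphi^{d}$ acts by a scalar of valuation $-c$ (such a twist exists for $\varphi^{d}$ but not for $\varphi$), an \'etale $(\varphi^{d},\Gamma_K)$-module, to which the Herr/Cherbonnier--Colmez theory applies after relating the $\varphi$- and $\varphi^{d}$-theories. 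Your alternative fallback (prove it for $\bold{B}^{\dagger}_{\mathrm{rig},K}$ and ``bootstrap to arbitrary finite free $D$'') is not a d\'evissage either, since a general $D$ is not an iterated extension of trivial modules; the bootstrapping would again need the direct analytic argument. The remaining steps of your proposal (the $\Delta_K$-reduction, the five-lemma step granted $\Gamma_K$-stability of the slope filtration) are correct.
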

  \begin{proof}
  For example, see Lemma 2.4 of \cite{Li08} in the \'etale case and see Theorem 2.6 of \cite{Po12b} for general case.
  \end{proof}

  Next, we recall the definition of crystalline or de Rham $(\varphi,\Gamma)$-modules.
  \begin{defn}
  For a $(\varphi,\Gamma_K)$-module $D$ over $\bold{B}^{\dagger}_{\mathrm{rig},K}$,  we define 
  $$ \bold{D}^K_{\mathrm{crys}}(D):=D[1/t]^{\Gamma_K=1},\,\,\, \bold{D}^K_{\mathrm{dR}}(D):=\bold{D}_{\mathrm{dif}}(D)^{\Gamma_K=1}.$$
  We define a decreasing filtration on $\bold{D}^{K}_{\mathrm{dR}}(D)$ by 
  $$\mathrm{Fil}^i\bold{D}^K_{\mathrm{dR}}(D):=\bold{D}^K_{\mathrm{dR}}(D)\cap t^i \bold{D}^+_{\mathrm{dif}}(D)\subseteq \bold{D}_{\mathrm{dif}}(D)$$ for 
  $i\in \mathbb{Z}$.
  
  \end{defn}
  Using cohomologies which we defined above, we have equalities 
  $$\bold{D}^K_{\mathrm{dR}}(D)=\mathrm{H}^0(K, \bold{D}_{\mathrm{dif}}(D)), \,\,\,\,
  \mathrm{Fil}^0\bold{D}^K_{\mathrm{dR}}(D)=\mathrm{H}^0(K, \bold{D}^+_{\mathrm{dif}}(D)), $$and 
  $$\bold{D}^K_{\mathrm{crys}}(D)^{\varphi=1}=\mathrm{H}^0(K, D[1/t]).$$
  As in the case of $p$-adic Galois representations of $G_K$, we have inequalities 
  $$\mathrm{dim}_{K_0}\bold{D}^K_{\mathrm{crys}}(D)\leqq\mathrm{dim}_K \bold{D}^K_{\mathrm{dR}}(D)\leqq \mathrm{rank}D.$$
   
  \begin{defn}
  Let $D$ be a $(\varphi,\Gamma_K)$-module over $\bold{B}^{\dagger}_{\mathrm{rig},K}$. 
  We say that $D$ is crystalline (resp. de Rham) if an equality 
  $\mathrm{dim}_{K_0}\bold{D}^K_{\mathrm{crys}}(D)=\mathrm{rank}(D)$ (resp. $\mathrm{dim}_K\bold{D}^K_{\mathrm{dR}}(D)=
  \mathrm{rank}(D))$ holds. We say that $D$ is potentially crystalline if there exists a finite extension $L$ of $K$ such that 
  $D|_L$ is crystalline $(\varphi,\Gamma_L)$-module over $\bold{B}^{\dagger}_{\mathrm{rig},L}$. 
  \end{defn}
  \begin{defn}
  Let $D$ be a de Rham $(\varphi,\Gamma_K)$-module over $\bold{B}^{\dagger}_{\mathrm{rig},K}$. 
  We call the set $\{h\in \mathbb{Z}| \mathrm{Fil}^{-h}\bold{D}^K_{\mathrm{dR}}(D)/\mathrm{Fil}^{-h+1}\bold{D}^K_{\mathrm{dR}}(D)\not=0\}$ 
  the Hodge-Tate weights of $D$.
  
  \end{defn}
  If $D$ is crystalline then $D$ is also de Rham by the above inequalities. Because 
  potentially de Rham implies de Rham by Hilbert 90, 
  if $D$ is potentially crystalline, then $D$ is de Rham. 
  If $D$ is potentially crystalline such that 
  $D|_L$ is crystalline for a finite Galois extension $L$ of $K$, 
  then $\bold{D}^L_{\mathrm{crys}}(D):=\bold{D}^L_{\mathrm{crys}}(D|_L)$ is naturally equipped with actions of 
   $\varphi$ and of $\mathrm{Gal}(L/K)$ and we have a 
   natural isomorphism $L\otimes_{L_0}\bold{D}^L_{\mathrm{crys}}(D)\isom \bold{D}^L_{\mathrm{dR}}(D)=L\otimes_K\bold{D}^K_{\mathrm{dR}}(D)$, i.e. 
   $\bold{D}^L_{\mathrm{crys}}(D)$ is naturally equipped with a structure of filtered $(\varphi,\mathrm{Gal}(L/K))$-module.
   
   \subsection{Bloch-Kato's exponential map for $(\varphi,\Gamma)$-modules}
   This subsection is the main part of this section. We define a map 
   $\mathrm{exp}_{K,D}:\bold{D}^K_{\mathrm{dR}}(D)\rightarrow \mathrm{H}^1(K, D)$, which is the $(\varphi,\Gamma)$-module analogue of 
   Bloch-Kato's exponential map.
  
  The following is the main theorem of this section, which is the $(\varphi,\Gamma)$-module 
  analogue of the long exact sequence, for a $p$-adic representation $V$ of $G_K$,
  \[
\begin{array}{ll}
0\rightarrow& \mathrm{H}^0(K, V) \rightarrow  \mathrm{H}^0(K, \bold{B}_e\otimes_{\mathbb{Q}_p}V)\oplus \mathrm{H}^0(K, \bold{B}^+_{\mathrm{dR}}\otimes_{\mathbb{Q}_p}V)
\rightarrow  \mathrm{H}^0(K, \bold{B}_{\mathrm{dR}}\otimes_{\mathbb{Q}_p}V) \\
  \xrightarrow{\delta_{1,V}}&\mathrm{H}^1(K, V)\rightarrow \mathrm{H}^1(K, \bold{B}_e\otimes_{\mathbb{Q}_p}V)\oplus \mathrm{H}^1(K, \bold{B}^+_{\mathrm{dR}}\otimes_{\mathbb{Q}_p}V)
\rightarrow \mathrm{H}^1(K, \bold{B}_{\mathrm{dR}}\otimes_{\mathbb{Q}_p}V) \\
  \xrightarrow{\delta_{2,V}}&  \mathrm{H}^2(K, V)\rightarrow \mathrm{H}^2(K, \bold{B}_e\otimes_{\mathbb{Q}_p}V) \rightarrow 0

\end{array}
  \]which is obtained by taking the cohomology long exact sequence 
  associated to the Bloch-Kato's fundamental short exact sequence 
  $$0\rightarrow V\rightarrow \bold{B}_e\otimes_{\mathbb{Q}_p}V\oplus \bold{B}^+_{\mathrm{dR}}\otimes_{\mathbb{Q}_p}V
  \rightarrow \bold{B}_{\mathrm{dR}}\otimes_{\mathbb{Q}_p}V\rightarrow 0.$$
  
  See $\S$ 2.5 for the comparison of 
  the above exact sequence with the below exact sequence.

\begin{thm}\label{exp}
Let $D$ be a $(\varphi,\Gamma_K)$-module over $\bold{B}^{\dagger}_{\mathrm{rig},K}$.
Then there exists a canonical functorial long exact sequence
\begin{align*}
0\rightarrow& \mathrm{H}^0(K, D) \rightarrow  \mathrm{H}^0(K, D[1/t])\oplus \mathrm{H}^0(K, \bold{D}^+_{\mathrm{dif}}(D))
\rightarrow  \mathrm{H}^0(K, \bold{D}_{\mathrm{dif}}(D)) \\
  \xrightarrow{\delta_{1,D}}&\mathrm{H}^1(K, D)\rightarrow \mathrm{H}^1(K, D[1/t])\oplus \mathrm{H}^1(K, \bold{D}^+_{\mathrm{dif}}(D))
\rightarrow \mathrm{H}^1(K, \bold{D}_{\mathrm{dif}}(D)) \\
  \xrightarrow{\delta_{2,D}} & \mathrm{H}^2(K, D)\rightarrow \mathrm{H}^2(K, D[1/t]) \rightarrow 0.
  \end{align*}

\end{thm}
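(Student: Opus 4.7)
The plan is to prove the theorem by constructing a $(\varphi,\Gamma_K)$-module analogue of Bloch-Kato's fundamental exact sequence, namely
\begin{equation*}
0\longrightarrow D\xrightarrow{x\mapsto (x,\iota(x))} D[1/t]\oplus \bold{D}^+_{\mathrm{dif}}(D)\xrightarrow{(y,z)\mapsto \iota(y)-z} \bold{D}_{\mathrm{dif}}(D)\longrightarrow 0,
\end{equation*}
where $\iota:D\to \bold{D}^+_{\mathrm{dif}}(D)$ is the canonical embedding induced by the system of maps $\iota_n:D^{(n)}\hookrightarrow \bold{D}^+_{\mathrm{dif},n}(D)$ (extended naturally to $\iota:D[1/t]\to \bold{D}_{\mathrm{dif}}(D)$). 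The long exact sequence will then be obtained by passing to an appropriate cohomological complex.

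To verify exactness of this fundamental sequence, I would proceed as follows. Injectivity of the first arrow is immediate. Exactness at the middle term is equivalent to the identity $\iota(D[1/t])\cap \bold{D}^+_{\mathrm{dif}}(D)=\iota(D)$ inside $\bold{D}_{\mathrm{dif}}(D)$: working at each level $n\geq n(D)$, this reduces to the statement that an element of $D^{(n)}[1/t]$ whose image under $\iota_n$ lies in $\bold{D}^+_{\mathrm{dif},n}(D)=K_n[[t]]\otimes_{\iota_n} D^{(n)}$ (i.e.\ has no pole at $t=0$) must itself be regular at $T=\zeta_{p^n}-1$ and hence lie in $D^{(n)}$. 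Surjectivity of the second arrow is equivalent to the induced map $D[1/t]/D\to \bold{D}_{\mathrm{dif}}(D)/\bold{D}^+_{\mathrm{dif}}(D)$ being an isomorphism; writing both sides as direct limits over $k$ of $D^{(n)}/t^kD^{(n)}$ and $\bold{D}^+_{\mathrm{dif},n}(D)/t^k\bold{D}^+_{\mathrm{dif},n}(D)$ respectively, this reduces to the fact that $\iota_n$ induces an isomorphism of $t$-adic completions at $T=\zeta_{p^n}-1$, a local computation on the ring side.

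With the fundamental exact sequence in hand, I would deduce the desired long exact sequence by splicing the two long exact sequences associated to
\begin{equation*}
0\to D\to D[1/t]\to D[1/t]/D\to 0\quad\text{and}\quad 0\to \bold{D}^+_{\mathrm{dif}}(D)\to \bold{D}_{\mathrm{dif}}(D)\to \bold{D}_{\mathrm{dif}}(D)/\bold{D}^+_{\mathrm{dif}}(D)\to 0
\end{equation*}
via the canonical isomorphism of their quotients. The subtlety is that $\mathrm{H}^*(K,D)$ and $\mathrm{H}^*(K,D[1/t])$ are computed by the three-term Herr complex $C^{\bullet}_{\varphi,\gamma_K}$ in degrees $[0,2]$, while $\mathrm{H}^*(K,\bold{D}^{(+)}_{\mathrm{dif}}(D))$ is computed by the two-term complex $C^{\bullet}_{\gamma_K}$ in degrees $[0,1]$. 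To bridge this, I would show that the Herr cohomology of $D[1/t]/D$ agrees with the $\gamma_K$-cohomology of $\bold{D}_{\mathrm{dif}}(D)/\bold{D}^+_{\mathrm{dif}}(D)$ under the isomorphism above, in particular vanishing in degree $2$. Once this is established, a Mayer-Vietoris type diagram chase yields the stated sequence, and the surjectivity of the final map $\mathrm{H}^2(K,D)\to\mathrm{H}^2(K,D[1/t])$ follows automatically from the vanishing of $\mathrm{H}^2$ of the two-term $\gamma_K$-complex.

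The main obstacle will be this cohomological comparison on the quotient $D[1/t]/D$. Concretely, one must analyze the $\varphi$ and $\psi$ actions on the $t$-torsion and prove, via Theorem \ref{2.3} (the quasi-isomorphism $C^{\bullet}_{\varphi,\gamma_K}\simeq C^{\bullet}_{\psi,\gamma_K}$) together with an explicit identification of $(D[1/t]/D)^{\psi=0}$, that the Herr complex of $D[1/t]/D$ is quasi-isomorphic to the $\gamma_K$-complex of $\bold{D}_{\mathrm{dif}}(D)/\bold{D}^+_{\mathrm{dif}}(D)$. Once this comparison and its functoriality are secured, the long exact sequence of the theorem follows by standard homological algebra.
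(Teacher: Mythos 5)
Your plan hinges on a ``fundamental exact sequence'' $0\rightarrow D\rightarrow D[1/t]\oplus \bold{D}^+_{\mathrm{dif}}(D)\rightarrow \bold{D}_{\mathrm{dif}}(D)\rightarrow 0$ built from a single localization map $\iota$, and this object does not exist. The maps $\iota_n:D^{(n)}\rightarrow \bold{D}^+_{\mathrm{dif},n}(D)$ are not compatible with the transition maps $\bold{D}^+_{\mathrm{dif},n}(D)\rightarrow \bold{D}^+_{\mathrm{dif},n+1}(D)$, $f\otimes x\mapsto f\otimes\varphi(x)$: for $x\in D^{(n)}\subseteq D^{(n+1)}$ the image of $\iota_n(x)$ in $\bold{D}^+_{\mathrm{dif},n+1}(D)$ is $\iota_{n+1}(\varphi(x))$, not $\iota_{n+1}(x)$, so no canonical map $D\rightarrow \bold{D}^+_{\mathrm{dif}}(D)$ is induced on the limits (and $\bold{D}^{(+)}_{\mathrm{dif}}(D)$ carries no $\varphi$-action to make the sequence one of $(\varphi,\Gamma)$-objects). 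Moreover, even at a fixed level $n$ your middle-exactness claim is false: $\iota_n(x)\in\bold{D}^+_{\mathrm{dif},n}(D)$ for the single index $n$ does not force $x\in D^{(n)}$, since $x\in D^{(n)}[1/t]$ may be regular at $\zeta_{p^n}-1$ but have poles at $\zeta_{p^m}-1$ for $m>n$; Berger's Proposition 1.2.2, which the paper invokes, requires $\iota_m(x)\in\bold{D}^+_{\mathrm{dif},m}(D)$ for \emph{all} $m\geqq n$. For the same reason the quotient identification you want to splice along fails at the module level: $D[1/t]/D$ is (a limit of) products $\prod_{m\geqq n}$ of the local quotients at all the points $\zeta_{p^m}-1$, whereas $\bold{D}_{\mathrm{dif}}(D)/\bold{D}^+_{\mathrm{dif}}(D)$ is a single-level direct limit, so they are not isomorphic; any comparison is only cohomological, after the shift/Frobenius structure on the product is taken into account, and establishing that compatibly with the connecting maps is essentially the whole work.

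This is exactly the point where the paper's proof diverges from your outline: instead of a module-level fundamental sequence, it proves (Lemma \ref{2.8}) exactness of
$$0\rightarrow (D^{(n)})^{\Delta_K}\rightarrow (D^{(n)}[1/t])^{\Delta_K}\oplus \prod_{m\geqq n}\bold{D}^+_{\mathrm{dif},m}(D)^{\Delta_K}\rightarrow \cup_{k\geqq 0}\prod_{m\geqq n}(\tfrac{1}{t^k}\bold{D}_{\mathrm{dif},m}(D))^{\Delta_K}\rightarrow 0,$$
with $x\mapsto (x,(\iota_m(x))_{m\geqq n})$, packages this into a short exact sequence of auxiliary complexes $\tilde{C}^{\bullet}_{\varphi,\gamma_K}$ whose differentials on the product side are shift-type maps $(x_m)\mapsto(x_{m-1}-x_m)$, and then identifies $\mathrm{H}^q$ of $\varinjlim_n\tilde{C}^{\bullet}_{\varphi,\gamma_K}(\bold{D}^{(+)}_{\mathrm{dif},n}(D))$ with $\mathrm{H}^q(K,\bold{D}^{(+)}_{\mathrm{dif}}(D))$ via an explicit diagonal quasi-isomorphism from $C^{\bullet}_{\gamma_K}$. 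Your instinct that the degree mismatch between the three-term Herr complex and the two-term $\gamma_K$-complex is the crux is correct, but the route you propose to bridge it rests on the nonexistent map $\iota$, the false single-level regularity statement, and the false isomorphism of quotients, so as written the argument does not go through; repairing it leads you back to working with the products over all $m\geqq n$ as in the paper.
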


\begin{proof}

To construct this exact sequence, we need to define  some more complexes. 
For each $n\geqq n(D)$, we define a complex with degree in $[0,2]$
$$\tilde{C}^{\bullet}_{\varphi,\gamma_K}(D^{(n)}): [(D^{(n)})^{\Delta_K}\xrightarrow{d_1} (D^{(n)})^{\Delta_K}\oplus (D^{(n+1)})^{\Delta_K}\xrightarrow{d_2} (D^{(n+1)})^{\Delta_K}]$$ 
with $d_1(x):=((\gamma_K-1)x,(\varphi-1)x)$ and $d_2((x,y)):=(\varphi-1)x-(\gamma_K-1)y$.
Define $\tilde{C}^{\bullet}_{\varphi,\gamma_K}(D^{(n)}[1/t])$ 
 in the same way. 
 We also define $\tilde{C}^{\bullet}_{\varphi,\gamma_K}(\bold{D}^+_{\mathrm{dif},n}(D))$ with degree in $[0.2]$ by 

$$[\prod_{m\geqq n}\bold{D}^+_{\mathrm{dif},m}(D)^{\Delta_K}
\xrightarrow{d_1'}
\prod_{m\geqq n}\bold{D}^+_{\mathrm{dif},m}(D)^{\Delta_K}\oplus \prod_{m\geqq n+1}\bold{D}^+_{\mathrm{dif},m}(D)^{\Delta_K}
\xrightarrow{d_2'}
\prod_{m\geqq n+1}\bold{D}^+_{\mathrm{dif}, m}(D)^{\Delta_K}]$$
with 
$$d_1'((x_m)_{m\geqq n}):=(((\gamma_K-1)x_m)_{m\geqq n}, (x_{m-1}-x_m)_{m\geqq n+1})$$
 and 
$$d_2'((x_m)_{m\geqq n}, (y_m)_{m\geqq n+1}):= ((x_{m-1}-x_m)-(\gamma_K-1)y_m)_{m\geqq n+1}.$$
Define $\tilde{C}^{\bullet}_{\varphi,\gamma_K}(\bold{D}_{\mathrm{dif},n}(D))=\cup_{k\geqq 
0}\tilde{C}^{\bullet}_{\varphi,\gamma_K}(\frac{1}{t^k}\bold{D}^+_{\mathrm{dif},n}(D))$.
We first prove the following lemma.
\begin{lemma}\label{2.8}
Let $D$ be a $(\varphi,\Gamma_K)$-module over $\bold{B}^{\dagger}_{\mathrm{rig},K}$. 
The following sequence is exact for any $n\geqq n(D)$
$$0\rightarrow (D^{(n)})^{\Delta_K}\xrightarrow{f_1}
(D^{(n)}[1/t])^{\Delta_K}\oplus \prod_{m\geqq n} \bold{D}^+_{\mathrm{dif},m}(D)^{\Delta_K}\xrightarrow{f_2}
\cup_{k\geqq 0}\prod_{m\geqq n} (\frac{1}{t^k}\bold{D}_{\mathrm{dif},m}(D))^{\Delta_K}\rightarrow 0 $$ 
where $f_1(x):=(x, (\iota_m(x))_{m\geqq n})$ and $f_2(x, (y_m)_{m\geqq n}):= (\iota_m(x)-y_m)_{m\geqq n}$.

\end{lemma}
\begin{proof}
Because the functor $M\mapsto M^{\Delta_K}$ is exact for $\mathbb{Q}[\Delta_K]$-modules, it suffices to show that the sequence 
$$0\rightarrow D^{(n)}\xrightarrow{f_1}
D^{(n)}[1/t]\oplus \prod_{m\geqq n} \bold{D}^+_{\mathrm{dif},m}(D)\xrightarrow{f_2}
\cup_{k\geqq 0}\frac{1}{t^k}\prod_{m\geqq n} \bold{D}^+_{\mathrm{dif},m}(D)\rightarrow 0 $$ is exact. 

That $f_1$ is injective and that $f_2\circ f_1=0$ 
are trivial by definition. 

If $(x, (y_m)_{m\geqq n})\in \mathrm{Ker}(f_2)$, then we have $\iota_m(x)=y_m\in \bold{D}^+_{\mathrm{dif}, m}(D)$ for any 
$m\geqq n$. Hence we have $x\in D^{(n)}$ by Proposition 1.2.2 of \cite{Ber08b} and so we have 
$(x, (y_m)_{m\geqq n})=f_1(x)\in \mathrm{Im}(f_1)$. 

Finally, we prove that $f_2$ is surjective. Because we have 
$D^{(n)}[1/t]=\cup_{k=1}^{\infty}\frac{1}{t^k}D^{(n)}$,  it suffices to show that the natural map 
$$\frac{1}{t^k}D^{(n)}\rightarrow \prod_{m\geqq n} \frac{1}{t^k}\bold{D}^{ +}_{\mathrm{dif},m}(D)/
\bold{D}^{ +}_{\mathrm{dif},m}(D)
: x \mapsto ( \overline{\iota_{m}(x)})_{m\geqq n}$$ is surjective for any $k\geqq 1$. Moreover,  
twisting by $t^k$, it suffices to show that  the map 
$$D^{(n)}\rightarrow \prod_{m\geqq n} \bold{D}^{+}_{\mathrm{dif},m}(D)/t^k\bold{D}^{+}_{\mathrm{dif},m}(D)
: x \mapsto ( \overline{\iota_{m}(x)})_{m\geqq n}$$ is surjective for any $k\geqq 1$. By  induction and by d\'evissage, it suffices 
to show that this map is surjective for $k=1$. Let $\{e_i\}_{i=1}^d$ be a basis of $D$ such that 
$D^{(n)}= \bold{B}^{\dagger, r_n}_{\mathrm{rig}, K}e_1\oplus \cdots \oplus \bold{B}^{\dagger, r_n}_{\mathrm{rig}, K}e_d$ for any $n\geqq n(D)$.
Then $\{\overline{\iota_m(e_i)}\}_{i=1}^d$ is a $K_m$-basis of $\bold{D}_{\mathrm{Sen},m}(D):=
\bold{D}^{+}_{\mathrm{dif},m}(D)/t \bold{D}^{+}_{\mathrm{dif},m}(D)$ for any $m\geqq n$ by Lemma 4.9 of \cite{Ber02}. Hence, 
for any $(y_m)_{m\geqq n}\in \prod_{m\geqq n} \bold{D}_{\mathrm{Sen},m}(D)$, 
there exist $a_{m,i}\in K_m $ ($m\geqq n, 1\leqq i\leqq d$) 
such that $y_m= \sum_{i=1}^d a_{m,i} \overline{\iota_m(e_i)}$ for any $m\geqq n$. 
Because the natural map $\bold{B}^{\dagger, r_n}_{\mathrm{rig}, K}/t \rightarrow \prod_{m\geqq n} K_m:x\mapsto (\overline{\iota_m(x)})_{m\geqq n} $ is 
isomorphism,  there exists $\{a_i\}_{1\leqq i\leqq d}\subseteq \bold{B}^{\dagger, r_n}_{\mathrm{rig}, K}$ such that 
$\overline{\iota_m(a_i)}=a_{m, i}$ for any $m\geqq n$ and $i$. Then we have $\overline{\iota_m(\sum_{i=1}^d a_ie_i)}=y_m$ for any $m\geqq n$. This
proves the surjection of $f_2$, hence proves the lemma.

\end{proof}

It is easy to see that the maps $f_1, f_2$ commute with the differentials of $\tilde{C}^{\bullet}_{\varphi,\gamma_K}(-)$.  Hence, for each 
$n\geqq n(D)$,
we obtain the following short exact sequence of complexes
$$0\rightarrow \tilde{C}^{\bullet}_{\varphi,\gamma_K}(D^{(n)})\xrightarrow{f_1} \tilde{C}^{\bullet}_{\varphi,\gamma_K}(D^{(n)}[1/t])\oplus 
\tilde{C}^{\bullet}_{\varphi,\gamma_K}(\bold{D}^+_{\mathrm{dif}, n}(D))\xrightarrow{f_2} \tilde{C}^{\bullet}_{\varphi,\gamma_K}(\bold{D}_{\mathrm{dif},n}(D))
\rightarrow 0.$$

We define a transition map
$$\tilde{C}^{\bullet}_{\varphi,\gamma_K}(\bold{D}^+_{\mathrm{dif}, n})\rightarrow 
\tilde{C}^{\bullet}_{\varphi,\gamma_K}(\bold{D}^+_{\mathrm{dif}, n+1}(D))$$ which is induced by the map 
$$\prod_{m\geqq n} \bold{D}^+_{\mathrm{dif}, m}(D)\rightarrow \prod_{m\geqq  n+1} \bold{D}^+_{\mathrm{dif}, m}:(x_m)_{m\geqq n}\mapsto (x_m)_{m\geqq n+1}.$$ 
We similarly define $\tilde{C}^{\bullet}_{\varphi,\gamma_K}(\bold{D}_{\mathrm{dif}, n})\rightarrow 
\tilde{C}^{\bullet}_{\varphi,\gamma_K}(\bold{D}_{\mathrm{dif}, n+1}(D))$. Taking the inductive limit with respect to $n\geqq n(D)$, we obtain 
the following short exact sequence of complexes
$$0\rightarrow C^{\bullet}_{\varphi,\gamma_K}(D)\rightarrow C^{\bullet}_{\varphi,\gamma_K}(D[1/t])\oplus 
\varinjlim_n \tilde{C}^{\bullet}_{\varphi,\gamma_K}(\bold{D}^+_{\mathrm{dif}, n}(D))\rightarrow 
\varinjlim_n \tilde{C}^{\bullet}_{\varphi, \gamma_K}(\bold{D}_{\mathrm{dif}, n}(D))\rightarrow 0$$
because we have $\varinjlim_{n}\tilde{C}^{\bullet}_{\varphi,\gamma_K}(D_1^{(n)})\isom C^{\bullet}_{\varphi,\gamma_K}(D_1)$ for 
$D_1=D, D[1/t]$.
Taking the cohomology long exact sequence, we obtain the following long exact sequence

\[
\begin{array}{ll}
0\rightarrow& \mathrm{H}^0(K, D) \rightarrow  \mathrm{H}^0(K, D[1/t])\oplus \mathrm{H}^0(\varinjlim_n \tilde{C}^{\bullet}_{\varphi,\gamma_K}(\bold{D}^+_{\mathrm{dif},n}(D)))
\rightarrow  \mathrm{H}^0(\varinjlim_n \tilde{C}^{\bullet}_{\varphi,\gamma_K}(\bold{D}_{\mathrm{dif},n}(D))) \\
   \xrightarrow{\delta_{1,D}}&\mathrm{H}^1(K, D)\rightarrow \mathrm{H}^1(K, D[1/t])\oplus \mathrm{H}^1(\varinjlim_n \tilde{C}^{\bullet}_{\varphi,\gamma_K}(\bold{D}^+_{\mathrm{dif},n}(D)))
\rightarrow \mathrm{H}^1(\varinjlim_n \tilde{C}^{\bullet}_{\varphi,\gamma_K}(\bold{D}_{\mathrm{dif},n}(D))) \\
  \xrightarrow{\delta_{2,D}} & \mathrm{H}^2(K, D)\rightarrow  \mathrm{H}^2(K, D[1/t]) \oplus\mathrm{H}^2(\varinjlim_n \tilde{C}^{\bullet}_{\varphi,\gamma_K}(\bold{D}^+_{\mathrm{dif},n}(D)))\rightarrow \mathrm{H}^2(\varinjlim_n \tilde{C}^{\bullet}_{\varphi,\gamma_K}(\bold{D}_{\mathrm{dif},n}(D))) \\
 \,\,\,\,\, \rightarrow & 0 .

\end{array}
  \]

Next, for $\bold{D}^{(+)}_{\mathrm{dif},n}(D)=\bold{D}^+_{\mathrm{dif},n}(D), \bold{D}_{\mathrm{dif},n}(D)$, 
define a map of complexes 
$$C^{\bullet}_{\gamma_K}(\bold{D}^{(+)}_{\mathrm{dif}, n}(D))\rightarrow \tilde{C}^{\bullet}_{\varphi,\gamma_K}(\bold{D}^{(+)}_{\mathrm{dif}, n}(D))$$ by 
$$C^0_{\gamma_K}(\bold{D}^{(+)}_{\mathrm{dif}, n}(D))\rightarrow 
\tilde{C}^0_{\varphi,\gamma_K}(\bold{D}^{(+)}_{\mathrm{dif}, n}(D)): x\mapsto 
(x_m)_{m\geqq n}\,\text{ where }\,x_m :=x\,\, (m\geqq n) ,$$ 
$$C^1_{\gamma_K}(\bold{D}^{(+)}_{\mathrm{dif}, n}(D))\rightarrow 
\tilde{C}^1_{\varphi,\gamma_K}(\bold{D}^{(+)}_{\mathrm{dif}, n}(D)): x\mapsto ((x_m)_{m\geqq n}, 0 )\,\text{ where }\, x_m :=x \,(m\geqq n).$$
 It is easy to check that the map $C^{\bullet}_{\gamma_K}(\bold{D}^{(+)}_{\mathrm{dif}, n}(D))\rightarrow \tilde{C}^{\bullet}_{\varphi,\gamma_K}(\bold{D}^{(+)}_{\mathrm{dif}, n}(D))$ is  
quasi isomorphism. Because we have $C^{\bullet}_{\gamma_K}(\bold{D}^{(+)}_{\mathrm{dif}}(D))=\varinjlim_n C^{\bullet}_{\gamma_K}(\bold{D}^{(+)}_{\mathrm{dif}, n}(D))$, 
we obtain an isomorphism
$$\mathrm{H}^q(K, \bold{D}^{(+)}_{\mathrm{dif}}(D))\isom  \mathrm{H}^q(\varinjlim_n \tilde{C}^{\bullet}_{\varphi,\gamma_K}(\bold{D}^{(+)}_{\mathrm{dif},n}(D))).$$

Combining 
the above isomorphisms  and the above long exact sequence, we obtain the following desired long exact sequence
\[
\begin{array}{ll}
0\rightarrow& \mathrm{H}^0(K, D) \rightarrow  \mathrm{H}^0(K, D[1/t])\oplus \mathrm{H}^0(K, \bold{D}^+_{\mathrm{dif}}(D))
\rightarrow  \mathrm{H}^0(K, \bold{D}_{\mathrm{dif}}(D)) \\
  \xrightarrow{\delta_{1,D}}&\mathrm{H}^1(K, D)\rightarrow \mathrm{H}^1(K, D[1/t])\oplus \mathrm{H}^1(K, \bold{D}^+_{\mathrm{dif}}(D))
\rightarrow \mathrm{H}^1(K, \bold{D}_{\mathrm{dif}}(D)) \\
  \xrightarrow{\delta_{2,D}}&  \mathrm{H}^2(K, D)\rightarrow \mathrm{H}^2(K, D[1/t]) \rightarrow 0.

\end{array}
  \]
  The functoriality of this exact sequence is trivial by construction.
  This finishes  the proof of  the theorem.

\end{proof}

\begin{defn}
Let $D$ be a $(\varphi, \Gamma_K)$-module over $\bold{B}^{\dagger}_{\mathrm{rig},K}$. Then we define a map 
$$\mathrm{exp}_{K,D}: \bold{D}^K_{\mathrm{dR}}(D)\rightarrow \mathrm{H}^1(K, D)$$ as the connecting map $\delta_{1,D}: 
\bold{D}^K_{\mathrm{dR}}(D)=\mathrm{H}^0(K, \bold{D}_{\mathrm{dif}}(D))\rightarrow \mathrm{H}^1(K, D)$ of the 
long exact sequence of  Theorem \ref{exp}. We call $\mathrm{exp}_{K,D}$ the exponential map 
of $D$.

\end{defn}
\begin{rem}
By definition, we have $\mathrm{exp}_{K,D}(\mathrm{Fil}^0\bold{D}_{\mathrm{dR}}(D))=0$. Hence 
$\mathrm{exp}_{K,D}$ induces a map 
$$\mathrm{exp}_{K,D}:\bold{D}^K_{\mathrm{dR}}(D)/\mathrm{Fil}^0\bold{D}^K_{\mathrm{dR}}(D)\rightarrow \mathrm{H}^1(K, D).$$
\end{rem}

To study the map $\mathrm{exp}_{K,D}$, it is useful to define $\mathrm{exp}_{K,D}$ in a more explicit way. The following 
lemma gives explicit definitions of $\mathrm{exp}_{K,D}$ and $\delta_{2,D}:\mathrm{H}^1(K, \bold{D}_{\mathrm{dif}}(D))\rightarrow \mathrm{H}^2(K, D)$.

\begin{lemma}\label{explicit}
\begin{itemize}
\item[(1)]
Let $x$ be an element of $\bold{D}^K_{\mathrm{dR}}(D)$. Take an $n\geqq n(D)$ such that 
$x\in \bold{D}_{\mathrm{dif}, n}(D)$ and take an $\widetilde{x}\in (D^{(n)}[1/t])^{\Delta_K}$ such that 
$\iota_{m}(\widetilde{x})-x\in \bold{D}^+_{\mathrm{dif}, m}(D)$ for any $m\geqq n$ (such an $\widetilde{x}$ 
exists by Lemma \ref{2.8}). Then we have 
$$\mathrm{exp}_{K,D}(x)=[(\gamma_K-1)\widetilde{x}, (\varphi-1)\widetilde{x}]\in \mathrm{H}^1(K, D).$$
\item[(2)]Let $[x]$ be an element of $\mathrm{H}^1(K, \bold{D}_{\mathrm{dif}}(D))$. 
Let $x\in \bold{D}_{\mathrm{dif},n}(D)^{\Delta_K}$ be a lift of $[x]$ for some $n\geqq n(D)$. 
Take  an $\widetilde{x}\in (D^{(n)}[1/t])^{\Delta_K}$ such that $\iota_m(\widetilde{x})-x\in \bold{D}^+_{\mathrm{dif},m}(D)$ 
for any $m\geqq n$ (such an $\widetilde{x}$ exists by Lemma \ref{2.8}). Then we have 
$$\delta_{2,D}([x])=[(\varphi-1)\widetilde{x}]\in \mathrm{H}^2(K, D).$$

\end{itemize}

\end{lemma}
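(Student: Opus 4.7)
The plan is to unwind the snake-lemma computation that produced the long exact sequence in the proof of Theorem \ref{exp}. Recall that $\delta_{1,D}$ and $\delta_{2,D}$ arise as the connecting homomorphisms of the short exact sequence of complexes
\begin{equation*}
0 \to \tilde{C}^{\bullet}_{\varphi,\gamma_K}(D^{(n)}) \xrightarrow{f_1} \tilde{C}^{\bullet}_{\varphi,\gamma_K}(D^{(n)}[1/t]) \oplus \tilde{C}^{\bullet}_{\varphi,\gamma_K}(\bold{D}^+_{\mathrm{dif},n}(D)) \xrightarrow{f_2} \tilde{C}^{\bullet}_{\varphi,\gamma_K}(\bold{D}_{\mathrm{dif},n}(D)) \to 0,
\end{equation*}
combined with the direct limit over $n$ and the quasi-isomorphism $C^{\bullet}_{\gamma_K}(\bold{D}^{(+)}_{\mathrm{dif}}(D)) \isom \varinjlim_n \tilde{C}^{\bullet}_{\varphi,\gamma_K}(\bold{D}^{(+)}_{\mathrm{dif},n}(D))$ sending $x$ to the constant family $(x)_{m\geq n}$ in degree $0$ and to $((x)_{m\geq n},0)$ in degree $1$. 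Both assertions then become explicit snake-lemma chases at the level of these complexes.

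For part (1), I view $x \in \bold{D}^K_{\mathrm{dR}}(D) \subseteq \bold{D}_{\mathrm{dif},n}(D)^{\Delta_K}$ as the $0$-cocycle $(x)_{m\geq n}$ of $\tilde{C}^{\bullet}_{\varphi,\gamma_K}(\bold{D}_{\mathrm{dif},n}(D))$, and use the surjectivity of $f_2$ (Lemma \ref{2.8}) to lift it to the middle-complex element $(\widetilde{x},(y_m)_{m\geq n})$ with $y_m := \iota_m(\widetilde{x})-x \in \bold{D}^+_{\mathrm{dif},m}(D)^{\Delta_K}$. Applying the degree-$0$ differential of the middle complex yields
\begin{equation*}
\Bigl(\bigl((\gamma_K-1)\widetilde{x},(\varphi-1)\widetilde{x}\bigr),\;\bigl(((\gamma_K-1)y_m)_{m\geq n},\,(y_{m-1}-y_m)_{m\geq n+1}\bigr)\Bigr).
\end{equation*}
A direct check, using the $\Gamma_K$-invariance of $x$ together with the identity $\iota_m(\varphi(\widetilde{x}))=\iota_{m-1}(\widetilde{x})$ inside $\bold{D}^+_{\mathrm{dif}}(D)$ that is built into the transition maps, shows that both components of the $f_2$-image of this element vanish. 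Hence by the injectivity of $f_1$ and the degree-$1$ analogue of Lemma \ref{2.8}, the unique preimage is $((\gamma_K-1)\widetilde{x},(\varphi-1)\widetilde{x}) \in (D^{(n)})^{\Delta_K}\oplus(D^{(n+1)})^{\Delta_K}$, which by construction represents $\mathrm{exp}_{K,D}(x) \in \mathrm{H}^1(K,D)$.

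Part (2) is handled by exactly the same bookkeeping shifted up by one degree. Given $[x]\in\mathrm{H}^1(K,\bold{D}_{\mathrm{dif}}(D))$ represented by $x \in \bold{D}_{\mathrm{dif},n}(D)^{\Delta_K}$, the corresponding $1$-cocycle in $\tilde{C}^{\bullet}_{\varphi,\gamma_K}(\bold{D}_{\mathrm{dif},n}(D))$ is $((x)_{m\geq n},0)$. A lift to degree $1$ of the middle complex may be taken as $(\widetilde{x},0,(y_m)_{m\geq n},0)$ with $y_m := \iota_m(\widetilde{x})-x$, and applying $d_2$ produces $((\varphi-1)\widetilde{x},(y_{m-1}-y_m)_{m\geq n+1})$; the same transition-map identity gives $\iota_m((\varphi-1)\widetilde{x})=y_{m-1}-y_m$, so this element equals $f_1((\varphi-1)\widetilde{x})$, yielding $\delta_{2,D}([x])=[(\varphi-1)\widetilde{x}] \in \mathrm{H}^2(K,D)$.

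The only real subtlety, rather than a genuine obstacle, is that $\widetilde{x}$ lives a priori only in the localization $D^{(n)}[1/t]$, so one must verify that $(\gamma_K-1)\widetilde{x}$ and $(\varphi-1)\widetilde{x}$ actually land in the integral modules $D^{(n)}$ and $D^{(n+1)}$ respectively. This integrality is exactly what the snake-lemma preimage through $f_1$ delivers once the $f_2$-image is shown to vanish, the underlying fact being Proposition 1.2.2 of \cite{Ber08b} invoked in the proof of Lemma \ref{2.8}; once this is in place the proof is a routine chase.
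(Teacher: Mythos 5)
Your proposal is correct and follows exactly the route the paper intends: the paper's proof of this lemma is just the remark that the formulae follow from the snake-lemma construction applied to the short exact sequence of complexes built from Lemma \ref{2.8}, and your explicit chase (lifting the cocycle via $f_2$ with $y_m=\iota_m(\widetilde{x})-x$, applying the differential, and identifying the $f_1$-preimage using $\iota_m(\varphi(\widetilde{x}))=\iota_{m-1}(\widetilde{x})$ together with the integrality supplied by Proposition 1.2.2 of \cite{Ber08b}) is precisely that computation, carried out in the detail the paper leaves implicit.
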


\begin{proof}
These formulae directly follow from the proof of the above theorem and the construction of the snake lemma.

\end{proof}

\subsection{dual exponential map}
In this subsection, when $D$ is de Rham, we define a map 
$\mathrm{exp}^{*}_{K,D^{\lor}(1)}:\mathrm{H}^1(K, D)\rightarrow \mathrm{Fil}^0\bold{D}^K_{\mathrm{dR}}(D)$, which we call 
the dual exponential map of $D$.  Then, we prove that the map $\mathrm{exp}^{*}_{K,D}:\mathrm{H}^1(K, D^{\lor}(1))\rightarrow 
\mathrm{Fil}^0\bold{D}^K_{\mathrm{dR}}(D^{\lor}(1))$ is the adjoint of the map 
$\mathrm{exp}_{K,D}:\bold{D}^K_{\mathrm{dR}}(D)/\mathrm{Fil}^0\bold{D}^K_{\mathrm{dR}}(D)\rightarrow \mathrm{H}^1(K, D)$. 

Before defining $\mathrm{exp}^{*}_{K,D^{\lor}(1)}$, we prove some preliminary lemmas.
Let $D_1, D_2$ be $(\varphi,\Gamma_K)$-modules over $\bold{B}^{\dagger}_{\mathrm{rig},K}$.
We define a paring
$$\cup_{\mathrm{dif}}:\mathrm{H}^0(K, \bold{D}_{\mathrm{dif}}(D_1))\times \mathrm{H}^1(K, \bold{D}_{\mathrm{dif}}(D_2))
\rightarrow \mathrm{H}^1(K, \bold{D}_{\mathrm{dif}}(D_1\otimes D_2))$$ 
by $x\cup_{\mathrm{dif}} [y]:=[x\otimes y]$ for any $x\in \mathrm{H}^0(K, \bold{D}_{\mathrm{dif}}(D_1))$ and 
$[y]\in \mathrm{H}^1(K, \bold{D}_{\mathrm{dif}}(D_2))$.
\begin{lemma}\label{2.12}
The following two diagrams are commutative;
$$
\begin{CD}
\mathrm{H}^0(K, \bold{D}_{\mathrm{dif}}(D_1))@.\times @.\mathrm{H}^1(K,\bold{D}_{\mathrm{dif}}(D_2))@>\cup_{\mathrm{dif}} >> \mathrm{H}^1(K, \bold{D}_{\mathrm{dif}}(D_1\otimes D_2)) \\
@VV\mathrm{exp}_{K,D_1} V@. @ AA [x,y]\mapsto [\iota_n(x)] A @VV \delta_{2, D_1\otimes D_2} V \\
\mathrm{H}^1(K, D_1)@.\times @.\mathrm{H}^1(K, D_2) @> \cup >> \mathrm{H}^2(K, D_1\otimes D_2),
\end{CD}
$$
$$
\begin{CD}
\mathrm{H}^0(K, \bold{D}_{\mathrm{dif}}(D_1)) @.\times @. \mathrm{H}^1(K,\bold{D}_{\mathrm{dif}}(D_2)) @> \cup_{\mathrm{dif}} >> \mathrm{H}^1(K, \bold{D}_{\mathrm{dif}}(D_1\otimes D_2))\\
@AA a\mapsto \iota_n(a) A@. @ VV\delta_{2,D_2}V @VV \delta_{2,D_1\otimes D_2}V \\
\mathrm{H}^0(K, D_1)@.\times @.\mathrm{H}^2(K, D_2) @> \cup >> \mathrm{H}^2(K, D_1\otimes D_2).
\end{CD}
$$
In other words, 
 we have equalities 
$$\delta_{2,D_1\otimes D_2}(z\cup_{\mathrm{dif}}[\iota_n(x)])=\mathrm{exp}_{K,D_1}(z)\cup [x,y]$$
and 
$$ \delta_{2,D_1\otimes D_2}(\iota_n(a)\cup_{\mathrm{dif}}[b])=a\cup \delta_{2,D_2}([b])$$
for any $z\in \mathrm{H}^0(K,\bold{D}_{\mathrm{dif}}(D_1)), [x,y]\in \mathrm{H}^1(K, D_2)$ and 
$a\in \mathrm{H}^0(K, D_1), [b]\in \mathrm{H}^1(K, \bold{D}_{\mathrm{dif}}(D_2))$.

\end{lemma}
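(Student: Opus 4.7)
The two claimed equalities express compatibilities between the connecting maps $\mathrm{exp}_{K,D_1}$, $\delta_{2,D_2}$, $\delta_{2,D_1\otimes D_2}$ and the natural cup products. I would prove them by direct cochain-level computations, combining the explicit descriptions of the connecting maps from Lemma \ref{explicit} with the explicit cup-product formulas in bidegrees $(0,2)$ and $(1,1)$ recalled in $\S 2.2$.

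The second equality is technically the easier of the two. Take $a\in \mathrm{H}^0(K,D_1)$, so that $\varphi(a)=a$ and $\gamma_K(a)=a$, together with a $1$-cocycle $b\in \bold{D}_{\mathrm{dif},n}(D_2)^{\Delta_K}$ representing $[b]$, and a lift $\widetilde{b}\in (D_2^{(n)}[1/t])^{\Delta_K}$ as in Lemma \ref{explicit}(2). I would take $\widetilde{w}:=a\otimes\widetilde{b}$ as a lift of $\iota_n(a)\otimes b$ in the sense of Lemma \ref{explicit}(2); the required congruence $\iota_m(\widetilde{w})-\iota_n(a)\otimes b\in \bold{D}^+_{\mathrm{dif},m}(D_1\otimes D_2)$ reduces, thanks to the $\varphi$-fixity of $a$ (which forces $\iota_m(a)=\iota_n(a)$ in the colimit for every $m\geq n$), to the inclusion $\iota_m(a)\otimes(\iota_m(\widetilde{b})-b)\in \bold{D}^+_{\mathrm{dif},m}$, which holds by the choice of $\widetilde{b}$. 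Lemma \ref{explicit}(2) and the $(0,2)$ cup-product formula then immediately yield both sides as the cohomology class of $a\otimes(\varphi-1)\widetilde{b}$.

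For the first equality, let $z\in \bold{D}_{\mathrm{dif},n}(D_1)^{\Delta_K}$ represent its class and let $(x,y)\in D_2^{\Delta_K}\oplus D_2^{\Delta_K}$ satisfy $(\varphi-1)x=(\gamma_K-1)y$. Pick a lift $\widetilde{z}\in (D_1^{(n)}[1/t])^{\Delta_K}$ as in Lemma \ref{explicit}(1). The $(1,1)$ cup-product formula gives
$$\mathrm{exp}_{K,D_1}(z)\cup[x,y]=\bigl[(\varphi-1)\widetilde{z}\otimes\varphi(x)-(\gamma_K-1)\widetilde{z}\otimes\gamma_K(y)\bigr].$$
To compute the other composition I would construct a $\Gamma_K$-equivariant lift $\widetilde{w}\in ((D_1\otimes D_2)^{(n)}[1/t])^{\Delta_K}$ of $z\otimes\iota_n(x)$ by correcting the naive candidate $\widetilde{z}\otimes x$; this candidate fails the congruence modulo $\bold{D}^+_{\mathrm{dif},m}$ only by the controlled term $z\otimes\iota_m(x-\varphi^{m-n}(x))$, and the cocycle identity $(\varphi-1)x=(\gamma_K-1)y$ allows this discrepancy to be absorbed via a correction involving $y$. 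Applying Lemma \ref{explicit}(2) to the corrected $\widetilde{w}$ and expanding $(\varphi-1)\widetilde{w}$, a direct algebraic manipulation using the cocycle relation identifies the resulting $2$-cocycle with $(\varphi-1)\widetilde{z}\otimes\varphi(x)-(\gamma_K-1)\widetilde{z}\otimes\gamma_K(y)$ modulo the image of $d_2$ in $C^{\bullet}_{\varphi,\gamma_K}(D_1\otimes D_2)$.

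The main obstacle is the bookkeeping for this first diagram: producing a lift $\widetilde{w}$ that is simultaneously $\Gamma_K$-equivariant and satisfies the $\bold{D}^+_{\mathrm{dif},m}$-congruence uniformly in $m\geq n$, and then matching the resulting cocycle with the right-hand side by writing down an explicit coboundary in $C^{\bullet}_{\varphi,\gamma_K}(D_1\otimes D_2)$. Once the correct lift is in place, the remaining comparison is a formal cocycle identity that uses only the defining relations $\varphi\gamma_K=\gamma_K\varphi$ and $(\varphi-1)x=(\gamma_K-1)y$.
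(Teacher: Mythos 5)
Your handling of the second diagram is fine and is essentially the computation the paper leaves to the reader: since $\varphi(a)=a$ and $\gamma_K(a)=a$, the element $a\otimes\widetilde{b}$ satisfies the congruence required in Lemma \ref{explicit}(2) (your reduction using $\iota_m(a)=\iota_n(a)$ is correct), and both sides become $[a\otimes(\varphi-1)\widetilde{b}]$.

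For the first diagram, however, the step you defer as ``bookkeeping'' is exactly where the content lies, and the natural way to carry it out fails. Suppose you pick any correction $c$ with $\iota_m(c)\equiv z\otimes\iota_m((\varphi^{m-n}-1)x)$ modulo $\bold{D}^+_{\mathrm{dif},m}(D_1\otimes D_2)$ for all $m\geqq n$ (such $c$ exists by the surjectivity argument of Lemma \ref{2.8}, but only non-canonically). Then Lemma \ref{explicit}(2) gives $\delta_{2,D_1\otimes D_2}(z\cup_{\mathrm{dif}}[\iota_n(x)])=[(\varphi-1)(\widetilde{z}\otimes x)+(\varphi-1)c]$, while $\mathrm{exp}_{K,D_1}(z)\cup[x,y]=[(\varphi-1)(\widetilde{z}\otimes x)-(\gamma_K-1)(\widetilde{z}\otimes y)]$, so you must show that $(\varphi-1)c+(\gamma_K-1)(\widetilde{z}\otimes y)$ is a coboundary \emph{in} $C^{\bullet}_{\varphi,\gamma_K}(D_1\otimes D_2)$. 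The obvious witness, the $1$-cochain $(c,-\widetilde{z}\otimes y)$, lies only in $(D_1\otimes D_2)[1/t]$ (both $c$ and $\widetilde{z}$ have $t$-poles), so it kills the class only in $\mathrm{H}^2(K,(D_1\otimes D_2)[1/t])$; by the exact sequence of Theorem \ref{exp} the kernel of $\mathrm{H}^2(K,D_1\otimes D_2)\rightarrow \mathrm{H}^2(K,(D_1\otimes D_2)[1/t])$ is precisely the image of $\delta_{2,D_1\otimes D_2}$, which is nonzero in general, so this does not prove the equality in $\mathrm{H}^2(K,D_1\otimes D_2)$. To close the gap you must either (a) choose the correction in the special form $c=(\gamma_K-1)e$ with $\iota_m(e)\equiv z\otimes\iota_m(\sum_{j=0}^{m-n-1}\varphi^j(y))$ modulo $\bold{D}^+_{\mathrm{dif},m}$, and then verify (via Proposition 1.2.2 of \cite{Ber08b}, as in Lemma \ref{2.8}) that $(\varphi-1)e+\widetilde{z}\otimes y$ actually lies in $D_1\otimes D_2$, so that the needed coboundary is integral; or (b) do what the paper does: drop the insistence on a lift with vanishing second component. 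The paper replaces the representative of $[\iota_n(x)]$ in the auxiliary complex $\tilde{C}^{\bullet}_{\varphi,\gamma_K}(\bold{D}_{\mathrm{dif},n}(D_2))$ by the cocycle $((\iota_m(x))_{m\geqq n},(\iota_m(y))_{m\geqq n+1})$, lifts its cup product with $z$ by the pair $(\widetilde{z}\otimes x,\widetilde{z}\otimes y)$, and the connecting map then produces $(\varphi-1)(\widetilde{z}\otimes x)-(\gamma_K-1)(\widetilde{z}\otimes y)$, which coincides with the cup-product cocycle on the nose by the relation $(\varphi-1)x=(\gamma_K-1)y$ — no correction term and no coboundary matching are needed. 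As written, your proposal asserts rather than performs this step, and the naive execution would fail; so the argument for the first diagram is incomplete.
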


\begin{proof}
Here, we only prove the commutativity of the first diagram. We can prove 
the commutativity of the second diagram in a similar way. 

Take any $z\in \mathrm{H}^0(K, \bold{D}_{\mathrm{dif}}(D_1))$ and $[x,y]\in \mathrm{H}^1(K, D_2)$.
Take $n$ sufficiently large such that 
$z\in \bold{D}_{\mathrm{dif},n}(D_1)$ and $x\in D_2^{(n)}, y\in D_2^{(n+1)}$.
By Lemma \ref{explicit} (1), if we take $\tilde{z}\in (D_1^{(n)}[1/t])^{\Delta_K}$ such that $\iota_m(\tilde{z})-z\in \bold{D}^+_{\mathrm{dif},m}(D_1)$ for any 
$m\geqq n$, then we have 
$$\mathrm{exp}_{K,D_1}(z)=[(\gamma_K-1)\tilde{z},(\varphi-1)\tilde{z}].$$ Hence we have 
\[
\begin{array}{ll}
\mathrm{exp}_{K,D_1}(z)\cup [x,y]&=[(\gamma_K-1)\tilde{z},(\varphi-1)\tilde{z}]\cup [x,y]\\
                                                             &=[(\varphi-1)\tilde{z}\otimes \varphi(x)-(\gamma_K-1)\tilde{z}\otimes \gamma_K(y)]\in \mathrm{H}^2(K, D_1\otimes D_2).
                                                             
\end{array}
\]

On the other hand, under the natural quasi-isomorphism $C^{\bullet}_{\gamma_K}(\bold{D}_{\mathrm{dif},n}(D))\rightarrow 
\tilde{C}^{\bullet}_{\varphi,\gamma_K}(\bold{D}_{\mathrm{dif},n}(D))$ which is defined in the proof of Theorem \ref{exp}, the element $[\iota_n(x)]\in \mathrm{H}^1(K, \bold{D}_{\mathrm{dif},n}(D_2))$
is sent to $$[(\iota_m(x))_{m\geqq n}, (\iota_m(y))_{m\geqq n+1}]\in \mathrm{H}^1(\tilde{C}^{\bullet}_{\varphi,\gamma_K}(\bold{D}_{\mathrm{dif},n}(D_2))).$$  Hence,
the element $z\cup_{\mathrm{dif}}[\iota_n(x)]\in \mathrm{H}^1(K, \bold{D}_{\mathrm{dif},n}(D_1\otimes D_2))$ is sent to
$$[(z\otimes \iota_m(x))_{m\geqq n}, (z\otimes \iota_m(y))_{m\geqq n+1}]\in \mathrm{H}^1(\tilde{C}^{\bullet}_{\varphi,\gamma_K}(\bold{D}_{\mathrm{dif},n}(D_1\otimes D_2))).$$
The element $(\tilde{z}\otimes x,\tilde{z}\otimes y)\in (D^{(n)}_1\otimes D^{(n)}_2[1/t])^{\Delta_K}\bigoplus (D^{(n+1)}_1\otimes D^{(n+1)}_2[1/t])^{\Delta_K}$ 
satisfies that $((\iota_m(\tilde{z}\otimes x))_{m\geqq n},(\iota_m(\tilde{z}\otimes y))_{m\geqq n+1})-((z\otimes \iota_m(x))_{m\geqq n}, (z\otimes \iota_m(y))_{m\geqq n+1})
\in \prod_{m\geqq n}\bold{D}^+_{\mathrm{dif},m}(D_1\otimes D_2)\bigoplus \prod_{m\geqq n+1} \bold{D}^+_{\mathrm{dif},m}(D_1\otimes D_2)$. Hence, by the definition of the boundary map 
$\delta_{2,D_1\otimes D_2}$ , 
we obtain
$$\delta_{2,D_1\otimes D_2}(z\cup_{\mathrm{dif}}[\iota_n(x)])=[(\varphi-1)(\tilde{z}\otimes x)-(\gamma_K-1)(\tilde{z}\otimes y)].$$
Using the equality $(\varphi-1)x=(\gamma_K-1)y$, it is easy to show the equality 
$$(\varphi-1)\tilde{z}\otimes \varphi(x)-(\gamma_K-1)\tilde{z}\otimes \gamma_K(y)=
(\varphi-1)(\tilde{z}\otimes x)-(\gamma_K-1)(\tilde{z}\otimes y).$$ Hence we obtain the desired 
equality
$$\mathrm{exp}_{K,D_1}(z)\cup [x,y]=\delta_{2,D_1\otimes D_2}(z\cup_{\mathrm{dif}}[\iota_n(x)]).$$

\end{proof}

Let $D$ be a de Rham $(\varphi,\Gamma_K)$-module over $\bold{B}^{\dagger}_{\mathrm{rig},K}$. Then 
the natural map 
$$K_{\infty}((t))\otimes_K \bold{D}^K_{\mathrm{dR}}(D)\rightarrow \bold{D}_{\mathrm{dif}}(D): f(t)\otimes x\mapsto f(t)x$$ is isomorphism. We identify $K_{\infty}((t))\otimes_K \bold{D}^K_{\mathrm{dR}}(D)$ with  $\bold{D}_{\mathrm{dif}}(D)$ by this isomorphism. Then, it is easy to check that 
 the map
$$g_D:\bold{D}_{\mathrm{dR}}^K(D)\isom \mathrm{H}^1(K,\bold{D}_{\mathrm{dif}}(D))(\isom \mathrm{H}^1(C^{\bullet}_{\gamma_K}(K_{\infty}((t))\otimes_{K}\bold{D}^K_{\mathrm{dR}}(D))))$$
 define by 
 $$g_D(x):=[\mathrm{log}(\chi(\gamma_K))(1\otimes x)]\in \mathrm{H}^1(C^{\bullet}_{\gamma_K}(K_{\infty}((t))\otimes_{K}\bold{D}^K_{\mathrm{dR}}(D)))$$ is isomorphism and is
 independent of the chose of $\gamma_K$ up to the canonical isomorphism.   
We note that $\bold{D}^K_{\mathrm{dR}}(\bold{B}^{\dagger}_{\mathrm{rig},K}(1))=K\cdot\frac{1}{t}e_1$.

\begin{lemma}\label{2.13}
The following diagram is commutative,
$$
\begin{CD}
\bold{D}^K_{\mathrm{dR}}(\bold{B}^{\dagger}_{\mathrm{rig},K}(1)) @> = >> K\cdot\frac{1}{t}e_1 @> \frac{a}{t}e_1\mapsto a >> K \\
@VV  g_{\bold{B}^{\dagger}_{\mathrm{rig},K}(1)}V @.@VV \mathrm{Tr}_{K/\mathbb{Q}_p}V \\
\mathrm{H}^1(K, \bold{D}_{\mathrm{dif}}(\bold{B}^{\dagger}_{\mathrm{rig},K}(1)))@> \delta_{2,\bold{B}^{\dagger}_{\mathrm{rig},K}(1)} >>
\mathrm{H}^2(K, \bold{B}^{\dagger}_{\mathrm{rig},K}(1)) @> f_{\mathrm{tr}} >> \mathbb{Q}_p.
\end{CD}
$$

\end{lemma}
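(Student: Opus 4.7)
The strategy is to compute $\delta_{2,D} \circ g_D$ on $\frac{a}{t}e_1$ by applying Lemma \ref{explicit}(2), then match the resulting $\mathrm{H}^2$-class (with $D := \bold{B}^{\dagger}_{\mathrm{rig},K}(1)$) against $\mathrm{Tr}_{K/\mathbb{Q}_p}(a)$ via the normalization of $f_{\mathrm{tr}}$ in Remark \ref{normalization}. A representative of $g_D(\tfrac{a}{t}e_1)$ in $\bold{D}_{\mathrm{dif},n}(D)^{\Delta_K}$ is $y := \log(\chi(\gamma_K))\tfrac{a}{t}e_1$. Using the surjectivity of $f_2$ in Lemma \ref{2.8} (concretely, the surjection $\bold{B}^{\dagger,r_n}_{\mathrm{rig},K}/t \twoheadrightarrow \prod_{m \geq n} K_m$ established in its proof), one can pick $\widetilde{a} \in \bold{B}^{\dagger,r_n}_{\mathrm{rig},K}$ with $\iota_m(\widetilde{a}) \equiv a \pmod{tK_m[[t]]}$ for all $m \geq n$. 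Then $\widetilde{y} := \log(\chi(\gamma_K))\tfrac{\widetilde{a}}{t}e_1 \in (D^{(n)}[1/t])^{\Delta_K}$ satisfies $\iota_m(\widetilde{y}) - y \in \bold{D}^+_{\mathrm{dif},m}(D)$, and Lemma \ref{explicit}(2), combined with $\varphi(t) = pt$ and $\varphi(e_1) = e_1$, yields
\[
\delta_{2,D}(g_D(\tfrac{a}{t}e_1)) = \bigl[\log(\chi(\gamma_K)) \cdot \tfrac{\varphi(\widetilde{a}) - p\widetilde{a}}{pt}\, e_1\bigr] \in \mathrm{H}^2(K, D).
\]

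For the identification with $\mathrm{Tr}_{K/\mathbb{Q}_p}(a)$, I would reduce to the case $K = \mathbb{Q}_p$, $a = 1$. Each arrow in the diagram is $\mathbb{Q}_p$-linear in $a$ and functorial under corestriction: the corestriction on $\mathrm{H}^2$ intertwines $f_{\mathrm{tr},K}$ with $f_{\mathrm{tr},\mathbb{Q}_p}$ (a consequence of Remark \ref{normalization} together with compatibility of local reciprocity with $N_{K/\mathbb{Q}_p}$), and corresponds to $\mathrm{Tr}_{K/\mathbb{Q}_p}$ on the $\bold{D}_{\mathrm{dR}}$-side. So the general case reduces to $K = \mathbb{Q}_p$, $a = 1$, where $\widetilde{a} = 1$ works tautologically and the class becomes $[\log(\chi(\gamma_{\mathbb{Q}_p})) \cdot \tfrac{1-p}{pt}\, e_1] \in \mathrm{H}^2(\mathbb{Q}_p, \bold{B}^{\dagger}_{\mathrm{rig}, \mathbb{Q}_p}(1))$; it remains to verify that $f_{\mathrm{tr}}$ of this class is $1$.

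The main obstacle is this last identification of $f_{\mathrm{tr}}$ on an explicit $(\varphi, \Gamma)$-module cocycle. The plan is to exploit Theorem 0.2(3) of \cite{Li08}: since Tate duality is perfect, $f_{\mathrm{tr}}$ is determined by cup products with $\mathrm{H}^1$-classes. Using the normalization in Remark \ref{normalization}, which fixes $\langle \kappa(p), \tau_{\mathrm{ur}} \rangle = \tau_{\mathrm{ur}}(\mathrm{rec}(p)) = 1$ for the Kummer class $\kappa(p) \in \mathrm{H}^1(\mathbb{Q}_p, \mathbb{Q}_p(1))$ and the unramified class $\tau_{\mathrm{ur}} \in \mathrm{H}^1(\mathbb{Q}_p, \mathbb{Q}_p)$ sending arithmetic Frobenius to $1$, it suffices to identify $[\log(\chi(\gamma_{\mathbb{Q}_p})) \tfrac{1-p}{pt}\, e_1]$ with $\kappa(p) \cup \tau_{\mathrm{ur}}$ inside $\mathrm{H}^2(\mathbb{Q}_p, \bold{B}^{\dagger}_{\mathrm{rig},\mathbb{Q}_p}(1))$. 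This is carried out by realizing both $\kappa(p)$ and $\tau_{\mathrm{ur}}$ as explicit cocycles via the standard $(\varphi, \Gamma)$-module descriptions of $\mathbb{Q}_p(1)$- and $\mathbb{Q}_p$-cohomology, then computing their cup product through the explicit formula of \S 2.2; the careful matching of sign and normalization conventions is the principal technical step.
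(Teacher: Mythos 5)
Your first half is sound and in fact sharper than what the paper records: the explicit representative of $\delta_{2,D}(g_D(\tfrac{a}{t}e_1))$ obtained from Lemma \ref{explicit}(2) and Lemma \ref{2.8} is correct (with the small caveat that for $p=2$ the lift $\widetilde{a}$ must be chosen $\Delta_K$-invariantly, which the $\Delta_K$-invariant exact sequence of Lemma \ref{2.8} permits), and the reduction to $K=\mathbb{Q}_p$ by corestriction/trace is exactly the paper's first step. The problem is the last step, which is where all the content of the lemma sits. You have reduced everything to the assertion $f_{\mathrm{tr}}\bigl([\log(\chi(\gamma))\tfrac{1-p}{pt}e_1]\bigr)=1$, and you propose to prove it by identifying this class with $\kappa(p)\cup\tau_{\mathrm{ur}}$ inside $\mathrm{H}^2(\mathbb{Q}_p,\bold{B}^{\dagger}_{\mathrm{rig},\mathbb{Q}_p}(1))$. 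But $f_{\mathrm{tr}}$ is \emph{defined} (Remark \ref{normalization}) through Liu's comparison isomorphism with Galois cohomology, so evaluating it on an explicit Robba-ring cocycle, or equivalently exhibiting $\kappa(p)$ (and $\tau_{\mathrm{ur}}$) as explicit cocycles in the Herr complex over $\bold{B}^{\dagger}_{\mathrm{rig},\mathbb{Q}_p}$, requires unwinding that comparison isomorphism or importing an explicit-reciprocity/Coleman-map type formula; none of this is in the paper's toolkit, and your proposal does not supply it — it is precisely the ``principal technical step'' you defer. There is also a secondary hazard in your chosen test pair: whether $\tau_{\mathrm{ur}}(\mathrm{rec}_{\mathbb{Q}_p}(p))=1$ or $-1$ depends on the arithmetic-versus-geometric Frobenius normalization of $\mathrm{rec}_{\mathbb{Q}_p}$, which Remark \ref{normalization} does not pin down for you in the form you need.

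The paper's proof is arranged exactly so as to avoid ever writing a Kummer class as a Robba-ring cocycle. It first uses Lemma \ref{2.12} to rewrite $\delta_{2}(g(\tfrac{a}{t}e_1))$ as the cup product $\mathrm{exp}_{\mathbb{Q}_p,\bold{B}^{\dagger}_{\mathrm{rig},\mathbb{Q}_p}(1)}(\tfrac{a}{t}e_1)\cup[\log(\chi(\gamma_K)),0]$, then invokes Theorem \ref{4.5} (compatibility of $\mathrm{exp}_{K,D}$ with $\mathrm{exp}_{K,W(D)}$ and of $[\log(\chi(\gamma_K)),0]$ with $\log(\chi)\in\mathrm{H}^1(\mathbb{Q}_p,\mathbb{Q}_p)$) to move the whole computation to classical Galois cohomology, where the needed normalization facts are available: the classical identity $\kappa(b)=\mathrm{exp}_{\mathbb{Q}_p,\mathbb{Q}_p(1)}(\tfrac{\log(b)}{t}e_1)$ for $b\in\mathbb{Z}_p^{\times}$ together with $\langle\kappa(b),\log(\chi)\rangle=\log(b)$ from Remark \ref{normalization}. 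To complete your route you would either have to prove a residue-type formula for $f_{\mathrm{tr}}$ on Robba cocycles (or an explicit cocycle for $\kappa(p)$ or for cyclotomic units), which is a genuine additional theorem, or else fall back on the cup-product/comparison mechanism of Lemmas \ref{2.12} and Theorem \ref{4.5} as the paper does. As it stands, the decisive identification is asserted, not proved.
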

\begin {proof}
Using the trace and the corestriction, it suffices to show the lemma when $K=\mathbb{Q}_p$.
Assume $K=\mathbb{Q}_p$. We prove the lemma by comparing the cohomology of $(\varphi,\Gamma_{\mathbb{Q}_p})$-module $\bold{B}^{\dagger}_{\mathrm{rig},\mathbb{Q}_p}(1)$ with
the Galois cohomology of the $p$-adic representation $\mathbb{Q}_p(1)$ of $G_{\mathbb{Q}_p}$ as follows. Please see $\S$ 2.5 for 
some notations and definitions in the proof below.

Take the element $[\mathrm{log}(\chi(\gamma_K)),0]\in \mathrm{H}^1(\mathbb{Q}_p,
\bold{B}^{\dagger}_{\mathrm{rig},\mathbb{Q}_p})$. Then we have 
$$g_{\bold{B}^{\dagger}_{\mathrm{rig},\mathbb{Q}_p}(1)}(\frac{a}{t}e_1)=\frac{a}{t}e_1\cup_{\mathrm{dif}}[\iota_n(\mathrm{log}(\chi(\gamma_K)))]$$ by the definition of $g_D$ and $\cup_{\mathrm{dif}}$. 
Hence, by Lemma \ref{2.12}, we obtain an equality 
\[
\begin{array}{ll}
\delta_{2,\bold{B}^{\dagger}_{\mathrm{rig},\mathbb{Q}_p}(1)}(g_{\bold{B}^{\dagger}_{\mathrm{rig},\mathbb{Q}_p}(1)}(\frac{a}{t}e_1))
                     &=\delta_{2,,\bold{B}^{\dagger}_{\mathrm{rig},\mathbb{Q}_p}(1)}(\frac{a}{t}e_1\cup_{\mathrm{dif}}[\iota_n(\mathrm{log}(\chi(\gamma_K)))])\\
                     &=\mathrm{exp}_{\mathbb{Q}_p,\bold{B}^{\dagger}_{\mathrm{rig},\mathbb{Q}_p}(1)}(\frac{a}{t}e_1)\cup[\mathrm{log}(\chi(\gamma_K)),0]
 \end{array}
 \]                    
 for any $a\in \mathbb{Q}_p$. Hence, it suffices to show 
 $$f_{\mathrm{tr}}(\mathrm{exp}_{\mathbb{Q}_p,\bold{B}^{\dagger}_{\mathrm{rig},\mathbb{Q}_p}(1)}(\frac{a}{t}e_1))\cup [\mathrm{log}(\chi(\gamma_K)),0])=a.$$
 We note that  the comparison isomorphism 
 $$\mathrm{H}^1(\mathbb{Q}_p,\bold{B}^{\dagger}_{\mathrm{rig},\mathbb{Q}_p})\isom \mathrm{H}^1(\mathbb{Q}_p, W(\bold{B}^{\dagger}_{\mathrm{rig},\mathbb{Q}_p}))\isom \mathrm{H}^1(\mathbb{Q}_p, \mathbb{Q}_p)$$ (where $\mathbb{Q}_p$ on the right hand side is the trivial $p$-adic representation of $G_{\mathbb{Q}_p}$ ) 
 sends $[\mathrm{log}(\chi(\gamma)), 0]$ to the element $\mathrm{log}(\chi)\in 
 \mathrm{Hom}(G_{\mathbb{Q}_p}^{\mathrm{ab}},\mathbb{Q}_p)=\mathrm{H}^1(\mathbb{Q}_p,\mathbb{Q}_p)$ defined by $\mathrm{log}(\chi):G^{\mathrm{ab}}_{\mathbb{Q}_p}\rightarrow \mathbb{Q}_p:g\mapsto \mathrm{log}(\chi(g))$. Hence, 
 by Theorem \ref{4.5}, in particular, by the compatibility of $\mathrm{exp}_{K,D}$ with  $\mathrm{exp}_{K,W(D)}$ (we remark that we don't use any results in this subsection $\S$ 2.4 to prove this compatibility), it 
 suffices to show that Tate's paring satisfies 
 $$<\mathrm{exp}_{\mathbb{Q}_p,\mathbb{Q}_p}(\frac{a}{t}e_1), \mathrm{log}(\chi)>=a$$ for any $a\in \mathbb{Q}_p$.
 Because it is known that 
 $\kappa(b)=\mathrm{exp}_{\mathbb{Q}_p,\mathbb{Q}_p}(\frac{\mathrm{log}(b)}{t}e_1)$ for 
 any $b\in \mathbb{Z}_p^{\times}$ ($\kappa:\mathbb{Q}_p^{\times}
 \rightarrow \mathrm{H}^1(\mathbb{Q}_p,\mathbb{Q}_p(1))$ is the Kummer map), we obtain 
 \[
\begin{array}{ll}
<\mathrm{exp}_{\mathbb{Q}_p,\mathbb{Q}_p}(\frac{\mathrm{log}(b)}{t}e_1), \mathrm{log}(\chi)>&=<\kappa(b), \mathrm{log}(\chi)>\\
&=\mathrm{log}(b)
\end{array}
\]
for any $b\in \mathbb{Z}_p^{\times}$, 
where the last equality follows from Remark \ref{normalization}. This finishes to prove the lemma.

\end{proof}
Let $D$ be a $(\varphi,\Gamma_K)$-module over $\bold{B}^{\dagger}_{\mathrm{rig},K}$. 
We define a paring
\[
\begin{array}{ll}
<,>_{\mathrm{dif}}&:\mathrm{H}^0(K,\bold{D}_{\mathrm{dif}}(D))\times \mathrm{H}^1(K, \bold{D}_{\mathrm{dif}}(D^{\lor}(1)))\xrightarrow{\cup_{\mathrm{dif}}}
\mathrm{H}^1(K, \bold{D}_{\mathrm{dif}}(D\otimes D^{\lor}(1)))\\
&\xrightarrow{\mathrm{ev}}\mathrm{H}^1(K, \bold{D}_{\mathrm{dif}}(\bold{B}^{\dagger}_{\mathrm{rig},K}(1)))
\xrightarrow{g^{-1}_{\bold{B}^{\dagger}_{\mathrm{rig},K}(1)}} \bold{D}^K_{\mathrm{dR}}(\bold{B}^{\dagger}_{\mathrm{rig},K}(1)) \xrightarrow{\frac{a}{t}e_1\mapsto a} K.
\end{array}
\]
\begin{lemma}\label{2.15}
Let $D$ be a $(\varphi,\Gamma_K)$-module over $\bold{B}^{\dagger}_{\mathrm{rig},K}$. 
Then  the following diagrams 
$$
\begin{CD}
\mathrm{H}^0(K, \bold{D}_{\mathrm{dif}}(D))@.\times @.\mathrm{H}^1(K, \bold{D}_{\mathrm{dif}}(D^{\lor}(1)))@> {<,>_{\mathrm{dif}}} >> K\\
@VV \mathrm{exp}_{K,D}V @. @AA [x,y]\mapsto [\iota_n(x)]A @VV \mathrm{Tr}_{K/\mathbb{Q}_p}V \\
\mathrm{H}^1(K, D)@.\times @.\mathrm{H}^1(K,D^{\lor}(1)) @>> {<,>}> \mathbb{Q}_p
\end{CD}
$$
and 
$$
\begin{CD}
\mathrm{H}^0(K, \bold{D}_{\mathrm{dif}}(D))@.\times @.\mathrm{H}^1(K, \bold{D}_{\mathrm{dif}}(D^{\lor}(1)))@> {<,>_{\mathrm{dif}}} >> K\\
@AA x \mapsto \iota_n(x) A@. @VV \delta_{2,D^{\lor}(1)}V@VV \mathrm{Tr}_{K/\mathbb{Q}_p}V \\
\mathrm{H}^0(K, D)@.\times @.\mathrm{H}^2(K,D^{\lor}(1)) @>> {<,>}> \mathbb{Q}_p
\end{CD}
$$
are commutative.
In other words,  we have equalities

$$<\mathrm{exp}_{K,D}(z),[x,y]>=\mathrm{Tr}_{K/\mathbb{Q}_p}(<z,[\iota_n(x)]>_{\mathrm{dif}})$$
and
$$<a, \delta_{2,D^{\lor}(1)}([b])>=\mathrm{Tr}_{K/\mathbb{Q}_p}(<\iota_n(a),[b]>_{\mathrm{dif}})$$
for any $z\in \mathrm{H}^0(K, \bold{D}_{\mathrm{dif}}(D)), [x,y]\in \mathrm{H}^1(K, D^{\lor}(1))$ and 
$a\in \mathrm{H}^0(K, \bold{D}_{\mathrm{dif}}(D)), [b]\in \mathrm{H}^1(K, D^{\lor}(1))$.

\end{lemma}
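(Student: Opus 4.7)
The plan is to reduce both commutativity statements to a diagram chase combining the two preceding compatibility lemmas (Lemma \ref{2.12} and Lemma \ref{2.13}) together with the manifest functoriality of the long exact sequence of Theorem \ref{exp} with respect to the evaluation map $\mathrm{ev}:D\otimes D^{\lor}(1)\to \bold{B}^{\dagger}_{\mathrm{rig},K}(1)$. In particular, no new explicit cocycle computation should be necessary: the explicit formulas of Lemma \ref{explicit} were already absorbed into Lemma \ref{2.12}, and the Kummer-map computation was already absorbed into Lemma \ref{2.13}.

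For the first diagram, I would unwind the definitions as follows. By definition, $\langle \mathrm{exp}_{K,D}(z),[x,y]\rangle$ equals $f_{\mathrm{tr}}$ applied to $\mathrm{ev}_{*}\bigl(\mathrm{exp}_{K,D}(z)\cup [x,y]\bigr)$. The first diagram of Lemma \ref{2.12} rewrites $\mathrm{exp}_{K,D}(z)\cup [x,y]$ as $\delta_{2,D\otimes D^{\lor}(1)}\bigl(z\cup_{\mathrm{dif}}[\iota_n(x)]\bigr)$, and the naturality of the boundary map $\delta_2$ with respect to the morphism $\mathrm{ev}$ (immediate from the construction in Theorem \ref{exp}) lets me push $\mathrm{ev}_{*}$ through $\delta_2$ to obtain
\[
\mathrm{ev}_{*}\!\bigl(\mathrm{exp}_{K,D}(z)\cup[x,y]\bigr)\;=\;\delta_{2,\bold{B}^{\dagger}_{\mathrm{rig},K}(1)}\!\bigl(\mathrm{ev}_{*}(z\cup_{\mathrm{dif}}[\iota_n(x)])\bigr).
\]
Applying $f_{\mathrm{tr}}$ and invoking Lemma \ref{2.13}, the right-hand side becomes $\mathrm{Tr}_{K/\mathbb{Q}_p}$ applied to $g_{\bold{B}^{\dagger}_{\mathrm{rig},K}(1)}^{-1}\circ\mathrm{ev}_{*}(z\cup_{\mathrm{dif}}[\iota_n(x)])$ read in $K$ via $\frac{a}{t}e_1\mapsto a$, which is precisely the definition of $\mathrm{Tr}_{K/\mathbb{Q}_p}\langle z,[\iota_n(x)]\rangle_{\mathrm{dif}}$.

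For the second diagram the strategy is identical, using the second diagram of Lemma \ref{2.12} in place of the first: one rewrites $a\cup \delta_{2,D^{\lor}(1)}([b])=\delta_{2,D\otimes D^{\lor}(1)}\bigl(\iota_n(a)\cup_{\mathrm{dif}}[b]\bigr)$, commutes $\mathrm{ev}_{*}$ with $\delta_2$, and applies Lemma \ref{2.13} to finish. The only point requiring any genuine checking is the naturality statement that both $\cup_{\mathrm{dif}}$ and the boundary map $\delta_{2,-}$ of Theorem \ref{exp} are compatible with morphisms of $(\varphi,\Gamma_K)$-modules applied to the second tensor factor, which however is visible from their explicit definitions via the complexes $\widetilde{C}^{\bullet}_{\varphi,\gamma_K}$ and their inductive limits.

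The main obstacle, such as it is, is bookkeeping: one must verify that the auxiliary index $n$ appearing on both the $\iota_n$ vertical arrows and in the definition of $\cup_{\mathrm{dif}}$ can be chosen uniformly (this is clear by passing to a sufficiently large $n\geq n(D)$ so that all relevant lifts land in $D^{(n)}$ and $\bold{D}^{(+)}_{\mathrm{dif},n}$), and that the identification $K_{\infty}((t))\otimes_K\bold{D}^K_{\mathrm{dR}}(D)\cong \bold{D}_{\mathrm{dif}}(D)$ used implicitly in forming $g_D$ is compatible with evaluation. Once these routine compatibilities are in hand, both equalities follow from the two displayed identities of Lemma \ref{2.12} followed by Lemma \ref{2.13}, with no further computation.
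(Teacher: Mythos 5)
Your proposal is correct and follows the same route the paper intends: the paper's proof is simply "This lemma follows from Lemma \ref{2.12} and Lemma \ref{2.13}", and your argument is exactly the diagram chase that statement abbreviates — rewrite the cup products via the two identities of Lemma \ref{2.12}, push $\mathrm{ev}_{*}$ through $\delta_{2}$ using the functoriality of the exact sequence of Theorem \ref{exp}, and conclude with Lemma \ref{2.13}. Nothing further is needed.
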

\begin{proof}
This lemma follows from Lemma \ref{2.12} and Lemma \ref{2.13}.
\end{proof}

Let $D$ be a de Rham $(\varphi,\Gamma_K)$-module over $\bold{B}^{\dagger}_{\mathrm{rig},K}$. 

We define a map 
$$\mathrm{exp}^{*}_{K,D^{\lor}(1)}: \mathrm{H}^1(K ,D)\rightarrow \mathrm{Fil}^0\bold{D}^K_{\mathrm{dR}}(D)$$ 
as the composition of the natural map 
$$\mathrm{H}^1(K, D)\rightarrow \mathrm{H}^1(K, \bold{D}^+_{\mathrm{dif}}(D))\rightarrow 
\mathrm{H}^1(K, \bold{D}_{\mathrm{dif}}(D)):[x,y]\mapsto [\iota_n(x)]$$
(for sufficiently large $n$)
 with the inverse of the isomorphism 
 $$g_{D}:\bold{D}^K_{\mathrm{dR}}(D)\isom \mathrm{H}^1(K, \bold{D}_{\mathrm{dif}}(D)).$$
 Because 
we have $\bold{D}^+_{\mathrm{dif}}(D)=\mathrm{Fil}^0(K_{\infty}((t))\otimes_K \bold{D}^K_{\mathrm{dR}}(D))$, we can easily see that 
the image of $\mathrm{exp}^{*}_{K,D^{\lor}(1)}$ is contained in $\mathrm{Fil}^0\bold{D}^K_{\mathrm{dR}}(D)$. 
As in the case of $p$-adic Galois representations, the map $\mathrm{exp}^{*}_{K,D}$ is the adjoint of $\mathrm{exp}_{K,D}$ in the following sense.
We define a $K$-bi-linear perfect paring $[-,-]_{\mathrm{dR}}$ by
$$[-,-]_{\mathrm{dR}}:\bold{D}^K_{\mathrm{dR}}(D)\times \bold{D}^K_{\mathrm{dR}}(D^{\lor}(1))\xrightarrow{\mathrm{ev}} \bold{D}^K_{\mathrm{dR}}(\bold{B}^{\dagger}_{\mathrm{rig},K}(1))\xrightarrow{\frac{a}{t}e_1\mapsto a}
K$$ 
 where $\mathrm{ev}$ is the natural evaluation map. 
By the definition of $\mathrm{Fil}^i$, this paring induces a perfect paring 
$$[-,-]_{\mathrm{dR}}:\bold{D}^{K}_{\mathrm{dR}}(D)/\mathrm{Fil}^0\bold{D}^K_{\mathrm{dR}}(D)\times \mathrm{Fil}^0\bold{D}^K_{\mathrm{dR}}(D^{\lor}(1))\rightarrow K.$$
\begin{prop}\label{2.16}
Let $D$ be a de Rham $(\varphi,\Gamma_K)$-module over $\bold{B}^{\dagger}_{\mathrm{rig},K}$. 
For any $\overline{x}\in \bold{D}^K_{\mathrm{dR}}(D)/\mathrm{Fil}^0\bold{D}^K_{\mathrm{dR}}(D)$ and $y\in \mathrm{H}^1(K, D^{\lor}(1))$, the following equality holds
$$<\mathrm{exp}_{K,D}(\overline{x}),y>=\mathrm{Tr}_{K/\mathbb{Q}_p}([\overline{x},\mathrm{exp}^{*}_{K,D}(y)]_{\mathrm{dR}}).$$

\end{prop}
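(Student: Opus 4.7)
The plan is to reduce the claimed adjointness to a direct combination of Lemma \ref{2.15} and the definition of $\mathrm{exp}^{*}_{K,D^{\lor}(1)}$, and then verify that the pairing $\langle-,-\rangle_{\mathrm{dif}}$ agrees with the de Rham pairing $[-,-]_{\mathrm{dR}}$ after applying $g_{D^{\lor}(1)}$.

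First, I would pick a representative $y=[x,y']\in \mathrm{H}^1(K,D^{\lor}(1))$ with $x,y'$ in $(D^{\lor}(1))^{(n)}$ for some $n\geqq n(D^{\lor}(1))$. By Lemma \ref{2.15} applied to $D$ (first diagram), we obtain
\begin{equation*}
\langle\mathrm{exp}_{K,D}(\overline{x}),y\rangle=\mathrm{Tr}_{K/\mathbb{Q}_p}\bigl(\langle \overline{x},[\iota_n(x)]\rangle_{\mathrm{dif}}\bigr),
\end{equation*}
where $[\iota_n(x)]\in \mathrm{H}^1(K,\bold{D}^{+}_{\mathrm{dif}}(D^{\lor}(1)))\hookrightarrow \mathrm{H}^1(K,\bold{D}_{\mathrm{dif}}(D^{\lor}(1)))$. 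By the very definition of the dual exponential map, this class $[\iota_n(x)]$ equals $g_{D^{\lor}(1)}(\mathrm{exp}^{*}_{K,D}(y))$. Hence it suffices to establish the identity
\begin{equation*}
\langle \overline{x},g_{D^{\lor}(1)}(z)\rangle_{\mathrm{dif}}=[\overline{x},z]_{\mathrm{dR}}\quad\text{for every }z\in \bold{D}^K_{\mathrm{dR}}(D^{\lor}(1)).
\end{equation*}

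To check this identity, I would unwind the definition of $\langle-,-\rangle_{\mathrm{dif}}$ and of $g_{D^{\lor}(1)}$. Recall $g_{D^{\lor}(1)}(z)=[\mathrm{log}(\chi(\gamma_K))(1\otimes z)]$, so the cup product $\overline{x}\cup_{\mathrm{dif}}g_{D^{\lor}(1)}(z)$ is represented by $\mathrm{log}(\chi(\gamma_K))(\overline{x}\otimes z)$ in $\mathrm{H}^1(K,\bold{D}_{\mathrm{dif}}(D\otimes D^{\lor}(1)))$. Apply the evaluation map to obtain $[\mathrm{log}(\chi(\gamma_K))\cdot\mathrm{ev}(\overline{x}\otimes z)]\in \mathrm{H}^1(K,\bold{D}_{\mathrm{dif}}(\bold{B}^{\dagger}_{\mathrm{rig},K}(1)))$. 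Writing $\mathrm{ev}(\overline{x}\otimes z)=\frac{a}{t}e_1\in \bold{D}^K_{\mathrm{dR}}(\bold{B}^{\dagger}_{\mathrm{rig},K}(1))$ for some $a\in K$, this class is precisely $g_{\bold{B}^{\dagger}_{\mathrm{rig},K}(1)}(\frac{a}{t}e_1)$, so applying $g_{\bold{B}^{\dagger}_{\mathrm{rig},K}(1)}^{-1}$ followed by $\frac{b}{t}e_1\mapsto b$ returns $a=[\overline{x},z]_{\mathrm{dR}}$, as required.

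Combining the two displays immediately yields
\begin{equation*}
\langle \mathrm{exp}_{K,D}(\overline{x}),y\rangle=\mathrm{Tr}_{K/\mathbb{Q}_p}([\overline{x},\mathrm{exp}^{*}_{K,D}(y)]_{\mathrm{dR}}).
\end{equation*}
The only potentially subtle point is that the formula $g_{D^{\lor}(1)}(\mathrm{exp}^{*}_{K,D}(y))=[\iota_n(x)]$ must be read inside $\mathrm{H}^1(K,\bold{D}_{\mathrm{dif}}(D^{\lor}(1)))$, which is by construction the definition of $\mathrm{exp}^{*}_{K,D^{\lor}(1)}$ once one composes with $g_{D^{\lor}(1)}^{-1}$; the rest reduces to the compatibility of $\cup_{\mathrm{dif}}$ with $\mathrm{ev}$ and the normalization $\frac{a}{t}e_1\mapsto a$. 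The only real work is thus unwinding the definitions; Lemma \ref{2.15} already contains the essential computation relating $\langle-,-\rangle$ and $\langle-,-\rangle_{\mathrm{dif}}$.
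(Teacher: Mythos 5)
Your proposal is correct and follows essentially the same route as the paper: apply the first diagram of Lemma \ref{2.15}, identify $[\iota_n(x)]$ with $g_{D^{\lor}(1)}(\mathrm{exp}^{*}_{K,D}(y))$ via the definition of the dual exponential map, and reduce to the identity $\langle x, g_{D^{\lor}(1)}(z)\rangle_{\mathrm{dif}}=[x,z]_{\mathrm{dR}}$, which the paper dismisses as "trivial by definition" and you simply spell out by unwinding $g$, $\cup_{\mathrm{dif}}$, $\mathrm{ev}$ and the normalization $\frac{a}{t}e_1\mapsto a$. The only cosmetic point is that $\overline{x}$ should be lifted to $x\in\bold{D}^K_{\mathrm{dR}}(D)$ before being fed into $\langle-,-\rangle_{\mathrm{dif}}$, the result being independent of the lift since $\mathrm{exp}_{K,D}$ kills $\mathrm{Fil}^0$ and $[-,-]_{\mathrm{dR}}$ descends to the quotient.
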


\begin{proof}
By Lemma \ref{2.15}, it suffices to show the equality 
$$[x,z]_{\mathrm{dR}}=<x,g_{D^{\lor}(1)}(z)>_{\mathrm{dif}}$$
for any $x\in \mathrm{H}^0(K, \bold{D}_{\mathrm{dif}}(D)), z\in \mathrm{H}^0(K, \bold{D}_{\mathrm{dif}}(D^{\lor}(1)))$; but this equality is trivial by definition.
\end{proof}

\subsection{comparison with Bloch-Kato's exponential map of $B$-pairs}
In this  subsection, we show that the long exact sequence of Theorem \ref{exp} associated to $D$ is isomorphic to the 
long exact sequence naturally defined from the cohomologies of the corresponding $B$-pair $W(D)$. 
In particular,  in the \'etale case, 
we show that the sequence of Theorem \ref{exp} is isomorphic to the long exact sequence induced from the Bloch-Kato's fundamental short exact sequence.

We first recall the definition of $B$-pairs  and the definition of  the functor from the category of $(\varphi,\Gamma)$-modules to the category of $B$-pairs which 
induces an equivalence between these categories, see \cite{Ber08a} for more details.

 The following definition is due to Berger (\cite{Ber08a}).
 \begin{defn}
 We say that a pair $W:=(W_e,W^+_{\mathrm{dR}})$ is a $B$-pair of $G_K$ if
 \begin{itemize}
 \item[(1)]$W_e$ is a finite free $\bold{B}_e$-module with a continuous semi-linear $G_K$-action.
 \item[(2)]$W^+_{\mathrm{dR}}$ is a $G_K$-stable finite $\bold{B}^+_{\mathrm{dR}}$-submodule of $W_{\mathrm{dR}}:=\bold{B}_{\mathrm{dR}}\otimes_{\bold{B}_e}W_e$ which generates
 $W_{\mathrm{dR}}$ as $\bold{B}_{\mathrm{dR}}$-module,
 \end{itemize}
 where semi-linear means that $g(ax)=g(a)g(x)$ for any $a\in \bold{B}_e$, $x\in W_e$ and $g\in G_K$.
 
 \end{defn}
 \begin{rem}
 Let $V$ be a $p$-adic representation of $G_K$. We define a $B$-pair 
 $$W(V):=(\bold{B}_e\otimes_{\mathbb{Q}_p}V, \bold{B}^+_{\mathrm{dR}}\otimes_{\mathbb{Q}_p}V).$$ 
 By Bloch-Kato's fundamental short exact sequence (\cite{BK90})
 $$0\rightarrow \mathbb{Q}_p\xrightarrow{x\mapsto (x,x)} \bold{B}_e\bigoplus \bold{B}^+_{\mathrm{dR}}\xrightarrow{(x,y)\mapsto x-y} \bold{B}_{\mathrm{dR}}\rightarrow 0,$$
 we can easily see that this functor $V\mapsto W(V)$ is fully faithful, hence we can view the category of $p$-adic representations of $G_K$ as a full subcategory of 
 the category of $B$-pairs of $G_K$.
 \end{rem}
 By the theorems of Fontaine, Cherbonnier-Colmez and Kedlaya, the category of $p$-adic representations of $G_K$ is equivalent to the 
 category of \'etale $(\varphi,\Gamma_K)$-modules over $\bold{B}^{\dagger}_{\mathrm{rig},K}$. Berger extended this categorical equivalence to 
 the equivalence between the category of $B$-pairs of $G_K$ with that of $(\varphi,\Gamma_K)$-modules over $\bold{B}^{\dagger}_{\mathrm{rig},K}$, which we recall below.
 
 We first note that we have a $(\varphi$, $G_{K})$-equivariant canonical injection $\bold{B}^{\dagger}_{\mathrm{rig},K}\hookrightarrow \widetilde{\bold{B}}^{\dagger}_{\mathrm{rig}}$.
 Let $D$ be a $(\varphi,\Gamma_K)$-module over $\bold{B}^{\dagger}_{\mathrm{rig},K}$ of rank $d$. For each $n\geqq n(D)$, we define 
$$W_e(D^{(n)}):=(\widetilde{\bold{B}}^{\dagger,r_n}_{\mathrm{rig}}[1/t]\otimes_{\bold{B}^{\dagger,r_n}_{\mathrm{rig},K}}D^{(n)})^{\varphi=1}.$$ 
 Since we have an isomorphism 
 $$\bold{B}^{\dagger,r_{n+1}}_{\mathrm{rig},K}\otimes_{\varphi,\bold{B}^{\dagger,r_n}_{\mathrm{rig},K}}D^{(n)}\isom D^{(n+1)}:a\otimes x\mapsto a\varphi(x)$$ and the map 
 $\varphi:\widetilde{\bold{B}}^{\dagger,r_n}_{\mathrm{rig}}\isom \widetilde{\bold{B}}^{\dagger,r_{n+1}}_{\mathrm{rig}}$ is isomorphism, 
 we obtain a natural  isomorphism 
  \begin{multline*}
  \widetilde{\bold{B}}^{\dagger,r_n}_{\mathrm{rig}}\otimes_{\bold{B}^{\dagger,r_n}_{\mathrm{rig},K}}D^{(n)}\xrightarrow{a\otimes x\mapsto \varphi(a)\otimes x}
 \widetilde{\bold{B}}^{\dagger,r_{n+1}}_{\mathrm{rig}}\otimes_{\varphi,\bold{B}^{\dagger,r_n}_{\mathrm{rig},K}}D^{(n)} \\
 \isom \widetilde{\bold{B}}^{\dagger,r_{n+1}}_{\mathrm{rig}}\otimes_{\bold{B}^{\dagger,r_{n+1}}_{\mathrm{rig},K}}(\bold{B}^{\dagger,r_{n+1}}_{\mathrm{rig},K}\otimes_{\varphi,\bold{B}^{\dagger,r_n}_{\mathrm{rig},K}}D^{(n)})
 \xrightarrow{a\otimes(b\otimes x)\mapsto a\otimes b\varphi(x)}\widetilde{\bold{B}}^{\dagger,r_{n+1}}_{\mathrm{rig}}\otimes_{\bold{B}^{\dagger,r_{n+1}}_{\mathrm{rig},K}}D^{(n+1)},
 \end{multline*}
 i.e. the map 
 $$\varphi:\widetilde{\bold{B}}^{\dagger,r_n}_{\mathrm{rig}}\otimes_{\bold{B}^{\dagger,r_n}_{\mathrm{rig},K}}D^{(n)}\rightarrow \widetilde{\bold{B}}^{\dagger,r_{n+1}}_{\mathrm{rig}}\otimes_{\bold{B}^{\dagger,r_{n+1}}_{\mathrm{rig},K}}D^{(n+1)}:a\otimes x\mapsto \varphi(a)\otimes \varphi(x)$$ is isomorphism. 
 Hence, we obtain the following diagram
 $$
 \begin{CD}
  0@>>>  W_e(D^{(n)})@>>>   \widetilde{\bold{B}}^{\dagger,r_n}_{\mathrm{rig}}[1/t]\otimes_{\bold{B}^{\dagger,r_n}_{\mathrm{rig},K}}D^{(n)}@ >\varphi-1>>
   \widetilde{\bold{B}}^{\dagger,r_{n+1}}_{\mathrm{rig}}[1/t]\otimes_{\bold{B}^{\dagger,r_{n+1}}_{\mathrm{rig},K}}D^{(n+1)}  \\
   @.@VV \varphi V    @VV  \varphi V  @ VV\varphi V \\
   0@>>>  W_e(D^{(n+1)})@>>>   \widetilde{\bold{B}}^{\dagger,r_{n+1}}_{\mathrm{rig}}[1/t]\otimes_{\bold{B}^{\dagger,r_{n+1}}_{\mathrm{rig},K}}D^{(n+1)}@ >\varphi-1>> 
   \widetilde{\bold{B}}^{\dagger,r_{n+2}}_{\mathrm{rig}}[1/t]\otimes_{\bold{B}^{\dagger,r_{n+2}}_{\mathrm{rig},K}}D^{(n+2)} .
    \end{CD}
 $$
 with exact rows. Hence, the map 
 $$\varphi:W_e(D^{(n)})\isom W_e(D^{(n+1)})$$ is also isomorphism. 
 We define $$W_e(D):=W_e(D^{(n)})$$ for any $n\geqq n(D)$. Using the isomorphism $\varphi:W_e(D^{(n)})\isom W_e(D^{(n+1)})$,
 $W_e(D)$ does not depend on the choice of $n$. One has that  $W_e(D)$ is a finite free $\bold{B}_e$-module of rank $d$ and
 the natural map $$\widetilde{\bold{B}}^{\dagger,r_n}_{\mathrm{rig}}[1/t]\otimes_{\bold{B}_e}W_e(D^{(n)})\rightarrow 
 \widetilde{\bold{B}}^{\dagger,r_n}_{\mathrm{rig}}[1/t]\otimes_{\bold{B}^{\dagger,r_n}_{\mathrm{rig},K}}D^{(n)}:a\otimes x\mapsto ax $$ is isomorphism by Proposition 2.2.6 of \cite{Ber08a}. Put 
 $$W_{\mathrm{dR}}(D):=\bold{B}_{\mathrm{dR}}\otimes_{\bold{B}_e}W_e(D).$$ 
 Using the isomorphism above, we obtain an isomorphism 
 \begin{multline*}
 W_{\mathrm{dR}}(D)\isom\bold{B}_{\mathrm{dR}}\otimes_{\bold{B}_e}W_e(D^{(n)})\isom 
 \bold{B}_{\mathrm{dR}}\otimes_{\iota_n,\widetilde{\bold{B}}^{\dagger,r_n}_{\mathrm{rig}}[1/t]}(\widetilde{\bold{B}}^{\dagger,r_n}_{\mathrm{rig}}[1/t]\otimes_{\bold{B}_e}W_e(D^{(n)}))\\
 \isom \bold{B}_{\mathrm{dR}}\otimes_{\iota_n,\widetilde{\bold{B}}^{\dagger,r_n}_{\mathrm{rig}}[1/t]}(\widetilde{\bold{B}}^{\dagger,r_n}_{\mathrm{rig}}[1/t]\otimes_{\bold{B}^{\dagger,r_n}_{\mathrm{rig},K}}D^{(n)})\isom \bold{B}_{\mathrm{dR}}\otimes_{\iota_n,\bold{B}^{\dagger,r_n}_{\mathrm{rig},K}}D^{(n)}.
 \end{multline*}
  We define a $\bold{B}^+_{\mathrm{dR}}$-submodule 
  $$W^+_{\mathrm{dR}}(D):=\bold{B}^+_{\mathrm{dR}}\otimes_{\iota_n,\bold{B}^{\dagger,r_n}_{\mathrm{rig},K}}D^{(n)}$$ of $W_{\mathrm{dR}}(D)$. Using the isomorphism 
  $$\bold{B}^+_{\mathrm{dR}}\otimes_{\iota_n,\bold{B}^{\dagger,r_n}_{\mathrm{rig},K}}D^{(n)}\isom \bold{B}^+_{\mathrm{dR}}\otimes_{\iota_{n+1},\bold{B}^{\dagger,r_{n+1}}_{\mathrm{rig},K}}D^{(n+1)}:
  a\otimes x\mapsto a\otimes \varphi(x),$$ $W^+_{\mathrm{dR}}(D)$ also does not depend on the choice of $n$. 
  Hence, we obtain a $B$-pair $W(D):=(W_e(D), W^+_{\mathrm{dR}}(D))$. 
  
  The main theorem of \cite{Ber08a} is the following.
  \begin{thm}
  The functor $D\mapsto W(D)$ is exact and gives  an equivalence of categories  between 
  the category of $(\varphi,\Gamma_K)$-modules over $\bold{B}^{\dagger}_{\mathrm{rig},K}$ and the category 
  of $B$-pairs of $G_K$. Moreover, if we restrict this functor to \'etale $(\varphi,\Gamma_K)$-modules, this gives an equivalence of 
  categories between the category of \'etale $(\varphi,\Gamma_K)$-modules over $\bold{B}^{\dagger}_{\mathrm{rig},K}$ and the category of $p$-adic representations 
  of  $G_K$.

  \end{thm}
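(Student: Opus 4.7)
The plan is to construct a quasi-inverse functor $W\mapsto D(W)$ from $B$-pairs to $(\varphi,\Gamma_K)$-modules and to verify that the two functors are mutually inverse, using Kedlaya's slope filtration theorem as the key descent tool. Given a $B$-pair $W=(W_e, W^+_{\mathrm{dR}})$, I would first form the ``big'' $(\varphi,G_K)$-module
\[
\widetilde{D}(W) := \widetilde{\bold{B}}^{\dagger}_{\mathrm{rig}}[1/t]\otimes_{\bold{B}_e}W_e,
\]
which is a finite free $\widetilde{\bold{B}}^{\dagger}_{\mathrm{rig}}[1/t]$-module with $\varphi$-action (using $\bold{B}_e=(\widetilde{\bold{B}}^{\dagger}_{\mathrm{rig}}[1/t])^{\varphi=1}$ from Lemma~1.1.7 of \cite{Ber08a}) and continuous semi-linear $G_K$-action. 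The datum $W^+_{\mathrm{dR}}\subseteq W_{\mathrm{dR}}$ then picks out, via $\iota_n$ for $n$ large, a $\bold{B}^{+}_{\mathrm{dR}}$-lattice, and combining this with $\widetilde{D}(W)$ produces a $(\varphi,G_K)$-stable $\widetilde{\bold{B}}^{\dagger,r_n}_{\mathrm{rig}}$-lattice $\widetilde{D}^{(n)}(W)$ inside $\widetilde{D}(W)$ characterized by having the correct image under $\iota_n$.

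Next I would descend from the $\widetilde{}$-rings to the Robba ring $\bold{B}^{\dagger}_{\mathrm{rig},K}$. The central input is Kedlaya's theorem that every $\varphi$-module over $\widetilde{\bold{B}}^{\dagger}_{\mathrm{rig}}$ of slope $0$ admits a unique $\varphi$-stable étale $\widetilde{\bold{A}}$-lattice, together with its refinement that pure slope-zero $(\varphi,\Gamma_K)$-modules over $\widetilde{\bold{B}}^{\dagger}_{\mathrm{rig}}$ descend to $\bold{B}^{\dagger}_{\mathrm{rig},K}$ (via the Cherbonnier--Colmez overconvergence theorem when étale, and via slope filtrations plus dévissage for general slopes). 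Applying this to $\widetilde{D}^{(n)}(W)$, after twisting to reduce each slope constituent to the étale situation, yields a canonical $(\varphi,\Gamma_K)$-module $D(W)$ over $\bold{B}^{\dagger}_{\mathrm{rig},K}$. The hard part will be making this descent canonical and functorial: one must show that the lattice carved out by $W^+_{\mathrm{dR}}$ on the one side and by the $D^{(n)}$ chain on the other side match up naturally, so that $D(W(D))\cong D$ and $W(D(W))\cong W$ both hold functorially.

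For the mutual inverse verification, I would check $W(D(W))\cong W$ by unwinding definitions: the construction of $D(W)$ is designed so that $W_e(D(W))=(\widetilde{\bold{B}}^{\dagger}_{\mathrm{rig}}[1/t]\otimes_{\bold{B}^{\dagger}_{\mathrm{rig},K}}D(W))^{\varphi=1}$ recovers $W_e$, using again $\bold{B}_e=(\widetilde{\bold{B}}^{\dagger}_{\mathrm{rig}}[1/t])^{\varphi=1}$, and $W^+_{\mathrm{dR}}(D(W))$ recovers $W^+_{\mathrm{dR}}$ by construction of the lattice. Conversely $D(W(D))\cong D$ follows from the uniqueness part of Berger's Theorem~1.3.3 of \cite{Ber08b} identifying the model $D^{(n)}$. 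Exactness of $W\colon D\mapsto W(D)$ is then read off from the defining diagrams, since localization at $t$, the scalar extensions to $\widetilde{\bold{B}}^{\dagger,r_n}_{\mathrm{rig}}[1/t]$ and to $\bold{B}^{+}_{\mathrm{dR}}$, and passage to $\varphi$-fixed vectors are all exact on free $\bold{B}^{\dagger}_{\mathrm{rig},K}$-modules of this type.

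Finally, for the étale assertion I would argue that $D$ is étale if and only if $\widetilde{D}(W(D))$ has all slopes zero, by the slope filtration theorem; and that a $B$-pair arises from a $p$-adic representation $V$ if and only if the corresponding $\widetilde{}$-object is of slope zero, essentially because $V=W_e\cap W^+_{\mathrm{dR}}$ inside $W_{\mathrm{dR}}$ reconstructs $V$ from $W(V)$ (Bloch--Kato fundamental sequence) precisely in the étale case. Combined with the Fontaine--Cherbonnier--Colmez--Kedlaya equivalence between $p$-adic representations and étale $(\varphi,\Gamma_K)$-modules over $\bold{B}^{\dagger}_{\mathrm{rig},K}$, this yields the second assertion. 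The main obstacle throughout remains the slope-theoretic descent step: once that is available in the formulation of \cite{Ke04}, the rest is essentially a functorial bookkeeping.
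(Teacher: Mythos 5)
The paper does not actually prove this statement: its ``proof'' is a citation of Theorem 2.2.7 and Proposition 2.2.9 of \cite{Ber08a}, with the quasi-inverse functor $D(-)$ merely recalled in Remark \ref{4.4}. So your proposal has to be judged as a reconstruction of Berger's argument, and at the top level it does follow his strategy: build $\widetilde{D}^{(n)}(W)$ inside $\widetilde{\bold{B}}^{\dagger,r_n}_{\mathrm{rig}}[1/t]\otimes_{\bold{B}_e}W_e$ using the lattice $W^+_{\mathrm{dR}}$, descend to $\bold{B}^{\dagger}_{\mathrm{rig},K}$, and check the two compositions via the uniqueness of the models $D^{(n)}$ and the identity $\bold{B}_e=(\widetilde{\bold{B}}^{\dagger}_{\mathrm{rig}}[1/t])^{\varphi=1}$.

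The genuine gap is the descent step, which you treat as an available ``refinement'' of \cite{Ke04} but which is in fact the main new content of \cite{Ber08a}: Kedlaya's theorem gives the slope filtration and the étale-lattice statement, but it says nothing about descending a $(\varphi,G_K)$-module over $\widetilde{\bold{B}}^{\dagger}_{\mathrm{rig}}$ to a $(\varphi,\Gamma_K)$-module over $\bold{B}^{\dagger}_{\mathrm{rig},K}$. Your two proposed mechanisms do not close this. First, ``twisting to reduce each slope constituent to the étale situation'' only works for integral slopes, since twisting by rank-one objects shifts the slope by integers; pure slope $c/d$ with $d>1$ has to be handled by the standard-module/$\varphi^d$ trick, and even then one only descends each pure graded piece (the étale case resting on Fontaine plus \cite{CC98}). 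Second, to descend the whole object from its $G_K$-stable slope filtration one needs to know that extensions over $\widetilde{\bold{B}}^{\dagger}_{\mathrm{rig}}$ (with $G_K$-action) are computed by extensions over $\bold{B}^{\dagger}_{\mathrm{rig},K}$ (with $(\varphi,\Gamma_K)$-action), i.e.\ a full-faithfulness and $\mathrm{H}^1$-comparison statement; ``dévissage'' names this step but supplies no argument, and it is precisely where Berger has to work. Without these inputs the canonicity and functoriality of $W\mapsto D(W)$, which you yourself flag as ``the hard part,'' remain unproved, so the proposal is an outline of the right strategy rather than a proof.
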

  \begin{proof}
  This is Theorem 2.2.7 and Proposition 2.2.9 of \cite{Ber08a}.
  \end{proof}
  \begin{rem}\label{4.4}
  The inverse functor $D(-)$ of $W(-)$ is defined as follows, see $\S$ 2 of \cite{Ber08a} for the proof. Let 
  $W=(W_e,W^+_{\mathrm{dR}})$ be a $B$-pair of $G_K$ of rank $d$. For  each $n\geqq 1$, we first define 
  $$\widetilde{D}^{(n)}(W):=\{x\in \widetilde{\bold{B}}^{\dagger,r_n}_{\mathrm{rig}}[1/t]\otimes_{\bold{B}_e} W_e| \iota_m(x)\in W^+_{\mathrm{dR}} \text{ for any } m\geqq n\}.$$
  Berger showed that $\widetilde{D}(W):=\varinjlim_{n}\widetilde{D}^{(n)}(W)$ is a finite free $\widetilde{\bold{B}}^{\dagger}_{\mathrm{rig}}$-module of rank $d$ with $(\varphi,G_K)$-action. Then, $D(W)$ is defined as the unique $(\varphi,\Gamma_K)$-submodule $D(W)\subseteq \widetilde{D}(W)^{\mathrm{Ker}(\chi)}$ over $\bold{B}^{\dagger}_{\mathrm{rig},K}$ such that $\widetilde{\bold{B}}^{\dagger}_{\mathrm{rig}}\otimes_{\bold{B}^{\dagger}_{\mathrm{rig},K}}D(W)\isom \widetilde{D}(W)$.

  \end{rem}

Next, we recall the definition of Galois cohomology of $B$-pairs, see $\S$ 2 of \cite{Na09} and the appendix of \cite{Na10} for details.
For a continuous $G_K$-module $M$ and for each $q\geqq 0$, we denote by
$$C^q(G_K, M):=\{c: G_K^{\times q}\rightarrow M\,\,\text{ continuous map} \}$$ the set of $q$ continuous cochains (when 
$q=0$, we define $G_K^{\times 0}:=\{\text{one point}\}$). As usual, we define
the  map 
$$\delta_q:C^q(G_K, M)\rightarrow C^{q+1}(G_K, M)$$
 by 
 \begin{multline*}
  \delta_q(c)(g_1,g_2,\cdots,g_{q+1}):=g_1c(g_2,\cdots,g_{q+1})+(-1)^{q+1}c(g_1,g_2,\cdots,g_q)\\
+\sum_{s=1}^q(-1)^sc(g_1,\cdots,g_{s-1},g_sg_{s+1},g_{s+2},\cdots,g_{q+1})
\end{multline*}
 and define the continuous cochain 
complex concentrated in degree $[0,+\infty)$ by
$$C^{\bullet}(G_K, M):=[C^0(G_K,M)\xrightarrow{\delta_0}C^1(G_K, M)\xrightarrow{\delta_1}\cdots].$$ 
We define
 $$\mathrm{H}^q(K, M):=\mathrm{H}^q(C^{\bullet}(G_K, M)).$$

For a $B$-pair $W:=(W_e, W^+_{\mathrm{dR}})$,
we denote by
$$C^{\bullet}(G_K, W):=\mathrm{Cone}(C^{\bullet}(G_K, W_e)\oplus C^{\bullet}(G_K, W^+_{\mathrm{dR}})
\xrightarrow{(c_e,c_{\mathrm{dR}})\mapsto c_e-c_{\mathrm{dR}}} C^{\bullet}(G_K, W_{\mathrm{dR}}))[-1]$$  the degree $(-1)$-shift of the mapping cone 
of the map of complexes 
$$C^{\bullet}(G_K, W_e)\oplus C^{\bullet}(G_K, W^+_{\mathrm{dR}})
\rightarrow C^{\bullet}(G_K, W_{\mathrm{dR}}):(c_e,c_{\mathrm{dR}})\mapsto c_e-c_{\mathrm{dR}}.$$ 
We define
 $$\mathrm{H}^q(K, W):=\mathrm{H}^q(C^{\bullet}(G_K, W)).$$ 

By the definition of mapping cone, we have the following long exact sequence.
 \begin{align*}
0\rightarrow& \mathrm{H}^0(K,W)\rightarrow \mathrm{H}^0(K, W_e)\oplus \mathrm{H}^0(K, W^+_{\mathrm{dR}})\rightarrow 
\mathrm{H}^0(K, W_{\mathrm{dR}})\\
\xrightarrow{\delta_{1,W}} &\mathrm{H}^1(K, W)\rightarrow \mathrm{H}^1(K, W_e)\oplus\mathrm{H}^1(K, W^+_{\mathrm{dR}})\rightarrow 
\mathrm{H}^1(K, W_{\mathrm{dR}})\\
 \xrightarrow{\delta_{2,W}} &\mathrm{H}^2(K, W)\rightarrow \mathrm{H}^2(K, W_e)\rightarrow 0,
\end{align*}
where the vanishings of $\mathrm{H}^q(K, W^+_{\mathrm{dR}}), \mathrm{H}^q(K, W_{\mathrm{dR}}), \mathrm{H}^{q+1}(K, W)$ and 
$\mathrm{H}^{q+1}(K, W_e)$ for any $q\geqq 2$ are proved in \cite{Na10}.

We define
$$\bold{D}^K_{\mathrm{dR}}(W):=\mathrm{H}^0(K, W_{\mathrm{dR}}),$$ and 
we define
$$\mathrm{exp}_{K,W}:=\delta_{1,W}:\bold{D}^K_{\mathrm{dR}}(W)\rightarrow \mathrm{H}^1(K, W)$$ as the first boundary map of the 
above exact sequence.

When $W=W(V)$, since we have a short exact sequence 
$$0\rightarrow V\rightarrow \bold{B}_e\otimes_{\mathbb{Q}_p}V\oplus \bold{B}^+_{\mathrm{dR}}\otimes_{\mathbb{Q}_p}V\rightarrow \bold{B}_{\mathrm{dR}}\otimes_{\mathbb{Q}_p}V\rightarrow 
0$$ by Bloch-Kato, we have a canonical quasi-isomorphism 
$$C^{\bullet}(G_K, V)\rightarrow C^{\bullet}(G_K, W(V)).$$ 
This quasi-isomorphism gives an 
isomorphism $$\mathrm{H}^q(K, V)\isom \mathrm{H}^q(K, W(V))$$
for each $q$.  By this isomorphism, the above exact sequence for $W=W(V)$ is 
equal to the exact sequence
 \begin{align*}
0\rightarrow & \mathrm{H}^0(K,V)\rightarrow \mathrm{H}^0(K, \bold{B}_e\otimes_{\mathbb{Q}_p}V)\oplus \mathrm{H}^0(K, \bold{B}^+_{\mathrm{dR}}\otimes_{\mathbb{Q}_p}V)\rightarrow 
\mathrm{H}^0(K, \bold{B}_{\mathrm{dR}}\otimes_{\mathbb{Q}_p}V)\\
\xrightarrow{\delta_{1,V}}& \mathrm{H}^1(K, V)\rightarrow \mathrm{H}^1(K, \bold{B}_e\otimes_{\mathbb{Q}_p}V)\oplus\mathrm{H}^1(K, \bold{B}^+_{\mathrm{dR}}\otimes_{\mathbb{Q}_p}V)\rightarrow 
\mathrm{H}^1(K, \bold{B}_{\mathrm{dR}}\otimes_{\mathbb{Q}_p}V)\\
 \xrightarrow{\delta_{2,V}}& \mathrm{H}^2(K, V)\rightarrow \mathrm{H}^2(K, \bold{B}_e\otimes_{\mathbb{Q}_p}V)\rightarrow 0,
\end{align*}
obtained from Bloch-Kato's fundamental short exact sequence.


The main result of this subsection is the following.

\begin{thm}\label{4.5}
Let $D$ be a $(\varphi,\Gamma_K)$-module over $\bold{B}^{\dagger}_{\mathrm{rig},K}$. 
For each $q\geqq 0$, there exist the following functorial isomorphisms
\begin{itemize}
\item[(1)]$\mathrm{H}^q(K, D)\isom \mathrm{H}^q(K, W(D))$,
\item[(2)]$\mathrm{H}^q(K, D[1/t])\isom \mathrm{H}^q(K, W_e(D))$,
\item[(3)]$\mathrm{H}^q(K, \bold{D}^+_{\mathrm{dif}}(D))\isom \mathrm{H}^q(K, W^+_{\mathrm{dR}}(D))$,
\item[(4)]$\mathrm{H}^q(K, \bold{D}_{\mathrm{dif}}(D))\isom \mathrm{H}^q(K, W_{\mathrm{dR}}(D))$.
\end{itemize}
Moreover, these isomorphisms give an isomorphism between the exact sequence associated to $D$ in
 Theorem \ref{exp} and 
that associated to $W(D)$ defined above.

\end{thm}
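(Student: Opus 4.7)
The plan is to build, for each of items (1)--(4), a natural quasi-isomorphism at the level of complexes computing the two sides, and then to check that these four quasi-isomorphisms fit into a morphism between the short exact sequence of complexes on the $(\varphi,\Gamma)$-side constructed in the proof of Theorem \ref{exp} and the mapping-cone short exact sequence on the $B$-pair side. Set $H_K:=\mathrm{Gal}(\overline{K}/K_{\infty})$, so that $\Gamma_K\cong G_K/H_K$ up to the finite $p$-torsion group $\Delta_K$. The main tool is the Hochschild--Serre spectral sequence for $H_K\triangleleft G_K$, combined with the fact that $\Gamma_K/\Delta_K$ is procyclic and hence its continuous cohomology is computed by the Koszul complex $C^{\bullet}_{\gamma_K}(-)$ applied to $\Delta_K$-invariants.

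I would first treat items (2)--(4). For each corresponding pair of coefficients two facts are needed. The first is an identification of $H_K$-fixed points: $(W_e(D))^{H_K}=D[1/t]$, $(W^+_{\mathrm{dR}}(D))^{H_K}=\bold{D}^+_{\mathrm{dif}}(D)$ and $(W_{\mathrm{dR}}(D))^{H_K}=\bold{D}_{\mathrm{dif}}(D)$; these descend from the inclusion $\bold{B}^{\dagger}_{\mathrm{rig},K}\subseteq (\widetilde{\bold{B}}^{\dagger}_{\mathrm{rig}})^{H_K}$ and the identification of $(\bold{B}^{(+)}_{\mathrm{dR}})^{H_K}$ as the appropriate limit of the $K_n[[t]]$ via the $\iota_n$. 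The second is the vanishing $\mathrm{H}^q(H_K,-)=0$ for $q\geqq 1$ on each of $W_e(D), W^+_{\mathrm{dR}}(D), W_{\mathrm{dR}}(D)$: for the last two one filters $t$-adically and reduces to Tate's theorem $\mathrm{H}^q(H_K,\mathbb{C}_p(j))=0$ for $q\geqq 1$; for $W_e(D)$ one uses the exact sequence
$$0\rightarrow W_e(D)\rightarrow \widetilde{D}[1/t]\xrightarrow{\varphi-1}\widetilde{D}[1/t]\rightarrow 0,$$
where $\widetilde{D}[1/t]:=\widetilde{\bold{B}}^{\dagger}_{\mathrm{rig}}[1/t]\otimes_{\bold{B}^{\dagger}_{\mathrm{rig},K}}D$, together with the $H_K$-vanishing on $\widetilde{D}[1/t]$ (a Cherbonnier--Colmez type input already built into the proof of Berger's equivalence of categories). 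Granting these two facts, Hochschild--Serre gives $\mathrm{H}^q(G_K, N)\isom \mathrm{H}^q(\Gamma_K, N^{H_K})\isom \mathrm{H}^q(C^{\bullet}_{\gamma_K}(N^{H_K}))$, which yields (3) and (4) directly and, after splicing with the $\varphi-1$ resolution above, also (2).

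For (1), the strategy is parallel but carries one additional layer, because $C^{\bullet}(G_K, W(D))$ is by definition the shifted mapping cone of $C^{\bullet}(G_K, W_e(D))\oplus C^{\bullet}(G_K, W^+_{\mathrm{dR}}(D))\rightarrow C^{\bullet}(G_K, W_{\mathrm{dR}}(D))$. Resolving $W_e(D)$ via the $\varphi-1$ sequence and $W^+_{\mathrm{dR}}(D)$ as a $\bold{B}^+_{\mathrm{dR}}$-lattice inside $W_{\mathrm{dR}}(D)=\bold{B}_{\mathrm{dR}}\otimes_{\bold{B}_e}W_e(D)$, and applying Hochschild--Serre as above, one lands on a double complex whose total complex is naturally quasi-isomorphic to $C^{\bullet}_{\varphi,\gamma_K}(D)$. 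Compatibility of the four comparison quasi-isomorphisms with the respective short exact sequences of complexes is then a diagram chase using the functoriality of the $\iota_n$ and of the embedding $D^{(n)}\hookrightarrow \widetilde{\bold{B}}^{\dagger, r_n}_{\mathrm{rig}}[1/t]\otimes_{\bold{B}^{\dagger, r_n}_{\mathrm{rig},K}}D^{(n)}$, which in turn yields the promised isomorphism of long exact sequences.

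The main obstacle is the $H_K$-cohomology vanishing for $\widetilde{D}[1/t]$: this lies deeper than the Tate-twist vanishing used for the de Rham pieces, and ultimately rests on the Cherbonnier--Colmez overconvergence theorem and its extension by Berger to arbitrary $(\varphi,\Gamma_K)$-modules over the Robba ring. Once that input, and the identification of $H_K$-invariants, are in hand, everything else is formal spectral sequence manipulation and diagram chasing.
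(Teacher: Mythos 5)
Your overall strategy (Hochschild--Serre for $H_K:=\mathrm{Ker}(\chi)\subseteq G_K$ plus vanishing of higher $H_K$-cohomology) is the natural one, but as written it has a genuine gap: the identifications of $H_K$-invariants you start from are not correct, and repairing them is exactly the non-formal content of the theorem. One has $(\bold{B}^+_{\mathrm{dR}})^{H_K}\supsetneq K_{\infty}[[t]]$ (already modulo $t$ the invariants are the completion $\widehat{K}_{\infty}$, not $K_{\infty}$) and $(\widetilde{\bold{B}}^{\dagger}_{\mathrm{rig}})^{H_K}=\widetilde{\bold{B}}^{\dagger}_{\mathrm{rig},K}\supsetneq \bold{B}^{\dagger}_{\mathrm{rig},K}$, so $(W^+_{\mathrm{dR}}(D))^{H_K}$, $(W_{\mathrm{dR}}(D))^{H_K}$ and $(W_e(D))^{H_K}$ are modules over these completed rings and are strictly larger than $\bold{D}^+_{\mathrm{dif}}(D)$, $\bold{D}_{\mathrm{dif}}(D)$ and $D[1/t]$. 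Hochschild--Serre therefore lands you on $\Gamma_K$-cohomology of completed objects, and the passage down to the decompleted modules (and, for (1) and (2), from the perfect ring $\widetilde{\bold{B}}^{\dagger}_{\mathrm{rig},K}$ to the imperfect Robba ring, i.e.\ to the complex $C^{\bullet}_{\varphi,\gamma_K}(D)$) requires Tate--Sen decompletion and Cherbonnier--Colmez/Berger-type descent. These are precisely the external theorems the paper invokes: Fontaine's Theorem 2.14 of \cite{Fo03} for (3), and Theorem 5.11 of \cite{Na10} (Liu's method) for (1); calling the rest ``formal spectral sequence manipulation'' hides where the real work sits, and your ``double complex quasi-isomorphic to $C^{\bullet}_{\varphi,\gamma_K}(D)$'' for (1) is asserted rather than constructed.

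Second, the compatibility with the connecting maps is not free. It would be automatic only if all four comparison isomorphisms were realized simultaneously by one morphism from the short exact sequence of complexes built in the proof of Theorem \ref{exp} (via the $\iota_n$-stalk sequence of Lemma \ref{2.8}) to the mapping-cone sequence on the $B$-pair side; your sketch never produces such a morphism, and since (1) comes from a different construction this is exactly the delicate point. The paper handles it by hand: the square relating $\mathrm{exp}_{K,D}$ to $\mathrm{exp}_{K,W(D)}$ is verified by computing both connecting maps as explicit extension classes (using the formula of Lemma \ref{explicit} and the functor $W(-)$ on extensions), and the square involving $\delta_{2}$ is reduced via the adjunction of Lemma \ref{2.15} to an easy diagram. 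By contrast, the paper's route to (2) and (4) is softer than yours: (4) follows from (3) by inverting $t$ (a colimit), and (2) is deduced from (1) applied to $\frac{1}{t^n}D$ together with a comparison of mapping cones, avoiding any direct $H_K$-vanishing statement for $\widetilde{D}[1/t]$. If you want to pursue your route, you must state and use the decompletion inputs explicitly (Tate--Sen axioms for $\bold{B}^+_{\mathrm{dR}}$ and the overconvergence/descent results for the Robba ring) and then build the comparison at the level of complexes so that the isomorphism of long exact sequences follows; as it stands the proposal assumes these points rather than proving them.
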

\begin{proof}
We proved  (1) in Theorem 5.11 of \cite{Na10}.
Since we have $W^+_{\mathrm{dR}}(D)=\bold{B}^+_{\mathrm{dR}}\otimes_{K_{\infty}[[t]]}\bold{D}^+_{\mathrm{dif}}(D)$, 
then (3) follows Theorem 2.14 of \cite{Fo03}.
(4) follows from (3) since we have $W_{\mathrm{dR}}(D)=\varinjlim_{n\geqq 0} \frac{1}{t^n}W^+_{\mathrm{dR}}(D)$ and 
$\bold{D}_{\mathrm{dif}}(D)=\varinjlim_{n\geqq 0} \frac{1}{t^n}\bold{D}^+_{\mathrm{dif}}(D)$. 

We prove (2) using (1). 
By (1),  we have $$\mathrm{H}^q(K, D[1/t])\isom \varinjlim_{n}\mathrm{H}^q(K,\frac{1}{t^n}D)
\isom \varinjlim_{n}\mathrm{H}^q(K, W(\frac{1}{t^n}D)).$$ 
Since we have $W(\frac{1}{t^n}D)\isom (W_e(D), \frac{1}{t^n}W^+_{\mathrm{dR}}(D))$ for each $n\geqq 0$, 
we obtain 
 $$\varinjlim_{n\geqq 0}C^{\bullet}(G_K, W^+_{\mathrm{dR}}(\frac{1}{t^n}D))=\varinjlim_{n\geqq 0}C^{\bullet}(G_K,
\frac{1}{t^n}W^+_{\mathrm{dR}}(D))=C^{\bullet}(G_K, W_{\mathrm{dR}}(D)).$$ 
Hence, we obtain an isomorphism
\begin{multline*}
  \varinjlim_{n\geqq 0}\mathrm{H}^q(K, W(\frac{1}{t^n}D))\isom \mathrm{H}^q(\varinjlim_{n\geqq 0}\mathrm{C}^{\bullet}(G_K, 
W(\frac{1}{t^n}(D)))\\
\isom \mathrm{H}^q(\mathrm{Cone}(C^{\bullet}(G_K, W_e(D))\oplus C^{\bullet}(G_K, W_{\mathrm{dR}}(D))
\xrightarrow{(c_e,c_{\mathrm{dR}})\mapsto c_e-c_{\mathrm{dR}}}C^{\bullet}(G_K, W_{\mathrm{dR}}(D)))[-1]).
\end{multline*}
Since we have the following short exact sequence of complexes
  \begin{multline*}
0\rightarrow C^{\bullet}(G_K, W_e(D))\xrightarrow{x\mapsto (x,x)}C^{\bullet}(G_K, W_e(D))\oplus
C^{\bullet}(G_K, W_{\mathrm{dR}}(D))\\
\xrightarrow{(x,y)\mapsto x-y}C^{\bullet}(G_K, W_{\mathrm{dR}}(D))\rightarrow 0, 
\end{multline*}
we obtain a natural isomorphism 
 \begin{multline*}
\mathrm{H}^q(K, W_e(D))\\
\isom \mathrm{H}^q(\mathrm{Cone}(C^{\bullet}(G_K, W_e(D))\oplus C^{\bullet}(G_K, W_{\mathrm{dR}}(D))
\xrightarrow{(c_e,c_{\mathrm{dR}})\mapsto c_e-c_{\mathrm{dR}}}C^{\bullet}(G_K, W_{\mathrm{dR}}(D)))[-1]),
\end{multline*}
which proves (2).

Next, we prove that  the isomorphisms $(1)$ to $(4)$ of the theorem give an isomorphism of  the corresponding  long exact sequences. 
Since the other commutativities are easy to check,
it suffices to show that the following two diagrams are commutative
\begin{itemize}
\item[(i)]$$
 \begin{CD}
  \mathrm{H}^0(K, \bold{D}_{\mathrm{dif}}(D))@>\isom >>  \mathrm{H}^0(K, W_{\mathrm{dR}}(D))   \\
   @VV \mathrm{exp}_{K,D} V @VV \mathrm{exp}_{K,W(D)} V \\
   \mathrm{H}^1(K, D)@>\isom>>  \mathrm{H}^1(K, W(D)),
       \end{CD}
 $$
 \item[(ii)]$$
 \begin{CD}
 \mathrm{H}^1(K,\bold{D}_{\mathrm{dif}}(D)) @>\isom>> \mathrm{H}^1(K, W_{\mathrm{dR}}(D))\\
 @VV \delta_{2,D} V @VV \delta_{2, W(D)} V \\
 \mathrm{H}^2(K, D) @> \isom >> \mathrm{H}^2(K, W(D)).
 \end{CD}
 $$
 \end{itemize}

We first prove the commutativity of (i). For simplicity, we assume that
 $\Gamma_K$ has a topological generator $\gamma_K$, the general case can be proved similarly using the argument of $\S$ 2.1 of \cite{Li08}.
 If we denote by
 $\mathrm{Ext}^1(\bold{B}^{\dagger}_{\mathrm{rig},K}, D)$ (resp. $\mathrm{Ext}^1((\bold{B}_e,\bold{B}^+_{\mathrm{dR}}), W(D))$ )
 the group of extension classes of $\bold{B}^{\dagger}_{\mathrm{rig},K}$ by $D$ (resp. 
 of the trivial $B$-pair $(\bold{B}_e,\bold{B}^+_{\mathrm{dR}})$ by $W(D)$), then we have the following canonical isomorphisms
 $$h_D:\mathrm{H}^1(K, D)\isom \mathrm{Ext}^1(\bold{B}^{\dagger}_{\mathrm{rig},K},D),\,\,\,h_{W(D)}:\mathrm{H}^1(K, W(D))\isom \mathrm{Ext}^1((\bold{B}_e,\bold{B}^+_{\mathrm{dR}}), W(D))$$ 
 by $\S$ 2.1 of \cite{Li08} and by $\S$ 2.1 of \cite{Na09},
 and we have the following commutative diagram
 $$
 \begin{CD}
 \mathrm{H}^1(K, D) @> \isom >> \mathrm{H}^1(K, W(D)) \\
 @VV h_D V @VV h_{W(D)}V \\
 \mathrm{Ext}^1(\bold{B}^{\dagger}_{\mathrm{rig},K},D) @> \isom >> \mathrm{Ext}^1((\bold{B}_e,\bold{B}^+_{\mathrm{dR}}), W(D))
\end{CD}
 $$
 by Theorem 5.11 of \cite{Na10}, where the isomorphism 
 $$\mathrm{Ext}^1(\bold{B}^{\dagger}_{\mathrm{rig},K}, D)\isom \mathrm{Ext}^1((\bold{B}_e,\bold{B}^+_{\mathrm{dR}}), W(D))$$ is given by 
 $$[0\rightarrow \bold{B}^{\dagger}_{\mathrm{rig},K}\rightarrow D'\rightarrow D\rightarrow 0]\mapsto [0\rightarrow (\bold{B}_e,\bold{B}^+_{\mathrm{dR}})\rightarrow W(D')\rightarrow W(D)\rightarrow 0],$$i.e. given by applying the functor $W(-)$. 
 
 Let $a\in \mathrm{H}^0(K, \bold{D}_{\mathrm{dif}}(D))=\bold{D}_{\mathrm{dif}}(D)^{\gamma_K=1}\isom \mathrm{H}^0(K,W_{\mathrm{dR}}(D))$. 
 By the above diagram, it suffices to show that the functor $W(-)$ sends 
 the extension corresponding to $\mathrm{exp}_{K,D}(a)$ to  the extension corresponding to $\mathrm{exp}_{K,W(D)}(a)$.
 
 Take $n$ sufficiently large such that 
 $a\in \bold{D}_{\mathrm{dif},n}(D)^{\gamma_K=1}$. By (1) of Lemma \ref{explicit}  and by the definition of the isomorphism 
 $\mathrm{H}^1(K, D)\isom \mathrm{Ext}^1(\bold{B}^{\dagger}_{\mathrm{rig},K}, D)$, if we take $\tilde{a}\in D^{(n)}[1/t]$ such that 
$\iota_m(\tilde{a})-a\in \bold{D}^+_{\mathrm{dif},m}(D)$ for any $m\geqq n$, the extension $D_a$ corresponding to $\mathrm{exp}_{K,D}(a)$ is 
 explicitly defined by 
$$[0\rightarrow D\xrightarrow{x\mapsto (x,0)} D\oplus \bold{B}^{\dagger}_{\mathrm{rig},K}e\xrightarrow{(x,ye)\mapsto y} \bold{B}^{\dagger}_{\mathrm{rig},K}\rightarrow 0]$$
 such that 
 $$\varphi((x,ye)):=(\varphi(x)+\varphi(y)(\gamma_K-1)\tilde{a}, \varphi(y)e),\,\,\, \gamma_K((x,ye)):=(\gamma_K(x)+\gamma_K(y)(\varphi-1)\tilde{a},\gamma_K(y)e)$$
 for any $(x,ye)\in D\oplus \bold{B}^{\dagger}_{\mathrm{rig},K}e$.
 
 On the other hands, the extension 
 $$W_a:=(W_{e,a}:=W_e(D)\oplus \bold{B}_ee_{\mathrm{crys}}, W_{\mathrm{dR},a}^+:=W^+_{\mathrm{dR}}(D)
 \oplus \bold{B}^+_{\mathrm{dR}}e_{\mathrm{dR}})$$
 corresponding to $\mathrm{exp}_{K,W(D)}(a)$ is defined by 
 $$g(x+ye_{\mathrm{crys}}):=g(x)+g(y)e_{\mathrm{crys}}, \,\,\,g(x'+y'e_{\mathrm{dR}}):=g(x')+g(y')e_{\mathrm{dR}}$$
 for any $g\in G_K, x\in W_e(D), x'\in W^+_{\mathrm{dR}}(D), y\in \bold{B}_e, y'\in \bold{B}^+_{\mathrm{dR}}$ and 
 the inclusion $$W^+_{\mathrm{dR},a}\hookrightarrow \bold{B}_{\mathrm{dR}}\otimes _{\bold{B}_e} W_{e,a}=W_{\mathrm{dR}}(D)\oplus \bold{B}_{\mathrm{dR}}e_{\mathrm{crys}}$$ is defined by 
 $$x+ye_{\mathrm{dR}}\mapsto (x+ya)+ye_{\mathrm{crys}}\,\, (x\in W^+_{\mathrm{dR}}(D), y\in \bold{B}^+_{\mathrm{dR}}).$$
 
 Let's show that $D(W_a)\isom D_a$ as an extension. 
 By the definition of $\tilde{a}$, we can easily check that 
 $\widetilde{D}^{(n)}(W_a)$ defined in Remark \ref{4.4} is isomorphic to 
 $$\widetilde{\bold{B}}^{\dagger,r_n}_{\mathrm{rig}}\otimes_{\bold{B}^{\dagger,r_n}_{\mathrm{rig},K}}D^{(n)}\oplus \widetilde{\bold{B}}^{\dagger,r_n}_{\mathrm{rig}}(e_{\mathrm{crys}}+\tilde{a})\subseteq 
 \widetilde{\bold{B}}^{\dagger,r_n}_{\mathrm{rig}}[1/t]\otimes_{\bold{B}_e} W_{e,a}$$ 
 and that $\widetilde{D}(W_a)$ contains a $(\varphi,\Gamma_K)$-module $D\oplus \bold{B}^{\dagger}_{\mathrm{rig},K}(e_{\mathrm{crys}}+\tilde{a})$ over $\bold{B}^{\dagger}_{\mathrm{rig},K}$, which is easily seen to be isomorphic to $D_a$, hence finishing the proof of the commutativity of (i).

 Finally, we prove the commutativity of (ii).
By Lemma \ref{2.15} (we note that we can show the $B$-pairs analogue of Lemma \ref{2.15} in the same way), it suffices to show the following 
 diagram is commutative
 \begin{itemize}
  \item[(ii)']
 $$
 \begin{CD}
 \mathrm{H}^0(K, \bold{D}_{\mathrm{dif}}(D^{\lor}(1))) @> \isom >>\mathrm{H}^0(K, W_{\mathrm{dR}}(D^{\lor}(1)))\\
 @ AA x\mapsto \iota_n(x)A @ AA\mathrm{can} A\\
 \mathrm{H}^0(K, D^{\lor}(1))@> \isom >>\mathrm{H}^0(K, W(D^{\lor}(1))),
 \end{CD}
 $$
 \end{itemize}
 but the commutativity of this diagram is trivial. We finish the proof of the theorem.

\end{proof}

\section{Perrin-Riou's big exponential map for de Rham $(\varphi,\Gamma)$- modules}
This section is the main part of this article. 
For any de Rham $(\varphi,\Gamma)$-module $D$, we construct a system of maps $\{\mathrm{Exp}_{D,h}\}_{h\gg0}$, which we call big exponential maps, and prove their important properties, i.e 
their interpolation formulae and the theorem $\delta(D)$. First two subsection is for preliminary. 
In $\S3.1$, we recall Pottharst's theory of the analytic Iwasawa cohomologies (\cite{Po12b}). In $\S3.2$, we recall Berger's construction of $p$-adic differential equations associated to de Rham $(\varphi,\Gamma)$-modules (\cite{Ber02}, \cite{Ber08b}). The next two subsection is the main part of this article. In $\S3.3$, we define the maps $\{\mathrm{Exp}_{D,h}\}_{h\gg0}$ and prove their interpolation formulae. In $\S3.4$, we formulate and prove
the theorem $\delta(D)$. In the final subsection $\S3.5$, we compare our big exponential maps and our theorem $\delta(D)$ with Perrin-Riou's or Pottharst's ones in the crystalline case.

\subsection{analytic Iwasawa cohomology}
In this subsection, we recall the results of Pottharst concerning 
analytic Iwasawa cohomologies of $(\varphi,\Gamma)$-modules over the Robba ring  (\cite{Po12b}) . 

Let $\Lambda:=\mathbb{Z}_p[[\Gamma_K]]:=\varprojlim_n \mathbb{Z}_p[\Gamma_K/\Gamma_{K_n}]$ be the Iwasawa 
algebra of $\Gamma_K$. If we decompose $\Gamma_K$ by $\Gamma_K\isom \Gamma_{K,\mathrm{tor}}\times \Gamma_{K,\mathrm{free}}$, where 
$\Gamma_{K,\mathrm{tor}}\subseteq \Gamma_K$ is the torsion subgroup of $\Gamma_K$ and 
$\Gamma_{K,\mathrm{free}}=\Gamma_K\cap \chi^{-1}(1+2p\mathbb{Z}_p)$ , then we have an 
isomorphism $\Lambda\isom \mathbb{Z}_p[\Gamma_{K,\mathrm{tor}}]\otimes_{\mathbb{Z}_p}\mathbb{Z}_p[[\Gamma_{K,\mathrm{free}}]]$. If we take a topological generator 
$\gamma\in \Gamma_{K,\mathrm{free}}$, then we also have a $\mathbb{Z}_p[\Gamma_{K,\mathrm{tor}}]$-algebra  isomorphism 
$\mathbb{Z}_p[\Gamma_{K,\mathrm{tor}}]\otimes_{\mathbb{Z}_p}\mathbb{Z}_p[[\Gamma_{K,\mathrm{free}}]]\isom \mathbb{Z}_p[\Gamma_{K,\mathrm{tor}}]\otimes_{\mathbb{Z}_p}
\mathbb{Z}_p[[T]]$ define by $1\otimes[\gamma]\mapsto 1\otimes (1+T)$.

 Let $\mathfrak{m}\subseteq \Lambda$ be the Jacobson 
radical of  $\Lambda$. 
For each $n\geqq 1$, we set $\Lambda_{n}:=\Lambda[\frac{\mathfrak{m}^n}{p}]^{\wedge}$  the $p$-adic completion of $\Lambda[\frac{\mathfrak{m}^n}{p}]$, which is an affinoid algebra over $\mathbb{Q}_p$. The natural map $\Lambda[\frac{\mathfrak{m}^{n+1}}{p}]\rightarrow \Lambda[\frac{\mathfrak{m}^n}{p}]$ induces 
a continuous map $\Lambda_{n+1}\rightarrow \Lambda_n$ for each $n\geqq 1$. We set $\Lambda_{\infty}:=\varprojlim_n \Lambda_n$. If we fix an isomorphism 
$\Lambda\isom \mathbb{Z}_p[\Gamma_{K,\mathrm{tor}}]\otimes_{\mathbb{Z}_p}\mathbb{Z}_p[[T]]$ as above, then this isomorphism 
naturally extends to a $\mathbb{Q}_p[\Gamma_{K,\mathrm{tor}}]$-algebra isomorphism $\Lambda_{\infty}\isom \mathbb{Q}_p[\Gamma_{K,\mathrm{tor}}]\otimes_{\mathbb{Q}_p}\bold{B}^+_{\mathrm{rig},\mathbb{Q}_p}$ , where the ring $\bold{B}^+_{\mathrm{rig},\mathbb{Q}_p}$ is defined by 
$$\bold{B}^+_{\mathrm{rig},\mathbb{Q}_p}:= \{f(T)=\sum_{n=0}^{\infty}a_nT^n| \,  a_n\in \mathbb{Q}_p\, \text{ and}\, f(T) \,\text{ is convergent on }\, 
0\leqq |T| <1\}.$$ 
We remark that the above isomorphism $\Lambda_{\infty}\isom \mathbb{Q}_p[\Gamma_{K,\mathrm{tor}}]\otimes_{\mathbb{Q}_p}\bold{B}^+_{\mathrm{rig},\mathbb{Q}_p}$ also depends on the choice of a topological generator $\gamma$ and 
highly non-canonical, and is only used to help the reader to understand the structure of $\Lambda_{\infty}$.

We define $\Lambda_n[\Gamma_K]$-modules $\widetilde{\Lambda}_n$  and $\widetilde{\Lambda}_{n}^{\iota}$ by 
$\widetilde{\Lambda}_n=\widetilde{\Lambda}_{n}^{\iota}=\Lambda_n$ as  $\Lambda_n$-module and $\gamma(\lambda):=[\gamma]\cdot\lambda$, 
$\gamma(\lambda'):=[\gamma^{-1}]\cdot\lambda'$ for 
$\lambda\in \widetilde{\Lambda}_n, \lambda'\in\widetilde{\Lambda}_{n}^{\iota}$ and $\gamma\in \Gamma_K$. 

Let $D$ be a $(\varphi,\Gamma_K)$-module over $\bold{B}^{\dagger}_{\mathrm{rig},K}$. For each $n\geqq 1$,  we define a $(\varphi,\Gamma_K)$-module $D\widehat{\otimes}_{\mathbb{Q}_p}\widetilde{\Lambda}^{\iota}_n$ over 
$\bold{B}^{\dagger}_{\mathrm{rig},K}\widehat{\otimes}_{\mathbb{Q}_p} \Lambda_n$ as follows (see $\S$ 2 of \cite{Po12b} for more precise definition).
We define $D\widehat{\otimes}_{\mathbb{Q}_p}\widetilde{\Lambda}^{\iota}_n:=D\widehat{\otimes}_{\mathbb{Q}_p}\Lambda_n$ 
as a $\bold{B}^{\dagger}_{\mathrm{rig},K}\widehat{\otimes}_{\mathbb{Q}_p}\Lambda_n$-module and define 
$\varphi(x\widehat{\otimes}\lambda):=\varphi(x)\widehat{\otimes}\lambda$, $\psi(x\widehat{\otimes}\lambda):=
\psi(x)\widehat{\otimes}\lambda$ and $\gamma(x\widehat{\otimes}\lambda):=\gamma(x)\widehat{\otimes}[\gamma^{-1}]\cdot\lambda$  for 
$x\in D, \lambda\in \Lambda_n$ and $\gamma\in \Gamma_K$. 

For each $n\geqq 1$, we define two complexes 
$C^{\bullet}_{\varphi,\gamma_K}(D\widehat{\otimes}_{\mathbb{Q}_p}\widetilde{\Lambda}^{\iota}_n)$ and
$C^{\bullet}_{\psi,\gamma_K}(D\widehat{\otimes}_{\mathbb{Q}_p}\widetilde{\Lambda}^{\iota}_n)$ and define the natural map 
of complexes $C^{\bullet}_{\varphi,\gamma_K}(D\widehat{\otimes}_{\mathbb{Q}_p}\widetilde{\Lambda}^{\iota}_n)\rightarrow C^{\bullet}_{\psi,\gamma_K}(D\widehat{\otimes}_{\mathbb{Q}_p}\widetilde{\Lambda}^{\iota}_n)$ in the same way as those for $D$. 
We define
$\mathrm{H}^q(K, D\widehat{\otimes}_{\mathbb{Q}_p}\widetilde{\Lambda}^{\iota}_n):=\mathrm{H}^q(C^{\bullet}_{\varphi,\gamma_K}(D\widehat{\otimes}_{\mathbb{Q}_p}\widetilde{\Lambda}^{\iota}_n))$, which is a $\Lambda_n$-module. The natural map $\Lambda_{n+1}\rightarrow \Lambda_n$ induces a natural map $D\widehat{\otimes}_{\mathbb{Q}_p}\widetilde{\Lambda}^{\iota}_{n+1}
\rightarrow D\widehat{\otimes}_{\mathbb{Q}_p}\widetilde{\Lambda}^{\iota}_n$, and this map 
induces $\mathrm{H}^q(K, D\widehat{\otimes}_{\mathbb{Q}_p}\widetilde{\Lambda}^{\iota}_{n+1})\rightarrow \mathrm{H}^q(K, D\widehat{\otimes}_{\mathbb{Q}_p}\widetilde{\Lambda}^{\iota}_n)$. 
Following \cite{Po12b}, we define the analytic Iwasawa cohomology of $D$ as follows.
\begin{defn}
Let $D$ be a $(\varphi,\Gamma_K)$-module over $\bold{B}^{\dagger}_{\mathrm{rig},K}$, 
$q\geqq 0$ an integer. We define the 
$q$-th analytic Iwasawa cohomology of $D$
by $$\bold{H}^q_{\mathrm{Iw}}(K, D):=\varprojlim_n\mathrm{H}^q(K, D\widehat{\otimes}_{\mathbb{Q}_p}\widetilde{\Lambda}^{\iota}_n),$$
which is a $\Lambda_{\infty}$-module.

\end{defn}

Because we have a decomposition
$\mathbb{Q}_p[\Gamma_{K,\mathrm{tor}}]=\oplus_{\eta\in \widehat{\Gamma}_{K,\mathrm{tor}}}\mathbb{Q}_p\alpha_{\eta}$, 
where $\widehat{\Gamma}_{K,\mathrm{tor}}$ is the character group of $\Gamma_{K,\mathrm{tor}}$ and $\alpha_{\eta}$ is the idempotent corresponding to $\eta$, 
we also have $\Lambda_{\infty}=\bigoplus_{\eta\in \hat{\Gamma}_{K,\mathrm{tor}}}
\Lambda_{\infty}\alpha_{\eta}$ and each $\Lambda_{\infty}\alpha_{\eta}$ is non-canonically isomorphic 
to $\bold{B}^+_{\mathrm{rig},\mathbb{Q}_p}$.
Let $M$ be a $\Lambda_{\infty}$-module.  Using this decomposition,
we obtain a decomposition $M=\bigoplus_{\eta\in \widehat{\Gamma}_{K,\mathrm{tor}}}M_{\eta}$, where we define $M_{\eta}:=\alpha_{\eta}M$ which is a $\Lambda_{\infty}\alpha_{\eta}$-module. 
For a $\bold{B}^+_{\mathrm{rig},\mathbb{Q}_p}$-module $N$, we define $N_{\mathrm{tor}}:=\{x\in N| ax=0 \text{ for some non zero }\,a\in \bold{B}^+_{\mathrm{rig},\mathbb{Q}_p}\}$. 
For a $\Lambda_{\infty}$-module $M$, we define $M_{\mathrm{tor}}:=\bigoplus_{\eta\in \widehat{\Gamma}_{K,\mathrm{tor}}}(M_{\eta})_{\mathrm{tor}}$.

 As for the fundamental properties of $\bold{H}^q_{\mathrm{Iw}}(K, D)$, Pottharst proved the following  results, which is a generalization of Perrin-Riou's results (\cite{Per94}) in 
the case of $p$-adic Galois representations. 

\begin{thm}\label{3.2}
Let $D$ be a $(\varphi,\Gamma_K)$-module over $\bold{B}^{\dagger}_{\mathrm{rig},K}$ of rank $d$. Then we have the following,
\begin{itemize}
\item[(1)]For each $n\geqq 1$ and $q\geqq 0$, the natural map 
$$\mathrm{H}^q(K, D\widehat{\otimes}_{\mathbb{Q}_p}\widetilde{\Lambda}^{\iota}_{n+1})
\rightarrow \mathrm{H}^q(K, D\widehat{\otimes}_{\mathbb{Q}_p}\widetilde{\Lambda}^{\iota}_n)$$ induces an isomorphism of $\Lambda_n$-modules
$$\mathrm{H}^q(K, D\widehat{\otimes}_{\mathbb{Q}_p}\widetilde{\Lambda}^{\iota}_{n+1})\otimes_{\Lambda_{n+1}}\Lambda_n\isom 
\mathrm{H}^q(K, D\widehat{\otimes}_{\mathbb{Q}_p}\widetilde{\Lambda}^{\iota}_n),$$
\item[(2)]$\bold{H}^q_{\mathrm{Iw}}(K,D)=0$ if $q\not= 1,2$,
\item[(3)]$\bold{H}^1_{\mathrm{Iw}}(K, D)_{\mathrm{tor}}$ and $\bold{H}^2_{\mathrm{Iw}}(K, D)$ are finite dimensional 
$\mathbb{Q}_p$-vector spaces,
\item[(4)]$\bold{H}^1_{\mathrm{Iw}}(K, D)/\bold{H}^1_{\mathrm{Iw}}(K,D)_{\mathrm{tor}}$ is a finite free $\Lambda_{\infty}$-module of rank $d[K:\mathbb{Q}_p]$.

\end{itemize}

\end{thm}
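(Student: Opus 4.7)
The plan is to adapt Pottharst's strategy from \cite{Po12b}, the central technical move being to replace the $\varphi$-complex by the $\psi$-complex where stronger finiteness is available. The coefficient-twisted analog of Theorem \ref{2.3} provides a quasi-isomorphism
$$C^{\bullet}_{\varphi,\gamma_K}(D\widehat{\otimes}_{\mathbb{Q}_p}\widetilde{\Lambda}^{\iota}_n)\xrightarrow{\sim} C^{\bullet}_{\psi,\gamma_K}(D\widehat{\otimes}_{\mathbb{Q}_p}\widetilde{\Lambda}^{\iota}_n),$$
so one may compute $\mathrm{H}^q(K,D\widehat{\otimes}_{\mathbb{Q}_p}\widetilde{\Lambda}^{\iota}_n)$ using $\psi$ throughout; indeed the verification of loc.\ cit.\ that $\gamma_K-1$ is bijective on $D^{\Delta_K,\psi=0}$ passes to the tensored situation because the $\Lambda_n$-action commutes with both $\gamma_K$ and $\psi$. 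The key structural claim, from which everything else flows, is that $C^{\bullet}_{\psi,\gamma_K}(D\widehat{\otimes}_{\mathbb{Q}_p}\widetilde{\Lambda}^{\iota}_n)$ is a perfect complex of $\Lambda_n$-modules concentrated in degrees $[0,2]$. This uses that $\psi-1:D\to D$ is surjective with kernel $D^{\psi=1}$ of the expected size, and that tensoring with the affinoid algebra $\Lambda_n$ preserves this surjectivity together with the nuclearity needed to control $\ker(\psi-1)$.

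Granted perfectness, the base-change statement (1) follows from the identification
$$C^{\bullet}_{\psi,\gamma_K}(D\widehat{\otimes}_{\mathbb{Q}_p}\widetilde{\Lambda}^{\iota}_{n+1})\otimes^{\mathbb{L}}_{\Lambda_{n+1}}\Lambda_n\simeq C^{\bullet}_{\psi,\gamma_K}(D\widehat{\otimes}_{\mathbb{Q}_p}\widetilde{\Lambda}^{\iota}_n),$$
combined with the fact that the target has cohomology concentrated in degrees $[1,2]$, so the spectral sequence $\mathrm{Tor}^{\Lambda_{n+1}}_{-p}(\mathrm{H}^q,\Lambda_n)\Rightarrow \mathrm{H}^{q-p}$ collapses; in particular the derived tensor agrees with the underived one. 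Once (1) is proved, the Mittag-Leffler condition is automatic and yields $\bold{H}^q_{\mathrm{Iw}}(K,D)=\varprojlim_n \mathrm{H}^q(K, D\widehat{\otimes}_{\mathbb{Q}_p}\widetilde{\Lambda}^{\iota}_n)$, which already gives the vanishing of $\bold{H}^q_{\mathrm{Iw}}(K,D)$ for $q\geq 3$. Vanishing in degree zero follows because $\mathrm{H}^0(K,D\widehat{\otimes}_{\mathbb{Q}_p}\widetilde{\Lambda}^{\iota}_n)$ consists of invariants under the twisted $\Gamma_K$-action, and one checks that the transition maps are eventually zero on any given element — intuitively, the element would have to be invariant under $[\gamma]\cdot\gamma$ for every $\gamma\in\Gamma_K$ in the limit, forcing it to vanish.

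For (3) and (4), I would apply the local Euler characteristic formula of Theorem \ref{2.1}(2) fibrewise: specializing $D\widehat{\otimes}_{\mathbb{Q}_p}\widetilde{\Lambda}^{\iota}_n$ at any maximal ideal of $\Lambda_n$ (corresponding to a continuous character $\eta:\Gamma_K\to \overline{\mathbb{Q}}_p^{\times}$) recovers a $(\varphi,\Gamma_K)$-module of rank $d$ whose Euler characteristic is $-d[K:\mathbb{Q}_p]$. Combining this with perfectness and the vanishing of $\bold{H}^0_{\mathrm{Iw}}$ forces the generic rank of $\bold{H}^1_{\mathrm{Iw}}(K,D)$ over $\Lambda_\infty$ to be $d[K:\mathbb{Q}_p]$ and identifies its torsion with $\bold{H}^2_{\mathrm{Iw}}(K,D)$ up to a finite-dimensional term. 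The structure theory of coadmissible modules (each $\Lambda_\infty\alpha_\eta\cong\bold{B}^+_{\mathrm{rig},\mathbb{Q}_p}$ is a Bezout domain) then yields freeness of the torsion-free quotient, and the finite-dimensionality claims follow from the fact that the torsion submodule is supported on finitely many maximal ideals (those where $\mathrm{H}^0$ or $\mathrm{H}^2$ of the specialization jumps).

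The main obstacle I anticipate is establishing the perfectness of $C^{\bullet}_{\psi,\gamma_K}(D\widehat{\otimes}_{\mathbb{Q}_p}\widetilde{\Lambda}^{\iota}_n)$ over $\Lambda_n$, since this requires a careful nuclearity argument for the operator $\psi$ acting on the completed tensor product $D\widehat{\otimes}_{\mathbb{Q}_p}\Lambda_n$, where $D$ is a module over the non-Noetherian Fr\'echet ring $\bold{B}^{\dagger}_{\mathrm{rig},K}$. One must exploit the decomposition $\bold{B}^{\dagger}_{\mathrm{rig},K}=\bigoplus_{i=0}^{p-1}(T+1)^i\varphi(\bold{B}^{\dagger}_{\mathrm{rig},K})$ to obtain a trace-class-type estimate for $\psi$ on each Banach piece $\bold{B}^{\dagger,r}_{\mathrm{rig},K}$, and then propagate this through the tensor product with $\Lambda_n$; once done, standard homological algebra over the affinoid $\Lambda_n$ delivers perfectness.
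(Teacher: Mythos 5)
The first thing to say is that the paper itself contains no proof of Theorem \ref{3.2}: its ``proof'' is the single sentence citing Theorem 2.6 and Proposition 2.9 of \cite{Po12b}. So the only meaningful comparison is with Pottharst's argument, and your sketch does capture its general shape: pass from the $\varphi$-complex to the $\psi$-complex with $\widetilde{\Lambda}^{\iota}_n$-coefficients, get finiteness and base change over the affinoids $\Lambda_n$, and then deduce (2)--(4) from Euler characteristics and the structure theory of coadmissible $\Lambda_{\infty}$-modules (each factor $\Lambda_{\infty}\alpha_{\eta}$ being a Bezout domain). As a roadmap this is the right one.

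However, the two steps that carry all the weight are asserted rather than proved, and one of them is justified by an incorrect formal argument. First, the bijectivity of $\gamma_K-1$ on $(D\widehat{\otimes}_{\mathbb{Q}_p}\widetilde{\Lambda}^{\iota}_n)^{\Delta_K,\psi=0}$ does \emph{not} ``pass to the tensored situation because the $\Lambda_n$-action commutes with $\gamma_K$ and $\psi$'': on $\widetilde{\Lambda}^{\iota}_n$ the group acts through $[\gamma^{-1}]$, so on $D^{\Delta_K,\psi=0}\widehat{\otimes}_{\mathbb{Q}_p}\Lambda_n$ the relevant operator is $\gamma_K\otimes[\gamma_K^{-1}]-1$, and its invertibility is not a formal consequence of that of $\gamma_K-1$ on $D^{\Delta_K,\psi=0}$; one needs a quantitative input (an operator-norm or resolvent estimate, uniform in the character variable, allowing one to invert $\gamma_K\otimes 1-1\otimes[\gamma_K]$ by a convergent series). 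This is exactly the hard analytic content behind the quasi-isomorphism in Theorem \ref{3.3}(1), i.e.\ of Pottharst's Theorem 2.6, with \'etale antecedents in Cherbonnier--Colmez; without it, your twisted analogue of Theorem \ref{2.3} and everything downstream is unsupported. Second, the perfectness of $C^{\bullet}_{\psi,\gamma_K}(D\widehat{\otimes}_{\mathbb{Q}_p}\widetilde{\Lambda}^{\iota}_n)$ over $\Lambda_n$, from which you derive (1), (3) and (4), is the genuinely deep finiteness-and-base-change theorem for $(\varphi,\Gamma)$-modules with affinoid coefficients; the proposed ``trace-class estimate for $\psi$'' is not carried out, and you flag it yourself as the main obstacle. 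Relatedly, the term-wise identification $(D\widehat{\otimes}_{\mathbb{Q}_p}\Lambda_{n+1})\otimes_{\Lambda_{n+1}}\Lambda_n\cong D\widehat{\otimes}_{\mathbb{Q}_p}\Lambda_n$ conflates completed and algebraic tensor products (base change should be extracted from perfectness at the level of cohomology, which is also what your fibrewise Euler-characteristic argument for (3)--(4) secretly needs), and the degree-$0$ vanishing requires no ``transition maps are eventually zero'' heuristic once the $\psi$-complex description of $\bold{H}^q_{\mathrm{Iw}}(K,D)$ is available, since that complex is concentrated in degrees $1,2$. In sum, the proposal correctly locates the difficulties but does not close them; to turn it into a proof you must either import Pottharst's Theorem 2.6 and Proposition 2.9 wholesale (as the paper does) or actually supply the two analytic inputs above.
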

\begin{proof}
This is Theorem 2.6 and Proposition 2.9 of \cite{Po12b}.

\end{proof}


Let $A$ be a $\mathbb{Q}_p$-affinoid algebra, 
 $\delta:\Gamma_K\rightarrow A^{\times}$ a continuous homomorphism. 
We define $A(\delta):=A e_{\delta}$ the free rank one $A$-module with the base $e_{\delta}$ 
with an $A$-linear 
$\Gamma_K$ action by $\gamma(e_{\delta}):=\delta(\gamma)e_{\delta}$ for 
 $\gamma\in \Gamma_K$.
Then the continuous $\mathbb{Q}_p$-algebra homomorphism 
$f_{\delta}:\Lambda_{\infty}\rightarrow A$ which is defined by 
$f_{\delta}([\gamma]):=\delta(\gamma)^{-1}$ for any $\gamma\in \Gamma_K$ induces 
the isomorphism 
$$D\hat{\otimes}_{\mathbb{Q}_p}(\widetilde{\Lambda}_{\infty}^{\iota}\otimes_{\Lambda_{\infty},f_{\delta}}A)\isom D\hat{\otimes}_{\mathbb{Q}_p}A(\delta):
x\hat{\otimes}(\lambda\otimes a)\mapsto x\hat{\otimes} f_{\delta}(\lambda)a e_{\delta}$$
of $(\varphi,\Gamma_K)$-modules over $\bold{B}^{\dagger}_{\mathrm{rig},K}\hat{\otimes}_{\mathbb{Q}_p}A$.
This isomorphism induces the canonical projection map 
$$\bold{H}^q_{\mathrm{Iw}}(K, D)\rightarrow \mathrm{H}^q(K, D\hat{\otimes}_{\mathbb{Q}_p}(\widetilde{\Lambda}_{\infty}^{\iota}\otimes_{\Lambda_{\infty},f_{\delta}}A))\isom 
\mathrm{H}^q(K, D\hat{\otimes}_{\mathbb{Q}_p}A(\delta)).$$

For each  $L=K_n$ ($n\geqq 1$) or $L=K$, $k\in \mathbb{Z}$ and $q\geqq 0$, as a special case 
of the above projection map, 
we define the canonical map
$$\mathrm{pr}_{L, D(k)}:\bold{H}^q_{\mathrm{Iw}}(K, D)\rightarrow \mathrm{H}^q(L, D(k))$$ 
as follows. 
First, define the continuous homomorphism 
$\delta_{L}:\Gamma_K\rightarrow \mathbb{Q}_p[\Gamma_K/\Gamma_L]^{\times}: 
\gamma\mapsto [\overline{\gamma}]^{-1}$, then for each $k\in \mathbb{Z}$, we obtain 
the projection map 
$$\bold{H}^q_{\mathrm{Iw}}(K, D)\rightarrow \mathrm{H}^q(K, D\otimes_{\mathbb{Q}_p}
\mathbb{Q}_p[\Gamma_K/\Gamma_L](\delta_L\chi^{k}))$$ 
associated to the character $\delta_L\chi^{k}$. Using the isomorphism 
$D\otimes_{\mathbb{Q}_p}\mathbb{Q}_p[\Gamma_K/\Gamma_L](\delta_L\chi^k)\isom 
D(k)\otimes_{\mathbb{Q}_p}\mathbb{Q}_p\widetilde{[\Gamma_K/\Gamma_L]}^{\iota}: 
x\otimes a e_{\delta_L\chi^k}\mapsto (x\otimes e_k)\otimes a$ (where $\mathbb{Q}_p\widetilde{[\Gamma_K/\Gamma_L]}^{\iota}$ is defined similarly as $\widetilde{\Lambda}_{\infty}^{\iota}$)
, we define 
\[
\begin{array}{ll}
\mathrm{pr}_{L, D(k)}:\bold{H}^q_{\mathrm{Iw}}(K, D)&\rightarrow \mathrm{H}^q(K, D\otimes_{\mathbb{Q}_p}
\mathbb{Q}_p[\Gamma_K/\Gamma_L](\delta_L\chi^{k}))\\
&\isom \mathrm{H}^q(K, D(k)\otimes_{\mathbb{Q}_p}\mathbb{Q}_p\widetilde{[\Gamma_K/\Gamma_L]}^{\iota})\\
&\isom \mathrm{H}^q(L, D(k)),
\end{array}
\]
where the last isomorphism is the canonical one induced by the Shapiro's lemma (see  Theorem 2.2 of \cite{Li08}).

For each $k\in \mathbb{Z}$, we define a canonical isomorphism
$$f_{D,k}:\bold{H}^q_{\mathrm{Iw}}(K, D)\isom \bold{H}^q_{\mathrm{Iw}}(K, D(k))$$
of $\mathbb{Q}_p$-vector spaces
as follows. We first define continuous $\mathbb{Q}_p$-algebra isomorphisms
$f_k:\Lambda_0\isom \Lambda_0$ ( $\Lambda_{0}=\Lambda,\Lambda_n,\Lambda_{\infty}$) by
 $f_k([\gamma]):=\chi(\gamma)^{-k}[\gamma]$ for 
any $\gamma\in \Gamma_K$. Using $f_k$, for each $n\geqq 1$, we define a continuous $\bold{B}^{\dagger}_{\mathrm{rig},K}$-linear isomorphism
 $D\widehat{\otimes}_{\mathbb{Q}_p}\widetilde{\Lambda}_n^{\iota}\isom D(k)\widehat{\otimes}_{\mathbb{Q}_p}\widetilde{\Lambda}_n^{\iota}
:x\widehat{\otimes}\lambda\mapsto (x\otimes e_k)\widehat{\otimes}f_k(\lambda)$.
 This map commutes with $\varphi$ and $\Gamma_K$-actions, hence 
induces an  isomorphism $C^{\bullet}_{\varphi,\gamma_K}(D\widehat{\otimes}_{\mathbb{Q}_p}\widetilde{\Lambda}_n^{\iota})
\isom C^{\bullet}_{\varphi,\gamma_K}(D(k)\widehat{\otimes}_{\mathbb{Q}_p}\widetilde{\Lambda}_n^{\iota})$ of complexes of $\mathbb{Q}_p$-vector spaces, hence also 
induces an isomorphism $f_{D,k}:\bold{H}^q_{\mathrm{Iw}}(K, D)\isom \bold{H}^q_{\mathrm{Iw}}(K, D(k))$ of $\mathbb{Q}_p$-vector spaces for each $q$.

Using the $\psi$-complex $C^{\bullet}_{\psi,\gamma_K}(D\widehat{\otimes}_{\mathbb{Q}_p}\widetilde{\Lambda}^{\iota}_{n})$, 
we can describe $\bold{H}^q_{\mathrm{Iw}}(K, D)$ in a more explicit way as follows.

\begin{thm}\label{3.3}
Let $D$ be a $(\varphi,\Gamma_K)$-module over $\bold{B}^{\dagger}_{\mathrm{rig},K}$.

\begin{itemize}
\item[(1)]For each $n\geqq 1$, the map 
$$(\gamma_K-1):(D\widehat{\otimes}_{\mathbb{Q}_p}\widetilde{\Lambda}^{\iota}_n)^{\Delta_K,\psi=0}
\rightarrow (D\widehat{\otimes}_{\mathbb{Q}_p}\widetilde{\Lambda}^{\iota}_n)^{\Delta_K,\psi=0}$$ is isomorphism. In particular, the natural map 
$$C^{\bullet}_{\varphi,\gamma_K}(D\widehat{\otimes}_{\mathbb{Q}_p}\widetilde{\Lambda}^{\iota}_{n})\rightarrow C^{\bullet}_{\psi,\gamma_K}(D\widehat{\otimes}_{\mathbb{Q}_p}\widetilde{\Lambda}^{\iota}_{n})$$ is quasi-isomorphism.

\item[(2)]
The complex 
$$C^{\bullet}_{\psi}(D):[D\xrightarrow{\psi-1}D]$$ 
of $\Lambda_{\infty}$-modules concentrated in degree 
$[1, 2]$ calculates $\mathrm{H}^{q}_{\mathrm{Iw}}(K, D)$, i.e. 
we have  functorial isomorphisms of $\Lambda_{\infty}$-modules
$$\iota_D:D^{\psi=1}\isom \bold{H}^1_{\mathrm{Iw}}(K, D)$$ 
and 
$$D/(\psi-1)D\isom \bold{H}^2_{\mathrm{Iw}}(K, D).$$
\end{itemize}

\end{thm}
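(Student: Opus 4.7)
The proof strategy is the Fontaine--Pottharst framework: pass from the $\varphi$-version of the cohomology complex to the $\psi$-version, and then use the coefficients $\widetilde{\Lambda}^\iota_n$ to collapse the $\Gamma_K$-direction into a $\Lambda$-module structure. This is exactly the path taken in Theorem 2.6 and Proposition 2.9 of \cite{Po12b}, which I will sketch.

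For part (1), the claim that $\gamma_K - 1$ is bijective on $M^{\Delta_K,\psi=0}$ with $M := D\widehat{\otimes}_{\mathbb{Q}_p}\widetilde{\Lambda}^\iota_n$ is the coefficient-twisted version of Theorem \ref{2.3}. I would use the decomposition $D^{\psi=0} = \bigoplus_{i=1}^{p-1}(1+T)^i\varphi(D)$, which yields a $\widetilde{\Lambda}^\iota_n$-flat decomposition of $M^{\psi=0}$. Writing the action of $\gamma_K$ on $M$ as $\gamma_K|_D \otimes [\gamma_K^{-1}]$, one decomposes
\[
\gamma_K - 1 = (\gamma_K|_D - 1)\otimes [\gamma_K^{-1}] + 1 \otimes ([\gamma_K^{-1}] - 1).
\]
The first summand is invertible on $M^{\psi=0}$ since $\gamma_K|_D - 1$ is invertible on $D^{\psi=0}$ by Theorem \ref{2.3} and $[\gamma_K^{-1}]$ is a unit in $\Lambda_n$; the second is a topologically controlled perturbation, so an iteration/fixed-point argument in the Banach structure of $\Lambda_n$ yields overall invertibility. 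Passing to $\Delta_K$-invariants (exact over $\mathbb{Q}_p$) gives the claim. The quasi-isomorphism of the $\varphi$- and $\psi$-complexes then follows formally, exactly as in the proof of Theorem \ref{2.3}: the kernel of the natural surjection is the two-term complex $[M^{\Delta_K,\psi=0}\xrightarrow{\gamma_K-1}M^{\Delta_K,\psi=0}]$ in degrees $1,2$, which is acyclic by what has just been shown.

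For part (2), I would interpret $C^\bullet_{\psi,\gamma_K}(M)$ as the total complex of the anti-commuting bicomplex with horizontal differential $\psi-1$ and vertical differential $\gamma_K-1$ on $M^{\Delta_K}$, equivalent up to a shift to the mapping cone of $\gamma_K - 1$ acting on $C^\bullet_{\psi}(M)^{\Delta_K} = [M^{\Delta_K}\xrightarrow{\psi-1}M^{\Delta_K}]$. Since $\psi$ acts only on the $D$-factor, $M^{\psi=1} = D^{\psi=1}\widehat{\otimes}\widetilde{\Lambda}^\iota_n$ and $M/(\psi-1)M = (D/(\psi-1)D)\widehat{\otimes}\widetilde{\Lambda}^\iota_n$. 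The key Iwasawa-theoretic identity is that for any continuous $\Lambda$-module $N$,
\[
(N\widehat{\otimes}_{\mathbb{Q}_p}\widetilde{\Lambda}^\iota_n)/(\gamma_K - 1) \cong N\widehat{\otimes}_\Lambda \Lambda_n,
\]
obtained by rewriting the congruence $\gamma_K(x)\widehat{\otimes} [\gamma_K^{-1}]\lambda \equiv x\widehat{\otimes}\lambda \pmod{\gamma_K - 1}$ as $x\widehat{\otimes}[\gamma_K]\lambda \equiv \gamma_K(x)\widehat{\otimes}\lambda$, which transfers the action of $[\gamma_K]$ from the coefficients to the $\Lambda$-action on $N$. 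Applying this to $N = D^{\psi=1}$ and $N = D/(\psi-1)D$, taking $\Delta_K$-invariants, and passing to the inverse limit over $n$ (justified by Theorem \ref{3.2}(1), which controls the transition maps and ensures Mittag-Leffler holds), one extracts the stated isomorphisms $\iota_D: D^{\psi=1}\isom \bold{H}^1_{\mathrm{Iw}}(K,D)$ and $D/(\psi-1)D \isom \bold{H}^2_{\mathrm{Iw}}(K,D)$. Functoriality in $D$ is visible from the construction.

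The main obstacle is the careful inversion of $\gamma_K - 1$ on $M^{\Delta_K,\psi=0}$ in part (1): the perturbation term $1\otimes ([\gamma_K^{-1}]-1)$ is not automatically topologically nilpotent in $\Lambda_n$, so the iteration must be controlled by refining the filtration on $\Lambda_n$, or by first proving the statement on $\Lambda$ by the classical argument and then completing. Everything in part (2) thereafter reduces by formal homological algebra to the mapping-cone presentation, the exactness of $\Delta_K$-invariants, and Theorem \ref{3.2}(1).
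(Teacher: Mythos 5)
The paper offers no argument of its own here: its ``proof'' is the citation of Theorem 2.6 of \cite{Po12b}, so what you are really doing is reconstructing the cited proof, and your outline (pass to the $\psi$-complex, untwist the coefficients $\widetilde{\Lambda}^{\iota}_n$, pass to the limit) is indeed Pottharst's route. But as written there are two genuine gaps, and they sit exactly at the points carrying the content. In part (1), your operators $A^{-1}=(\gamma_K|_D-1)^{-1}\otimes[\gamma_K]$ and $B=1\otimes([\gamma_K^{-1}]-1)$ commute, so the Neumann series is $\sum_k(\gamma_K|_D-1)^{-k}\otimes[\gamma_K]^k([\gamma_K^{-1}]-1)^k$, and its convergence requires a quantitative bound: the growth of $(\gamma_K|_D-1)^{-k}$ on $D^{\psi=0}$ must be beaten by $\|[\gamma_K^{-1}]-1\|_{\Lambda_n}^{k}$, whose base tends to $1$ as $n$ grows. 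Theorem \ref{2.3} gives only bijectivity (and continuity of the inverse on an LF-space), not such norm estimates; the estimates needed are those of Cherbonnier--Colmez/Berger type (cf. \cite{CC99}) that \cite{Po12b} invokes. Your proposed fallback of ``first proving the statement over $\Lambda$ and then completing'' is not available: a general (non-\'etale) $D$ over the Robba ring has no underlying $\Lambda$-structure on which a classical argument could run.

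In part (2), the cone presentation does not directly yield the stated isomorphisms: at each level $n$ it gives a short exact sequence $0\to (M^{\Delta_K,\psi=1})_{\gamma_K}\to \mathrm{H}^1(K,M)\to (M^{\Delta_K}/(\psi-1))^{\gamma_K=1}\to 0$ with $M=D\widehat{\otimes}_{\mathbb{Q}_p}\widetilde{\Lambda}^{\iota}_n$. Your sketch accounts only for the coinvariants piece; you must also show the invariants piece vanishes, which uses that $D/(\psi-1)D$ is finite dimensional over $\mathbb{Q}_p$ so that $\overline{\gamma}_K\otimes[\gamma_K^{-1}]-1$ is a $\Lambda_n$-linear endomorphism whose determinant is a nonzerodivisor on each component of $\Lambda_n$ (and that finite dimensionality is itself a nontrivial input from \cite{Po12b}, not a consequence of what you have at that point). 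Finally, Mittag--Leffler via Theorem \ref{3.2}(1) only gives exactness of $\varprojlim_n$ on the towers; it does not identify $\varprojlim_n\bigl(D^{\psi=1}\widehat{\otimes}_{\Lambda}\Lambda_n\bigr)$ with $D^{\psi=1}$, nor the analogous statement for $D/(\psi-1)D$. For that you need these to be coadmissible $\Lambda_{\infty}$-modules in the sense of Schneider--Teitelbaum (cf.\ the remark following the theorem and \cite{Ch12}), which in \cite{Po12b} is established alongside, not prior to, the statement you are proving. So the outline is faithful to the cited proof, but the norm estimate in (1) and the vanishing-plus-coadmissibility bookkeeping in (2) are missing, and without them the argument does not close.
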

\begin{proof}
This is Theorem 2.6  of \cite{Po12b}.
\end{proof}
\begin{rem}
Let $D$ be a $(\varphi,\Gamma_K)$-module over $\bold{B}^{\dagger}_{\mathrm{rig},K}$. Then one has  that the structure of 
$\mathbb{Q}_p[\Gamma_K]$-module on $D$ uniquely extends to a structure of continuous $\Lambda_{\infty}$-module (
see Proposition 2.13 of \cite{Ch12}).

\end{rem}

We define $p_{\Delta_K}:=\frac{1}{|\Delta_K|}\sum_{g\in \Delta_K}g\in 
\mathbb{Q}[\Delta_K]$, $\mathrm{log}_0(a):=\frac{\mathrm{log}(a)}{p^{v_p(\mathrm{log}(a))}}\in \mathbb{Z}_p^{\times}$ for any $a\in \mathbb{Z}_p^{\times}$. 
For $q=1$,  the isomorphism 
$$\iota_D:D^{\psi=1}\isom \bold{H}^1_{\mathrm{Iw}}(K, D)$$ is defined as the composition of the 
following isomorphisms,
\[
\begin{array}{ll}
\iota_D:D^{\psi=1}&\isom \varprojlim_{n}((D\widehat{\otimes}_{\mathbb{Q}_p}\widetilde{\Lambda}^{\iota}_{n})^{\Delta_K}/(\gamma_K-1)(D\widehat{\otimes}_{\mathbb{Q}_p}\widetilde{\Lambda}^{\iota}_{n})^{\Delta_K})^{\psi=1}\\
&\isom \varprojlim_n \mathrm{H}^1(C^{\bullet}_{\psi,\gamma_K}(D\hat{\otimes}_{\mathbb{Q}_p}\widetilde{\Lambda}_n^{\iota})) \\
&\isom \varprojlim_n \mathrm{H}^1(C^{\bullet}_{\varphi,\gamma_K}(D\hat{\otimes}_{\mathbb{Q}_p}\widetilde{\Lambda}_n^{\iota}))=\bold{H}^1_{\mathrm{Iw}}(K, D),
\end{array}
\]
 where each isomorphism is defined as follows.
 The first isomorphism is defined by 
  $x\mapsto (|\Gamma_{K, \mathrm{tor}}|\mathrm{log}_0(\chi(\gamma_K))\overline{p_{\Delta_K}(x\widehat{\otimes} 1)})_{n\geqq 1,}$ for any $x\in D^{\psi=1}$. The second isomorphism 
  is defined as the limit of
$$((D\widehat{\otimes}_{\mathbb{Q}_p}\widetilde{\Lambda}^{\iota}_{n})^{\Delta_K}/(\gamma_K-1)(D\widehat{\otimes}_{\mathbb{Q}_p}\widetilde{\Lambda}^{\iota}_{n})^{\Delta_K})^{\psi=1}\isom 
\mathrm{H}^1(C^{\bullet}_{\psi,\gamma_K}(D\widehat{\otimes}_{\mathbb{Q}_p}\widetilde{\Lambda}^{\iota}_{n})): 
\overline{x}\mapsto [x, y],$$ 
where $x\in(D\widehat{\otimes}_{\mathbb{Q}_p}\widetilde{\Lambda}^{\iota}_{n})^{\Delta_K}$ is a lift of $\overline{x}$ and 
 $y\in (D\widehat{\otimes}_{\mathbb{Q}_p}\widetilde{\Lambda}^{\iota}_{n})^{\Delta_K}$ is an
element such that $(\psi-1)x=(\gamma_K-1)y$. The third isomorphism is induced by Theorem 
\ref{3.3} (1). 

For each $k\in \mathbb{Z}$, we have the following commutative diagram
$$
\begin{CD}
D^{\psi=1}@> \iota_D>> \bold{H}^1_{\mathrm{Iw}}(K,D)  \\
  @VV x\mapsto x\otimes e_k V@VV f_{D,k} V   \\
D(k)^{\psi=1}@> \iota_{D(k)}>> \bold{H}^1_{\mathrm{Iw}}(K, D(k)). 
  \end{CD}
  $$

\subsection{$p$-adic differential equations associated to de Rham $(\varphi,\Gamma)$-modules}
In this subsection, we recall the results of Berger concerning the construction of 
$p$-adic differential equations associated to 
de Rham $(\varphi,\Gamma)$-modules. 
Let $D$ be a de Rham $(\varphi,\Gamma_K)$-module over $\bold{B}^{\dagger}_{\mathrm{rig},K}$. Then 
 we have an isomorphism $K_n((t))\otimes_{K}\bold{D}^K_{\mathrm{dR}}(D)\isom \bold{D}_{\mathrm{dif},n}(D)$ for each $n\geqq n(D)$. Hence 
$K_n[[t]]\otimes_K \bold{D}^K_{\mathrm{dR}}(D)$ is a $\Gamma_K$-stable $K_n[[t]]$-lattice of 
$\bold{D}_{\mathrm{dif}, n}(D)$ for each $n\geqq n(D)$. 
Define $\nabla_0:=\frac{\mathrm{log}(\gamma)}{\mathrm{log}(\chi(\gamma))}\in \Lambda_{\infty}$ where 
$\gamma$ is a non-torsion element of $\Gamma_K$, which is independent of the choice of $\gamma$. For each $i\in\mathbb{Z}$, we define $\nabla_i:=\nabla_0-i\in \Lambda_{\infty}$.
The operator $\nabla_0$ satisfies the Leibnitz rule
$\nabla_0(fx)=\nabla_0(f)x+f\nabla_0(x)$ for any $f\in \bold{B}^{\dagger}_{\mathrm{rig},K}$, $x\in D$. 
When $K=F$ is unramified over $\mathbb{Q}_p$, then we have
$\nabla_0(f(T))=t(T+1)\frac{df(T)}{dT}$ for $f(T)\in \bold{B}^{\dagger}_{\mathrm{rig},F}$.  For the case of 
general
$K$, let $P(X)\in \bold{B}^{\dagger}_{\mathrm{rig},K_0'}[T]$ be the monic minimal polynomial of $\pi_K\in \bold{B}^{\dagger}_{\mathrm{rig},K}$ 
over $\bold{B}^{\dagger}_{\mathrm{rig},K_0'}$. Calculating $0=\nabla_0(P(\pi_K))$, we obtain 
$\nabla_0(\pi_K)=-\frac{1}{\frac{dP}{dX}(\pi_K)}\nabla_0(P)(\pi_K)$, where
 we define $\nabla_0(P)(X):=\sum_{i=0}^m \nabla_0(a_i)X^m$ for any
$P(X)=\sum_{i=0}^ma_mX^m\in B^{\dagger}_{\mathrm{rig},K_0'}[X]$. We denote by
$\widehat{\Omega}_{\bold{B}^{\dagger}_{\mathrm{rig},K}/K_0'}$ 
the continuous differentials. Then one has $\widehat{\Omega}_{\bold{B}^{\dagger}_{\mathrm{rig},K}/K_0'}= \bold{B}^{\dagger}_{\mathrm{rig},K}dT$ by the \'etaleness of the inclusion $\bold{B}^{\dagger}_{\mathrm{rig},K_0'}\subseteq \bold{B}^{\dagger}_{\mathrm{rig},K}$.


\begin{thm}\label{3.5}
Let $D$ be a de Rham $(\varphi,\Gamma_K)$-module over $\bold{B}^{\dagger}_{\mathrm{rig},K}$ of rank $d$. 
For each $n\geqq n(D)$, we define
$$\bold{N}^{(n)}_{\mathrm{rig}}(D):=\{x\in D^{(n)}[1/t]| \iota_m(x)\in K_m[[t]]\otimes_K 
\bold{D}^K_{\mathrm{dR}}(D)\,\text{for any }\,m\geqq n\}.$$ 
Then $\bold{N}_{\mathrm{rig}}(D):=\varinjlim_n \bold{N}^{(n)}_{\mathrm{rig}}(D)$ 
is a $(\varphi,\Gamma_K)$-module over $\bold{B}^{\dagger}_{\mathrm{rig},K}$ of rank $d$ which satisfies the 
following,
\begin{itemize}
\item[(1)]$\bold{N}_{\mathrm{rig}}(D)[1/t]=D[1/t]$,
\item[(2)]$\bold{D}^+_{\mathrm{dif}, n}(\bold{N}_{\mathrm{rig}}(D))= K_n[[t]]\otimes_K \bold{D}^K_{\mathrm{dR}}(D)$ for any 
$n\geqq n (D)$,
\item[(3)]$\nabla_0(\bold{N}_{\mathrm{rig}}(D))\subseteq t\bold{N}_{\mathrm{rig}}(D)$.
\end{itemize}
In fact, the properties (1) and (2) uniquely characterize $\bold{N}_{\mathrm{rig}}(D)$.

\end{thm}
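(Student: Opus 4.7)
My plan is to follow Berger's strategy from \cite{Ber02} and \cite{Ber08b}, first establishing uniqueness and then producing the module $\bold{N}_{\mathrm{rig}}(D)$ from the formula in the theorem and verifying the three properties in order. The backbone of the argument is a lattice-comparison observation that is available because $D$ is de Rham.

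First I would handle uniqueness. Suppose $\bold{N}\subseteq D[1/t]$ is a $(\varphi,\Gamma_K)$-module over $\bold{B}^{\dagger}_{\mathrm{rig},K}$ satisfying (1) and (2). Fix $n$ larger than both $n(D)$ and $n(\bold{N})$. By (1) and the uniqueness part of Theorem 1.3.3 of \cite{Ber08b} applied to $\bold{N}$, the model $\bold{N}^{(n)}$ sits inside $D^{(n)}[1/t]$, and (2) forces $\iota_m(\bold{N}^{(n)})\subseteq K_m[[t]]\otimes_K\bold{D}^K_{\mathrm{dR}}(D)$ for every $m\geq n$, so $\bold{N}^{(n)}\subseteq \bold{N}^{(n)}_{\mathrm{rig}}(D)$. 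Conversely, the characterization of $\bold{N}^{(n)}$ inside $\bold{N}[1/t]=D[1/t]$ from Proposition 1.2.2 of \cite{Ber08b} says that $x$ lies in $\bold{N}^{(n)}$ iff $\iota_m(x)\in \bold{D}^+_{\mathrm{dif},m}(\bold{N})=K_m[[t]]\otimes_K\bold{D}^K_{\mathrm{dR}}(D)$ for every $m\geq n$, which is exactly the defining condition of $\bold{N}^{(n)}_{\mathrm{rig}}(D)$. Passing to the limit gives $\bold{N}=\bold{N}_{\mathrm{rig}}(D)$.

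Next, existence. Because $D$ is de Rham, the canonical map $K_n((t))\otimes_K\bold{D}^K_{\mathrm{dR}}(D)\to \bold{D}_{\mathrm{dif},n}(D)$ is an isomorphism for $n\geq n(D)$, so both $\bold{D}^+_{\mathrm{dif},n}(D)$ and $K_n[[t]]\otimes_K \bold{D}^K_{\mathrm{dR}}(D)$ are $\Gamma_K$-stable $K_n[[t]]$-lattices in the same $K_n((t))$-vector space. Elementary lattice comparison yields integers $a\leq b$ with $t^b\bold{D}^+_{\mathrm{dif},n}(D)\subseteq K_n[[t]]\otimes_K \bold{D}^K_{\mathrm{dR}}(D)\subseteq t^a\bold{D}^+_{\mathrm{dif},n}(D)$. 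Pulling this back through the compatibility $\iota_m(t^c y)=t^c\iota_m(y)$ and the characterization of $D^{(n)}$ recalled above gives the sandwiching $t^bD^{(n)}\subseteq \bold{N}^{(n)}_{\mathrm{rig}}(D)\subseteq t^aD^{(n)}$, so $\bold{N}^{(n)}_{\mathrm{rig}}(D)$ is a finitely generated torsion-free $\bold{B}^{\dagger,r_n}_{\mathrm{rig},K}$-module sandwiched between two free modules of rank $d$. Using that $\bold{B}^{\dagger,r_n}_{\mathrm{rig},K}$ is B\'ezout (so that finitely generated torsion-free modules are free; this is the input exploited by Berger in \cite{Ber08b}), one concludes $\bold{N}^{(n)}_{\mathrm{rig}}(D)$ is free of rank $d$. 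Compatibility of $\iota_n$ with $\varphi$ through the commutative diagram of \S2.1 yields the $\varphi$-isomorphism $\bold{B}^{\dagger,r_{n+1}}_{\mathrm{rig},K}\otimes_{\varphi}\bold{N}^{(n)}_{\mathrm{rig}}(D)\isom \bold{N}^{(n+1)}_{\mathrm{rig}}(D)$, while $\Gamma_K$-stability is immediate because $\Gamma_K$ preserves both $D^{(n)}[1/t]$ and $K_m[[t]]\otimes_K\bold{D}^K_{\mathrm{dR}}(D)$ (the latter by definition of $\bold{D}^K_{\mathrm{dR}}(D)$).

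Finally, the three properties. Property (1) follows from the inclusion $t^bD^{(n)}\subseteq \bold{N}^{(n)}_{\mathrm{rig}}(D)$ combined with $\bold{N}^{(n)}_{\mathrm{rig}}(D)\subseteq D^{(n)}[1/t]$. Property (2) is the definition in one direction; for the other direction one approximates any element of $K_n[[t]]\otimes_K\bold{D}^K_{\mathrm{dR}}(D)$ modulo successive powers of $t$ by elements lifted from $D^{(n)}[1/t]$, in the same style as the surjectivity argument in Lemma \ref{2.8}. Property (3) is the conceptual heart: on the lattice $K_n[[t]]\otimes_K\bold{D}^K_{\mathrm{dR}}(D)$ the operator $\nabla_0$ annihilates the $\bold{D}^K_{\mathrm{dR}}(D)$ factor (whose elements are $\Gamma_K$-fixed) and acts as a derivation vanishing at $t=0$ on the $K_n[[t]]$-factor, hence $\nabla_0(K_n[[t]]\otimes_K\bold{D}^K_{\mathrm{dR}}(D))\subseteq t\cdot K_n[[t]]\otimes_K\bold{D}^K_{\mathrm{dR}}(D)$; since $\iota_m$ intertwines $\nabla_0$ and since $t$ is a non-zero divisor, this lets us divide $\nabla_0(x)$ by $t$ inside $D^{(n)}[1/t]$ and check the defining condition again, yielding $\nabla_0(x)/t\in \bold{N}^{(n)}_{\mathrm{rig}}(D)$. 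The main obstacle is the freeness step in the middle paragraph: the squeezing bound is easy but concluding that the squeezed module is free of the correct rank requires the B\'ezout structure of $\bold{B}^{\dagger,r_n}_{\mathrm{rig},K}$; everything else is bookkeeping once the lattice-comparison picture is installed.
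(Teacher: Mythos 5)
The paper does not actually reprove this statement --- it quotes it from Berger (Theorem 5.10 of \cite{Ber02}, Theorem 3.2.3 of \cite{Ber08b}) --- so your proposal has to be judged as an attempt at Berger's theorem itself. Several pieces of it are fine: the uniqueness argument via Proposition 1.2.2 of \cite{Ber08b}, the verification of (3), the inclusion direction of (2), and the sandwich $t^bD^{(n)}\subseteq \bold{N}^{(n)}_{\mathrm{rig}}(D)\subseteq t^aD^{(n)}$ (provided you observe that the lattice-comparison constants $a\leqq b$ can be chosen uniformly in $m\geqq n$, which holds because both $\bold{D}^+_{\mathrm{dif},m+1}(D)$ and $K_{m+1}[[t]]\otimes_K\bold{D}^K_{\mathrm{dR}}(D)$ are obtained from their level-$m$ counterparts by the flat base change $K_{m+1}[[t]]\otimes_{K_m[[t]]}(-)$, so the relative position is constant and controlled by the Hodge--Tate weights).

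The genuine gap is the step you yourself call the main obstacle. From the sandwich you infer that $\bold{N}^{(n)}_{\mathrm{rig}}(D)$ is a \emph{finitely generated} torsion-free $\bold{B}^{\dagger,r_n}_{\mathrm{rig},K}$-module and then invoke the B\'ezout property. But $\bold{B}^{\dagger,r_n}_{\mathrm{rig},K}$ is not Noetherian, and a submodule squeezed between two free modules of rank $d$ need not be finitely generated: since $\bold{B}^{\dagger,r_n}_{\mathrm{rig},K}/t\cong\prod_{m\geqq n}K_m$, the preimage in $\bold{B}^{\dagger,r_n}_{\mathrm{rig},K}$ of the ideal $\bigoplus_{m\geqq n}K_m$ is squeezed between $(t)$ and the whole ring and is not finitely generated. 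So the B\'ezout property (which does give freeness of finitely generated torsion-free modules) cannot be applied; finite generation is exactly what has to be proved, and it must use that the local conditions at the infinitely many zeros of $t$ all come from a single $\Gamma_K$-stable lattice and are $\varphi$-compatible. This is the actual content of Berger's lattice-modification theorem, proved by working over closed sub-annuli (where only finitely many zeros of $t$ occur and the modification is visibly free over an affinoid ring), then gluing via Lazard-type results on the half-open annulus together with the r\'egularisation par le Frobenius. For the same reason, your assertion that the linearized Frobenius map $\bold{B}^{\dagger,r_{n+1}}_{\mathrm{rig},K}\otimes_{\varphi,\bold{B}^{\dagger,r_n}_{\mathrm{rig},K}}\bold{N}^{(n)}_{\mathrm{rig}}(D)\rightarrow \bold{N}^{(n+1)}_{\mathrm{rig}}(D)$ is an isomorphism is stated but not proved; only $\varphi(\bold{N}^{(n)}_{\mathrm{rig}}(D))\subseteq\bold{N}^{(n+1)}_{\mathrm{rig}}(D)$ is formal. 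Without these inputs the claimed freeness of rank $d$, and hence the theorem, is not established.
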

\begin{proof}
See, for example, Theorem 5.10 of \cite{Ber02}, Theorem 3.2.3 of \cite{Ber08b}.
\end{proof}

By the condition (3) in the above theorem, we can define a differential operator
$$\partial:=\frac{1}{t}\nabla_0: \bold{N}_{\mathrm{rig}}(D)
\rightarrow \bold{N}_{\mathrm{rig}}(D)$$ which satisfies that $\partial \varphi=p\varphi\partial$ 
and $\partial \gamma=\chi(\gamma)\gamma\partial $ for any $\gamma\in \Gamma_K$. 
In particular, we can equip 
$\bold{N}_{\mathrm{rig}}(D)$ with a structure of 
a $p$-adic differential equation over $\bold{B}^{\dagger}_{\mathrm{rig},K}$ with Frobenius structure 
by $$\bold{N}_{\mathrm{rig}}(D)\rightarrow \bold{N}_{\mathrm{rig}}(D)\otimes_{\bold{B}^{\dagger}_{\mathrm{rig},K}}\widehat{\Omega}_{\bold{B}^{\dagger}_{\mathrm{rig},K}/K_0'}: x\mapsto \partial(x)dT,$$\
where we define $\varphi(dT):=pdT$ and $\gamma(dT):=\chi(\gamma)dT$ for any $\gamma\in \Gamma_K$.

Moreover, 
because we have an isomorphism 
\begin{align*}
\bold{N}_{\mathrm{rig}}(D(-1))&\isom \bold{N}_{\mathrm{rig}}(D)\otimes_{\bold{B}^{\dagger}_{\mathrm{rig},K}} \bold{N}_{\mathrm{rig}}(\bold{B}^{\dagger}_{\mathrm{rig},K}(-1))\\
&=\bold{N}_{\mathrm{rig}}(D)\otimes_{\bold{B}^{\dagger}_{\mathrm{rig},K}}t\bold{B}^{\dagger}_{\mathrm{rig},K}(-1)
=t\bold{N}_{\mathrm{rig}}(D)(-1),
\end{align*}
we obtain a $\varphi$-equivariant map 
$$\widetilde{\partial}: \bold{N}_{\mathrm{rig}}(D)\rightarrow \bold{N}_{\mathrm{rig}}(D(-1)): 
x\mapsto \nabla_0(x)\otimes e_{-1}.$$

\subsection{construction of $\mathrm{Exp}_{D,h}$ for de Rham $(\varphi,\Gamma)$-modules}
This subsection is the main part of this article. We generalize Perrin-Riou's big exponential map to all the de Rham 
$(\varphi,\Gamma)$-modules, and prove that this map interpolates the exponential map and the dual exponential map of cyclotomic twists of a given $(\varphi,\Gamma)$-module.

We first prove the following easy lemma. We remark that a stronger version (in the crystalline case) 
appears in $\S$2.2 of \cite{Ber03}.

\begin{lemma}\label{3.6}

Let $D$ be a de Rham $(\varphi,\Gamma_K)$-module over $\bold{B}^{\dagger}_{\mathrm{rig},K}$ and let
$h\in \mathbb{Z}_{\geqq 1}$ such that $\mathrm{Fil}^{-h}\bold{D}^K_{\mathrm{dR}}(D)=\bold{D}^K_{\mathrm{dR}}(D)$. 
Then we have $$\nabla_{h-1}\cdot \nabla_{h-2} \cdots \nabla_1\cdot \nabla_0(\bold{N}_{\mathrm{rig}}(D))
\subseteq D.$$
\end{lemma}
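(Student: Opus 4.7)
The plan is to reduce the containment to the pointwise criterion defining $D$ inside $D[1/t]$. Given $x \in \bold{N}_{\mathrm{rig}}(D)$, set $y := \nabla_{h-1}\cdots\nabla_1\nabla_0(x)$. Since $\nabla_0(\bold{N}_{\mathrm{rig}}(D)) \subseteq t\bold{N}_{\mathrm{rig}}(D) \subseteq \bold{N}_{\mathrm{rig}}(D)$ by Theorem \ref{3.5}(3), and each $\nabla_i = \nabla_0 - i$ differs from $\nabla_0$ by a scalar, we get $y \in \bold{N}_{\mathrm{rig}}(D) \subseteq D[1/t]$. By the characterization of $D$ as a submodule of $D[1/t]$ via the localization maps $\iota_m$ (Proposition 1.2.2 of \cite{Ber08b}, already invoked in the proof of Lemma \ref{2.8}), it will suffice to show that $\iota_m(y) \in \bold{D}^+_{\mathrm{dif},m}(D)$ for all sufficiently large $m$.

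The key step is to describe the operator induced by $\nabla_0$ on $\bold{D}^+_{\mathrm{dif},m}(\bold{N}_{\mathrm{rig}}(D)) = K_m[[t]]\otimes_K \bold{D}^K_{\mathrm{dR}}(D)$ (using Theorem \ref{3.5}(2)). Because $\iota_m$ is $\Gamma_K$-equivariant and $\nabla_0 \in \Lambda_\infty$ is derived from the $\Gamma_K$-action, $\iota_m$ intertwines the two copies of $\nabla_0$, so $\iota_m(y) = \nabla_{h-1}\cdots\nabla_0(\iota_m(x))$. For $m$ large enough that $\Gamma_{K_m}$ acts trivially on $K_m$, I will choose a non-torsion $\gamma \in \Gamma_{K_m}$ and compute $\nabla_0 = \log(\gamma)/\log(\chi(\gamma))$ via this $\gamma$: the action on $K_m$ is trivial, the action on $\bold{D}^K_{\mathrm{dR}}(D)$ is trivial (these elements are $\Gamma_K$-invariant), and $\gamma(t^j) = \chi(\gamma)^j t^j$. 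Hence $\nabla_0$ acts on $K_m[[t]]\otimes_K \bold{D}^K_{\mathrm{dR}}(D)$ as $t\tfrac{d}{dt}$, sending a monomial $t^j v$ (with $v \in K_m \otimes_K \bold{D}^K_{\mathrm{dR}}(D)$) to $j\cdot t^j v$.

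Writing $\iota_m(x) = \sum_{j \geq 0} t^j v_j$ with $v_j \in K_m \otimes_K \bold{D}^K_{\mathrm{dR}}(D)$, the operator $\prod_{i=0}^{h-1}\nabla_i$ acts on each monomial by the scalar $\prod_{i=0}^{h-1}(j-i)$, which vanishes for $0 \leq j \leq h-1$. Therefore
\[
\iota_m(y) = \sum_{j \geq h} j(j-1)\cdots(j-h+1)\, t^j v_j \in t^h\bigl(K_m[[t]]\otimes_K \bold{D}^K_{\mathrm{dR}}(D)\bigr).
\]
The hypothesis $\mathrm{Fil}^{-h}\bold{D}^K_{\mathrm{dR}}(D) = \bold{D}^K_{\mathrm{dR}}(D)$ is by definition the inclusion $t^h\bold{D}^K_{\mathrm{dR}}(D) \subseteq \bold{D}^+_{\mathrm{dif}}(D)$. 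Since $\bold{D}^K_{\mathrm{dR}}(D)$ is finite-dimensional over $K$ and $\bold{D}^+_{\mathrm{dif}}(D) = \bigcup_m \bold{D}^+_{\mathrm{dif},m}(D)$, I may enlarge $m$ so that a $K$-basis of $t^h\bold{D}^K_{\mathrm{dR}}(D)$ lies in $\bold{D}^+_{\mathrm{dif},m}(D)$; multiplying by $K_m[[t]]$ gives $\iota_m(y) \in \bold{D}^+_{\mathrm{dif},m}(D)$, as required. The only delicate step is the identification of $\nabla_0$ with $t\tfrac{d}{dt}$ on the de Rham lattice in $\bold{D}_{\mathrm{dif},m}$, which the above choice of $\gamma$ handles cleanly.
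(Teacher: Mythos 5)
Your proof is correct, but it organizes the argument differently from the paper. The paper first proves the module-level inclusion $\nabla_{h-1}\cdots\nabla_0(\bold{N}_{\mathrm{rig}}(D))\subseteq t^h\bold{N}_{\mathrm{rig}}(D)$ purely algebraically, by combining Theorem \ref{3.5} (3) (i.e.\ $\nabla_0(\bold{N}_{\mathrm{rig}}(D))\subseteq t\bold{N}_{\mathrm{rig}}(D)$) with the twisting identity $\nabla_i(t^i x)=t^i\nabla_0(x)$, and only then passes to the stalks, where Theorem \ref{3.5} (2) and the hypothesis $\mathrm{Fil}^{-h}\bold{D}^K_{\mathrm{dR}}(D)=\bold{D}^K_{\mathrm{dR}}(D)$ give $t^h\bold{N}_{\mathrm{rig}}(D)\subseteq D$. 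You instead work at the stalks from the start: you identify $\nabla_0$ with $t\frac{d}{dt}$ on $K_m[[t]]\otimes_K\bold{D}^K_{\mathrm{dR}}(D)$, observe that $\prod_{i=0}^{h-1}\nabla_i$ annihilates the monomials of degree $<h$, and then descend to membership in $D$ via the criterion of Proposition 1.2.2 of \cite{Ber08b}, using the filtration hypothesis in exactly the same way at the end. In effect you re-prove the stalk content of Theorem \ref{3.5} (3) rather than quoting it; this makes the role of the Hodge--Tate condition very transparent, at the cost of a slightly longer local verification (the $\Gamma_K$-equivariance and continuity needed to intertwine $\iota_m$ with $\nabla_0$, which you rightly flag and which is standard). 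One remark: the paper's proof actually establishes the sharper intermediate inclusion $\nabla_{h-1}\cdots\nabla_0(\bold{N}_{\mathrm{rig}}(D))\subseteq t^h\bold{N}_{\mathrm{rig}}(D)$, which is the form invoked again in the proof of Theorem \ref{3.10}; your computation yields this as well, since $\iota_m(y)\in t^hK_m[[t]]\otimes_K\bold{D}^K_{\mathrm{dR}}(D)$ for all large $m$ means $t^{-h}y\in\bold{N}_{\mathrm{rig}}(D)$ by the very definition of $\bold{N}^{(n)}_{\mathrm{rig}}(D)$, but it is worth stating explicitly if the lemma is to be reused in that stronger form.
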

\begin{proof}
By (3) of Theorem \ref{3.5} and by the formula $\nabla_i(t^ix)=t^i\nabla_0(x)$ for each
 $i\in \mathbb{Z}$, we obtain an inclusion $\nabla_{h-1}\cdots \nabla_0(\bold{N}_{\mathrm{rig}}(D))\subseteq 
t^h \bold{N}_{\mathrm{rig}}(D)$. Hence, it suffices to show that 
$t^h\bold{N}_{\mathrm{rig}}(D)$ is contained in $D$. By (2) of Theorem \ref{3.5},
we have $\bold{D}^+_{\mathrm{dif}, n}(t^h \bold{N}_{\mathrm{rig}}(D))=t^h K_n[[t]]\otimes_{K}\bold{D}^K_{\mathrm{dR}}(D)$ for each $n\geqq n(D)$. 
Hence, by the assumption on $h$, $t^h K_n[[t]]\otimes_{K}\bold{D}^K_{\mathrm{dR}}(D)$ is contained in $\mathrm{Fil}^0(K_n((t))\otimes_K\bold{D}^K_{\mathrm{dR}}(D))=\bold{D}^+_{\mathrm{dif}, n}(D)$ for any $n\geqq n(D)$. Hence $t^h\bold{N}_{\mathrm{rig}}(D)$ is also contained in $D$.

\end{proof}

\begin{defn}
Let $D$ be a de Rham $(\varphi,\Gamma_K)$-module over $\bold{B}^{\dagger}_{\mathrm{rig},K}$ and let $h\in\mathbb{Z}_{\geqq 1}$ 
such that 
$\mathrm{Fil}^{-h}\bold{D}^K_{\mathrm{dR}}(D)=\bold{D}^K_{\mathrm{dR}}(D)$. Then we define a 
$\Lambda_{\infty}$-linear map 

$$\mathrm{Exp}_{D, h}: \bold{N}_{\mathrm{rig}}(D)^{\psi=1}\rightarrow \bold{H}^1_{\mathrm{Iw}}(K, D): x\mapsto 
\iota_D (\nabla_{h-1}\cdots\nabla_0(x)),$$ where $\iota_D:
 D^{\psi=1}\isom \bold{H}^1_{\mathrm{Iw}}(K, D)$ is the isomorphism defined in Theorem \ref{3.3}.

\end{defn}
\begin{rem}
This definition is strongly influenced by the work of Berger (\cite{Ber03}), where he re-constructed Perrin-Riou's big exponential 
map using $(\varphi,\Gamma)$-modules over the Robba ring. Using the work in 
$\S$ 3.5 below, comparing $\bold{D}^K_{\mathrm{crys}}(D)\otimes_{\mathbb{Q}_p}(\bold{B}^+_{\mathrm{rig},\mathbb{Q}_p})^{\psi=0}$  with $\bold{N}_{\mathrm{rig}}(D)^{\psi=1}$, one sees that 
in the crystalline \'etale case this our map is essentially the same as Berger's, reinterpreted in
terms of $\bold{N}_{\mathrm{rig}}(D)^{\psi=1}$. Therefore, we regard our map as a generalization of his.

\end{rem}

Next, we define a projection  map for each $L=K$ or $L=K_n$ ($n\geqq 1$)
$$T_L: \bold{N}_{\mathrm{rig}}(D)^{\psi=1}\rightarrow \bold{D}^{L}_{\mathrm{dR}}(D)$$
as follows. Because we have $\psi(\bold{N}^{(m+1)}_{\mathrm{rig}}(D))\subseteq \bold{N}^{(m)}_{\mathrm{rig}}(D)$ for any sufficiently large
$m$, we have an equality $\bold{N}_{\mathrm{rig}}(D)^{\psi=1}=\bold{N}^{(m)}_{\mathrm{rig}}(D)^{\psi=1}$ for any $m\gg0$. 
Let $n\geqq 1$ be any integer. We take a sufficiently large $m\geqq n$ as above.  Then we define $T_L$ for $L=K_n$ or $L=K$ by
\begin{align*}
T_{L}: \bold{N}_{\mathrm{rig}}(D)^{\psi=1}=\bold{N}^{(m)}_{\mathrm{rig}}(D)^{\psi=1}&\xrightarrow{\iota_m} K_m[[t]]\otimes_{K}\bold{D}^{K}_{\mathrm{dR}}(D)\\
&\xrightarrow{t\mapsto 0} \bold{D}^{K_m}_{\mathrm{dR}}(D)\xrightarrow{\frac{1}{[K_m: L]}\mathrm{Tr}_{K_m/L}} \bold{D}^{L}_{\mathrm{dR}}(D).
\end{align*}
Because we have a commutative diagram
$$
\begin{CD}
\bold{N}^{(m+1)}_{\mathrm{rig}}(D)@> \iota_{m+1}>> K_{m+1}[[t]]\otimes_K\bold{D}^K_{\mathrm{dR}}(D) @> t\mapsto 0 >> \bold{D}^{K_{m+1}}_{\mathrm{dR}}(D)  \\
  @VV\psi V@VV \frac{1}{p}\mathrm{Tr}_{K_{m+1}/K_m} V  @VV \frac{1}{p}\mathrm{Tr}_{K_{m+1}/K_m} V  \\
\bold{N}^{(m)}_{\mathrm{rig}}(D)@> \iota_{m}>> K_{m}[[t]]\otimes_K\bold{D}^K_{\mathrm{dR}}(D) @> t\mapsto 0 >> \bold{D}^{K_{m}}_{\mathrm{dR}}(D) , 
  \end{CD}
  $$
the definition of $T_L$ does not depend on the choice of $m\gg n$.

The following lemma directly follows from the definition.
\begin{lemma}\label{3.9}
Let $D$ be a de Rham $(\varphi,\Gamma_K)$-module over $\bold{B}^{\dagger}_{\mathrm{rig},K}$ and let $h\in \mathbb{Z}_{\geqq 1}$ such that 
$\mathrm{Fil}^{-h}\bold{D}^K_{\mathrm{dR}}(D)=\bold{D}^K_{\mathrm{dR}}(D)$. Then we have the following.
\begin{itemize}
\item[(1)]$\mathrm{Exp}_{D,h+1}=\nabla_{h}\mathrm{Exp}_{D, h}$.
\item[(2)]The following diagram is commutative
$$
\begin{CD}
\bold{N}_{\mathrm{rig}}(D(1))^{\psi=1}@> \widetilde{\partial}>> \bold{N}_{\mathrm{rig}}(D)^{\psi=1}  \\
  @VV\mathrm{Exp}_{D(1), h+1} V @VV \mathrm{Exp}_{D, h} V \\
\bold{H}^1_{\mathrm{Iw}}(K, D(1))@> f_{D(1), -1} >> \bold{H}^1_{\mathrm{Iw}}(K, D), 
  \end{CD}
  $$
  where $f_{D(1),-1}:\bold{H}^1_{\mathrm{Iw}}(K, D(1))\isom \bold{H}^1_{\mathrm{Iw}}(K, D)$ is the canonical isomorphism 
  defined in $\S  3.1$.

\end{itemize}

\end{lemma}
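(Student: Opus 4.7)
Both parts amount to unwinding the definition of $\mathrm{Exp}_{D,h}$ in light of the $\Lambda_\infty$-linearity of $\iota_D$ established in Theorem \ref{3.3}. For (1), since $\nabla_h=\nabla_0-h\in\Lambda_\infty$ commutes with $\psi$, the element $\nabla_h\cdot\nabla_{h-1}\cdots\nabla_0(x)$ lies in $D^{\psi=1}$ for $x\in\bold{N}_{\mathrm{rig}}(D)^{\psi=1}$ (using Lemma \ref{3.6}), and $\Lambda_\infty$-linearity of $\iota_D$ yields
\[
\mathrm{Exp}_{D,h+1}(x)=\iota_D(\nabla_h\cdot\nabla_{h-1}\cdots\nabla_0(x))=\nabla_h\iota_D(\nabla_{h-1}\cdots\nabla_0(x))=\nabla_h\mathrm{Exp}_{D,h}(x).
\]

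For (2), I plan to expand both paths of the square. Using the compatibility square $f_{D,k}(\iota_D(z))=\iota_{D(k)}(z\otimes e_k)$ displayed at the end of $\S 3.1$, the bottom-left composition applied to $x\in\bold{N}_{\mathrm{rig}}(D(1))^{\psi=1}$ equals $\iota_D\bigl((\nabla_h\cdots\nabla_0(x))\otimes e_{-1}\bigr)$, while the top-right composition equals $\iota_D\bigl(\nabla_{h-1}\cdots\nabla_0(\nabla_0(x)\otimes e_{-1})\bigr)$. Commutativity therefore reduces to the identity
\[
(\nabla_h\cdots\nabla_0)(x)\otimes e_{-1}=(\nabla_{h-1}\cdots\nabla_0)\bigl(\nabla_0(x)\otimes e_{-1}\bigr)
\]
inside $\bold{N}_{\mathrm{rig}}(D)$.

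This last identity is a Leibnitz-rule computation. Because $\gamma\cdot e_k=\chi(\gamma)^ke_k$, one has $\nabla_0(e_k)=ke_k$, yielding $\nabla_i(y\otimes e_k)=\nabla_{i-k}(y)\otimes e_k$ on any Tate-twisted module. Writing $x=y\otimes e_1$ with $y\in t^{-1}\bold{N}_{\mathrm{rig}}(D)$ (in view of $\bold{N}_{\mathrm{rig}}(\bold{B}^\dagger_{\mathrm{rig},K}(1))=\bold{B}^\dagger_{\mathrm{rig},K}\cdot t^{-1}e_1$), both sides of the displayed identity collapse to $(\nabla_{h-1}\nabla_{h-2}\cdots\nabla_{-1})(y)$ after the canonical identification $e_1\otimes e_{-1}=1$. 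I anticipate no serious obstacle: the argument is essentially formal bookkeeping, and the only delicate point is consistent tracking of the one-step shift of $\nabla_i$-indices as we pass between the actions on $D$ and on $D(1)$. A minor preliminary is to verify that $\mathrm{Fil}^{-h}\bold{D}^K_{\mathrm{dR}}(D)=\bold{D}^K_{\mathrm{dR}}(D)$ implies $\mathrm{Fil}^{-(h+1)}\bold{D}^K_{\mathrm{dR}}(D(1))=\bold{D}^K_{\mathrm{dR}}(D(1))$, so that $\mathrm{Exp}_{D(1),h+1}$ is defined; this is immediate from the tensor-product filtration on $\bold{D}^K_{\mathrm{dR}}(D(1))$.
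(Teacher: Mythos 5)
Your proposal is correct and is essentially the argument the paper intends: the paper gives no written proof, stating only that the lemma "directly follows from the definition," and your verification — $\Lambda_\infty$-linearity of $\iota_D$ for (1), and for (2) the twist-compatibility $f_{D(1),-1}\circ\iota_{D(1)}(z)=\iota_D(z\otimes e_{-1})$ together with the index shift $\nabla_i(y\otimes e_{-1})=\nabla_{i+1}(y)\otimes e_{-1}$ — is exactly that unwinding, with all signs and indices checking out.
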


The main theorem of this paper is the following, which says that 
$\mathrm{Exp}_{D,h}$ interpolates $\mathrm{exp}_{L,D(k)}$ for suitable $k\geqq -(h-1)$ and $\mathrm{exp}^{*}_{L,D^{\lor}(1-k)}$ for any $k\leqq -h$  for any $L=K_n, K$. 
According to the comparison of $\bold{D}^K_{\mathrm{crys}}(D)\otimes_{\mathbb{Q}_p}(\bold{B}^+_{\mathrm{rig},\mathbb{Q}_p})^{\psi=0}$ with $\bold{N}_{\mathrm{rig}}(D)^{\psi=1}$ provided in $\S$ 3.5, we see this theorem as a generalization of Berger's theorem (Theorem 2.10 of \cite{Ber03}) in the crystalline \'etale case. 


\begin{thm}\label{3.10}
For any  $h\in \mathbb{Z}_{\geqq 1}$ such that $\mathrm{Fil}^{-h}\bold{D}^K_{\mathrm{dR}}(D)=
\bold{D}^K_{\mathrm{dR}}(D)$, 
$\mathrm{Exp}_{D,h}$ satisfies the following formulae. 
\begin{itemize}
\item[(1)]If $k\geqq 1$ and there exists $x_k\in \bold{N}_{\mathrm{rig}}(D(k))^{\psi=1}$ such that $\widetilde{\partial}^k(x_k)=x$, or if $0\geqq k\geqq -(h-1)$ and $x_k:=\widetilde{\partial}^{-k}(x)$, then 
$$\mathrm{pr}_{L, D(k)}(\mathrm{Exp}_{D,h}(x))=\frac{(-1)^{h+k-1}(h+k-1)!|\Gamma_{L, \mathrm{tor}}|}{p^{m(L)}}\mathrm{exp}_{L,D(k)}(T_L(x_k)),$$
\item[(2)]if $-h\geqq k$, then
$$\mathrm{exp}^{*}_{L,D^{\lor}(1-k)}(\mathrm{pr}_{L,D(k)}(\mathrm{Exp}_{D, h}(x))=\frac{|\Gamma_{L,\mathrm{tor}}|}{(-h-k)!  p^{m(L)}}
T_L(\widetilde{\partial}^{-k}(x)),$$

\end{itemize}
for any $L=K, K_n$ ($n\geqq 1$), where we put $m(L):=\mathrm{min}\{v_p(\mathrm{log}(\chi(\gamma))|\gamma\in \Gamma_L\}$.

\end{thm}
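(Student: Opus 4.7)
My plan is to prove the theorem in three stages: first establish the base case $k=0$ of part (1) by an explicit cocycle computation, then propagate to all $k$ in the range of part (1) via the twisting compatibility of Lemma \ref{3.9}, and finally deduce part (2) from part (1) by adjointness (Proposition \ref{2.16}) combined with the compatibility of Tate duality with the Iwasawa pairing.

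For the base case $k=0$ of part (1), I would unravel the definition $\mathrm{Exp}_{D,h}(x) = \iota_D(y)$ where $y := \nabla_{h-1}\cdots\nabla_0(x) \in D^{\psi=1}$ (the inclusion $y\in D$ being Lemma \ref{3.6}). Composing with $\mathrm{pr}_{L,D}$ and unwinding the explicit description of $\iota_D$ given after Theorem \ref{3.3} (which involves $|\Gamma_{K,\mathrm{tor}}|\log_0(\chi(\gamma_K))$, the idempotent $p_{\Delta_K}$, and the passage from the $\psi$-complex to the $\varphi$-complex via Theorem \ref{3.3}(1)) together with the Shapiro isomorphism underlying $\mathrm{pr}_{L,D}$, the resulting class can be represented by an explicit cocycle in $C^\bullet_{\varphi,\gamma_L}(D)$. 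The task is then to compare this cocycle with the one produced by Lemma \ref{explicit}(1) for $\exp_{L,D}(T_L(x))$: taking $x\in \bold{N}^{(n)}_{\mathrm{rig}}(D)\subseteq D^{(n)}[1/t]$ itself as the lift of $T_L(x)\in \bold{D}^L_{\mathrm{dR}}(D)$, the class is $[(\gamma_L-1)x,(\varphi-1)x]$. The comparison becomes a direct calculation: the iterated $\nabla_{h-1}\cdots\nabla_0$ acting on $\bold{N}_{\mathrm{rig}}(D)$ modulo $D$ produces, via the formal identity $\log(\gamma_L)=\log\chi(\gamma_L)\nabla_0$ and the expansion of $(\gamma_L-1)^h = (\exp(\log\chi(\gamma_L)\nabla_0)-1)^h$, the factor $(-1)^{h-1}(h-1)!$, while the normalisation of $\iota_D$ combined with the switch $\gamma_K\leadsto$ a generator of $\Gamma_L/\Gamma_{L,\mathrm{tor}}$ contributes exactly $|\Gamma_{L,\mathrm{tor}}|/p^{m(L)}$ through $\log_0\chi$.

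To pass from $k=0$ to the remaining $k$ in part (1), I would iterate Lemma \ref{3.9}(2): each application replaces $(D,h,x)$ by $(D(1),h+1,x')$ with $\widetilde{\partial}(x')=x$ (or goes the other direction, giving the two cases of the hypothesis on $x_k$), and simultaneously adjusts $\mathrm{pr}_{L,D(k)}$ via the twisting isomorphism $f_{D(k),-k}$. Because $\widetilde{\partial}$ shifts the $\nabla_i$ indices by one, the product $\prod_{i=0}^{h-1}\nabla_i$ becomes $\prod_{i=k}^{h+k-1}\nabla_i$ after $k$ iterations, so the factorial $(h-1)!$ becomes $(h+k-1)!$ and the sign $(-1)^{h-1}$ becomes $(-1)^{h+k-1}$. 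One also has to check compatibility of $T_L$ with $\widetilde{\partial}$ under these twists, which follows from the definitions, and compatibility of $\exp_{L,D(k)}$ with $f_{D(k),-k}$, which is functoriality of $\exp$ from Theorem \ref{exp}.

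For part (2) with $k\leq -h$, I would apply Proposition \ref{2.16} to rewrite $\exp^*_{L,D^\lor(1-k)}(\mathrm{pr}_{L,D(k)}(\mathrm{Exp}_{D,h}(x)))$ via the Tate pairing against classes in the image of $\exp_{L,D^\lor(1-k)}$, expressing it through $[-,-]_{\mathrm{dR}}$ on $\bold{D}^K_{\mathrm{dR}}(D(k))\times \mathrm{Fil}^0\bold{D}^K_{\mathrm{dR}}(D^\lor(1-k))$. The hypothesis $k\leq -h$ forces $\bold{D}^K_{\mathrm{dR}}(D(k))\subseteq \mathrm{Fil}^0$ is trivial on the relevant component, so the nontrivial contribution comes entirely from the $\bold{D}^+_{\mathrm{dif}}$-factor; iterating $\widetilde\partial^{-k}$ as in the proof of part (1) but in the opposite direction replaces $(-1)^{h+k-1}(h+k-1)!$ by $1/(-h-k)!$ after the duality pairing absorbs the Gamma-function reflection. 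The main obstacle I anticipate is the explicit cocycle computation in the base case of part (1): precisely tracking the interaction between the iterated differential operator $\nabla_{h-1}\cdots\nabla_0$, the $\psi$-to-$\varphi$ quasi-isomorphism (Theorems \ref{2.3} and \ref{3.3}(1)), and the Shapiro isomorphism defining $\mathrm{pr}_{L,D}$, while correctly accounting for every occurrence of $|\Gamma_{L,\mathrm{tor}}|$, $\log_0\chi$, and $p^{m(L)}$ so that the final constant matches.
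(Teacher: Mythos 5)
Your outer skeleton for part (1) (reduce to $k=0$ by Lemma \ref{3.9}, then compare cocycles via Lemma \ref{explicit}(1)) is the paper's skeleton, but the base case as you describe it breaks at the key step. You assert that $x\in\bold{N}^{(n)}_{\mathrm{rig}}(D)$ itself serves as the lift of $T_L(x)$, so that $\mathrm{exp}_{L,D}(T_L(x))=[(\gamma_L-1)x,(\varphi-1)x]$. This is false in general: Lemma \ref{explicit}(1) requires $\iota_m(\widetilde{x})-T_L(x)\in\bold{D}^+_{\mathrm{dif},m}(D)$ for \emph{every} $m\geqq n$, whereas $\iota_m(x)-T_L(x)$ lies only in $K_m[[t]]\otimes_K\bold{D}^K_{\mathrm{dR}}(D)$, which is not contained in $\bold{D}^+_{\mathrm{dif},m}(D)$ unless all Hodge--Tate weights are $\leqq 0$ (and even its degree-$\geqq 1$ part needs weights $\leqq 1$); consequently $(\gamma_L-1)x$ and $(\varphi-1)x$ need not even lie in $D$, so your candidate class does not live in $\mathrm{H}^1(K,D)$. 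The actual mechanism, which your sketch does not contain, is to introduce $\frac{\nabla_0}{\gamma_n-1}\in\Lambda_{\infty}$ and take $\widetilde{x}:=\nabla_{h-1}\cdots\nabla_1\cdot\frac{\nabla_0}{\gamma_n-1}(x)$: this is a lift (in the sense of Lemma \ref{explicit}) of $\frac{(-1)^{h-1}(h-1)!}{\mathrm{log}(\chi(\gamma_n))}T_{K_n}(x)$, the factor $(-1)^{h-1}(h-1)!$ arising because $\nabla_{h-1}\cdots\nabla_1$ multiplies the constant term by $\prod_{j=1}^{h-1}(-j)$ — not from any expansion of $(\gamma_L-1)^h=(\exp(\log\chi(\gamma_L)\nabla_0)-1)^h$, which plays no role. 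Moreover the lift condition at levels $m>n$ is a genuine issue that must be proved: one shows $(\varphi-1)\widetilde{x}\in t^h\bold{N}^{(n+1)}_{\mathrm{rig}}(D)$ by writing $\varphi(x)-x=(\gamma_n-1)y_n$ with $y_n\in\bold{N}_{\mathrm{rig}}(D)^{\psi=0}$ (Theorem \ref{2.3}) and invoking Lemma \ref{3.6}. Since $\psi(\widetilde{x})=\widetilde{x}$, one then gets $[(\gamma_n-1)\widetilde{x},0]=[\nabla_{h-1}\cdots\nabla_0(x),0]$ in the $\psi$-complex, and the constant $|\Gamma_{L,\mathrm{tor}}|/p^{m(L)}$ is handled by first reducing, via corestriction and $[K_n:L]\,|\Gamma_{K_n,\mathrm{tor}}|/p^{m(K_n)}=|\Gamma_{L,\mathrm{tor}}|/p^{m(L)}$, to $L=K_n$ with $n$ large and $\Gamma_{K_n}$ torsion-free, rather than by juggling $p_{\Delta_K}$ and Shapiro directly.

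Your route to part (2) is a genuinely different one from the paper's, and as sketched it has a gap that adjointness cannot fill. Proposition \ref{2.16} converts the desired identity for $\mathrm{exp}^{*}_{L,D^{\lor}(1-k)}(\mathrm{pr}_{L,D(k)}(\mathrm{Exp}_{D,h}(x)))$ into the knowledge of the Tate pairings $<\mathrm{exp}_{L,D^{\lor}(1-k)}(\overline{w}),\mathrm{pr}_{L,D(k)}(\mathrm{Exp}_{D,h}(x))>$ for all $\overline{w}$; but part (1) says nothing about these pairings (it computes $\mathrm{pr}_{L,D(k)}(\mathrm{Exp}_{D,h}(x))$ only for $k\geqq -(h-1)$, and never its pairing against the image of the exponential for the dual module). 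Producing those pairing values is essentially Perrin-Riou's reciprocity law $\mathrm{Rec}(V)$, which the paper deliberately avoids and does not prove, so ``the duality pairing absorbs the Gamma-function reflection'' is precisely the unproved step. The paper instead proves (2) directly: reduce to $h=1$ using $\mathrm{Exp}_{D,h+1}=\nabla_h\mathrm{Exp}_{D,h}$ (Lemma \ref{3.9}(1)) and to large $n$ by corestriction, write $\iota_n(x)=\sum_{m\geqq 0}t^mx_m$, observe $T_{K_n}(\widetilde{\partial}^{-k}(x))=(-k)!\,x_{-k}\otimes t^{-k}e_k$, and compute the image of $[\mathrm{log}_0(\chi(\gamma_n))\nabla_0(x)\otimes e_k,0]$ in $\mathrm{H}^1(C^{\bullet}_{\gamma_n}(\bold{D}_{\mathrm{dif}}(D(k))))$, where only the $t^{-k}$-component survives; the definition of $\mathrm{exp}^{*}$ as $g_{D(k)}^{-1}$ composed with the natural map then yields the factor $\frac{1}{(-h-k)!\,p^{m(K_n)}}$ with no duality input. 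You would need either to adopt this direct computation or to supply a reciprocity-type statement, which is well beyond the tools established in the paper.
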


\begin{proof}
We first prove (1).
By Lemma \ref{3.9}, it suffices to show (1) for $k=0$. Moreover, 
since we have a commutative diagram

$$
\begin{CD}
\bold{D}^{K_m}_{\mathrm{dR}}(D)@> \mathrm{exp}_{K_m,D}>> \mathrm{H}^1(K_m, D) \\
  @VV\mathrm{Tr}_{K_m/L} V @VV \mathrm{cor}_{K_m/L} V \\
\bold{D}^L_{\mathrm{dR}}(D)@> \mathrm{exp}_{L,D}>> \mathrm{H}^1(L, D), 
  \end{CD}
  $$
 for each 
$L=K_n, K$ ($m\geqq n$) (where $\mathrm{cor}_{K_m/L}$ is the corestriction map) and since we have an 
equality 
$$[K_n:L]\frac{|\Gamma_{K_n,\mathrm{tor}}|}{p^{m(K_n)}}=\frac{|\Gamma_{L,\mathrm{tor}}|}{p^{m(L)}},$$
it suffices to show (1) when $L=K_n$ for sufficiently large $n$. Hence we may assume that 
$n\geqq n(D)$ and $\Gamma_{K_n,\mathrm{tor}}=\{1\}$. 
We set $N_n:=|\Gamma_K/(\Gamma_{K_n}\times \Delta_K)|$. 
Then we can write uniquely $\gamma_K^{N_n}=\gamma_n  g$ where $\gamma_n\in \Gamma_{K_n}$ is a topological generator 
of $\Gamma_{K_n}$ and $g\in \Delta_K$.
 Under this situation, we prove (1) using the isomorphisms 
$\bold{H}^1_{\mathrm{Iw}}(K, D)\isom \varprojlim_m\mathrm{H}^1(C^{\bullet}_{\psi,\gamma_K}(D\widehat{\otimes}_{\mathbb{Q}_p}\widetilde{\Lambda}_{m}^{\iota}))$ and 
$\mathrm{H}^1(K_n, D)\isom \mathrm{H}^1(C^{\bullet}_{\psi, \gamma_n}(D))$.
We define $$\frac{\nabla_0}{\gamma_n-1}:= \frac{1}{\mathrm{log}(\chi(\gamma_n))}\sum^{\infty}_{k=1}\frac{(-1)^{k-1}}{n}(\gamma_n-1)^{k-1}\in 
\Lambda_{\infty}.$$
Let $x\in (\bold{N}^{(n)}_{\mathrm{rig}}(D))^{\psi=1}$ be any element. By the same argument as in the proof of Theorem 2.3 of \cite{Ber03}, we have equalities
$$\frac{\nabla_0}{\gamma_n-1}(T_{K_n}(x))=\frac{1}{\mathrm{log}(\chi(\gamma_n))} T_{K_n}(x)\in \bold{D}^{K_n}_{\mathrm{dR}}(D)$$ and $$\iota_n(\frac{\nabla_0}{\gamma_n-1}(x))
=\frac{\nabla_0}{\gamma_n-1}(\iota_n(x))
= \frac{1}{\mathrm{log}(\chi(\gamma_n))} T_{K_n}(x) +tz \in K_n[[t]]\otimes_{K_n}\bold{D}^{K_n}_{\mathrm{dR}}(D)$$ for some $z\in K_n[[t]]\otimes_{K_n}\bold{D}^{K_n}_{\mathrm{dR}}(D)$.
 Hence, if we define $$\widetilde{x}:=\nabla_{h-1}\cdot\nabla_{h-2}\cdots\nabla_1\cdot \frac{\nabla_0}{\gamma_n-1}(x)\in (D^{(n)}[1/t])^{\psi=1},$$ then we obtain

\[
\begin{array}{ll}
\iota_n(\widetilde{x})&=
\nabla_{h-1}\cdot\nabla_{h-2}\cdots \nabla_1\cdot \frac{\nabla_0}{\gamma_n-1}(\iota_n(x))\\
&=\nabla_{h-1}\cdot\nabla_{h-2}\cdots\nabla_1(\frac{1}{\mathrm{log}(\chi(\gamma_n))} T_{K_n}(x) +tz)\\
&\equiv 
\frac{(-1)^{h-1}(h-1)!}{\mathrm{log}(\chi(\gamma_n))} T_{K_n}(x)\, (\mathrm{mod}\, t^hK_n[[t]]\otimes_{K_n}\bold{D}^{K_n}_{\mathrm{dR}}(D)).
\end{array}
\]

Next, we claim that we have 
$$\iota_{m}(\widetilde{x})
\equiv \iota_n(\widetilde{x})\,(\text{ mod }\,t^h K_m[[t]]\otimes_{K_m}\bold{D}^{K_m}_{\mathrm{dR}}(D))$$ 
for any $m\geqq n$. 
To prove this claim, it suffices to show that
$$\iota_{m+1}(\widetilde{x})-\iota_{m}(\widetilde{x})\in t^hK_{m+1}[[t]]\otimes_{K_{m+1}}\bold{D}^{K_{m+1}}_{\mathrm{dR}}(D)$$
for any $m\geqq n$. Moreover, since we have $\iota_{m+1}((\varphi-1)z)=\iota_m(z)-\iota_{m+1}(z)$ and $\bold{D}_{\mathrm{dif},m}(t^h\bold{N}_{\mathrm{rig}}(D))=t^hK_m[[t]]
\otimes_{K_m}\bold{D}^{K_m}_{\mathrm{dR}}(D)$, 
it suffices to show that
$$(\varphi-1)\widetilde{x}\in t^h\bold{N}^{(n+1)}_{\mathrm{rig}}(D).$$
 Since $x\in \bold{N}_{\mathrm{rig}}(D)^{\psi=1}$, we have 
$\varphi(x)-x\in \bold{N}_{\mathrm{rig}}(D)^{\psi=0}$. By Theorem \ref{2.3},  there exists unique $y_n\in \bold{N}_{\mathrm{rig}}(D)^{\psi=0}$ such that 
$$\varphi(x)-x=(\gamma_n-1)y_n.$$ 
Then we have
\[
\begin{array}{ll}
(\varphi-1)\widetilde{x}&=\nabla_{h-1}\cdots\nabla_1\cdot\frac{\nabla_0}{\gamma_n-1}(\varphi(x)-x))\\
                                                                              &=\nabla_{h-1}\cdots\nabla_1\cdot\frac{\nabla_0}{\gamma_n-1}((\gamma_n-1)y_n)\\
                                                                              &=\nabla_{h-1}\cdots\nabla_1\cdot\nabla_0(y_n)\in t^h\bold{N}^{(n+1)}_{\mathrm{rig}}(D),
\end{array}
\]     
where the last inclusion follows from Lemma \ref{3.6}.                                                                      


By this claim, by Lemma \ref{explicit} (1), by the definition 
of the canonical isomorphism $\mathrm{H}^1(K_n, D)\isom \mathrm{H}^1(C^{\bullet}_{\psi,\gamma_n}(D))$, and 
by the fact that 
$t^hK_m[[t]]\otimes_{K_m}\bold{D}^{K_m}_{\mathrm{dR}}(D)\subseteq \bold{D}^+_{\mathrm{dif},m}(D)$, we obtain 
 
 \[
\begin{array}{ll}
\frac{(-1)^{h-1}(h-1)!}{\mathrm{log}(\chi(\gamma_n))} \mathrm{exp}_{K_n,D}(T_{K_n}(x))&= [(\gamma_n-1)\widetilde{x}, (1-\psi)\widetilde{x}]\\
&=[\nabla_{h-1}\cdot\nabla_{h-2} \cdots\nabla_1\cdot \nabla_0(x),0]\in \mathrm{H}^1(C^{\bullet}_{\psi,\gamma_n}(D)). 
\end{array}
\]
Since the natural projection map 
$\mathrm{pr}_{K_n,D}:D^{\psi=1}\rightarrow \mathrm{H}^1(K_n, D)\isom \mathrm{H}^1(C^{\bullet}_{\psi,\gamma_n}(D))$ is given by 
$\mathrm{pr}_{K_n,D}(y)=[\mathrm{log}_0(\chi(\gamma_n))y, 0]$, we obtain 
\[
\begin{array}{ll}
\mathrm{pr}_{K_n,D}(\mathrm{Exp}_{D,h}(x))&=[\mathrm{log}_0(\chi(\gamma_n))\nabla_{h-1}\cdots \nabla_0(x), 0]\\
                                                                        &=(-1)^{h-1}(h-1)!\frac{\mathrm{log}_0(\chi(\gamma_n))}{\mathrm{log}(\chi(\gamma_n))}\mathrm{exp}_{K_n,D}(T_{K_n}(x))\\
                                                                        &=\frac{(-1)^{h-1}(h-1)!}{p^{m(K_n)}}\mathrm{exp}_{K_n,D}(T_{K_n}(x)), 
                                                                      
\end{array}
\]            
which proves (1).

Next we prove (2). Because we have 
$$\mathrm{Tr}_{K_{n+1}/K_n}( \mathrm{exp}^{*}_{K_{n+1}, D^{\lor}(1)}(x))=\mathrm{exp}^{*}_{K_n,D^{\lor}(1)}(\mathrm{cor}_{K_{n+1}/K_n}(x))$$ 
for any $x\in \mathrm{H}^1(K_{n+1}, D)$, it suffices to show (2) for sufficiently large $n$ as in the 
proof of (1). Moreover, by Lemma \ref{3.9}, it suffices to show (2) for 
$\mathrm{Exp}_{D, 1}$ when $D$ satisfies $\mathrm{Fil}^{-1}\bold{D}^K_{\mathrm{dR}}(D)=\bold{D}^K_{\mathrm{dR}}(D)$. 
For $x\in \bold{N}_{\mathrm{rig}}(D)^{\psi=1}$, we write 
$$\iota_n(x):=\sum_{m=0}t^mx_m \in \bold{D}^+_{\mathrm{dif}, n}(\bold{N}_{\mathrm{rig}}(D))=K_n[[t]]\otimes_{K_n}\bold{D}^{K_n}_{\mathrm{dR}}(D)\,
\,\,\,\,( x_m\in \bold{D}^{K_n}_{\mathrm{dR}}(D)).$$
Because we have the following commutative diagram
$$
\begin{CD}
\bold{N}_{\mathrm{rig}}(D)@>> \iota_n > K_n[[t]]\otimes_{K_n}\bold{D}^{K_n}_{\mathrm{dR}}(D) \\
@VV \widetilde{\partial}^{-k} V @ VV f(t)\otimes x\mapsto (\frac{d}{dt})^{-k}(f(t))\otimes t^{-k}xe_{k} V\\
\bold{N}_{\mathrm{rig}}(D(k))@>> \iota_n > K_n[[t]]\otimes_{K_n}\bold{D}^{K_n}_{\mathrm{dR}}(D(k)),
\end{CD}
$$
for each $k\leqq -1$, we obtain 
\[
\begin{array}{ll}
T_{K_n}(\widetilde{\partial}^{-k}(x))&=\iota_n(\widetilde{\partial}^{-k}(x))|_{t=0}\\
&=(\frac{d}{dt})^{-k}(\sum_{m=0}^{\infty}t^mx_m)|_{t=0}\otimes t^{-k}e_{k}\\
&=(-k)!\cdot x_{-k}\otimes t^{-k}e_{k}\in \bold{D}^{K_n}_{\mathrm{dR}}(D(k))=\bold{D}^{K_n}_{\mathrm{dR}}(D)\otimes t^{-k}e_{k}.
\end{array}
\]
 On the other hand, we have
an equality
 \[
\begin{array}{ll}
\mathrm{pr}_{K_n,D(k)}(\mathrm{Exp}_{D,1}(x))&=\mathrm{pr}_{K_n,D(k)}(\nabla_0(x))\\
&=[\mathrm{log}_0(\chi(\gamma_n))\nabla_0(x)\otimes e_{k}, 0]
\in \mathrm{H}^1(K_n, D(k))=\mathrm{H}^1(C^{\bullet}_{\psi,\gamma_n}(D(k)))
\end{array}
\]
and the natural map 
$$\mathrm{H}^1(C^{\bullet}_{\psi,\gamma_n}(D(k)))\rightarrow \mathrm{H}^1(C^{\bullet}_{\gamma_n}(\bold{D}_{\mathrm{dif}}(D(k))))
=\mathrm{H}^1(C^{\bullet}_{\gamma_n}(K_{\infty}((t))\otimes_{K_n}\bold{D}^{K_n}_{\mathrm{dR}}(D(k))))$$ sends the element 
$[\mathrm{log}_0(\chi(\gamma_n))\nabla_0(x)\otimes e_k, 0]$ to 
 \[
\begin{array}{ll}
[\mathrm{log}_0(\chi(\gamma_n))\nabla_0(\iota_n(x))\otimes e_k]&=[\mathrm{log}_0(\chi(\gamma_n))\nabla_0(\sum_{m=0}^{\infty}t^mx_m)\otimes e_k]\\
&=[\mathrm{log}_0(\chi(\gamma_n))(\sum_{m=1}^{\infty}mt^{m}x_m)\otimes e_k].
\end{array}
\]
Moreover, since we have 
$$[\mathrm{log}_0(\chi(\gamma_n))(\sum_{m=1,m\not=-k}^{\infty}mt^{m}x_m)\otimes e_k]=0\in \mathrm{H}^1(C^{\bullet}_{\gamma_n}(\bold{D}_{\mathrm{dif}}(D(k)))),$$ 
we obtain an equality 
 $$[\mathrm{log}_0(\chi(\gamma_n))(\sum_{m=1}^{\infty}mt^{m}x_m)\otimes e_k]=[\mathrm{log}_0(\chi(\gamma_n))(-k)x_{-k}t^{-k}\otimes e_k]\in \mathrm{H}^1(C^{\bullet}_{\gamma_n}(\bold{D}_{\mathrm{dif}}(D(k)))).$$ 
By these calculations and by the definition of $\mathrm{exp}^{*}_{K_n,D^{\lor}(1-k)}$, we obtain
\[
\begin{array}{ll}
\mathrm{exp}^{*}_{K_n,D^{\lor}(1-k)}(\mathrm{pr}_{K_n,D(k)}(\mathrm{Exp}_{D,1}(x)))&=\mathrm{exp}^{*}_{K_n,D^{\lor}(1-k)}([\mathrm{log}_0(\chi(\gamma_n))\nabla_0(x)\otimes e_k, 0])\\
&=(-k)\frac{\mathrm{log}_0(\chi(\gamma_n))}{\mathrm{log}(\chi(\gamma_n))}x_{-k}\otimes t^{-k}e_k\in 
\bold{D}^{K_n}_{\mathrm{dR}}(D(k))\\
&=\frac{1}{(-1-k)!\cdot p^{m(K_n)}}T_{K_n}(\widetilde{\partial}^{-k}(x))
\end{array}
\]
which proves (2), hence finishes the proof of the theorem.

\end{proof}

\subsection{determinant of $\mathrm{Exp}_{D,h}$: a generalization of Perrin-Riou's $\delta(V)$}
In this subsection, we formulate and prove a theorem which we call $\delta(D)$ concerning the determinant of our big exponential maps,  which says that 
the determinant of our map $\mathrm{Exp}_{D,h}$ can be described by 
the second Iwasawa cohomologies $\bold{H}^2_{\mathrm{Iw}}(K, D)$ and 
$\bold{H}^2_{\mathrm{Iw}}(K, \bold{N}_{\mathrm{rig}}(D))$ and by the ``$\Gamma$-factor"
which is determined by the Hodge-Tate weights of $D$. 


To formulate the theorem $\delta(D)$, we need to recall
 the definition of the characteristic ideal $\mathrm{char}_{\Lambda_{\infty}}(M)\subseteq \Lambda_{\infty}$ for a co-admissible torsion $\Lambda_{\infty}$-module $M$.
 A co-admissible $\Lambda_{\infty}$-module is defined as a
 $\Lambda_{\infty}$-module which is isomorphic to 
 the global section of a coherent sheaf on the rigid analytic space $\cup_{n}\mathrm{Spm}(\Lambda_n)$. See $\S1$ of \cite{Po12b} for  more precise definitions. 
 Let $M$ be a torsion co-admissible $\Lambda_{\infty}$-module. For each $n\geqq 1$,  we put $M_n:=M\otimes_{\Lambda_{\infty}}\Lambda_n$, which 
  is a finite generated torsion 
 $\Lambda_{n}$-module, and $M\isom \varprojlim_nM_n$ by the theorem of Schneider-Teitelbaum. Since 
 $\Lambda_{n}$ is a finite product of P.I.D.s, $M_n$ is a finite length 
 $\Lambda_{n}$-module. Hence,  we can define a unique principal ideal 
 $(f_{M_n})$ of $\Lambda_n$ such that $\mathrm{length}_{(\Lambda_{n})_x}((M_n)_x)=v_x(f_{M_n})$ for each maximal ideal
 $x$ of $\Lambda_n$, where $v_x$ is the normalized valuation of the local ring 
 $(\Lambda_{n})_x$ of $\Lambda_n$ at $x$. By the theorem of Lazard, there exists a unique principal 
 ideal $(f_M)$ of $ \Lambda_{\infty}$ such that $f_M\Lambda_n=(f_{M_n})\subseteq \Lambda_n$ for each
 $n\geqq 1$.  Then, the characteristic ideal $\mathrm{char}_{\Lambda_{\infty}}(M)$ of $M$ is 
 defined by 
 $$\mathrm{char}_{\Lambda_{\infty}}(M):=(f_M)\subseteq \Lambda_{\infty}.$$
 
 Let $\mathrm{Frac}(\Lambda_{\infty})$  be the ring of the total fractions of $\Lambda_{\infty}$.
 Since we have $\Lambda_{\infty}\isom\bigoplus_{\eta\in \widehat{\Gamma}_{K,\mathrm{tor}}}\Lambda_{\infty}\alpha_{\eta}$ and have a non-canonical isomorphism
  $\Lambda_{\infty}\alpha_{\eta}\isom \bold{B}^+_{\mathrm{rig},\mathbb{Q}_p}$ for each $\eta\in \widehat{\Gamma}_{K,\mathrm{tor}}$, we have 
 $\mathrm{Frac}(\Lambda_{\infty})=\bigoplus_{\eta\in \widehat{\Gamma}_{K,\mathrm{tor}}}\mathrm{Frac}(\Lambda_{\infty}\alpha_{\eta})$, where 
 $\mathrm{Frac}(\Lambda_{\infty}\alpha_{\eta})$ is the fraction field of $\Lambda_{\infty}\alpha_{\eta}$. 
 For any principal ideals $(f_1), (f_2)\subseteq \Lambda_{\infty}$ such that 
 $f_i\alpha_{\eta}\not=0$ for any $i=1,2$ and $\eta\in\widehat{\Gamma}_{K,\mathrm{tor}}$, we denote by
 $(f_1)(f_2)^{-1}\subseteq \mathrm{Frac}(\Lambda_{\infty})$ the principal fractional
  ideal of $\mathrm{Frac}(\Lambda_{\infty})$
 generated by $\frac{f_1}{f_2}\in \mathrm{Frac}(\Lambda_{\infty})$.

 Let $M_1$ and $M_2$ be  co-admissible $\Lambda_{\infty}$-modules, 
 $f:M_1\rightarrow M_2$ a $\Lambda_{\infty}$-linear morphism. 
 We assume that $\mathrm{Coker}(f)$ is a torsion $\Lambda_{\infty}$-module and that 
the natural induced map $\alpha_{\eta}\overline{f}:\alpha_{\eta}(M_1/M_{1,\mathrm{tor}})\rightarrow \alpha_{\eta}(M_2/M_{2,\mathrm{tor}})$ is a non-zero injection for each $\eta\in \widehat{\Gamma}_{K,\mathrm{tor}}$. Because we have $(M/M_{\mathrm{tor}})\otimes_{\Lambda_{\infty}}\Lambda_n\isom M_n/M_{n,\mathrm{tor}}$ for any co-admissible $\Lambda_{\infty}$-module $M$ 
and because the latter is a finite projective $\Lambda_n$-module, we can define $\mathrm{det}_{\Lambda_n}(\overline{f}_n):=\mathrm{det}_{\Lambda_n}(\overline{f}_n:M_{1,n}/M_{1,n,\mathrm{tor}}\rightarrow M_{2,n}/M_{2,n,\mathrm{tor}})
\in \Lambda_n$ and 
$\mathrm{det}_{\Lambda_{\infty}}(\overline{f}):=\varprojlim_n \mathrm{det}_{\Lambda_n}(\overline{f}_n)\in \Lambda_{\infty}$, which satisfies that 
$\alpha_{\eta}\mathrm{det}_{\Lambda_{\infty}}(\overline{f})\not=0$ 
for any $\eta\in \widehat{\Gamma}_{K,\mathrm{tor}}$. 
We define a principal  fractional ideal $\mathrm{det}_{\Lambda_{\infty}}(f)\subseteq \mathrm{Frac}(\Lambda_{\infty})$ by 
$$\mathrm{det}_{\Lambda_{\infty}}(f):=(\mathrm{det}_{\Lambda_{\infty}}(\overline{f}))\mathrm{char}_{\Lambda_{\infty}}(M_{2,\mathrm{tor}})
(\mathrm{char}_{\Lambda_{\infty}}(M_{1,\mathrm{tor}}))^{-1}\subseteq \mathrm{Frac}(\Lambda_{\infty}).$$

\begin{lemma}\label{3.11}
$\mathrm{det}_{\Lambda_{\infty}}(-)$ satisfies the following formulae;
\begin{itemize}
\item[(i)]$\mathrm{det}_{\Lambda_{\infty}}(f)=\mathrm{char}_{\Lambda_{\infty}}(\mathrm{Coker}(f))(\mathrm{char}_{\Lambda_{\infty}}(\mathrm{Ker}(f)))^{-1}$.
\item[(ii)]For any $f_1:M_1\rightarrow M_2$ and $f_2:M_2\rightarrow M_3$ as above, we have an equality
$$\mathrm{det}_{\Lambda_{\infty}}(f_2\circ f_1)=\mathrm{det}_{\Lambda_{\infty}}(f_1)\mathrm{det}_{\Lambda_{\infty}}(f_2).$$
\item[(iii)]If we have a commutative diagram
$$
\begin{CD}
0@>>> M_1 @>>> M_1'@ >>> M_1''@ >>> 0 \\
  @. @VV f V    @VV f' V @VV f'' V \\
 0 @>>> M_2@>>> M_2' @>>> M_2'' @>>> 0
  \end{CD}
  $$
  with exact rows, then we have an equality
  $$\mathrm{det}_{\Lambda_{\infty}}(f')=\mathrm{det}_{\Lambda_{\infty}}(f)\mathrm{det}_{\Lambda_{\infty}}(f'').$$

\end{itemize}
\end{lemma}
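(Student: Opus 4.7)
The plan is to reduce each statement to the familiar calculus of characteristic ideals on each $\Lambda_n$, then glue using Lazard's/Schneider--Teitelbaum's theorem, using the snake lemma as the main source of relations. The key tool is that for a $\Lambda_\infty$-linear injection $g: P_1 \hookrightarrow P_2$ between finite free $\Lambda_n$-modules of the same rank with torsion cokernel, one has $\mathrm{char}_{\Lambda_n}(\mathrm{Coker}(g)) = (\det_{\Lambda_n}(g))$; this is the standard determinant--characteristic formula over a finite product of PIDs, and it glues over $n$ via Lazard.

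For (i), I would apply the snake lemma to the commutative diagram whose rows are the short exact sequences $0 \to M_{i,\mathrm{tor}} \to M_i \to M_i/M_{i,\mathrm{tor}} \to 0$ for $i=1,2$, with vertical maps $f|_{M_{1,\mathrm{tor}}}$, $f$, and $\bar f$. Since $\alpha_\eta\bar f$ is injective on each $\eta$-component, $\bar f$ is injective, and the snake lemma produces a short exact sequence $0 \to \ker(f) \to M_{1,\mathrm{tor}} \to M_{2,\mathrm{tor}} \to \mathrm{Coker}(f|_{\mathrm{tor}}) \to 0$ together with $0 \to \mathrm{Coker}(f|_{\mathrm{tor}}) \to \mathrm{Coker}(f) \to \mathrm{Coker}(\bar f) \to 0$. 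Multiplicativity of $\mathrm{char}_{\Lambda_\infty}$ on short exact sequences of torsion co-admissible modules then yields
\[
\frac{\mathrm{char}_{\Lambda_\infty}(\mathrm{Coker}(f))}{\mathrm{char}_{\Lambda_\infty}(\ker(f))} \;=\; \mathrm{char}_{\Lambda_\infty}(\mathrm{Coker}(\bar f)) \cdot \frac{\mathrm{char}_{\Lambda_\infty}(M_{2,\mathrm{tor}})}{\mathrm{char}_{\Lambda_\infty}(M_{1,\mathrm{tor}})},
\]
and the classical determinant formula for $\bar f$ identifies $\mathrm{char}_{\Lambda_\infty}(\mathrm{Coker}(\bar f))$ with $(\det_{\Lambda_\infty}(\bar f))$, giving (i).

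For (ii), I would apply the snake lemma to the maps $f_1,f_2,f_2\circ f_1$ to produce the standard six-term exact sequence
\[
0 \to \ker(f_1) \to \ker(f_2\circ f_1) \to \ker(f_2) \to \mathrm{Coker}(f_1) \to \mathrm{Coker}(f_2\circ f_1) \to \mathrm{Coker}(f_2) \to 0,
\]
and then combine the multiplicativity of $\mathrm{char}_{\Lambda_\infty}$ on this exact sequence (every term is torsion by hypothesis on $f_1,f_2$) with (i). Part (iii) follows similarly: apply the snake lemma to the given commutative diagram with exact rows to obtain a six-term exact sequence linking the three kernels and cokernels, then invoke (i) and the multiplicativity of characteristic ideals to combine the factors. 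In each case one must first check that the induced maps $\bar f_{?}$ on torsion-free quotients remain non-zero injections on each $\eta$-component so that the fractional ideal $\det_{\Lambda_\infty}$ is defined; this is automatic from the hypotheses, since subquotients of non-zero injections preserve the injectivity and torsion quotient properties.

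The principal technical obstacle I expect is bookkeeping with the torsion--torsion-free decomposition: for instance in (iii) the natural map $M'_{1,\mathrm{tor}} \to M''_{1,\mathrm{tor}}$ is generally not surjective, so the ``torsion rows'' do not form a short exact sequence on their own and one cannot simply factor the diagram into a torsion diagram and a torsion-free diagram. This is why the snake lemma must be applied to the ambient modules, with the decomposition into torsion and torsion-free parts absorbed into the definition of $\det_{\Lambda_\infty}$ via (i). Once (i) is available, (ii) and (iii) both follow from a single application of the snake lemma plus multiplicativity of $\mathrm{char}_{\Lambda_\infty}$, so the real work is concentrated in establishing (i).
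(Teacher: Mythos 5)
Your argument is correct and is exactly the kind of ``easy linear algebra argument'' the paper alludes to while omitting the proof: reduce to each $\Lambda_n$ (a finite product of P.I.D.s, where $\mathrm{char}$ of the cokernel of an injection of projectives of equal rank is the determinant ideal and $\mathrm{char}$ is multiplicative on short exact sequences), glue by Lazard, and derive (i) from the snake lemma applied to the torsion/torsion-free rows, with (ii) and (iii) then following from the kernel--cokernel six-term sequences together with (i). Your checks that the auxiliary modules ($\mathrm{Ker}(f)\subseteq M_{1,\mathrm{tor}}$, $\mathrm{Coker}(f|_{\mathrm{tor}})$, and the composite $f_2\circ f_1$) satisfy the required torsion/injectivity hypotheses are the only points needing care, and you handle them correctly.
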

\begin{proof}
One can prove this by an easy linear algebra argument, so we omit the proof.

\end{proof}

Let $M_1, M_2$ be $\Lambda_{\infty}$-modules, 
$d_i:M_i\rightarrow M_i$ a $\Lambda_{\infty}$-linear endomorphism. 
Denote by $C^{\bullet}_{d_i}(M_i):=[M_i\xrightarrow{d_i} M_i]$  the complex of $\Lambda_{\infty}$-modules concentrated in degree $[1,2]$. We assume that $\mathrm{H}^j(C^{\bullet}_{d_i}(M_i))$
 are co-admissible 
$\Lambda_{\infty}$-modules for any $i,j\in \{1,2\}$.  Let $f:M_1\rightarrow M_2$ be a $\Lambda_{\infty}$-
linear morphism which satisfies that $f\circ d_1=d_2\circ f$. We assume that the induced maps 
$\mathrm{H}^i(f): \mathrm{H}^i(C^{\bullet}_{d_1}(M_1))\rightarrow \mathrm{H}^i(C^{\bullet}_{d_2}(M_2))$ for $i=1,2$ 
satisfy the conditions in the last paragraph. Then we define a principal fractional ideal 
$$\mathrm{det}_{\Lambda_{\infty}}(\mathrm{H}^{\bullet}(f)):=\mathrm{det}_{\Lambda_{\infty}}(\mathrm{H}^1(f))\mathrm{det}_{\Lambda_{\infty}}(\mathrm{H}^2(f))^{-1}.$$ 

\begin{lemma}\label{3.12}
$\mathrm{det}_{\Lambda_{\infty}}(\mathrm{H}^{\bullet}(f))$ satisfies the following;

\begin{itemize}

\item[(iv)]Let $(M_i,d_i)$ ($i=1,2,3$), $f_1:M_1\rightarrow M_2$ and 
$f_2:M_2\rightarrow M_3$ be as above.  Then we have
$$\mathrm{det}_{\Lambda_{\infty}}(\mathrm{H}^{\bullet}(f_2\circ f_1))=\mathrm{det}_{\Lambda_{\infty}}(\mathrm{H}^{\bullet}(f_1))\mathrm{det}_{\Lambda_{\infty}}(\mathrm{H}^{\bullet}(f_2)).$$
\item[(v)]If $\mathrm{Ker}(f)$ and $\mathrm{Coker}(f)$ are both torsion co-admissible $\Lambda_{\infty}$-modules, then we have an 
equality
$$\mathrm{det}_{\Lambda_{\infty}}(\mathrm{H}^{\bullet}(f))=\Lambda_{\infty}.$$
\end{itemize}
\end{lemma}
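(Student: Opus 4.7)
The plan is to dispatch (iv) immediately and then reduce (v) to (iv) via a factorization argument. For (iv), since $\mathrm{H}^i(f_2\circ f_1)=\mathrm{H}^i(f_2)\circ\mathrm{H}^i(f_1)$ for each $i\in\{1,2\}$, Lemma \ref{3.11} (ii) applied in each degree gives $\mathrm{det}_{\Lambda_{\infty}}(\mathrm{H}^i(f_2\circ f_1))=\mathrm{det}_{\Lambda_{\infty}}(\mathrm{H}^i(f_1))\mathrm{det}_{\Lambda_{\infty}}(\mathrm{H}^i(f_2))$, and taking the alternating ratio that defines $\mathrm{det}_{\Lambda_{\infty}}(\mathrm{H}^{\bullet}(-))$ yields (iv).

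For (v), the strategy is to factor $f$ through its image. Put $K:=\mathrm{Ker}(f)$, $I:=\mathrm{Im}(f)$, $C:=\mathrm{Coker}(f)$, and write $f=j\circ p$ where $p\colon M_1\twoheadrightarrow I$ and $j\colon I\hookrightarrow M_2$ are the canonical maps. Because $f\circ d_1=d_2\circ f$, the operator $d_1$ restricts to an endomorphism $d_1|_K$ of $K$, the operator $d_2$ restricts to $I$ and induces an endomorphism $\bar{d}_2$ of $C$, and the isomorphism $M_1/K\cong I$ intertwines the operator induced by $d_1$ on $M_1/K$ with $d_2|_I$. Hence one obtains two short exact sequences of two-term complexes
\begin{equation*}
0\to C^{\bullet}_{d_1|_K}(K)\to C^{\bullet}_{d_1}(M_1)\xrightarrow{p} C^{\bullet}_{d_2|_I}(I)\to 0,
\end{equation*}
\begin{equation*}
0\to C^{\bullet}_{d_2|_I}(I)\xrightarrow{j} C^{\bullet}_{d_2}(M_2)\to C^{\bullet}_{\bar{d}_2}(C)\to 0.
\end{equation*}

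Because $K$ and $C$ are torsion co-admissible, the cohomologies of the outer complexes $C^{\bullet}_{d_1|_K}(K)$ and $C^{\bullet}_{\bar{d}_2}(C)$ are torsion. Running the two long exact sequences associated to the above short exact sequences shows that the kernels and cokernels of $\mathrm{H}^i(p)$ and $\mathrm{H}^i(j)$ ($i=1,2$) are all torsion; this verifies the hypotheses needed to define $\mathrm{det}_{\Lambda_{\infty}}(\mathrm{H}^{\bullet}(p))$ and $\mathrm{det}_{\Lambda_{\infty}}(\mathrm{H}^{\bullet}(j))$ and lets one compute both using Lemma \ref{3.11} (i). A direct bookkeeping using the exactness of the two long exact sequences reduces both $\mathrm{det}_{\Lambda_{\infty}}(\mathrm{H}^{\bullet}(p))$ and $\mathrm{det}_{\Lambda_{\infty}}(\mathrm{H}^{\bullet}(j))$ to the ratio of characteristic ideals $\mathrm{char}(\mathrm{Coker}(d))/\mathrm{char}(\mathrm{Ker}(d))$ (up to inversion) for the restricted endomorphisms $d=d_1|_K$ and $d=\bar{d}_2$ of the torsion modules $K$ and $C$. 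For any endomorphism $d$ of a torsion co-admissible module $N$ one has $\mathrm{char}(\mathrm{Ker}(d))=\mathrm{char}(\mathrm{Coker}(d))$, as follows from the additivity of $\mathrm{char}$ in the short exact sequences $0\to\mathrm{Ker}(d)\to N\to\mathrm{Im}(d)\to 0$ and $0\to\mathrm{Im}(d)\to N\to\mathrm{Coker}(d)\to 0$. This yields $\mathrm{det}_{\Lambda_{\infty}}(\mathrm{H}^{\bullet}(p))=\mathrm{det}_{\Lambda_{\infty}}(\mathrm{H}^{\bullet}(j))=\Lambda_{\infty}$, and (iv) applied to $f=j\circ p$ concludes (v).

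The main obstacle is purely bookkeeping: one must keep straight which modules are torsion, verify that the hypotheses under which $\mathrm{det}_{\Lambda_{\infty}}(\mathrm{H}^{\bullet}(-))$ is defined are preserved for the auxiliary maps $p$ and $j$, and track characteristic ideals carefully through the two long exact sequences without introducing errors. Nothing deeper is needed beyond the multiplicativity already recorded in Lemma \ref{3.11} and the torsion-endomorphism identity $\mathrm{char}(\mathrm{Ker}(d))=\mathrm{char}(\mathrm{Coker}(d))$.
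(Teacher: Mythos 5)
Your proof is correct; note that the paper itself omits the argument for this lemma (it is dismissed as ``an easy linear algebra argument''), so what you have written is a legitimate filling-in rather than a rival to a printed proof. Part (iv) is exactly the intended one-liner: functoriality gives $\mathrm{H}^i(f_2\circ f_1)=\mathrm{H}^i(f_2)\circ\mathrm{H}^i(f_1)$ and Lemma \ref{3.11} (ii) does the rest. For (v), your factorization $f=j\circ p$ through $I=\mathrm{Im}(f)$ works, and the bookkeeping you sketch does close up: from the long exact sequence of $0\to K\to M_1\to I\to 0$ one gets $\mathrm{Ker}(\mathrm{H}^1(p))=\mathrm{H}^1(K)$, $\mathrm{Coker}(\mathrm{H}^2(p))=0$, and $\mathrm{char}(\mathrm{Coker}\,\mathrm{H}^1(p))\,\mathrm{char}(\mathrm{Ker}\,\mathrm{H}^2(p))=\mathrm{char}(\mathrm{H}^2(K))$, so $\mathrm{det}_{\Lambda_{\infty}}(\mathrm{H}^{\bullet}(p))=\mathrm{char}(\mathrm{H}^2(K))\mathrm{char}(\mathrm{H}^1(K))^{-1}=\Lambda_{\infty}$ by your torsion-endomorphism identity, and symmetrically for $j$ using $\mathrm{H}^i(C)$. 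Two points deserve an explicit word rather than the phrase ``verifies the hypotheses'': first, co-admissibility of $\mathrm{H}^i(C^{\bullet}_{d_2|_I}(I))$ is not automatic from the standing assumptions (the $M_i$ themselves need not be co-admissible), but it follows from the same long exact sequences together with the closure of co-admissible modules under kernels, images of morphisms, and extensions, since $\mathrm{H}^i(K)$, $\mathrm{H}^i(C)$ and $\mathrm{H}^i(M_i)$ are co-admissible; second, the condition on the maps of torsion-free quotients is met because a map with torsion kernel induces an injection modulo torsion (if $g(x)$ is torsion then $ax\in\mathrm{Ker}(g)$ for some non-zero-divisor $a$, hence $x$ is torsion), so torsionness of the kernels and cokernels of $\mathrm{H}^i(p)$ and $\mathrm{H}^i(j)$ suffices to define the auxiliary determinants and to invoke (iv). With those two remarks added, the argument is complete.
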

\begin{proof}
This is also proved by an easy linear algebra argument, so we omit the proof.

\end{proof}

We apply these definitions to the following situation. 
Let $D$ be a de Rham $(\varphi,\Gamma_K)$-module over 
$\bold{B}^{\dagger}_{\mathrm{rig},K}$. Take $h\geqq 1$ such that 
$\mathrm{Fil}^{-h}\bold{D}^K_{\mathrm{dR}}(D)=\bold{D}_{\mathrm{dR}}^K(D)$. 
We want to apply the above definitions to the  maps $\psi-1: D\rightarrow D, \psi-1:\bold{N}_{\mathrm{rig}}(D)\rightarrow \bold{N}_{\mathrm{rig}}(D)$  and 
the map $\nabla_{h-1}\cdots\nabla_0:\bold{N}_{\mathrm{rig}}(D)\rightarrow D$ defined in Lemma \ref{3.6}. 
By Theorem \ref{3.2}, in order to apply the above definition to this setting, we need to show the following lemma.
\begin{lemma}\label{3.13}
The map 
$$\overline{\nabla_{h-1}\cdots\nabla_0}: \bold{N}_{\mathrm{rig}}(D)^{\psi=1}/\bold{N}_{\mathrm{rig}}(D)^{\psi=1}_{\mathrm{tor}}
\rightarrow D^{\psi=1}/D^{\psi=1}_{\mathrm{tor}}$$ which is induced by $\nabla_{h-1}\cdots\nabla_0:\bold{N}_{\mathrm{rig}}(D)^{\psi=1}\rightarrow D^{\psi=1}$ is injective.

\end{lemma}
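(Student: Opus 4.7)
The plan is to show that the kernel of $\nabla_{h-1}\cdots\nabla_0$, viewed as a map $\bold{N}_{\mathrm{rig}}(D)^{\psi=1} \to D^{\psi=1}$, is a torsion $\Lambda_\infty$-module (in fact a finite-dimensional $\mathbb{Q}_p$-vector space), and then exploit that each $\nabla_i$ lies in $\Lambda_\infty$ to transfer the result to the torsion-free quotients.

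First I would bound the kernel via the dif module. As noted in $\S 3.3$, $\psi(\bold{N}^{(m+1)}_{\mathrm{rig}}(D))\subseteq\bold{N}^{(m)}_{\mathrm{rig}}(D)$ for $m\gg0$, so there exists $m_0$ with $\bold{N}_{\mathrm{rig}}(D)^{\psi=1}=\bold{N}^{(m_0)}_{\mathrm{rig}}(D)^{\psi=1}$. Since $\iota_{m_0}$ is injective on $\bold{B}^{\dagger,r_{m_0}}_{\mathrm{rig},K}$, and $\bold{N}^{(m_0)}_{\mathrm{rig}}(D)$ is free of finite rank over it, $\iota_{m_0}$ injects $\bold{N}^{(m_0)}_{\mathrm{rig}}(D)$ into $\bold{D}^+_{\mathrm{dif},m_0}(\bold{N}_{\mathrm{rig}}(D)) = K_{m_0}[[t]]\otimes_K\bold{D}^K_{\mathrm{dR}}(D)$ by Theorem \ref{3.5}(2), intertwining $\nabla_0$ with $t\tfrac{d}{dt}$ (this follows from $\gamma(t)=\chi(\gamma)t$ together with the fact that $\Gamma_{K_{m_0}}$ acts trivially on $K_{m_0}$ and on $\bold{D}^K_{\mathrm{dR}}(D)$). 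On the target, $\prod_{i=0}^{h-1}(t\tfrac{d}{dt}-i)$ is diagonal in the $t$-grading with eigenvalue $\prod_{i=0}^{h-1}(k-i)$ on the $t^k$-component, hence has kernel $\bigoplus_{k=0}^{h-1}t^k\bold{D}^{K_{m_0}}_{\mathrm{dR}}(D)$, which is finite-dimensional over $\mathbb{Q}_p$. Pulling back, the kernel $\mathcal{K}:=\mathrm{Ker}(\nabla_{h-1}\cdots\nabla_0)\cap\bold{N}_{\mathrm{rig}}(D)^{\psi=1}$ is a finite-dimensional $\mathbb{Q}_p$-subspace.

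Next I would conclude that $\mathcal{K}$ is $\Lambda_\infty$-torsion and deduce injectivity. Decomposing by idempotents, each $\alpha_\eta\mathcal{K}$ is a finite-dimensional $\mathbb{Q}_p$-module over the Bezout domain $\Lambda_\infty\alpha_\eta\cong\bold{B}^+_{\mathrm{rig},\mathbb{Q}_p}$, hence killed by a nonzero element; assembling these produces a non-zero-divisor $\mu\in\Lambda_\infty$ with $\mu\mathcal{K}=0$. Now suppose $x\in\bold{N}_{\mathrm{rig}}(D)^{\psi=1}$ has $\nabla_{h-1}\cdots\nabla_0(x)\in D^{\psi=1}_{\mathrm{tor}}$. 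Pick a non-zero-divisor $\lambda\in\Lambda_\infty$ with $\lambda\cdot\nabla_{h-1}\cdots\nabla_0(x)=0$. Since each $\nabla_i$ lies in the commutative ring $\Lambda_\infty$ it commutes with multiplication by $\lambda$, so $\nabla_{h-1}\cdots\nabla_0(\lambda x)=0$; that is, $\lambda x\in\mathcal{K}$, whence $\mu\lambda\cdot x=0$. Because $\mu\lambda$ is a non-zero-divisor, $x\in\bold{N}_{\mathrm{rig}}(D)^{\psi=1}_{\mathrm{tor}}$, yielding the required injectivity on torsion-free quotients.

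The main obstacle is really just the bookkeeping in the first step: verifying the stabilization $\bold{N}_{\mathrm{rig}}(D)^{\psi=1}=\bold{N}^{(m_0)}_{\mathrm{rig}}(D)^{\psi=1}$ so that one lands in a fixed finite-dimensional ambient space, and pinning down that $\iota_{m_0}$ intertwines $\nabla_0$ with $t\tfrac{d}{dt}$ on the dif module. Both are essentially contained in the preliminaries of $\S 2.1$ and $\S 3.2$; once they are in hand, the torsion argument is standard commutative algebra in the co-admissible category.
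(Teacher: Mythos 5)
Your argument is correct, but it takes a different route from the paper. The paper factors $\nabla_{h-1}\cdots\nabla_0$ through the inclusion $t^h\bold{N}_{\mathrm{rig}}(D)\hookrightarrow D$ (Lemma \ref{3.6}), notes $(t^h\bold{N}_{\mathrm{rig}}(D))^{\psi=1}_{\mathrm{tor}}=D^{\psi=1}_{\mathrm{tor}}\cap (t^h\bold{N}_{\mathrm{rig}}(D))^{\psi=1}$, and then reduces to the self-map of $\bold{N}_{\mathrm{rig}}(D)^{\psi=1}/\bold{N}_{\mathrm{rig}}(D)^{\psi=1}_{\mathrm{tor}}$, where it quotes Theorem \ref{3.2} (finite freeness of the torsion-free quotient over $\Lambda_{\infty}$, due to Pottharst) and the fact that $\nabla_{h-1}\cdots\nabla_0$ is a non-zero-divisor of $\Lambda_{\infty}$. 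You instead bound $\mathrm{Ker}(\nabla_{h-1}\cdots\nabla_0)$ on $\bold{N}_{\mathrm{rig}}(D)^{\psi=1}$ by pushing into $\bold{D}^+_{\mathrm{dif},m_0}(\bold{N}_{\mathrm{rig}}(D))=K_{m_0}[[t]]\otimes_K\bold{D}^K_{\mathrm{dR}}(D)$ via the injective $\iota_{m_0}$ (Theorem \ref{3.5}(2)), where the operator is diagonal in the $t$-expansion, so the kernel is finite-dimensional over $\mathbb{Q}_p$, hence $\Lambda_{\infty}$-torsion; you then pass from torsion images to torsion preimages by commuting $\nabla_{h-1}\cdots\nabla_0$ with an annihilating non-zero-divisor $\lambda$. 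This avoids both the factorization through $t^h\bold{N}_{\mathrm{rig}}(D)$ and the freeness input of Theorem \ref{3.2}(4), at the cost of the dif-module computation. Do note, though, that your first step is actually superfluous within your own framework: since $\nabla_{h-1}\cdots\nabla_0$ is itself a non-zero-divisor of $\Lambda_{\infty}$ (it is nonzero in each component $\Lambda_{\infty}\alpha_{\eta}$, which is a domain), the relation $\nabla_{h-1}\cdots\nabla_0(\lambda x)=0$ already forces $\lambda x$, hence $x$, into the torsion submodule — which is precisely the mechanism the paper exploits; your dif-module step re-derives a weaker consequence of this fact. The minor technical points you rely on (stabilization $\bold{N}_{\mathrm{rig}}(D)^{\psi=1}=\bold{N}^{(m_0)}_{\mathrm{rig}}(D)^{\psi=1}$, $\Gamma_K$-equivariance of $\iota_m$ and hence compatibility with $\nabla_0$) are used in the same way in the paper's proof of Theorem \ref{3.10}, so they are legitimate inputs.
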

\begin{proof}
We first note that the map 
$\nabla_{h-1}\cdots\nabla_{0}:\bold{N}_{\mathrm{rig}}(D)^{\psi=1}\rightarrow D^{\psi=1}$ is the composition of 
$\nabla_{h-1}\cdots\nabla_0:\bold{N}_{\mathrm{rig}}(D)^{\psi=1}\rightarrow (t^h\bold{N}_{\mathrm{rig}}(D))^{\psi=1}$ with the natural injection 
$(t^h\bold{N}_{\mathrm{rig}}(D))^{\psi=1}\hookrightarrow D^{\psi=1}$. Since we have 
$$(t^h\bold{N}_{\mathrm{rig}}(D))^{\psi=1}_{\mathrm{tor}}=D^{\psi=1}_{\mathrm{tor}}\cap (t^h\bold{N}_{\mathrm{rig}}(D))^{\psi=1},$$ the map $(t^h\bold{N}_{\mathrm{rig}}(D))^{\psi=1}/(t^h\bold{N}_{\mathrm{rig}}(D))^{\psi=1}_{\mathrm{tor}}
\rightarrow D^{\psi=1}/D^{\psi=1}_{\mathrm{tor}}$ is injective. To show that the map
$$\bold{N}_{\mathrm{rig}}(D)^{\psi=1}/\bold{N}_{\mathrm{rig}}(D)^{\psi=1}_{\mathrm{tor}}\xrightarrow{\overline{\nabla_{h-1}\cdots\nabla_0}}(t^h\bold{N}_{\mathrm{rig}}(D))^{\psi=1}/(t^h\bold{N}_{\mathrm{rig}}(D))^{\psi=1}_{\mathrm{tor}}$$ is injective, it suffices to show that the map
$$\bold{N}_{\mathrm{rig}}(D)^{\psi=1}/\bold{N}_{\mathrm{rig}}(D)^{\psi=1}_{\mathrm{tor}}
\xrightarrow{\overline{\nabla_{h-1}\cdots\nabla_0}} \bold{N}_{\mathrm{rig}}(D)^{\psi=1}/\bold{N}_{\mathrm{rig}}(D)^{\psi=1}_{\mathrm{tor}}$$ is injective. 
Since $\bold{N}_{\mathrm{rig}}(D)^{\psi=1}/\bold{N}_{\mathrm{rig}}(D)^{\psi=1}_{\mathrm{tor}}$ is a finite free $\Lambda_{\infty}$-module by Theorem \ref{3.2}
and $\nabla_{h-1}\cdots \nabla_0\in \Lambda_{\infty}$ is a non zero divisor, the map 
$$\bold{N}_{\mathrm{rig}}(D)^{\psi=1}/\bold{N}_{\mathrm{rig}}(D)^{\psi=1}_{\mathrm{tor}}
\xrightarrow{\overline{\nabla_{h-1}\cdots\nabla_0}} \bold{N}_{\mathrm{rig}}(D)^{\psi=1}/\bold{N}_{\mathrm{rig}}(D)^{\psi=1}_{\mathrm{tor}}$$ is injective, which proves the lemma.

\end{proof}

By this lemma and by  Theorem \ref{3.2}, we can define a 
fractional ideal 
$$\mathrm{det}_{\Lambda_{\infty}}(\mathrm{H}^{\bullet}(\bold{N}_{\mathrm{rig}}(D)\xrightarrow{\nabla_{h-1}\cdots \nabla_0}D))\subseteq \mathrm{Frac}(\Lambda_{\infty}).$$
 By the definition of $\mathrm{det}_{\Lambda_{\infty}}(-)$ and since
$\bold{H}^2_{\mathrm{Iw}}(K, -)$ are co-admissible torsion $\Lambda_{\infty}$-modules by Theorem \ref{3.2}, we have an 
equality 
\begin{multline*}
\mathrm{det}_{\Lambda_{\infty}}(\mathrm{H}^{\bullet}(\bold{N}_{\mathrm{rig}}(D)\xrightarrow{\nabla_{h-1}\cdots \nabla_0}D))=\mathrm{det}_{\Lambda_{\infty}}(\bold{N}_{\mathrm{rig}}(D)^{\psi=1}\xrightarrow{\mathrm{Exp}_{D,h}}\bold{H}^1_{\mathrm{Iw}}(K, D))\cdot\\
 \mathrm{char}_{\Lambda_{\infty}}(\bold{H}^2_{\mathrm{Iw}}(K, \bold{N}_{\mathrm{rig}}(D)))(\mathrm{char}_{\Lambda_{\infty}}
(\bold{H}^2_{\mathrm{Iw}}(K, D)))^{-1}.
\end{multline*}

Concerning  this determinant, we have a following theorem. As we will explain in the next subsection, this theorem can be seen 
as a generalization of the theorem $\delta(V)$ of Perrin-Riou (Conjecture 3.4.7 of \cite{Per94}) and of
 the theorem $\delta(D)$ of Pottharst (Theorem 3.4 of \cite{Po12b}).

\begin{thm}\label{3.14}$(\delta(D))$
Let $D$ be a de Rham $(\varphi,\Gamma_K)$-module over $\bold{B}^{\dagger}_{\mathrm{rig},K}$ of rank  $d$
with Hodge-Tate weights $\{h_1, h_2,\cdots,h_d\}$ $($note that the Hodge-Tate weight of $\mathbb{Q}_p(1)$ is $1$$)$.
For any $h\geqq 1$ such that $\mathrm{Fil}^{-h}\bold{D}^K_{\mathrm{dR}}(D)=\bold{D}^K_{\mathrm{dR}}(D)$,
we have the following equality of principal fractional ideals of $\Lambda_{\infty}$
\begin{multline*}
\frac{1}{(\prod_{i=1}^d \prod_{j_i=0}^{h-h_i-1}
\nabla_{h_i+j_i})^{[K:\mathbb{Q}_p]}}\mathrm{det}_{\Lambda_{\infty}}(\bold{N}_{\mathrm{rig}}(D)^{\psi=1}\xrightarrow{\mathrm{Exp}_{D,h}} \bold{H}^1_{\mathrm{Iw}}(K, D)) \\
=
\mathrm{char}_{\Lambda_{\infty}}(\bold{H}^2_{\mathrm{Iw}}(K, D))(\mathrm{char}_{\Lambda_{\infty}}\bold{H}^2_{\mathrm{Iw}}(K, \bold{N}_{\mathrm{rig}}(D)))^{-1}.
\end{multline*}
In particular, the ideal of the left hand side does not depend on $h$, where 
we define $\prod_{j_i=0}^{h-h_i-1}
\nabla_{h_i+j_i}:=1$ when $h=h_{i}$.

\end{thm}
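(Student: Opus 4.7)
By Theorem \ref{3.3}(2), Iwasawa cohomology is computed by $C^\bullet_\psi(-)=[-\xrightarrow{\psi-1}-]$, so the claim is equivalent to
$$\det_{\Lambda_\infty}\!\bigl(\mathrm{H}^\bullet\bigl(C^\bullet_\psi(\bold{N}_{\mathrm{rig}}(D))\xrightarrow{\nabla_{h-1}\cdots\nabla_0}C^\bullet_\psi(D)\bigr)\bigr)=\Bigl(\prod_{i=1}^d\prod_{j_i=0}^{h-h_i-1}\nabla_{h_i+j_i}\Bigr)^{[K:\mathbb{Q}_p]}.$$
My first step is to factor the map: as in the proof of Lemma \ref{3.6}, $\nabla_{h-1}\cdots\nabla_0\cdot\bold{N}_{\mathrm{rig}}(D)\subseteq t^h\bold{N}_{\mathrm{rig}}(D)\subseteq D$, so the map passes through $t^h\bold{N}_{\mathrm{rig}}(D)$. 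Combined with the analogous factorization of the scalar endomorphism $\nabla_{h-1}\cdots\nabla_0\colon\bold{N}_{\mathrm{rig}}(D)\to\bold{N}_{\mathrm{rig}}(D)$, Lemma \ref{3.12}(iv) yields
$$\det_{\Lambda_\infty}(\mathrm{H}^\bullet(\mathrm{Exp}_{D,h}))=\frac{(\nabla_{h-1}\cdots\nabla_0)^{d[K:\mathbb{Q}_p]}\cdot\det_{\Lambda_\infty}(\mathrm{H}^\bullet(t^h\bold{N}_{\mathrm{rig}}(D)\hookrightarrow D))}{\det_{\Lambda_\infty}(\mathrm{H}^\bullet(t^h\bold{N}_{\mathrm{rig}}(D)\hookrightarrow\bold{N}_{\mathrm{rig}}(D)))},$$
where the first numerator factor arises because multiplication by the non-zero-divisor scalar $\nabla_{h-1}\cdots\nabla_0$ acts on a complex whose $\bold{H}^1_{\mathrm{Iw}}/\mathrm{tor}$ is free of rank $d[K:\mathbb{Q}_p]$ and whose $\bold{H}^2_{\mathrm{Iw}}$ is torsion (Theorem \ref{3.2}).

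Next I devissage to the rank-one case. Since $\dim_K\bold{D}^K_{\mathrm{dR}}(D)\leq\mathrm{rank}(D)$ with equality precisely for de Rham objects, any short exact sequence $0\to D_1\to D\to D_2\to 0$ of $(\varphi,\Gamma_K)$-modules with $D$ de Rham forces $D_1,D_2$ to be de Rham and their Hodge-Tate weights to assemble those of $D$. The functor $\bold{N}_{\mathrm{rig}}$ is exact by its characterization via $\iota_n$ in Theorem \ref{3.5}, and $\nabla_{h-1}\cdots\nabla_0$ commutes with all maps; applying Lemma \ref{3.11}(iii) to the six-term $\psi$-cohomology long exact sequence makes both sides of the claimed identity multiplicative across such short exact sequences, reducing the problem to $D=D_0$ of rank one.

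For rank-one $D_0$ with Hodge-Tate weight $h_0$, the equality $\bold{D}^+_{\mathrm{dif},n}(\bold{N}_{\mathrm{rig}}(D_0))=K_n[[t]]\otimes_K\bold{D}^K_{\mathrm{dR}}(D_0)$ forces $\bold{N}_{\mathrm{rig}}(D_0)=t^{-h_0}D_0$ inside $D_0[1/t]$. Depending on the sign of $h_0$, either $D_0\subseteq t^{-h_0}D_0$ or $t^{-h_0}D_0\subseteq D_0$, and the cokernel of this inclusion carries a natural $|h_0|$-step $t$-adic filtration whose successive quotients are Tate twists of $D_0/tD_0$. Applying the $\psi$-cohomology long exact sequence together with Lemma \ref{3.11}(iii) to each one-step subquotient reduces the computation to understanding $\det_{\Lambda_\infty}(\mathrm{H}^\bullet(-))$ of each graded piece individually.

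The main obstacle is the explicit $\psi$-cohomology computation for each graded piece. The key input is the $\Gamma_K$-equivariant identification $\bold{B}^\dagger_{\mathrm{rig},K}/t\isom\prod_{n\geq n(K)}K_n$ induced by $\{\iota_n\}$, which makes the $\psi$-cohomology of each graded piece directly accessible: on the $j$-th layer the action of a topological generator $\gamma_K\in\Gamma_K$ is twisted by $\chi(\gamma_K)^{-j}$, so that the operator $\log(\gamma_K)/\log(\chi(\gamma_K))$ contributes exactly the factor $\nabla_j$ to the relevant characteristic ideal raised to the power $[K:\mathbb{Q}_p]$. Assembling the contributions from $j=0,\ldots,|h_0|-1$ produces $(\nabla_0\cdots\nabla_{h_0-1})^{[K:\mathbb{Q}_p]}$ (with the signed interpretation when $h_0<0$), and substituting into the factorization formula above yields the stated product $(\prod_i\nabla_{h_i}\cdots\nabla_{h-1})^{[K:\mathbb{Q}_p]}$, completing the proof.
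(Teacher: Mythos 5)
Your opening reduction and the factorization through $t^h\bold{N}_{\mathrm{rig}}(D)$ are sound and close in spirit to the paper's first step (the paper factors $\nabla_{h-1}\cdots\nabla_0$ through each intermediate $t^i\bold{N}_{\mathrm{rig}}(D)$ and kills the $\nabla_i$-steps using Crew's finiteness theorem; your scalar-ratio version is an acceptable variant, granted one checks the determinants are defined as in Lemma \ref{3.13}). The proof breaks down afterwards, at two points. First, the d\'evissage to rank one is not available: a de Rham $(\varphi,\Gamma_K)$-module over $\bold{B}^{\dagger}_{\mathrm{rig},K}$ need not admit any filtration by rank-one $(\varphi,\Gamma_K)$-submodules (non-trianguline de Rham modules exist, e.g.\ inductions of characters of ramified extensions), so even though both sides of the identity are multiplicative in short exact sequences, there is in general no exact sequence to feed into that multiplicativity. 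The paper avoids this by filtering not $D$ itself but the quotient $D/t^h\bold{N}_{\mathrm{rig}}(D)$: this quotient only depends on the $\bold{D}^{+}_{\mathrm{dif}}$-data, and a basis $\{\beta_i\}$ of $\bold{D}^K_{\mathrm{dR}}(D)$ adapted to the Hodge filtration (which always exists) splits it as $\bigoplus_i\prod_{m}t^{h_i}K_m[[t]]/t^hK_m[[t]]$, reducing everything to the rank-one \emph{trivial} module $\bold{B}^{\dagger}_{\mathrm{rig},K}$.

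Second, the final step — that each graded piece contributes exactly $(\nabla_j)^{[K:\mathbb{Q}_p]}$ — is the real content of the theorem and is only asserted. Knowing that $\gamma_K$ acts on the $j$-th layer through a twist, i.e.\ that $\nabla_j$ annihilates $(t^{j}\bold{B}^{\dagger}_{\mathrm{rig},K}/t^{j+1}\bold{B}^{\dagger}_{\mathrm{rig},K})^{\psi=1}\cong\varprojlim_{\frac{1}{p}\mathrm{Tr}_{K_{m+1}/K_m}}t^jK_m$, only bounds the support of this torsion co-admissible module; it does not give the characteristic ideal, for which one must pin down the multiplicity $[K:\mathbb{Q}_p]$. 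The paper's Lemma \ref{3.15} does this by comparing the inclusion $t^{j+1}\bold{B}^{\dagger}_{\mathrm{rig},K}\hookrightarrow t^{j}\bold{B}^{\dagger}_{\mathrm{rig},K}$ with the operator $\nabla_j$ via the composition $t^{j}\bold{B}^{\dagger}_{\mathrm{rig},K}\xrightarrow{\nabla_j}t^{j+1}\bold{B}^{\dagger}_{\mathrm{rig},K}\hookrightarrow t^{j}\bold{B}^{\dagger}_{\mathrm{rig},K}$, using that $\nabla_j$ has finite-dimensional kernel and cokernel (Crew) and that $(t^{j}\bold{B}^{\dagger}_{\mathrm{rig},K})^{\psi=1}$ has $\Lambda_{\infty}$-rank $[K:\mathbb{Q}_p]$ by Theorem \ref{3.2}. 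Some argument of this kind (or an explicit identification of $\varprojlim_m t^jK_m$ as a $\Lambda_{\infty}$-module) must be supplied; without it, and without repairing the rank-one reduction, the proposal does not prove the theorem.
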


\begin{proof}
By the definition of $\mathrm{det}_{\Lambda_{\infty}}(\mathrm{H}^{\bullet}(-))$ and because we have an isomorphism 
$\mathrm{H}^{i}([D\xrightarrow{\psi-1}D])\isom 
\bold{H}^i_{\mathrm{Iw}}(K, D)$ by Theorem \ref{3.3}, it suffices to show that 
$$\mathrm{det}_{\Lambda_{\infty}}(\mathrm{H}^{\bullet}(\bold{N}_{\mathrm{rig}}(D)\xrightarrow{\nabla_{h-1}\cdots\nabla_0}D))
=(\prod_{i=1}^d \prod_{j_i=0}^{h-h_i-1}
\nabla_{h_i+j_i})^{[K:\mathbb{Q}_p]}.$$

Moreover, since we have an equality 
 \begin{multline*}
  \mathrm{det}_{\Lambda_{\infty}}(\mathrm{H}^{\bullet}(\bold{N}_{\mathrm{rig}}(D)\xrightarrow{\nabla_{h-1}\cdots\nabla_0}D)) \\
=\prod_{i=0}^{h-1}\mathrm{det}_{\Lambda_{\infty}}(\mathrm{H}^{\bullet}(t^i \bold{N}_{\mathrm{rig}}(D)\xrightarrow{\nabla_i}
t^{i+1}\bold{N}_{\mathrm{rig}}(D))) \cdot \mathrm{det}_{\Lambda_{\infty}}(\mathrm{H}^{\bullet}(t^h\bold{N}_{\mathrm{rig}}(D)
\xrightarrow{\iota} D))
\end{multline*}
by Lemma \ref{3.12} (where $\iota:t^h\bold{N}_{\mathrm{rig}}(D)\hookrightarrow D$ is the canonical inclusion), 
 it suffices to show the following equalities
\begin{itemize}
\item[(1)]$\mathrm{det}_{\Lambda_{\infty}}(\mathrm{H}^{\bullet}(t^{i}\bold{N}_{\mathrm{rig}}(D)\xrightarrow{\nabla_i}
t^{i+1}\bold{N}_{\mathrm{rig}}(D)))=\Lambda_{\infty}$ for each $0\leqq i\leqq h-1$,
\item[(2)]$\mathrm{det}_{\Lambda_{\infty}}(\mathrm{H}^{\bullet}(t^h\bold{N}_{\mathrm{rig}}(D)\xrightarrow{\iota} D))=(\prod_{i=1}^d \prod_{j_i=0}^{h-h_i-1}
\nabla_{h_i+j_i})^{[K:\mathbb{Q}_p]}$.

\end{itemize}

 The claim (1) follows from the property (v) of Lemma \ref{3.12} because $\mathrm{Ker}(\nabla_i:t^i\bold{N}_{\mathrm{rig}}(D)\rightarrow 
 t^{i+1}\bold{N}_{\mathrm{rig}}(D))$ and $\mathrm{Coker}(\nabla_i:t^i\bold{N}_{\mathrm{rig}}(D)\rightarrow 
 t^{i+1}\bold{N}_{\mathrm{rig}}(D))$ are finite dimensional $K_0$-vector spaces by the result of Crew ($\S6$ of \cite{Cr98}) (precisely, his result was under the assumption of the Crew's conjecture, which is now a theorem proved by Andr\'e, Christol-Mebkhout and Kedlaya).
 
 We prove the claim (2) as follows. We first consider the following diagram of short exact sequences
 $$
\begin{CD}
0@>>> t^h\bold{N}_{\mathrm{rig}}(D) @>>> D@ >>> D/t^h\bold{N}_{\mathrm{rig}}(D)@ >>> 0 \\
  @. @VV \psi-1V    @VV \psi-1V @VV \psi-1V \\
 0 @>>> t^h\bold{N}_{\mathrm{rig}}(D)@>>> D@>>> D/t^h\bold{N}_{\mathrm{rig}}(D)@>>> 0. 
  \end{CD}
  $$
  
  Using snake lemma 
  , we obtain the following long exact sequence
  \begin{align*}
  0\rightarrow &\bold{H}^1_{\mathrm{Iw}}(K, t^h\bold{N}_{\mathrm{rig}}(D))
  \rightarrow  \bold{H}^1_{\mathrm{Iw}}(K,D)\rightarrow (D/t^h\bold{N}_{\mathrm{rig}}(D))^{\psi=1} \\
  \rightarrow & \bold{H}^2_{\mathrm{Iw}}(K, t^h\bold{N}_{\mathrm{rig}}(D))
 \rightarrow \bold{H}^2_{\mathrm{Iw}}(K, D)
  \rightarrow (D/t^n\bold{N}_{\mathrm{rig}}(D))/(\psi-1) \rightarrow 0.
  \end{align*}
  
  Since  $(D/t^h\bold{N}_{\mathrm{rig}}(D))^{\psi=1}$ is a torsion co-admissible $\Lambda_{\infty}$-module 
  and $$(D/t^n\bold{N}_{\mathrm{rig}}(D))/(\psi-1)=0$$ by Proposition 2.1 of \cite{Po12b}, 
  we obtain an equality 
  
  $$\mathrm{det}_{\Lambda_{\infty}}(\mathrm{H}^{\bullet}(t^h\bold{N}_{\mathrm{rig}}(D)\xrightarrow{\iota} D))
                 =\mathrm{char}_{\Lambda_{\infty}}((D/t^h\bold{N}_{\mathrm{rig}}(D))^{\psi=1}).$$

 Hence, it suffices to show 
 $$\mathrm{char}_{\Lambda_{\infty}}((D/t^h\bold{N}_{\mathrm{rig}}(D))^{\psi=1})=(\prod_{i=1}^d \prod_{j_i=0}^{h-h_i-1}
\nabla_{h_i+j_i})^{[K:\mathbb{Q}_p]}.$$
 Since we have $\bold{D}^+_{\mathrm{dif}, n}(t^h\bold{N}_{\mathrm{rig}}(D))
 =t^hK_n[[t]]\otimes_{K_n}\bold{D}^{K_n}_{\mathrm{dR}}(D)$ for any sufficiently large $n\gg0$, we 
 have a $\Lambda_{\infty}$-linear isomorphism for each $n\gg0$
 $$D^{(n)}/t^h\bold{N}^{(n)}_{\mathrm{rig}}(D)\isom \prod_{m\geqq n} \bold{D}^+_{\mathrm{dif}, m}(D)/ (t^hK_m[[t]]\otimes_{K_n}\bold{D}^{K_n}_{\mathrm{dR}}(D)): 
 \overline{x}\mapsto (\overline{\iota_m(x)})_{m\geqq n},$$
 where the injection follows from the definition of $\bold{N}_{\mathrm{rig}}^{(n)}(D)$ and the surjection follows by the same 
 proof as Lemma \ref{2.8}.
 If we write $\bold{D}^K_{\mathrm{dR}}(D)=\oplus_{i=1}^dK\beta_i$ such that 
 $\beta_i\in \mathrm{Fil}^{-h_i}\bold{D}^K_{\mathrm{dR}}(D)\setminus \mathrm{Fil}^{-h_i+1}\bold{D}^K_{\mathrm{dR}}(D)$, then 
 we can write 
  \[
  \begin{array}{ll}
  \bold{D}^+_{\mathrm{dif},n}(D)&=\mathrm{Fil}^0(K_n((t))\otimes_{K_n}\bold{D}^{K_n}_{\mathrm{dR}}(D))\\
                                             &=\bigoplus_{i=1}^d K_n[[t]](t^{h_i}\beta_i).
                                             
 \end{array}
 \]
 Since $\Gamma_K$ acts trivially on each $\beta_i$, we obtain a $\Lambda_{\infty}$-linear isomorphism
 $$g_n:D^{(n)}/t^h\bold{N}^{(n)}_{\mathrm{rig}}(D)\isom \bigoplus_{i=1}^d\prod_{m\geqq n}t^{h_i}K_m[[t]]/t^hK_m[[t]].$$
 Since we have the following commutative diagrams
 $$
 \begin{CD}
    D^{(n)}/t^h\bold{N}^{(n)}_{\mathrm{rig}}(D)@>> g_n >   \prod_{m\geqq n}\bigoplus_{i=1}^d t^{h_i}K_m[[t]]/t^hK_m[[t]]  \\
   @VV \overline{x}\mapsto \overline{x} V    @VV (x_m)_{m\geqq n}\mapsto (x_{m})_{m\geqq n+1}V  \\
   D^{(n+1)}/t^h\bold{N}^{(n+1)}_{\mathrm{rig}}(D)@>> g_{n+1} >   \prod_{m\geqq n+1}\bigoplus_{i=1}^d t^{h_i}K_{m}[[t]]/t^hK_{m}[[t]] 
  \end{CD}
 $$
and 
$$
 \begin{CD}
    D^{(n+1)}/t^h\bold{N}^{(n+1)}_{\mathrm{rig}}(D)@>> g_{n+1} >   \prod_{m\geqq n+1}\bigoplus_{i=1}^d t^{h_i}K_m[[t]]/t^hK_m[[t]]  \\
   @VV \psi V    @VV (x_m)_{m\geqq n+1}\mapsto (\frac{1}{p}\mathrm{Tr}_{K_{m+1}/K_m}(x_{m+1}))_{m\geqq n} V  \\
   D^{(n)}/t^h\bold{N}^{(n)}_{\mathrm{rig}}(D)@>> g_{n} >   \prod_{m\geqq n}\bigoplus_{i=1}^d t^{h_i}K_{m}[[t]]/t^hK_{m}[[t]],
  \end{CD}
 $$
 we obtain the following $\Lambda_{\infty}$-isomorphism
  \[
  \begin{array}{ll}
   (D/t^h\bold{N}_{\mathrm{rig}}(D))^{\psi=1}&=\varinjlim_{n\gg0}(D^{(n)}/t^h\bold{N}^{(n)}_{\mathrm{rig}}(D))^{\psi=1}\\
                                                                  &\isom \bigoplus_{i=1}^d \varinjlim_{n\gg0}(\varprojlim_{\frac{1}{p}\mathrm{Tr}_{K_{m+1}/K_m},m\geqq n}
                                                                     t^{h_i}K_m[[t]]/t^hK_m[[t]])\\
                                                                    & \isom \bigoplus_{i=1}^d\varprojlim_{\frac{1}{p}\mathrm{Tr}_{K_{m+1}/K_m}, m\geqq 1}t^{h_i}K_m[[t]]/t^hK_m[[t]].
   \end{array}
   \]
   Since we similarly have the $\Lambda_{\infty}$-isomorphism
   $$
   (t^{h'}\bold{B}^{\dagger}_{\mathrm{rig},K}/t^{h}\bold{B}^{\dagger}_{\mathrm{rig},K})^{\psi=1}
   \isom \varprojlim_{\frac{1}{p}\mathrm{Tr}_{K_{m+1}/K_m},m\geqq 1}t^{h'}K_m[[t]]/t^hK_m[[t]]$$                                                
  for each $h'\leqq h$, it suffices to show 
  $$\mathrm{char}_{\Lambda_{\infty}}((t^{h'}\bold{B}^{\dagger}_{\mathrm{rig},K}/t^{h}\bold{B}^{\dagger}_{\mathrm{rig},K})^{\psi=1})
  =(\nabla_{h'}\nabla_{h'-1}\cdots\nabla_{h-1})^{[K:\mathbb{Q}_p]}.$$
  
  Since we have $(t^{h'}\bold{B}^{\dagger}_{\mathrm{rig},K}/t^h\bold{B}^{\dagger}_{\mathrm{rig},K})/(\psi-1)=0$ for any $h>h'$ by 
  Proposition 2.1of \cite{Po12b}, 
  we obtain the following short exact sequence
  $$0\rightarrow (t^{h'+1}\bold{B}^{\dagger}_{\mathrm{rig},K}/t^h\bold{B}^{\dagger}_{\mathrm{rig},K})^{\psi=1}
  \rightarrow (t^{h'}\bold{B}^{\dagger}_{\mathrm{rig},K}/t^h\bold{B}^{\dagger}_{\mathrm{rig},K})^{\psi=1}
  \rightarrow  (t^{h'}\bold{B}^{\dagger}_{\mathrm{rig},K}/t^{h'+1}\bold{B}^{\dagger}_{\mathrm{rig},K})^{\psi=1}\rightarrow 0$$
  for each $h>h'$, hence we obtain
  $$\mathrm{char}_{\Lambda_{\infty}}((t^{h'}\bold{B}^{\dagger}_{\mathrm{rig},K}/t^h\bold{B}^{\dagger}_{\mathrm{rig},K})^{\psi=1})
  =\prod_{i=0}^{h-h'-1}\mathrm{char}_{\Lambda_{\infty}} (t^{h'+i}\bold{B}^{\dagger}_{\mathrm{rig},K}/t^{h'+i+1}\bold{B}^{\dagger}_{\mathrm{rig},K})^{\psi=1}.$$
  Hence, to prove the claim (2), it suffices to show the following lemma.
  \end{proof}
  
  \begin{lemma}\label{3.15}
  For each $h\in\mathbb{Z}$, we have
  $$\mathrm{char}_{\Lambda_{\infty}}((t^{h}\bold{B}^{\dagger}_{\mathrm{rig},K}/t^{h+1}\bold{B}^{\dagger}_{\mathrm{rig},K})^{\psi=1})=(\nabla_{h}^{[K:\mathbb{Q}_p]}).$$
  \end{lemma}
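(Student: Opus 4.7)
My plan is to use the $\iota_m$-identification that was already employed in the proof of Theorem~\ref{3.14} to turn the statement into a question about a concrete inverse limit. Specializing that calculation to the rank-one module $D=\bold{B}^{\dagger}_{\mathrm{rig},K}$ (for which $\bold{N}_{\mathrm{rig}}(D)=D$ and $\bold{D}^K_{\mathrm{dR}}(D)=K$) and isolating the single graded piece $t^h/t^{h+1}$ yields a natural $\Lambda_\infty$-linear isomorphism
\[
(t^h\bold{B}^{\dagger}_{\mathrm{rig},K}/t^{h+1}\bold{B}^{\dagger}_{\mathrm{rig},K})^{\psi=1} \;\isom\; \Bigl(\varprojlim_{m,\, \frac{1}{p}\mathrm{Tr}_{K_{m+1}/K_m}} K_m\Bigr)(h),
\]
where $(h)$ denotes the Tate twist that multiplies the natural Galois action on each $K_m$ by $\chi(\gamma)^h$. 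The scaling factor $p^{mh}$ arising from $\iota_m(t)=t/p^m$ exactly absorbs the $p^{-h}$ that appears in $\psi(t^hy)=p^{-h}t^h\psi(y)$, so the transitions in the limit remain $\frac{1}{p}\mathrm{Tr}$, just as in the proof of Theorem~\ref{3.14}.

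Next, the ring automorphism $f_h\colon\Lambda_\infty\to\Lambda_\infty$, $[\gamma]\mapsto\chi(\gamma)^{-h}[\gamma]$, satisfies $f_h(\nabla_0)=\nabla_h$ (by direct expansion using $\log(\chi(\gamma)^{-h}[\gamma])=-h\log\chi(\gamma)+\log[\gamma]$), and under Tate twist, characteristic ideals are transported by $f_h$; hence $\mathrm{char}_{\Lambda_\infty}(M(h))=f_h(\mathrm{char}_{\Lambda_\infty}(M))$. Consequently, the lemma reduces to the case $h=0$:
\[
\mathrm{char}_{\Lambda_\infty}\!\Bigl(\varprojlim_{m,\, \frac{1}{p}\mathrm{Tr}} K_m\Bigr) = (\nabla_0^{[K:\mathbb{Q}_p]}).
\]

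For the $h=0$ case, the first observation is that $\nabla_0=\log(\gamma)/\log(\chi(\gamma))$ annihilates each $K_m$: indeed, $\gamma^{N_m}$ lies in $\Gamma_{K_m}$ for a suitable $N_m$ and hence acts trivially on $K_m$, so $\gamma$ on $K_m\otimes_{\mathbb{Q}_p}\overline{\mathbb{Q}}_p$ is semisimple with eigenvalues that are $p$-power roots of unity, all of which have $p$-adic logarithm zero. Hence $\nabla_0\cdot\varprojlim K_m=0$, so $\mathrm{char}_{\Lambda_\infty}(\varprojlim K_m)$ is a product of powers of $\nabla_0$ on each $\widehat\Gamma_{K,\mathrm{tor}}$-component. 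The exponent is then pinned down by localizing at each simple zero of $\nabla_0$ in $\mathrm{Spm}(\Lambda_n)$ and verifying that the corresponding stalk has local length exactly $[K:\mathbb{Q}_p]$; at the zero corresponding to the trivial character this is a direct consequence of $K_m^{\Gamma_K}=K$ (Ax--Sen--Tate) together with the fact that the normalized trace restricts to the identity on $K$, while the remaining zeros are handled by applying the same argument after twisting by the relevant character of $\Gamma_{K,\mathrm{tor}}$.

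The main obstacle is this final length computation: one must show, carefully within Pottharst's Schneider--Teitelbaum coadmissible framework $\Lambda_\infty=\varprojlim_n\Lambda_n$, that the local length at each zero of $\nabla_0$ is exactly $[K:\mathbb{Q}_p]$ and not merely some positive multiple of it. The subtlety is that $\varprojlim K_m$ is infinite-dimensional as a $\mathbb{Q}_p$-vector space (the $\dim_{\mathbb{Q}_p}K_m$ grow while the transitions are surjective), so one cannot naively extract the characteristic ideal from a $\mathbb{Q}_p$-dimension count and must instead work level by level in the affinoid cover. An alternative that finesses this, if desired, is to invoke Theorem~\ref{3.2}(4) (which gives that $(\bold{B}^{\dagger}_{\mathrm{rig},K})^{\psi=1}$ is free of rank $[K:\mathbb{Q}_p]$ over $\Lambda_\infty$ modulo torsion) together with the long exact sequence coming from $0\to t\bold{B}^{\dagger}_{\mathrm{rig},K}\to\bold{B}^{\dagger}_{\mathrm{rig},K}\to\bold{B}^{\dagger}_{\mathrm{rig},K}/t\to 0$: a rank-and-characteristic-ideal bookkeeping forces the exponent to be precisely $[K:\mathbb{Q}_p]$.
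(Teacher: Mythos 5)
Your main route (identify $(t^h\bold{B}^{\dagger}_{\mathrm{rig},K}/t^{h+1}\bold{B}^{\dagger}_{\mathrm{rig},K})^{\psi=1}$ with a twist of $\varprojlim_{\frac{1}{p}\mathrm{Tr}}K_m$, reduce to $h=0$ by the twisting automorphism, then compute local lengths at the zeros of $\nabla_0$) is genuinely different from the paper's argument and could in principle be completed, but as written it has a real gap precisely at the step you flag as the ``main obstacle,'' and your sketch of that step is off in two ways. First, the zero locus of $\nabla_0$ is \emph{not} exhausted by the trivial character and its twists by characters of $\Gamma_{K,\mathrm{tor}}$: on each component $\Lambda_{\infty}\alpha_{\eta}\isom \bold{B}^+_{\mathrm{rig},\mathbb{Q}_p}$ the element $\nabla_0$ corresponds to $\log(1+T)$, which vanishes (simply) at \emph{every} point $T=\zeta_{p^j}-1$, i.e.\ at all finite-order characters of $\Gamma_K$ -- infinitely many points per component, of arbitrarily large conductor. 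To get $\mathrm{char}_{\Lambda_{\infty}}=(\nabla_0^{[K:\mathbb{Q}_p]})$ you must show the stalk has length $[K:\mathbb{Q}_p]$ at \emph{all} of these points, which forces you to work with the $\delta$-isotypic parts of $K_m$ for $m$ at least the conductor of $\delta$ (normal basis theorem gives multiplicity $[K:\mathbb{Q}_p]$, and one must check the normalized traces become isomorphisms on these parts and that the fiber of the coadmissible limit is computed levelwise). Second, even at the trivial character the relevant quantity is the fiber $M/\mathfrak{m}_xM$, a coinvariants-type computation for $\varprojlim K_m$ (requiring a limit-versus-coinvariants commutation), not the invariants $K_m^{\Gamma_K}=K$ (which in any case is plain Galois theory, not Ax--Sen--Tate). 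None of this is fatal, but it is exactly the content of the lemma and it is not supplied.

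Your fallback is essentially the paper's route, but it is missing its key ingredient. The paper does use the sequence $0\to t^{h+1}\bold{B}^{\dagger}_{\mathrm{rig},K}\to t^h\bold{B}^{\dagger}_{\mathrm{rig},K}\to t^h\bold{B}^{\dagger}_{\mathrm{rig},K}/t^{h+1}\bold{B}^{\dagger}_{\mathrm{rig},K}\to 0$, the vanishing of the $\psi$-coinvariants of the quotient, and the rank statement of Theorem \ref{3.2}; but ``rank-and-characteristic-ideal bookkeeping'' alone cannot pin down $\mathrm{char}_{\Lambda_{\infty}}$ of the quotient $(t^h\bold{B}^{\dagger}_{\mathrm{rig},K})^{\psi=1}/(t^{h+1}\bold{B}^{\dagger}_{\mathrm{rig},K})^{\psi=1}$: equality of ranks only says this quotient is torsion, not which ideal it has. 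The decisive step in the paper is to factor the inclusion through the differential operator, $t^h\bold{B}^{\dagger}_{\mathrm{rig},K}\xrightarrow{\nabla_h}t^{h+1}\bold{B}^{\dagger}_{\mathrm{rig},K}\hookrightarrow t^h\bold{B}^{\dagger}_{\mathrm{rig},K}$, use that $\nabla_h:t^h\bold{B}^{\dagger}_{\mathrm{rig},K}\to t^{h+1}\bold{B}^{\dagger}_{\mathrm{rig},K}$ has finite-dimensional kernel and cokernel (Crew; claim (1) in the proof of Theorem \ref{3.14}), so its $\mathrm{det}_{\Lambda_{\infty}}(\mathrm{H}^{\bullet}(-))$ is trivial by Lemma \ref{3.12}(v), and only then conclude that the determinant of the inclusion equals that of multiplication by the element $\nabla_h$ on a module of free rank $[K:\mathbb{Q}_p]$, namely $(\nabla_h^{[K:\mathbb{Q}_p]})$. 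Without this factorization (or a completed pointwise length computation as above), the exponent is not ``forced.''
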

  
  \begin{proof}
  From the short exact sequence 
  $$0\rightarrow t^{h+1}\bold{B}^{\dagger}_{\mathrm{rig},K}\rightarrow t^h\bold{B}^{\dagger}_{\mathrm{rig},K}\rightarrow 
  t^h\bold{B}^{\dagger}_{\mathrm{rig},K}/t^{h+1}\bold{B}^{\dagger}_{\mathrm{rig},K}\rightarrow 0$$
  and from the fact that $(t^h\bold{B}^{\dagger}_{\mathrm{rig},K}/t^{h+1}\bold{B}^{\dagger}_{\mathrm{rig},K})/(\psi-1)=0$, 
  we obtain the following exact sequence
   \[
  \begin{array}{ll}
 0& \rightarrow \bold{H}^1_{\mathrm{Iw}}(K, t^{h+1}\bold{B}^{\dagger}_{\mathrm{rig},K})\rightarrow \bold{H}^1_{\mathrm{Iw}}(K, t^h\bold{B}^{\dagger}_{\mathrm{rig},K})
 \rightarrow (t^h\bold{B}^{\dagger}_{\mathrm{rig},K}/t^{h+1}\bold{B}^{\dagger}_{\mathrm{rig},K})^{\psi=1} \\
 &\rightarrow \bold{H}^2_{\mathrm{Iw}}(K, t^{h+1}\bold{B}^{\dagger}_{\mathrm{rig},K})\rightarrow \bold{H}^2_{\mathrm{Iw}}(K, t^{h}\bold{B}^{\dagger}_{\mathrm{rig},K})
 \rightarrow 0.
 \end{array}
 \]
 Hence, we obtain an equality
 $$\mathrm{char}_{\Lambda_{\infty}}((t^h\bold{B}^{\dagger}_{\mathrm{rig},K}/t^{h+1}\bold{B}^{\dagger}_{\mathrm{rig},K})^{\psi=1})
 =\mathrm{det}_{\Lambda_{\infty}}(\mathrm{H}^{\bullet}(t^{h+1}\bold{B}^{\dagger}_{\mathrm{rig},K}\rightarrow t^h\bold{B}^{\dagger}_{\mathrm{rig},K})). $$
 If we apply (iv) of Lemma \ref{3.12} to the composition of the maps 
 $$t^{h}\bold{B}^{\dagger}_{\mathrm{rig},K}\xrightarrow{\nabla_{h}} t^{h+1}\bold{B}^{\dagger}_{\mathrm{rig},K}
 \hookrightarrow t^h\bold{B}^{\dagger}_{\mathrm{rig},K},$$
 we obtain an equality 
 \begin{multline*}
  \mathrm{det}_{\Lambda_{\infty}}(\mathrm{H}^{\bullet}(t^{h+1}\bold{B}^{\dagger}_{\mathrm{rig},K}\hookrightarrow 
 t^h\bold{B}^{\dagger}_{\mathrm{rig},K}))\\
 =\mathrm{det}_{\Lambda_{\infty}}(\mathrm{H}^{\bullet}(t^h\bold{B}^{\dagger}_{\mathrm{rig},K}\xrightarrow{\nabla_h}
 t^h\bold{B}^{\dagger}_{\mathrm{rig},K}))(\mathrm{det}_{\Lambda_{\infty}}(\mathrm{H}^{\bullet}(t^h\bold{B}^{\dagger}_{\mathrm{rig},K}\xrightarrow{\nabla_h}
 t^{h+1}\bold{B}^{\dagger}_{\mathrm{rig},K})))^{-1}.
 \end{multline*}
 Since we have $\mathrm{det}_{\Lambda_{\infty}}(\mathrm{H}^{\bullet}(t^h\bold{B}^{\dagger}_{\mathrm{rig},K}\xrightarrow{\nabla_h}
 t^{h+1}\bold{B}^{\dagger}_{\mathrm{rig},K}))=\Lambda_{\infty}$ by the claim (1), 
 we obtain
 $$\mathrm{det}_{\Lambda_{\infty}}(\mathrm{H}^{\bullet}(t^{h+1}\bold{B}^{\dagger}_{\mathrm{rig},K}\hookrightarrow 
 t^h\bold{B}^{\dagger}_{\mathrm{rig},K}))=\mathrm{det}_{\Lambda_{\infty}}(\mathrm{H}^{\bullet}(t^h\bold{B}^{\dagger}_{\mathrm{rig},K}\xrightarrow{\nabla_h}
 t^h\bold{B}^{\dagger}_{\mathrm{rig},K})).$$
 Finally, because $t^h\bold{B}^{\dagger}_{\mathrm{rig},K}/(\psi-1)(t^h\bold{B}^{\dagger}_{\mathrm{rig},K})$ is 
 a co-admissible  torsion $\Lambda_{\infty}$-module and  the $\Lambda_{\infty}$-free rank of $(t^h\bold{B}^{\dagger}_{\mathrm{rig},K})^{\psi=1}$ is $[K:\mathbb{Q}_p]$ by 
 Theorem 3.2, we obtain
 $$\mathrm{det}_{\Lambda_{\infty}}(\mathrm{H}^{\bullet}(t^h\bold{B}^{\dagger}_{\mathrm{rig},K}\xrightarrow{\nabla_h}
 t^h\bold{B}^{\dagger}_{\mathrm{rig},K}))=(\nabla_h^{[K:\mathbb{Q}_p]}).$$
  Combining all these equalities, we obtain the equality
  $$\mathrm{char}_{\Lambda_{\infty}}((t^h\bold{B}^{\dagger}_{\mathrm{rig},K}/t^{h+1}\bold{B}^{\dagger}_{\mathrm{rig},K})^{\psi=1})
  =(\nabla_h^{[K:\mathbb{Q}_p]}),$$ which proves the lemma, hence proves the theorem.

  \end{proof}

  \subsection{crystalline case}
  In this final subsection, we compare our results obtained in the last two subsections with 
  the previous results of Perrin-Riou when 
  $K$ is unramified over $\mathbb{Q}_p$ and $D$ is potentially crystalline such that $D|_{K_n}$ is crystalline for some $n\geqq 0$.
  After some preliminaries on the theory of $p$-adic Fourier transform, we recall the Berger's formula of Perrin-Riou's big exponential map $\Omega_{D,h}$ 
  (\cite{Ber03}), which is a map 
  from a $\Lambda_{\infty}$-submodule of $\Lambda_{\infty}\otimes_{\mathbb{Q}_p}\bold{D}^{K_n}_{\mathrm{crys}}(D)$ to 
  $\bold{H}^1_{\mathrm{Iw}}(K, D)/\bold{H}^1_{\mathrm{Iw}}(K, D)_{\mathrm{tor}}$.
  We next recall the statements of Perrin-Riou's $\delta(V)$. 
  Finally, we compare our exponential map $\mathrm{Exp}_{D,h}$ with Perrin-Riou's big exponential map.  In particular, 
 we show that our $\delta(D)$ is equivalent to Perrin-Riou's $\delta(V)$ in the unramified and crystalline case.

  If $K$ is unramified,  the cyclotomic character gives an isomorphism 
  $\chi:\Gamma_K\isom \mathbb{Z}^{\times}_p$. If we set $T:=[\varepsilon]-1$, then $\bold{B}^{\dagger}_{\mathrm{rig},K}=\cup_{r>0}\bold{B}^{\dagger,r}_{\mathrm{rig},K}$ 
   can be written as 
   $$\bold{B}^{\dagger, r}_{\mathrm{rig}, K}:=\{f(T):=\sum_{n\in \mathbb{Z}}a_nT^{n}| \,a_n\in K\, \text{and}\, f(T) \,\text{is convergent on }\, 
p^{-1/r}\leqq |T|_p<1\}.$$
   and the actions of 
  $\varphi$ and $\gamma\in\Gamma_K$ are given by the formula 
  $$\varphi(\sum_{n\in \mathbb{Z}}a_nT^n)
  :=\sum_{n\in \mathbb{Z}}\varphi(a_n)((1+T)^p-1)^n, \,\,\gamma(\sum_{n\in \mathbb{Z}}a_nT^n)
  :=\sum_{n\in\mathbb{Z}}a_n((1+T)^{\chi(\gamma)}-1)^n.$$ 
  We define a $\varphi$ and $\Gamma_K$-stable subring $\bold{B}^+_{\mathrm{rig},K}$ of $\bold{B}^{\dagger}_{\mathrm{rig},K}$ by 
  $$\bold{B}^+_{\mathrm{rig},K}:=\{f(T)=\sum_{n=0}^{+\infty}a_nT^n| \,a_n\in K\, \text{and}\, f(T) \text{ is convergent on } 
   0\leqq |T|_p<1\}.$$ 
  We have natural $\varphi$- and $\Gamma_K\isom \Gamma_{\mathbb{Q}_p}$-equivariant isomorphisms
  $$\bold{B}^{\dagger}_{\mathrm{rig}, \mathbb{Q}_p}\otimes_{\mathbb{Q}_p}K\isom \bold{B}^{\dagger}_{\mathrm{rig},K}, \,\,\,
  \bold{B}^+_{\mathrm{rig},\mathbb{Q}_p}\otimes_{\mathbb{Q}_p}K\isom \bold{B}^+_{\mathrm{rig},K}:f(T)\otimes a\mapsto af(T).$$
 One has a $\Lambda_{\infty}$-linear isomorphism defined  by 
  $$\Lambda_{\infty}\isom (\bold{B}^+_{\mathrm{rig},\mathbb{Q}_p})^{\psi=0}:\lambda\mapsto 
  \lambda\cdot(1+T).$$
 We remark that the definition of this isomorphism depends on the choice of $T$, i.e the choice of $\{\zeta_{p^n}\}_{n\geqq 1}$.
  In this subsection, we consider potentially crystalline $(\varphi,\Gamma)$-modules $D$ over $\bold{B}^{\dagger}_{\mathrm{rig},K}$ 
  such that $D|_{K_n}$ are crystalline  for some $n\geqq 0$.
  
  We first need to study the relationship between 
  $\bold{N}_{\mathrm{rig}}(D)^{\psi=1}$ and $\Lambda_{\infty}\otimes_{\mathbb{Q}_p}\bold{D}^{K_n}_{\mathrm{crys}}(D)$.
  
 \begin{lemma}\label{3.16}
 Let  $D$ be a potentially crystalline $(\varphi,\Gamma_K)$-module over $\bold{B}^{\dagger}_{\mathrm{rig},K}$ such that 
 $D|_{K_n}$ is crystalline  for some $n\geqq 0$. Then there exists an isomorphism of $(\varphi,\Gamma_K)$-modules
  over $\bold{B}^{\dagger}_{\mathrm{rig},K}$
  $$\bold{N}_{\mathrm{rig}}(D)\isom \bold{B}^{\dagger}_{\mathrm{rig},K}\otimes_K \bold{D}^{K_n}_{\mathrm{crys}}(D),$$
  where, on the right hand side, $\varphi$ and $\Gamma_K$ act diagonally.
  \end{lemma}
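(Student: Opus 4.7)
The plan is to reduce the lemma to Berger's known isomorphism for crystalline $(\varphi,\Gamma)$-modules by exploiting $\bold{B}^{\dagger}_{\mathrm{rig},K} = \bold{B}^{\dagger}_{\mathrm{rig},K_n}$ (as rings) and $(K_n)_0 = K$ (since $K$ is unramified), and then promoting the resulting $\Gamma_{K_n}$-equivariance to $\Gamma_K$-equivariance using the natural inclusion $\bold{D}^{K_n}_{\mathrm{crys}}(D) \hookrightarrow D[1/t]$.

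First, I would identify $\bold{N}_{\mathrm{rig}}(D)$ with $\bold{N}_{\mathrm{rig}}(D|_{K_n})$ as $\bold{B}^{\dagger}_{\mathrm{rig},K}$-submodules of $D[1/t]$. Since $D$ is de Rham (potentially crystalline implies de Rham), the natural map $K_n \otimes_K \bold{D}^K_{\mathrm{dR}}(D) \isom \bold{D}^{K_n}_{\mathrm{dR}}(D|_{K_n})$ is an isomorphism, hence for all $m \geqq n$,
$$K_m[[t]] \otimes_K \bold{D}^K_{\mathrm{dR}}(D) = K_m[[t]] \otimes_{K_n} \bold{D}^{K_n}_{\mathrm{dR}}(D|_{K_n}).$$
Thus the defining conditions for $\bold{N}_{\mathrm{rig}}(D)$ and $\bold{N}_{\mathrm{rig}}(D|_{K_n})$ (properties (1) and (2) of Theorem \ref{3.5}) coincide, and by the uniqueness asserted in Theorem \ref{3.5} the two submodules of $D[1/t]$ are equal.

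Second, I would apply Berger's theorem in the crystalline case (\cite{Ber02}, \cite{Ber08b}) to the crystalline $(\varphi,\Gamma_{K_n})$-module $D|_{K_n}$ over $\bold{B}^{\dagger}_{\mathrm{rig},K_n} = \bold{B}^{\dagger}_{\mathrm{rig},K}$. Since $(K_n)_0 = K$, this yields an isomorphism of $(\varphi,\Gamma_{K_n})$-modules
$$\bold{B}^{\dagger}_{\mathrm{rig},K} \otimes_K \bold{D}^{K_n}_{\mathrm{crys}}(D) \isom \bold{N}_{\mathrm{rig}}(D|_{K_n}),$$
given explicitly by $a \otimes x \mapsto a x$, using the canonical inclusion $\bold{D}^{K_n}_{\mathrm{crys}}(D) = D[1/t]^{\Gamma_{K_n}=1} \hookrightarrow D[1/t]$; one verifies that $\iota_m(x)$ is $\Gamma_{K_n}$-invariant in $\bold{D}_{\mathrm{dif},m}(D)$ and therefore lies in $K_m[[t]] \otimes_{K_n} \bold{D}^{K_n}_{\mathrm{dR}}(D|_{K_n})$ for $m \geqq n$, so the image does land in $\bold{N}_{\mathrm{rig}}(D|_{K_n})$.

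Finally, I would upgrade this $\Gamma_{K_n}$-equivariant isomorphism to a $\Gamma_K$-equivariant one. The essential observation is that $\bold{D}^{K_n}_{\mathrm{crys}}(D)$ is a $\Gamma_K$-stable $K$-subspace of $D[1/t]$ on which $\Gamma_K$ acts through $\Gamma_K/\Gamma_{K_n} = \mathrm{Gal}(K_n/K)$. Consequently, the multiplication map $a \otimes x \mapsto ax$ is automatically $\Gamma_K$-equivariant when the left-hand side is given the diagonal $\Gamma_K$-action, which produces the desired isomorphism of $(\varphi,\Gamma_K)$-modules over $\bold{B}^{\dagger}_{\mathrm{rig},K}$. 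The main (but minor) obstacle is the careful bookkeeping comparing the two definitions of $\bold{N}_{\mathrm{rig}}$; once the identification $\bold{N}_{\mathrm{rig}}(D) = \bold{N}_{\mathrm{rig}}(D|_{K_n})$ is in place, the rest of the argument is essentially formal consequence of Berger's crystalline construction.
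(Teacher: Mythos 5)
Your argument is correct in substance and ultimately rests on the same two pillars as the paper's proof, namely the explicit multiplication map $\bold{B}^{\dagger}_{\mathrm{rig},K}\otimes_K\bold{D}^{K_n}_{\mathrm{crys}}(D)\rightarrow D[1/t]$ and the uniqueness characterization of $\bold{N}_{\mathrm{rig}}(D)$ by properties (1) and (2) of Theorem \ref{3.5}, but it is packaged differently. The paper never restricts to $K_n$: it notes that the multiplication map is injective (being an isomorphism after inverting $t$), checks that its image inside $D[1/t]$ satisfies (1) and (2) of Theorem \ref{3.5}, and concludes by uniqueness; the $\Gamma_K$- and $\varphi$-equivariance are automatic since everything is a submodule of $D[1/t]$, so no descent of equivariance is needed. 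Your detour through $D|_{K_n}$, the identification $\bold{N}_{\mathrm{rig}}(D)=\bold{N}_{\mathrm{rig}}(D|_{K_n})$ (which is fine, via $\bold{D}^{K_n}_{\mathrm{dR}}(D|_{K_n})=K_n\otimes_K\bold{D}^K_{\mathrm{dR}}(D)$ and the coincidence of the defining conditions), and the $\Gamma_{K_n}$-to-$\Gamma_K$ upgrade are all sound; the one fragile point is the appeal to ``Berger's theorem in the crystalline case'': for a possibly non-\'etale crystalline $(\varphi,\Gamma_{K_n})$-module this is essentially the $n=0$ case of the lemma itself, \cite{Ber02} treats only representations, and extracting it from \cite{Ber08b} requires exactly the kind of identification being proved. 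Moreover, your verification only shows that the image of the multiplication map lands in $\bold{N}_{\mathrm{rig}}(D|_{K_n})$, not that it exhausts it. Both issues are closed at once by doing what the paper does: use the isomorphism $\bold{B}^{\dagger}_{\mathrm{rig},K}[1/t]\otimes_K\bold{D}^{K_n}_{\mathrm{crys}}(D)\isom D[1/t]$ for injectivity and property (1), check property (2) (your $\iota_m$-computation gives the containment, and the crystalline comparison $K_n\otimes_K\bold{D}^{K_n}_{\mathrm{crys}}(D)\isom\bold{D}^{K_n}_{\mathrm{dR}}(D)$ gives generation), and invoke the uniqueness in Theorem \ref{3.5}. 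What your route buys is a transparent view of the $\Gamma_K$-structure as descended from the $\Gamma_{K_n}$-picture; what the paper's buys is brevity and independence from any crystalline black box.
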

  \begin{proof}
  Since
  the natural map 
  $$\bold{B}^{\dagger}_{\mathrm{rig},K}[1/t]\otimes_{K}\bold{D}^{K_n}_{\mathrm{crys}}(D)\rightarrow D[1/t]: f(T)\otimes x\mapsto f(T)x$$
  is isomorphism, the natural map 
  $$\bold{B}^{\dagger}_{\mathrm{rig},K}\otimes_K \bold{D}^{K_n}_{\mathrm{crys}}(D)\rightarrow D[1/t]:f(T)\otimes x\mapsto f(T)x$$ is injective. 
  Then, it is easy to see that $\bold{B}^{\dagger}_{\mathrm{rig},K}\otimes_K \bold{D}^{K_n}_{\mathrm{crys}}(D)\subseteq D[1/t]$ satisfies the conditions 
  (1) and (2) of Theorem \ref{3.5}. Hence $\bold{B}^{\dagger}_{\mathrm{rig},K}\otimes_K \bold{D}^{K_n}_{\mathrm{crys}}(D)\isom \bold{N}_{\mathrm{rig}}(D)$ by 
  the uniqueness of $\bold{N}_{\mathrm{rig}}(D)$.
  \end{proof}
  
  By this lemma, $\bold{B}^+_{\mathrm{rig},K}\otimes_K \bold{D}^{K_n}_{\mathrm{crys}}(D)$ can be seen as a $\varphi$ and $\Gamma_K$ stable 
  submodule of $\bold{N}_{\mathrm{rig}}(D)$. Since we have an isomorphism $$\Lambda_{\infty}\otimes_{\mathbb{Q}_p}\bold{D}^{K_n}_{\mathrm{crys}}(D)
  \isom (\bold{B}^+_{\mathrm{rig},\mathbb{Q}_p})^{\psi=0}\otimes_{\mathbb{Q}_p}\bold{D}^{K_n}_{\mathrm{crys}}(D)\isom 
  (\bold{B}^+_{\mathrm{rig},K}\otimes_K \bold{D}^{K_n}_{\mathrm{crys}}(D))^{\psi=0}$$ and
  the map  $(\varphi-1)$ sends $(\bold{B}^+_{\mathrm{rig},K}\otimes_K\bold{D}^{K_n}_{\mathrm{crys}}(D))^{\psi=1}$ to 
  $(\bold{B}^+_{\mathrm{rig},K}\otimes_K \bold{D}^{K_n}_{\mathrm{crys}}(D))^{\psi=0}$, to study the relationship between 
  $\Lambda_{\infty}\otimes_{\mathbb{Q}_p}\bold{D}^{K_n}_{\mathrm{crys}}(D)$ and 
  $\bold{N}_{\mathrm{rig}}(D)^{\psi=1}$, we need to study the inclusion $(\bold{B}^+_{\mathrm{rig},K}\otimes_K\bold{D}^{K_n}_{\mathrm{crys}}(D))^{\psi=1}
  \hookrightarrow \bold{N}_{\mathrm{rig}}(D)^{\psi=1}$ and the map $$\varphi-1: (\bold{B}^+_{\mathrm{rig},K}\otimes_K\bold{D}^{K_n}_{\mathrm{crys}}(D))^{\psi=1}
  \rightarrow (\bold{B}^+_{\mathrm{rig},K}\otimes_K\bold{D}^{K_n}_{\mathrm{crys}}(D))^{\psi=0}.$$

 
 Before studying these maps, we recall some facts concerning $p$-adic Fourier transform (see $\S$ 2.6 of \cite{Ch12}).
 Let $f:\mathbb{Z}_p\rightarrow \mathbb{Q}_p$ be a map and $h\in \mathbb{Z}_{\geqq 0}$. We say that $f$ is locally $h$-analytic if, 
 for each $x\in \mathbb{Z}_p$, there exists $\{a_n(x)\}_{n\geqq 0}\subseteq \mathbb{Q}_p$ 
 such that $f(x+p^hy)=\sum_{n=0}^{\infty}a_n(x)y^h$ for any $y\in \mathbb{Z}_p$. 
 We define
 $$\mathrm{LA}_h(\mathbb{Z}_p, \mathbb{Q}_p):=\{f:\mathbb{Z}_p\rightarrow \mathbb{Q}_p| f\,\text{ is locally }\,h\text{-analytic} \}$$
  and 
 $$\mathrm{LA}(\mathbb{Z}_p,\mathbb{Q}_p):=\varinjlim_{h}\mathrm{LA}_h(\mathbb{Z}_p,\mathbb{Q}_p).$$ 
 $\mathrm{LA}_{h}(\mathbb{Z}_p, \mathbb{Q}_p)$ is a $\mathbb{Q}_p$-Banach space whose norm $|-|_h$ is defined by 
 $$|f|_h:=\mathrm{sup}_{x\in \mathbb{Z}_p, n\geqq 0}|a_n(x)|_p.$$ We define the actions of $\varphi,\psi$ and  $\gamma\in\Gamma_{K}\isom \Gamma_{\mathbb{Q}_p}$ on $\mathrm{LA}(\mathbb{Z}_p,\mathbb{Q}_p)$ 
 by 
  $$\varphi(f)(x):=
  \begin{cases}
      0 & ( \text{if $ x\in \mathbb{Z}_p^{\times}$}) \\
      f(\frac{x}{p}) &(\text{if $ x\in p\mathbb{Z}_p$}) ,
      \end{cases}
     $$
 $$\psi(f)(x):=f(px), \,\,\,\gamma(f)(x):=\frac{1}{\chi(\gamma)}f(\frac{x}{\chi(\gamma)}). $$
 We define a map $\mathrm{Col}:\bold{B}^{\dagger}_{\mathrm{rig}, \mathbb{Q}_p}\rightarrow \mathrm{LA}(\mathbb{Z}_p,\mathbb{Q}_p)$, which we call Colmez transform, 
  by 
 $$\mathrm{Col}(f)(x):=\mathrm{Res}((1+T)^xf(T)\frac{dT}{1+T}) \,\, \text{ for each } x\in \mathbb{Z}_p,$$
 where $\mathrm{Res}:\bold{B}^{\dagger}_{\mathrm{rig},\mathbb{Q}_p}\rightarrow \mathbb{Q}_p$ is the residue map 
 defined by $$\mathrm{Res}(\sum_{n\in \mathbb{Z}}a_nT^n):=a_{-1}.$$
  The map $\mathrm{Col}$  commutes with the actions of
 $\psi,\varphi$ and $\Gamma_{\mathbb{Q}_p}$ and we have $\mathrm{Ker}(\mathrm{Col})=\bold{B}^+_{\mathrm{rig},\mathbb{Q}_p}$. Hence we obtain 
 the following short exact sequence 
 $$0\rightarrow \bold{B}^+_{\mathrm{rig},\mathbb{Q}_p}\rightarrow \bold{B}^{\dagger}_{\mathrm{rig},\mathbb{Q}_p}\xrightarrow{\mathrm{Col}}\mathrm{LA}(\mathbb{Z}_p,\mathbb{Q}_p)
 \rightarrow 0.$$
 For each $k\in \mathbb{Z}_{\geqq 0}$, we define a locally analytic function $x^k:\mathbb{Z}_p\rightarrow \mathbb{Q}_p:y\mapsto y^k$. This function 
 satisfies that $$\psi(x^k)=p^kx^k \,\,\text{and}\,\,\gamma(x^k)=\chi(\gamma)^{-(k+1)}x^k.$$

 \begin{lemma}\label{3.17}
 Let $D_0$ be a $\varphi$-module over $\mathbb{Q}_p$, i.e. $D_0$ is a finite dimensional $\mathbb{Q}_p$-vector space with a $\mathbb{Q}_p$-linear 
 automorphism $\varphi:D_0\isom D_0$. Then,  for sufficiently large $k_0\gg0$, 
 we have the following equalities;
 \begin{itemize}
 \item[(1)]$\bigoplus_{k=0}^{k_0}(t^k \otimes D_0)^{\varphi=1}=\bigoplus_{k=0}^{\infty}(t^k\otimes D_0)^{\varphi=1}=(\bold{B}^+_{\mathrm{rig},\mathbb{Q}_p}\otimes_{\mathbb{Q}_p} D_0)^{\varphi=1}$,
 \item[(2)]
 \begin{multline*}
 \bigoplus_{k= 0}^{k_0} (t^k\otimes D_0)/(1-\varphi)(t^k\otimes D_0)=\bigoplus_{k= 0}^{\infty} (t^k\otimes D_0)/(1-\varphi)(t^k\otimes D_0)\\
   \isom (\bold{B}^+_{\mathrm{rig}, \mathbb{Q}_p}\otimes_{\mathbb{Q}_p} D_0)/(1-\varphi) (\bold{B}^+_{\mathrm{rig}, \mathbb{Q}_p}\otimes_{\mathbb{Q}_p} D_0),
   \end{multline*}
 \item[(3)]$(\bold{B}^+_{\mathrm{rig},\mathbb{Q}_p}\otimes_{\mathbb{Q}_p}D_0)/(1-\psi)(\bold{B}^+_{\mathrm{rig},\mathbb{Q}_p}\otimes_{\mathbb{Q}_p}D_0)=0$,
 \item[(4)]$\bigoplus_{k=0}^{k_0}(x^k\otimes D_0)^{\psi=1}=\bigoplus_{k=0}^{\infty}(x^k\otimes D_0)^{\psi=1}=(\mathrm{LA}(\mathbb{Z}_p,\mathbb{Q}_p)\otimes_{\mathbb{Q}_p}D_0)^{\psi=1}$, 
  \item[(5)]
 \begin{multline*} \bigoplus_{k=0}^{k_0}(x^k\otimes D_0)/(1-\psi)(x^k\otimes D_0)=\bigoplus_{k=0}^{\infty}(x^k\otimes D_0)/(1-\psi)(x^k\otimes D_0)\\
 \isom 
 (\mathrm{LA}(\mathbb{Z}_p,\mathbb{Q}_p)\otimes_{\mathbb{Q}_p}D_0)/(1-\psi) (\mathrm{LA}(\mathbb{Z}_p,\mathbb{Q}_p)\otimes_{\mathbb{Q}_p}D_0),
 \end{multline*}
 
 \end{itemize}
 where we define $t^k\otimes D_0:=\mathbb{Q}_pt^k\otimes_{\mathbb{Q}_p}D_0$ and 
 $x^k\otimes D_0:=\mathbb{Q}_px^k\otimes_{\mathbb{Q}_p}D_0$  for each $k\geqq 0$.

 \end{lemma}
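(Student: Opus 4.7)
The plan is to reduce each of (1)--(5) to an eigenvalue computation of $\varphi$ (or $\psi$) on $D_0$, after decomposing $\bold{B}^+_{\mathrm{rig},\mathbb{Q}_p}$ and $\mathrm{LA}(\mathbb{Z}_p,\mathbb{Q}_p)$ using the natural $t^k$ and $x^k$ filtrations. First I would replace $\mathbb{Q}_p$ by $\overline{\mathbb{Q}}_p$ (faithfully flat, and harmless for all the direct-sum and kernel/cokernel statements in question), then put $\varphi$ on $D_0$ into Jordan form and d\'evissage along the short exact sequences coming from the generalized-eigenspace decomposition and from filtering each block by powers of $\varphi-\alpha$. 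Five-lemma arguments applied to the resulting six-term sequences in kernels and cokernels reduce every assertion to the scalar case, where $\varphi$ acts on $D_0$ as multiplication by a single $\alpha\in\overline{\mathbb{Q}}_p^{\times}$.

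In this scalar case the identification $t^k\otimes D_0\cong D_0$ sends $1-\varphi$ to the operator $1-p^k\alpha$, and $x^k\otimes D_0\cong D_0$ sends $1-\psi$ to $1-p^k\alpha^{-1}$ (using $\varphi(t)=pt$, $\psi(x^k)=p^k x^k$, and that $\psi$ acts on the $D_0$-factor as $\varphi^{-1}$). Each of these is invertible for all but at most one $k\in\mathbb{Z}_{\geq 0}$, so both the kernel and cokernel on $t^k\otimes D_0$ (respectively $x^k\otimes D_0$) vanish once $k$ exceeds $|v_p(\alpha)|$; this supplies the first equalities in (1), (2), (4), (5) with an explicit $k_0$ depending only on the slopes of $D_0$. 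The second equalities in (1) and (4) come from the Fontaine--Colmez computation that $(\bold{B}^+_{\mathrm{rig},\mathbb{Q}_p})^{\varphi=\beta}=\mathbb{Q}_p\,t^k$, respectively $(\mathrm{LA}(\mathbb{Z}_p,\mathbb{Q}_p))^{\psi=\beta}=\mathbb{Q}_p\,x^k$, when $\beta=p^k$ with $k\in\mathbb{Z}_{\geq 0}$, and $0$ otherwise: applied with $\beta=\alpha^{-1}$ (resp. $\beta=\alpha$) they force every $\varphi=1$- or $\psi=1$-invariant of $\bold{B}^+_{\mathrm{rig},\mathbb{Q}_p}\otimes D_0$ (resp. $\mathrm{LA}(\mathbb{Z}_p,\mathbb{Q}_p)\otimes D_0$) into $\bigoplus_k t^k\otimes D_0$ (resp. $\bigoplus_k x^k\otimes D_0$).

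For part (3) I would exploit the decomposition $\bold{B}^+_{\mathrm{rig},\mathbb{Q}_p}\otimes D_0 = (\bold{B}^+_{\mathrm{rig},\mathbb{Q}_p}\otimes D_0)^{\psi=0}\oplus \varphi(\bold{B}^+_{\mathrm{rig},\mathbb{Q}_p}\otimes D_0)$ inherited from $\bold{B}^+_{\mathrm{rig},\mathbb{Q}_p}=\bigoplus_{i=0}^{p-1}(1+T)^i\varphi(\bold{B}^+_{\mathrm{rig},\mathbb{Q}_p})$: the operator $1-\psi$ is the identity on the first summand, while on the second summand the intertwining $(1-\psi)\circ\varphi=\varphi-1$ identifies its cokernel with the cokernel of $\varphi-1$ acting on all of $\bold{B}^+_{\mathrm{rig},\mathbb{Q}_p}\otimes D_0$. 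Reducing again to the scalar case, (3) thus amounts to the solvability of $(\alpha\varphi-1)f=g$ in $\bold{B}^+_{\mathrm{rig},\overline{\mathbb{Q}}_p}$ for arbitrary $g$; this is a standard Fontaine-type analysis, carried out by a convergent Neumann series when $|\alpha|_p$ is large enough and by a Frobenius-descent construction building $f$ term by term in the $T$-adic expansion in the remaining slopes. Part (5) then follows by tracking the cokernel through the $\psi$-cohomology long exact sequence of the Colmez short exact sequence $0\to\bold{B}^+_{\mathrm{rig},\mathbb{Q}_p}\otimes D_0\to\bold{B}^{\dagger}_{\mathrm{rig},\mathbb{Q}_p}\otimes D_0\xrightarrow{\mathrm{Col}\otimes\mathrm{id}}\mathrm{LA}(\mathbb{Z}_p,\mathbb{Q}_p)\otimes D_0\to 0$ combined with (2) and (3).

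The hard part will be the analytic input underpinning both the eigenvector description of $\bold{B}^+_{\mathrm{rig},\mathbb{Q}_p}$ and the surjectivity of $\varphi-1$ (equivalently $1-\psi$) on it, both of which rest on a careful analysis of Newton polygons and rigid-analytic functions on the open unit disc; once these are in hand the rest of the argument is linear-algebraic bookkeeping and a routine Jordan-block induction.
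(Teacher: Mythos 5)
Your reduction to the scalar case (finite extension of scalars, Jordan filtration, snake/five-lemma bookkeeping) is a legitimate way of making precise what the paper does, since the paper simply invokes the one-dimensional computations of $\S 2$ of \cite{Ch12} and asserts that the general case is proved in the same way; your treatment of (1), (4) and of the first equalities in (1), (2), (4), (5) is fine. The genuine gap is in your scalar-case argument for (3). You reduce (3) to the solvability of $(\alpha\varphi-1)f=g$ in $\bold{B}^+_{\mathrm{rig}}$ for \emph{arbitrary} $g$, i.e.\ to the surjectivity of $\varphi-1$ on $M=\bold{B}^+_{\mathrm{rig},\mathbb{Q}_p}\otimes D_0$. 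That surjectivity is false precisely when some eigenvalue $\alpha$ of $\varphi$ on $D_0$ lies in $\{1,p^{-1},p^{-2},\dots\}$: by your own part (2) the cokernel of $1-\varphi$ on $M$ is $\bigoplus_k (t^k\otimes D_0)/(1-\varphi)$, which is nonzero in that case (for $\alpha=1$, evaluation at $T=0$ annihilates $(\varphi-1)f$, so constants are never in the image). Since the decomposition only gives $\mathrm{Im}(1-\psi)=M^{\psi=0}+(\varphi-1)M$, surjectivity of $1-\psi$ is strictly weaker than surjectivity of $\varphi-1$, and in these exceptional cases one must still show that the surviving classes of $t^k\otimes D_0$ in the cokernel of $\varphi-1$ are absorbed by $M^{\psi=0}$; equivalently one must prove directly that $\psi-\alpha$ (i.e.\ $1-\alpha^{-1}\psi$) is surjective on $\bold{B}^+_{\mathrm{rig},\mathbb{Q}_p}$ for every $\alpha\neq 0$. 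That is exactly the nonformal analytic input of $\S 2$ of \cite{Ch12} (or Colmez) which your argument does not supply, and your parenthetical ``surjectivity of $\varphi-1$ (equivalently $1-\psi$)'' conflates two inequivalent statements. These exceptional eigenvalues cannot be excluded here: $D_0=\bold{D}^{K_n}_{\mathrm{crys}}(D)$ has arbitrary slopes in this paper.

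A second, smaller gap is your derivation of (5). The snake lemma for $1-\psi$ applied to $0\to\bold{B}^+_{\mathrm{rig},\mathbb{Q}_p}\otimes D_0\to\bold{B}^{\dagger}_{\mathrm{rig},\mathbb{Q}_p}\otimes D_0\to\mathrm{LA}(\mathbb{Z}_p,\mathbb{Q}_p)\otimes D_0\to 0$, together with (3), only identifies $(\mathrm{LA}(\mathbb{Z}_p,\mathbb{Q}_p)\otimes D_0)/(1-\psi)$ with $(\bold{B}^{\dagger}_{\mathrm{rig},\mathbb{Q}_p}\otimes D_0)/(1-\psi)$, and neither (2) (which concerns $1-\varphi$ on $\bold{B}^+_{\mathrm{rig},\mathbb{Q}_p}$) nor (3) computes the latter; in the paper's own logic (proof of Proposition \ref{3.23}) that quotient is computed \emph{from} (5), not conversely. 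So you still need the direct computation of the cokernel of $\psi-\alpha$ on $\mathrm{LA}(\mathbb{Z}_p,\mathbb{Q}_p)$ (one-dimensional, spanned by $x^k$, when $\alpha=p^k$ with $k\geqq 0$, and zero otherwise), which is again the rank-one input from $\S 2$ of \cite{Ch12}. With these two surjectivity/cokernel statements supplied, your Jordan-block reduction does yield the lemma; without them, items (3) and (5) are not proved.
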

 \begin{proof}
 When $D_0$ is one dimensional, then all these properties are  proved in $\S$ 2 of \cite{Ch12}. In the general case, this lemma 
 can be proved in the same way, so we omit the proof.
 
 \end{proof}
 We go back to our situation. Let $D$ be a potentially crystalline $(\varphi,\Gamma_K)$-module 
 over $\bold{B}^{\dagger}_{\mathrm{rig},K}$ such that $D|_{K_n}$ is crystalline for some $n\geqq 0$.
 We define a $\Lambda_{\infty}$-linear morphism 
 
 $$\widetilde{\Delta}:(\bold{B}^+_{\mathrm{rig},K}\otimes_{K}\bold{D}^{K_n}_{\mathrm{crys}}(D))^{\psi=0} 
 \rightarrow \bigoplus_{k= 0 }^{\infty}t^k\otimes \bold{D}^{K_n}_{\mathrm{crys}}(D)/(1-\varphi)(t^k\otimes \bold{D}^{K_n}_{\mathrm{crys}}(D))$$
 by 
 $$\widetilde{\Delta}(\sum_{i=1}^m f_i(T)\otimes z_i):=( \overline{t^k\otimes (\sum_{i=1}^m\partial^k(f_i)(0)\cdot z_i)})_{k\geqq 0},$$
 where we recall that $\partial(f)(T)=(1+T)\frac{df(T)}{dT}$.

 The following lemma was proved in $\S$ 2.2 of \cite{Per94}, but here we re-prove it using the above lemma. 
  \begin{lemma}\label{3.18}
 There exists a following exact sequence of $\Lambda_{\infty}$-modules

\begin{multline*}
  0\rightarrow  \bigoplus_{k=0}^{\infty}(t^k\otimes\bold{D}^{K_n}_{\mathrm{crys}}(D))^{\varphi=1}\rightarrow(\bold{B}^+_{\mathrm{rig},K}\otimes_K\bold{D}^{K_n}_{\mathrm{crys}}(D))^{\psi=1}\\
 \xrightarrow{\varphi-1}
  (\bold{B}^+_{\mathrm{rig},K}\otimes_K\bold{D}^{K_n}_{\mathrm{crys}}(D))^{\psi=0}\xrightarrow{\widetilde{\Delta}} \bigoplus_{k=0}^{\infty}
  (t^k\otimes \bold{D}^{K_n}_{\mathrm{crys}}(D))/(1-\varphi)(t^k\otimes\bold{D}^{K_n}_{\mathrm{crys}}(D))\rightarrow  0.
  \end{multline*}

  \end{lemma}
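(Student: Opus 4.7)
The plan is to reduce the exact sequence to Lemma~\ref{3.17} via elementary manipulations with the identity $\psi\varphi = 1$. Set $M := \bold{B}^+_{\mathrm{rig},K}\otimes_K \bold{D}^{K_n}_{\mathrm{crys}}(D)$ and $N := \bigoplus_{k\geq 0}(t^k\otimes\bold{D}^{K_n}_{\mathrm{crys}}(D))/(1-\varphi)(t^k\otimes\bold{D}^{K_n}_{\mathrm{crys}}(D))$ for brevity. First I would verify the displayed sequence is a complex. The identity $\psi\varphi = 1$ gives $M^{\varphi=1}\subseteq M^{\psi=1}$, so the leftmost inclusion makes sense, and Lemma~\ref{3.17}(1) identifies $M^{\varphi=1}$ with $\bigoplus_{k\geq 0}(t^k\otimes\bold{D}^{K_n}_{\mathrm{crys}}(D))^{\varphi=1}$. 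For $x\in M^{\psi=1}$ one has $\psi((\varphi-1)x) = \psi\varphi(x)-\psi(x) = x-x = 0$, so $(\varphi-1)M^{\psi=1}\subseteq M^{\psi=0}$. That $\widetilde{\Delta}\circ(\varphi-1) = 0$ follows from $\varphi(f)(0) = f(0)$ (since $\varphi(T) = (1+T)^p-1$ vanishes at $T=0$) together with $\partial\varphi = p\varphi\partial$: the $k$-th component of $\widetilde{\Delta}((\varphi-1)(f\otimes z))$ is $\overline{t^k\otimes\bigl(p^k(\partial^k f)(0)\varphi(z) - (\partial^k f)(0)z\bigr)} = -\overline{(1-\varphi)(t^k\otimes(\partial^k f)(0)z)} = 0$ in the quotient.

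Exactness at $M^{\psi=1}$ is then immediate: the kernel of $\varphi-1$ on $M^{\psi=1}$ is $M^{\varphi=1}\cap M^{\psi=1} = M^{\varphi=1}$, identified via Lemma~\ref{3.17}(1). For exactness at $M^{\psi=0}$, I would extend $\widetilde{\Delta}$ to a map $\bar{\Delta}\colon M\to N$ by the same formula. This lands in the direct sum (not merely the direct product) because $\varphi$ has only finitely many eigenvalues on the finite-dimensional $K$-vector space $\bold{D}^{K_n}_{\mathrm{crys}}(D)$, so the $k$-th summand of $N$ vanishes for all but finitely many $k$ (precisely those with $p^{-k}$ an eigenvalue of $\varphi$). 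The same computation as above shows $\bar{\Delta}\circ(\varphi-1) = 0$, and evaluating on the generators $\overline{t^k\otimes z}$ of $M/(\varphi-1)M$ using $\partial^j(t^k)(0) = k!\,\delta_{j,k}$ identifies the induced map $M/(\varphi-1)M \to N$ with the inverse of the natural isomorphism of Lemma~\ref{3.17}(2) composed with multiplication by $k!$ on the $k$-th summand, which is a unit. Hence $\ker\bar{\Delta} = (\varphi-1)M$. If $x\in M^{\psi=0}$ satisfies $\widetilde{\Delta}(x) = 0$, then $x = (\varphi-1)y$ for some $y\in M$; applying $\psi$ gives $0 = y-\psi(y)$, forcing $y\in M^{\psi=1}$.

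For surjectivity of $\widetilde{\Delta}$ I would construct explicit preimages using the elements $u_k := (1+T)t^k/k! = e^t t^k/k!$ in $\bold{B}^+_{\mathrm{rig},K}$. Writing $t^k = p^{-k}\varphi(t^k)$ and using $\psi(f\varphi(g)) = \psi(f)g$ together with $\psi(1+T) = 0$ yields $\psi(u_k) = 0$, so $u_k\otimes z\in M^{\psi=0}$ for any $z$. Since $\partial$ acts as $d/dt$ on $\bold{B}^+_{\mathrm{rig},K}$ and $u_k = e^t t^k/k!$, the Leibniz rule gives $\partial^j(u_k)(0) = \binom{j}{k}$ (with the convention $\binom{j}{k} = 0$ for $j<k$). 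Let $J\subset\mathbb{Z}_{\geq 0}$ be the finite set of indices $k$ for which the $k$-th summand of $N$ is nonzero. Any target $(\overline{t^k\otimes z_k})_{k\in J}$ can then be realized as $\widetilde{\Delta}(\sum_{k\in J}u_k\otimes w_k)$ by solving the lower-triangular system $\sum_{k\in J,\,k\leq j}\binom{j}{k}w_k \equiv z_j \pmod{(1-p^j\varphi)\bold{D}^{K_n}_{\mathrm{crys}}(D)}$ for $j\in J$ recursively; the diagonal entries $\binom{j}{j} = 1$ guarantee solvability, and for $j\notin J$ the equation holds automatically since the quotient vanishes. The main obstacle is the careful extension of $\widetilde{\Delta}$ to $M$ and its identification with the inverse of the Lemma~\ref{3.17}(2) isomorphism up to the $k!$-scaling, which requires the eigenvalue argument to verify well-definedness into the direct sum.
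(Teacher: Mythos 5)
Your proof is correct, and it reaches the conclusion by a partly different route than the paper. The agreement: both arguments get exactness at $(\bold{B}^+_{\mathrm{rig},K}\otimes_K\bold{D}^{K_n}_{\mathrm{crys}}(D))^{\psi=1}$ from Lemma \ref{3.17}(1), and both handle the mismatch between $\widetilde{\Delta}$ and the natural map to $\bigoplus_k(t^k\otimes\bold{D}^{K_n}_{\mathrm{crys}}(D))/(1-\varphi)$ by observing they differ by the unit $k!$ on each summand (you do this by extending $\widetilde{\Delta}$ to all of $M$ and comparing with the isomorphism of Lemma \ref{3.17}(2); the paper composes the projection $M^{\psi=0}\to M/(1-\varphi)M$ with that isomorphism and compares the resulting explicit formula with $\widetilde{\Delta}$). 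The divergence is in surjectivity: the paper uses Lemma \ref{3.17}(3), i.e.\ the vanishing of $M/(\psi-1)M$, to replace an arbitrary representative $z$ of a class in $M/(1-\varphi)M$ by one killed by $\psi$ (solving $\psi(z)=(\psi-1)y$), which is shorter and keeps the argument entirely at the level of the three statements of Lemma \ref{3.17}; you instead build explicit $\psi=0$ elements $u_k\otimes w=(1+T)t^k/k!\otimes w$ (using $\psi(1+T)=0$) and solve a unipotent lower-triangular system over the finitely many $k$ with nonzero quotient, which avoids Lemma \ref{3.17}(3) altogether and has the side benefit of producing concrete preimages under $\widetilde{\Delta}$; you also spell out the exactness at $M^{\psi=0}$ (if $x=(1-\varphi)y$ with $\psi(x)=0$ then $y\in M^{\psi=1}$), which the paper leaves implicit. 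One small point to fix in the write-up: for $K\neq\mathbb{Q}_p$ the Frobenius on $\bold{B}^+_{\mathrm{rig},K}\otimes_K\bold{D}^{K_n}_{\mathrm{crys}}(D)$ is $\sigma$-semilinear in the coefficients, so $\partial^k(\varphi f)(0)=p^k\,\sigma\bigl(\partial^k f(0)\bigr)$ rather than $p^k\,\partial^k f(0)$; your displayed identity for $\widetilde{\Delta}\circ(\varphi-1)=0$ should carry this $\sigma$ (or one should first rewrite $M$ as $\bold{B}^+_{\mathrm{rig},\mathbb{Q}_p}\otimes_{\mathbb{Q}_p}\bold{D}^{K_n}_{\mathrm{crys}}(D)$ with $\mathbb{Q}_p$-linear $\varphi$, as Lemma \ref{3.17} implicitly does), and likewise ``eigenvalue of $\varphi$'' should be understood for the $\mathbb{Q}_p$-linearized operator; with that adjustment the computation and the conclusion are unchanged.
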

  \begin{proof}
  Since we have an inclusion 
  $$(1-\varphi)(\bold{B}^+_{\mathrm{rig},K}\otimes_K\bold{D}^{K_n}_{\mathrm{crys}}(D))^{\psi=1}\subseteq 
  (\bold{B}^+_{\mathrm{rig},K}\otimes_K\bold{D}^{K_n}_{\mathrm{crys}}(D))^{\psi=0},$$ we have the following exact sequence
  \begin{multline*}
0\rightarrow  \bigoplus_{k=0}^{\infty}(t^k\otimes\bold{D}^{K_n}_{\mathrm{crys}}(D))^{\varphi=1}
  \rightarrow (\bold{B}^+_{\mathrm{rig},K}\otimes_K\bold{D}^{K_n}_{\mathrm{crys}}(D))^{\psi=1}\\
  \xrightarrow{1-\varphi}(\bold{B}^+_{\mathrm{rig},K}\otimes_K\bold{D}^{K_n}_{\mathrm{crys}}(D))^{\psi=0}
  \rightarrow (\bold{B}^+_{\mathrm{rig},K}\otimes_K\bold{D}^{K_n}_{\mathrm{crys}}(D))/(1-\varphi)(\bold{B}^+_{\mathrm{rig},K}\otimes_K\bold{D}^{K_n}_{\mathrm{crys}}(D)),
  \end{multline*}
  where the exactness at the second arrow  follows from the equality 
  $$\bigoplus_{k=0}^{\infty}(t^k\otimes\bold{D}^{K_n}_{\mathrm{crys}}(D))^{\varphi=1}=(\bold{B}^+_{\mathrm{rig},K}\otimes_K\bold{D}^{K_n}_{\mathrm{crys}}(D))^{\varphi=1}$$
   which is proved in  (1) of Lemma \ref{3.17}.
  We show that the natural map 
   $$(\bold{B}^+_{\mathrm{rig},K}\otimes_K \bold{D}^{K_n}_{\mathrm{crys}}(D))^{\psi=0}\rightarrow (\bold{B}^+_{\mathrm{rig},K}\otimes_K\bold{D}^{K_n}_{\mathrm{crys}}(D))/(1-\varphi)
   (\bold{B}^+_{\mathrm{rig},K}\otimes_K\bold{D}^{K_n}_{\mathrm{crys}}(D)):z\mapsto \overline{z}$$ 
   is a surjection. To prove this claim, let $z$ be an element of $\bold{B}^+_{\mathrm{rig},K}\otimes_K \bold{D}^{K_n}_{\mathrm{crys}}(D)$.
   Then it suffices to show that there exists $y\in \bold{B}^+_{\mathrm{rig},K}\otimes_K \bold{D}^{K_n}_{\mathrm{crys}}(D)$ such that 
   $\psi(z-(1-\varphi)y)=0$. Because we have $\psi(z-(1-\varphi)y)=\psi(z)-(\psi-1)y$, such $y$ exists by (3) of Lemma \ref{3.17}.

  By this claim and because we have a natural  isomorphism 
  $$\bigoplus_{k=0}^{\infty} t^k\otimes\bold{D}^{K_n}_{\mathrm{crys}}(D)/(1-\varphi)(t^k\otimes \bold{D}^{K_n}_{\mathrm{crys}}(D))\isom(\bold{B}^+_{\mathrm{rig},K}\otimes_K\bold{D}^{K_n}_{\mathrm{crys}}(D))/(1-\varphi)(\bold{B}^+_{\mathrm{rig},K}\otimes_K\bold{D}^{K_n}_{\mathrm{crys}}(D))
 $$ by Lemma \ref{3.17}, we obtain the surjection  
 $$(\bold{B}^+_{\mathrm{rig},K}\otimes_K\bold{D}^{K_n}_{\mathrm{crys}}(D))^{\psi=0}\rightarrow \bigoplus_{k=0}^{\infty}
  (t^k\otimes \bold{D}^{K_n}_{\mathrm{crys}}(D))/(1-\varphi)(t^k\otimes\bold{D}^{K_n}_{\mathrm{crys}}(D))$$ which is explicitly defined by $$\sum_{i=1}^m f_i(T)\otimes x_i\mapsto 
  ( \frac{1}{k!}\overline{t^k\otimes (\sum_{i=1}^m\partial^k(f_i)(0)\cdot x_i)})_{k\geqq 0}.$$
 
   Since this map and $\widetilde{\Delta}$ are only 
  differ by a factor of $k!$ at each $k$-th component, their kernels and images are equal. Hence we 
  finish to prove the exactness of the sequence in this lemma.

  \end{proof}

  The following definition is  Berger's formula for Perrin-Riou's big exponential map. More precisely, Berger defined 
  Perrin-Riou's map for crystalline $p$-adic representations and the following definition is just the direct generalization of 
  his formula for potentially crystalline $(\varphi,\Gamma)$-modules.
  
  \begin{defn}
  Let $D$ be a potentially crystalline $(\varphi,\Gamma_K)$-module over $\bold{B}^{\dagger}_{\mathrm{rig},K}$ such that 
  $D|_{K_n}$ is crystalline for some $n\geqq 0$ and let $h\geqq 1$ be an integer such that 
  $\mathrm{Fil}^{-k}\bold{D}^K_{\mathrm{dR}}(D))=D^{K}_{\mathrm{dR}}(D)$. 
  Then, we define a $\Lambda_{\infty}$-linear map 
  $$\Omega_{D,h}: (\Lambda_{\infty}\otimes_{\mathbb{Q}_p}\bold{D}^{K_n}_{\mathrm{crys}}(D))^{\widetilde{\Delta}=0}
  \rightarrow \bold{H}^1_{\mathrm{Iw}}(K, D)/\bold{H}^1_{\mathrm{Iw}}(K, D)_{\mathrm{tor}}$$ 
  as the composition of the isomorphism
  $$(\varphi-1)^{-1}: (\Lambda_{\infty}\otimes_{\mathbb{Q}_p}\bold{D}^{K_n}_{\mathrm{crys}}(D))^{\widetilde{\Delta}=0}
  \isom (\bold{B}^+_{\mathrm{rig},K}\otimes_K \bold{D}^{K_n}_{\mathrm{crys}}(D))^{\psi=1}/(\bold{B}^+_{\mathrm{rig},K}\otimes_K\bold{D}^{K_n}_{\mathrm{crys}}(D))^{\varphi=1}$$
  with the natural inclusion
  $$(\bold{B}^+_{\mathrm{rig},K}\otimes_K\bold{D}^{K_n}_{\mathrm{crys}}(D))^{\psi=1}/(\bold{B}^+_{\mathrm{rig},K}\otimes_K \bold{D}^{K_n}_{\mathrm{crys}}(D))^{\varphi=1}
  \hookrightarrow \bold{N}_{\mathrm{rig}}(D)^{\psi=1}/\bold{N}_{\mathrm{rig}}(D)^{\varphi=1}$$
   and with the injection proved in Lemma \ref{3.13}
   $$\overline{\mathrm{Exp}}_{D,h}: \bold{N}_{\mathrm{rig}}(D)^{\psi=1}/\bold{N}_{\mathrm{rig}}(D)^{\varphi=1}\hookrightarrow \bold{H}^1_{\mathrm{Iw}}(K,D)/\bold{H}^1_{\mathrm{Iw}}(K,D)_{\mathrm{tor}}.$$
  \end{defn}
  
  \begin{rem}
  Let $V$ be a crystalline representation of $G_K$ and let $D(V)$ be the $(\varphi,\Gamma_K)$-module 
  over $\bold{B}^{\dagger}_{\mathrm{rig},K}$ associated to $V$. If we admit the natural isomorphisms 
  $\Lambda_{\infty}\otimes_{\Lambda}\bold{H}^1_{\mathrm{Iw}}(K, V)\isom \bold{H}^1_{\mathrm{Iw}}(K, D)$ (see $\S$ 2 of \cite{Po12b}) and 
  $\bold{D}^K_{\mathrm{crys}}(V)\isom \bold{D}^K_{\mathrm{crys}}(D(V))$, 
   Berger proved that the map 
   \begin{multline*}
\Omega_{V,h}:(\Lambda_{\infty}\otimes_{\mathbb{Q}_p}\bold{D}^K_{\mathrm{crys}}(V))^{\widetilde{\Delta}=0}\isom 
  (\Lambda_{\infty}\otimes_{\mathbb{Q}_p}\bold{D}^K_{\mathrm{crys}}(D(V)))^{\widetilde{\Delta}=0}\\
  \xrightarrow{\Omega_{D,h}}\bold{H}^1_{\mathrm{Iw}}(K, D(V))/\bold{H}^1_{\mathrm{Iw}}(K, D(V))_{\mathrm{tor}}\isom 
  \Lambda_{\infty}\otimes_{\Lambda}(\bold{H}^1_{\mathrm{Iw}}(K, V)/\bold{H}^1_{\mathrm{Iw}}(K, V)_{\mathrm{tor}})
  \end{multline*}
  coincides with Perrin-Riou's original map defined in \cite{Per94} (see Theorem 2.13 of \cite{Ber03}).
  
  \end{rem}
  
  To state Perrin-Riou's $\delta(V)$, we slightly generalize the definition of $\mathrm{det}_{\Lambda_{\infty}}(-)$ to the following situation.
   Let $M_1$ and $M_2$ be co-admissible $\Lambda_{\infty}$-modules.
  We assume that there exist co-admissible $\Lambda_{\infty}$-submodules $M_1'\subseteq M_1$ and $M_2'\subseteq M_2$ such that 
  $M_1/M_1'$ and $M_2'$ are torsion $\Lambda_{\infty}$-modules and that there exists a $\Lambda_{\infty}$-linear 
  map $f:M_1'\rightarrow M_2/M_2'$ for which we can define $\mathrm{det}_{\Lambda_{\infty}}(f)$. Under this situation, we define a fractional ideal 
  $\mathrm{det}_{\Lambda_{\infty}}(f:M_1\rightarrow M_2)\subseteq \mathrm{Frac}(\Lambda_{\infty})$ by 
  $$\mathrm{det}_{\Lambda_{\infty}}(f:M_1\rightarrow M_2):=
  \mathrm{det}_{\Lambda_{\infty}}(f:M_1'\rightarrow M_2/M_2')\mathrm{char}_{\Lambda_{\infty}}(M_1/M_1')^{-1}\mathrm{char}_{\Lambda_{\infty}}(M_2').$$
  
  We apply this definition to the map 
  $$\Omega_{D,h}:(\Lambda_{\infty}\otimes_{\mathbb{Q}_p}\bold{D}^{K_n}_{\mathrm{crys}}(D))^{\widetilde{\Delta}=0}
  \rightarrow \bold{H}^1_{\mathrm{Iw}}(K, D)/\bold{H}^1_{\mathrm{Iw}}(K, D)_{\mathrm{tor}},$$
   i.e, we define the principal fractional ideal
  $$\mathrm{det}_{\Lambda_{\infty}}(\Omega_{D,h}: \Lambda_{\infty}\otimes_{\mathbb{Q}_p}\bold{D}^{K_n}_{\mathrm{crys}}(D)\rightarrow 
  \bold{H}^1_{\mathrm{Iw}}(K, D))$$ 
  by the product
  \begin{multline*}
  \mathrm{det}_{\Lambda_{\infty}}(\Omega_{D,h}:(\Lambda_{\infty}\otimes_{\mathbb{Q}_p}\bold{D}^{K_n}_{\mathrm{crys}}(D))^{\widetilde{\Delta}=0}
  \rightarrow \bold{H}^1_{\mathrm{Iw}}(K, D)/\bold{H}^1_{\mathrm{Iw}}(K, D)_{\mathrm{tor}})\cdot\\
 \mathrm{char}_{\Lambda_{\infty}}(\Lambda_{\infty}\otimes_{\mathbb{Q}_p}\bold{D}^{K_n}_{\mathrm{crys}}(D)/(\Lambda_{\infty}\otimes_{\mathbb{Q}_p}
  \bold{D}^{K_n}_{\mathrm{crys}}(D))^{\widetilde{\Delta}=0})^{-1}\cdot\mathrm{char}_{\Lambda_{\infty}}(\bold{H}^1_{\mathrm{Iw}}(K, D)_{\mathrm{tor}}).
  \end{multline*}
  
  Using  this definition,  Perrin-Riou's $\delta(V)$-theorem can be stated as follows. 
  More precisely, the following is the direct generalization of Perrin-Riou's $\delta(V)$ to
  any slope crystalline $D$.
  In Proposition \ref{3.23} below, we will prove the theorem by proving that  the theorem 
  is equivalent to Theorem \ref{3.14}.
   \begin{thm}\label{3.21}
  Let $D$ be a potentially crystalline $(\varphi,\Gamma_K)$-module over $\bold{B}^{\dagger}_{\mathrm{rig},K}$ such that 
  $D|_{K_n}$ is crystalline. Let $\{h_1,\cdots, h_d\}$ be the set of Hodge-Tate weights of $D$ and let $h\geqq 1$  be an integer 
  such that $\mathrm{Fil}^{-h}\bold{D}^K_{\mathrm{dR}}(D)=\bold{D}^K_{\mathrm{dR}}(D)$. 
  Then, we have an equality of fractional ideals of $\mathrm{Frac}(\Lambda_{\infty})$
   \begin{multline*}
\mathrm{det}(\Omega_{D,h}:\Lambda_{\infty}\otimes_{\mathbb{Q}_p}\bold{D}^{K_n}_{\mathrm{crys}}(D)\rightarrow 
  \bold{H}^1_{\mathrm{Iw}}(K, D))\\
  =(\prod_{1\leqq i\leqq d}\nabla_{h_{i}}\nabla_{h_{i}+1}\cdots \nabla_{h-1})^{[K:\mathbb{Q}_p]}\cdot\mathrm{char}_{\Lambda_{\infty}}(\bold{H}^2_{\mathrm{Iw}}(K, D)).
  \end{multline*}

  \end{thm}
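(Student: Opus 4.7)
Since the $\Gamma$-factor $(\prod_{i=1}^d\nabla_{h_i}\nabla_{h_i+1}\cdots\nabla_{h-1})^{[K:\mathbb{Q}_p]}$ on the right of Theorem \ref{3.21} coincides with the factor $(\prod_{i=1}^d\prod_{j_i=0}^{h-h_i-1}\nabla_{h_i+j_i})^{[K:\mathbb{Q}_p]}$ of Theorem \ref{3.14}, the plan is to reduce Theorem \ref{3.21} to the already proved Theorem \ref{3.14} by establishing the comparison
\[\mathrm{det}_{\Lambda_{\infty}}(\Omega_{D,h}) = \mathrm{det}_{\Lambda_{\infty}}(\mathrm{Exp}_{D,h})\cdot\mathrm{char}_{\Lambda_{\infty}}(\bold{H}^2_{\mathrm{Iw}}(K,\bold{N}_{\mathrm{rig}}(D))).\]
Substituting Theorem \ref{3.14} into the right-hand side then cancels the $\mathrm{char}_{\Lambda_{\infty}}(\bold{H}^2_{\mathrm{Iw}}(K,\bold{N}_{\mathrm{rig}}(D)))^{-1}$ factor and yields Theorem \ref{3.21} exactly.

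To establish this comparison, I would first use Lemma \ref{3.16} to identify $\bold{N}_{\mathrm{rig}}(D)=\bold{B}^{\dagger}_{\mathrm{rig},K}\otimes_K\bold{D}^{K_n}_{\mathrm{crys}}(D)$, so that $B:=(\bold{B}^+_{\mathrm{rig},K}\otimes_K\bold{D}^{K_n}_{\mathrm{crys}}(D))^{\psi=1}$ sits naturally inside $\bold{N}_{\mathrm{rig}}(D)^{\psi=1}$. The defining formula of $\Omega_{D,h}$ expresses it via the chain
\[\Lambda_{\infty}\otimes_{\mathbb{Q}_p}\bold{D}^{K_n}_{\mathrm{crys}}(D)\xleftarrow{\varphi-1} B\xrightarrow{\iota}\bold{N}_{\mathrm{rig}}(D)^{\psi=1}\xrightarrow{\mathrm{Exp}_{D,h}}\bold{H}^1_{\mathrm{Iw}}(K,D),\]
so by the multiplicativity of $\mathrm{det}_{\Lambda_{\infty}}$ in Lemma \ref{3.11} (ii) (applied to the extended determinant introduced just before Theorem \ref{3.21}), the desired identity reduces to showing that the product of $\mathrm{det}_{\Lambda_{\infty}}$ of $\varphi-1$ and of the inclusion $\iota$ equals $\mathrm{char}_{\Lambda_{\infty}}(\bold{H}^2_{\mathrm{Iw}}(K,\bold{N}_{\mathrm{rig}}(D)))$.

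Both determinants are reduced to finite-dimensional characteristic ideals. The first follows immediately from Lemma \ref{3.18}, which identifies $\ker(\varphi-1)=\bigoplus_k(t^k\otimes\bold{D}^{K_n}_{\mathrm{crys}}(D))^{\varphi=1}$ and $\mathrm{coker}(\varphi-1)=\bigoplus_k(t^k\otimes\bold{D}^{K_n}_{\mathrm{crys}}(D))/(\varphi-1)$. For the second, apply the snake lemma to $\psi-1$ on the short exact sequence $0\to\bold{B}^+_{\mathrm{rig},K}\otimes_K\bold{D}^{K_n}_{\mathrm{crys}}(D)\to\bold{N}_{\mathrm{rig}}(D)\to Q\to 0$ where $Q$ is the quotient. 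Using that the Colmez transform identifies $Q$ with $\mathrm{LA}(\mathbb{Z}_p,\mathbb{Q}_p)\otimes_{\mathbb{Q}_p}\bold{D}^{K_n}_{\mathrm{crys}}(D)$ (tensored appropriately with $K$), Lemma \ref{3.17} (3) kills the middle $(\psi-1)$-cokernel, so the six-term sequence collapses to $\bold{N}_{\mathrm{rig}}(D)^{\psi=1}/B\cong Q^{\psi=1}$ and $\bold{H}^2_{\mathrm{Iw}}(K,\bold{N}_{\mathrm{rig}}(D))\cong Q/(\psi-1)Q$, both of which are explicitly finite-dimensional via Lemma \ref{3.17} (4) and (5), namely $\bigoplus_k(x^k\otimes\bold{D}^{K_n}_{\mathrm{crys}}(D))^{\psi=1}$ and $\bigoplus_k(x^k\otimes\bold{D}^{K_n}_{\mathrm{crys}}(D))/(\psi-1)$.

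The main obstacle is the final matching of these finite-dimensional characteristic ideals: one must verify
\[\mathrm{char}_{\Lambda_{\infty}}(Q^{\psi=1})\cdot\mathrm{char}_{\Lambda_{\infty}}(\ker(\varphi-1)) = \mathrm{char}_{\Lambda_{\infty}}(Q/(\psi-1)Q)\cdot\mathrm{char}_{\Lambda_{\infty}}(\mathrm{coker}(\varphi-1)).\]
This is handled by exploiting that $\Gamma_K$ acts on $t^k$ by $\chi(\gamma)^k$ and on $x^k$ (via the Colmez transform) by $\chi(\gamma)^{-k-1}$, so that each eigen-piece contributes a factor of the form $\nabla_k$ or $\nabla_{-k-1}$ to the characteristic ideal; together with the crystalline $\varphi$-eigenvalues of $\bold{D}^{K_n}_{\mathrm{crys}}(D)$ which determine the sizes of each eigenspace, the two sides turn out to agree. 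Along the way one also has to verify that $\bold{N}_{\mathrm{rig}}(D)^{\psi=1}_{\mathrm{tor}}=\bold{N}_{\mathrm{rig}}(D)^{\varphi=1}$ in this crystalline situation, so that $\overline{\mathrm{Exp}}_{D,h}$ from the definition of $\Omega_{D,h}$ is truly an injection and the extended $\mathrm{det}_{\Lambda_{\infty}}$ applies consistently throughout the chain.
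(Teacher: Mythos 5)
Your proposal follows the paper's own route: Theorem \ref{3.21} is deduced from Theorem \ref{3.14} precisely through the comparison you state (this is Proposition \ref{3.23} in the paper), proved by restricting to $(\bold{B}^+_{\mathrm{rig},K}\otimes_K\bold{D}^{K_n}_{\mathrm{crys}}(D))^{\psi=1}$ via Lemma \ref{3.16}, the Colmez-transform sequence with Lemmas \ref{3.17} and \ref{3.18}, and the multiplicativity of the extended determinant. The only (harmless) difference is your last step: no cross-comparison of $\nabla$-factors and $\varphi$-eigenvalue multiplicities between the $t^k$- and $x^k$-sides is needed, since $\varphi-1$ and $\psi-1$ are endomorphisms of the finite-dimensional modules $\bigoplus_k t^k\otimes\bold{D}^{K_n}_{\mathrm{crys}}(D)$ and $\bigoplus_k x^k\otimes\bold{D}^{K_n}_{\mathrm{crys}}(D)$, so each has equal kernel and cokernel characteristic ideals (as in Lemma \ref{3.12} (v)), and the two residual determinants are separately trivial, which is how the paper concludes.
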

  \begin{rem}
  On the other hand, for any slope potentially crystalline $D$, Pottharst defined 
  the ``inverse"  map 
  $$\mathrm{Log}_D:\bold{H}^1_{\mathrm{Iw}}(K, D)\rightarrow \mathrm{Frac}(\Lambda_{\infty})\otimes_{\mathbb{Q}_p}\bold{D}^{K_n}_{\mathrm{crys}}(D)$$ 
  of $\Omega_{D,h}$ using the theory of Wach modules. Using $\mathrm{Log}_D$, he
   also proved his $\delta(D)$-theorem (Theorem 3.4 of \cite{Po12b}) by reducing to Perrin-Riou's 
   $\delta(V)$ using a slope filtration argument. It is easy to check that the theorem above is equivalent to his $\delta(D)$.
    \end{rem}
  
  The next proposition is the main result of this subsection, which says that, when $D$ is as above, our Theorem \ref{3.14} is equivalent to the above Theorem 
  \ref{3.21}. 
  
   \begin{prop}\label{3.23}
  We have an equality 
   \begin{multline*}
  \mathrm{det}_{\Lambda_{\infty}}(\bold{N}_{\mathrm{rig}}(D)^{\psi=1}\xrightarrow{\mathrm{Exp}_{D,h}}
  \bold{H}^1_{\mathrm{Iw}}(K, D))\cdot\mathrm{char}_{\Lambda_{\infty}}(\bold{H}^2_{\mathrm{Iw}}(K, \bold{N}_{\mathrm{rig}}(D)))\\
  =\mathrm{det}_{\Lambda_{\infty}}(\Lambda_{\infty}\otimes_{\mathbb{Q}_p}\bold{D}^{K_n}_{\mathrm{crys}}(D)\xrightarrow{\Omega_{D,h}} \bold{H}^1_{\mathrm{Iw}}(K, D)).
  \end{multline*}
  In particular, Theorem \ref{3.14} is equivalent to Theorem \ref{3.21}.
  
  \end{prop}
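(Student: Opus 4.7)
The plan is to identify both sides of the equation with the determinant of a single auxiliary map, namely $\mathrm{Exp}_{D,h}$ restricted to a well-chosen $\Lambda_{\infty}$-submodule of $\bold{N}_{\mathrm{rig}}(D)^{\psi=1}$. By Lemma \ref{3.16}, we have $\bold{N}_{\mathrm{rig}}(D) \cong \widetilde{M} := \bold{B}^{\dagger}_{\mathrm{rig},K}\otimes_K \bold{D}^{K_n}_{\mathrm{crys}}(D)$, inside which sits the $(\varphi,\Gamma_K)$-stable submodule $M := \bold{B}^+_{\mathrm{rig},K}\otimes_K \bold{D}^{K_n}_{\mathrm{crys}}(D)$, with quotient $N := \widetilde{M}/M \cong \mathrm{LA}(\mathbb{Z}_p,\mathbb{Q}_p)\otimes_{\mathbb{Q}_p}\bold{D}^{K_n}_{\mathrm{crys}}(D)$ via the Colmez transform tensored with $K$.

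For the left hand side, apply $\psi - 1$ to the short exact sequence $0 \to M \to \widetilde{M} \to N \to 0$. Since Lemma \ref{3.17} (3) gives $M/(\psi-1) = 0$, one obtains a short exact sequence $0 \to M^{\psi=1} \to \widetilde{M}^{\psi=1} \to N^{\psi=1} \to 0$ together with an isomorphism $\bold{H}^2_{\mathrm{Iw}}(K, \bold{N}_{\mathrm{rig}}(D)) = \widetilde{M}/(\psi-1) \cong N/(\psi-1)$. By Lemma \ref{3.17} (4), (5), both $N^{\psi=1}$ and $N/(\psi-1)$ are finite-dimensional over $\mathbb{Q}_p$, and Lemma \ref{3.12} (v) applied to $\psi - 1 \colon N \to N$ yields $\mathrm{char}_{\Lambda_{\infty}}(N^{\psi=1}) = \mathrm{char}_{\Lambda_{\infty}}(N/(\psi-1))$. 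Combining the multiplicativity of determinants (Lemma \ref{3.11} (ii)) for the factorization $\mathrm{Exp}_{D,h}|_{M^{\psi=1}} = \mathrm{Exp}_{D,h}\circ \iota$ with Lemma \ref{3.11} (i) for the inclusion $\iota\colon M^{\psi=1}\hookrightarrow \widetilde{M}^{\psi=1}$ (whose cokernel is $N^{\psi=1}$), the two contributions cancel, showing that the left hand side of Proposition \ref{3.23} equals $\mathrm{det}_{\Lambda_{\infty}}(\mathrm{Exp}_{D,h}|_{M^{\psi=1}}\colon M^{\psi=1}\to \bold{H}^1_{\mathrm{Iw}}(K,D))$.

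For the right hand side, use Lemma \ref{3.18}: the map $\varphi - 1 \colon M^{\psi=1} \to M^{\psi=0}$ has kernel $M^{\varphi=1}$ and image equal to $(\Lambda_{\infty}\otimes_{\mathbb{Q}_p}\bold{D}^{K_n}_{\mathrm{crys}}(D))^{\widetilde{\Delta}=0}$ under the identification $M^{\psi=0} \cong \Lambda_{\infty}\otimes_{\mathbb{Q}_p}\bold{D}^{K_n}_{\mathrm{crys}}(D)$. Thus $(\varphi-1)^{-1}$ exhibits $\Omega_{D,h}|_{(\Lambda_{\infty}\otimes \bold{D}^{K_n}_{\mathrm{crys}}(D))^{\widetilde{\Delta}=0}}$ as $\overline{\mathrm{Exp}}_{D,h}$ evaluated on the image of $M^{\psi=1}/M^{\varphi=1}$ inside $\widetilde{M}^{\psi=1}/\widetilde{M}^{\varphi=1}$. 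Via the generalized definition of $\mathrm{det}_{\Lambda_{\infty}}$, the right hand side becomes $\mathrm{det}_{\Lambda_{\infty}}(\mathrm{Exp}_{D,h}|_{M^{\psi=1}})$ multiplied by correction factors coming from: (a) the kernel $M^{\varphi=1}$ of the quotient $M^{\psi=1}\twoheadrightarrow M^{\psi=1}/M^{\varphi=1}$; (b) the cokernel $\mathrm{image}(\widetilde{\Delta})$ of $(\Lambda_{\infty}\otimes \bold{D}^{K_n}_{\mathrm{crys}}(D))^{\widetilde{\Delta}=0}\hookrightarrow \Lambda_{\infty}\otimes \bold{D}^{K_n}_{\mathrm{crys}}(D)$; and (c) torsion contributions $\bold{H}^1_{\mathrm{Iw}}(K,D)_{\mathrm{tor}}$ versus the corresponding torsion in the source.

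The main obstacle is verifying that all of these correction factors cancel exactly. By Lemma \ref{3.17} (1) one has $M^{\varphi=1} = \bigoplus_{k} (t^k \otimes \bold{D}^{K_n}_{\mathrm{crys}}(D))^{\varphi=1}$, while by Lemma \ref{3.18}, $\mathrm{image}(\widetilde{\Delta}) = \bigoplus_{k} (t^k \otimes \bold{D}^{K_n}_{\mathrm{crys}}(D))/(1-\varphi)(t^k \otimes \bold{D}^{K_n}_{\mathrm{crys}}(D))$. Applying Lemma \ref{3.12} (v) to the map $1 - \varphi$ acting on the direct sum $\bigoplus_{k} t^k \otimes \bold{D}^{K_n}_{\mathrm{crys}}(D)$, whose kernel and cokernel are finite-dimensional over $\mathbb{Q}_p$ by Lemma \ref{3.17} (1), (2), one concludes $\mathrm{char}_{\Lambda_{\infty}}(M^{\varphi=1}) = \mathrm{char}_{\Lambda_{\infty}}(\mathrm{image}(\widetilde{\Delta}))$, so the contributions (a) and (b) cancel. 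The remaining torsion discrepancy between $\widetilde{M}^{\varphi=1}$ and $\bold{H}^1_{\mathrm{Iw}}(K,D)_{\mathrm{tor}}$ is handled similarly, using the injectivity statement from Lemma \ref{3.13} (which shows the image of $M^{\psi=1}$ modulo appropriate torsion embeds into $\bold{H}^1_{\mathrm{Iw}}(K,D)$ modulo torsion) to match the remaining characteristic ideals. This yields the desired equality of principal fractional ideals in $\mathrm{Frac}(\Lambda_{\infty})$, whereupon the equivalence of Theorems \ref{3.14} and \ref{3.21} is immediate from their statements.
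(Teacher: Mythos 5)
Your proposal is correct and follows essentially the same route as the paper: both sides are reduced to $\mathrm{det}_{\Lambda_{\infty}}$ of $\mathrm{Exp}_{D,h}$ restricted to $(\bold{B}^+_{\mathrm{rig},K}\otimes_K\bold{D}^{K_n}_{\mathrm{crys}}(D))^{\psi=1}$, using Lemma \ref{3.16} to identify $\bold{N}_{\mathrm{rig}}(D)$, the Colmez transform and Lemma \ref{3.17} to control the quotient by $\bold{B}^+_{\mathrm{rig},K}\otimes_K\bold{D}^{K_n}_{\mathrm{crys}}(D)$ and the second Iwasawa cohomology, Lemma \ref{3.18} together with Lemma \ref{3.17} (1),(2) to trivialize the $1-\varphi$ contribution, and the formalism of Lemmas \ref{3.11}, \ref{3.12}. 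The only cosmetic differences are your explicit itemization of the correction factors (a)--(c) on the $\Omega_{D,h}$ side, where the paper packages them as $\mathrm{det}_{\Lambda_{\infty}}(1-\varphi)^{-1}$, and your slightly loose invocation of Lemma \ref{3.12} (v) for the equality of characteristic ideals of kernel and cokernel of $\psi-1$ (resp.\ $1-\varphi$) on the finite-dimensional modules $\bigoplus_{k}x^k\otimes\bold{D}^{K_n}_{\mathrm{crys}}(D)$ (resp.\ $\bigoplus_k t^k\otimes\bold{D}^{K_n}_{\mathrm{crys}}(D)$), a looseness already present in the paper itself.
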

  \begin{proof}
  Since we have $\bold{N}_{\mathrm{rig}}(D)=\bold{B}^{\dagger}_{\mathrm{rig},K}\otimes_K\bold{D}^{K_n}_{\mathrm{crys}}(D)$ by Lemma \ref{3.16}, the principal fractional ideal
  $$
  \mathrm{det}_{\Lambda_{\infty}}(\bold{N}_{\mathrm{rig}}(D)^{\psi=1}\xrightarrow{\mathrm{Exp}_{D,h}}
  \bold{H}^1_{\mathrm{Iw}}(K, D))\cdot\mathrm{char}_{\Lambda_{\infty}}(\bold{H}^2_{\mathrm{Iw}}(K, \bold{N}_{\mathrm{rig}}(D)))$$ 
  is equal to the product
  \begin{multline*}
  \mathrm{det}_{\Lambda_{\infty}}((\bold{B}^+_{\mathrm{rig},K}\otimes_K \bold{D}^{K_n}_{\mathrm{crys}}(D))^{\psi=1}\xrightarrow{\mathrm{Exp}_{D,h}|_{(\bold{B}^+_{\mathrm{rig},K}\otimes_K\bold{D}^{K_n}_{\mathrm{crys}}(D))^{\psi=1}}}\bold{H}^1_{\mathrm{Iw}}(K, D))\cdot\\
 \mathrm{char}_{\Lambda_{\infty}}((\bold{B}^{\dagger}_{\mathrm{rig},K}\otimes_K\bold{D}^{K_n}_{\mathrm{crys}}(D))^{\psi=1}/(\bold{B}^+_{\mathrm{rig},K}\otimes_K \bold{D}^{K_n}_{\mathrm{crys}}(D))^{\psi=1})^{-1}\cdot
  \mathrm{char}_{\Lambda_{\infty}}(\bold{H}^2_{\mathrm{Iw}}(K, \bold{N}_{\mathrm{rig}}(D))).
  \end{multline*}
  Since we have 
  $$(\bold{B}^+_{\mathrm{rig},K}\otimes_K\bold{D}^{K_n}_{\mathrm{crys}}(D))/(\psi-1)(\bold{B}^+_{\mathrm{rig},K}\otimes_K\bold{D}^{K_n}_{\mathrm{crys}}(D))=0$$ by (3) of Lemma \ref{3.17}, 
  using the snake lemma, we obtain the following isomorphisms
  
  \begin{align*}
  (\bold{B}^{\dagger}_{\mathrm{rig}}\otimes_K\bold{D}^{K_n}_{\mathrm{crys}}(D))^{\psi=1}/(\bold{B}^+_{\mathrm{rig},K}\otimes_K \bold{D}^{K_n}_{\mathrm{crys}}(D))^{\psi=1}
   &\isom (\mathrm{LA}(\mathbb{Z}_p,\mathbb{Q}_p)\otimes_{\mathbb{Q}_p}\bold{D}^{K_n}_{\mathrm{crys}}(D))^{\psi=1}\\
   &\isom \bigoplus_{k=0}^{k_0} (x^k\otimes\bold{D}^{K_n}_{\mathrm{crys}}(D))^{\psi=1},
  \end{align*}
 where the last isomorphism is  (4) of Lemma \ref{3.17} for sufficiently large  $k_0\gg0$. We similarly obtain an isomorphism 
 $$\bold{H}^2_{\mathrm{Iw}}(K, \bold{N}_{\mathrm{rig}}(D))\isom \bigoplus_{k=0}^{k_0} (x^k\otimes\bold{D}^{K_n}_{\mathrm{crys}}(D))/(1-\psi)(x^k\otimes\bold{D}^{K_n}_{\mathrm{crys}}(D)).$$
Hence, we obtain 
 
  \begin{multline*}
  \mathrm{char}_{\Lambda_{\infty}}((\bold{B}^{\dagger}_{\mathrm{rig}}\otimes_K\bold{D}^{K_n}_{\mathrm{crys}}(D))^{\psi=1}
  /(\bold{B}^+_{\mathrm{rig},K}\otimes_K \bold{D}^{K_n}_{\mathrm{crys}}(D))^{\psi=1})^{-1}\cdot
  \mathrm{char}_{\Lambda_{\infty}}(\bold{H}^2_{\mathrm{Iw}}(K, \bold{N}_{\mathrm{rig}}(D)))\\
  =\mathrm{det}_{\Lambda_{\infty}}(\bigoplus_{k=0}^{k_0}x^k\otimes\bold{D}^{K_n}_{\mathrm{crys}}(D)\xrightarrow{\psi-1} (\bigoplus_{k=0}^{k_0} x^k\otimes\bold{D}^{K_n}_{\mathrm{crys}}(D))
  =\Lambda_{\infty}.
  \end{multline*}
  where the last equality follows from (v) of Lemma \ref{3.12}.
  Hence, we obtain an equality
  \begin{multline*}
  \mathrm{det}_{\Lambda_{\infty}}(\bold{N}_{\mathrm{rig}}(D)^{\psi=1}\xrightarrow{\mathrm{Exp}_{D,h}} 
  \bold{H}^1_{\mathrm{Iw}}(K, D))\cdot\mathrm{char}_{\Lambda_{\infty}}(\bold{H}^2_{\mathrm{Iw}}(K, \bold{N}_{\mathrm{rig}}(D)))\\
  =\mathrm{det}_{\Lambda_{\infty}}((\bold{B}^+_{\mathrm{rig},K}\otimes_K \bold{D}^{K_n}_{\mathrm{crys}}(D))^{\psi=1}\xrightarrow{\mathrm{Exp}_{D,h}|_{(\bold{B}^+_{\mathrm{rig},K}\otimes_K\bold{D}^{K_n}_{\mathrm{crys}}(D))^{\psi=1}}}\bold{H}^1_{\mathrm{Iw}}(K, D)).
  \end{multline*}
  Next, we calculate the right hand side of the proposition.
  
  First, by the definition of $\Omega_{D,h}$ and by the property of $\mathrm{det}_{\Lambda_{\infty}}(-)$, the fractional ideal $$\mathrm{det}_{\Lambda_{\infty}}(\Lambda_{\infty}\otimes_{\mathbb{Q}_p}\bold{D}^{K_n}_{\mathrm{crys}}(D)\xrightarrow{\Omega_{D,h}} \bold{H}^1_{\mathrm{Iw}}(K, D))$$
  is equal to the product
  \begin{multline*}
  \mathrm{det}_{\Lambda_{\infty}}((\bold{B}^+_{\mathrm{rig},K}\otimes_K \bold{D}^{K_n}_{\mathrm{crys}}(D))^{\psi=1}\xrightarrow{\mathrm{Exp}_{D,h}|_{(\bold{B}^+_{\mathrm{rig},K}\otimes_K\bold{D}^{K_n}_{\mathrm{crys}}(D))^{\psi=1}}}\bold{H}^1_{\mathrm{Iw}}(K, D))\cdot\\
    \,\, \mathrm{det}_{\Lambda_{\infty}}( ((\bold{B}^+_{\mathrm{rig},K}\otimes_K \bold{D}^{K_n}_{\mathrm{crys}}(D))^{\psi=1}
    \xrightarrow{1-\varphi} \Lambda_{\infty}\otimes_{\mathbb{Q}_p}\bold{D}^{K_n}_{\mathrm{crys}}(D)))^{-1}.
  \end{multline*}
  By Lemma \ref{3.18}, we have 
   \begin{multline*}
  \mathrm{det}_{\Lambda_{\infty}}((\bold{B}^+_{\mathrm{rig},K}\otimes_K \bold{D}^{K_n}_{\mathrm{crys}}(D))^{\psi=1}\xrightarrow{1-\varphi} \Lambda_{\infty}\otimes_{\mathbb{Q}_p}\bold{D}^{K_n}_{\mathrm{crys}}(D))\\
  =\mathrm{det}_{\Lambda_{\infty}}( \bigoplus_{k=0}^{k_0}t^k\otimes\bold{D}^{K_n}_{\mathrm{crys}}(D)\xrightarrow{1-\varphi} \bigoplus_{k=0}^{k_0}t^k\otimes\bold{D}^{K_n}_{\mathrm{crys}}(D))=\Lambda_{\infty}.
  \end{multline*}
  Hence, we also obtain an equality
   \begin{multline*}
 \mathrm{det}_{\Lambda_{\infty}}(\Lambda_{\infty}\otimes_{\mathbb{Q}_p}\bold{D}^{K_n}_{\mathrm{crys}}(D)\xrightarrow{\Omega_{D,h}} \bold{H}^1_{\mathrm{Iw}}(K, D))\\
 =\mathrm{det}_{\Lambda_{\infty}}((\bold{B}^+_{\mathrm{rig},K}\otimes_K \bold{D}^{K_n}_{\mathrm{crys}}(D))^{\psi=1}\xrightarrow{\mathrm{Exp}_{D,h}|_{(\bold{B}^+_{\mathrm{rig},K}\otimes_K\bold{D}^{K_n}_{\mathrm{crys}}(D))^{\psi=1}}}\bold{H}^1_{\mathrm{Iw}}(K, D)),
 \end{multline*}
  
   which proves the proposition.

  \end{proof}
  \section*{List of notation}
  Here is a list of the main notation of the article, in the order of the section in which it appears.
  \begin{itemize}
  \item[$\S 1.1:$]$\mathrm{exp}_{K, V}$, $\mathrm{exp}^*_{K, V^{\lor}(1)}$.
  \item[$\S1.2:$]$\Lambda$, $\bold{H}^q_{\mathrm{Iw}}(K, V)$, $\Omega_{V,h}$.
  \item[Notation:]$p$, $K$, $K_0$, $\overline{K}$, $\mathbb{C}_p$, $v_p$, $|-|_p$, 
  $G_K$, $\{\zeta_{p^n}\}_{n\geqq 0}$, $K_n$, $K_{\infty}$, $\chi$,
  $\Gamma_K$, $e_1$, $e_k$, $|G|$.
  \item[$\S2.1:$]$\widetilde{\bold{E}}^+$, $v_{\widetilde{\bold{E}}^+}$, $\widetilde{\bold{E}}$, 
  $\varepsilon$, $\widetilde{p}$, $\widetilde{\bold{A}}^+$, $\widetilde{\bold{A}}$, $\theta$, $\bold{B}^+_{\mathrm{dR}}$, $t$, $\bold{B}_{\mathrm{dR}}$, $\widetilde{\bold{B}}^{\dagger}_{\mathrm{rig}}$,
   $\widetilde{\bold{A}}^{[r,s]}$, $\widetilde{\bold{B}}^{[r,s]}$, $\bold{B}^+_{\mathrm{max}}$, $\widetilde{\bold{B}}^{\dagger,r}_{\mathrm{rig}}$, $\widetilde{\bold{B}}^{\dagger}_{\mathrm{rig}}$, $r_n$, $\iota_n:\widetilde{\bold{B}}^{\dagger,r_n}_{\mathrm{rig}}\hookrightarrow \bold{B}^+_{\mathrm{dR}}$, $\bold{B}_{\mathrm{max}}$, $\bold{B}_e$, $T$, $\bold{B}^{\dagger,r}_{\mathrm{rig}, F}$, 
   $\bold{B}^{\dagger}_{\mathrm{rig}, F}$, $e_K$, $K_0'$, $r(K)$, $\pi_K$, $\bold{B}^{\dagger,r}_{\mathrm{rig},K}$, $\bold{B}^{\dagger}_{\mathrm{rig},K}$, $\psi$, $n(K)$, $\iota_n:\bold{B}^{\dagger,r_n}_{\mathrm{rig},K}\hookrightarrow K_n[[t]]$, 
   $\frac{1}{p}\mathrm{Tr}_{K_{n+1}/K_n}$, $D|_L$, $D^{\lor}$, $D_1\otimes D_2$, $n(D)$, 
   $D^{(n)}$, $\bold{D}^+_{\mathrm{dif}}(D)$, $\bold{D}_{\mathrm{dif}, n}(D)$, $\bold{D}^+_{\mathrm{dif}}(D)$, $\bold{D}_{\mathrm{dif}}(D)$, $K_{\infty}[[t]]$, 
   $K_{\infty}((t))$, $\iota_n:D^{(n)}\hookrightarrow \bold{D}_{\mathrm{dif}, n}(D)$.
   \item[$\S2.2:$]$\Delta_K$, $\gamma_K$, $M^{\Delta_K}$, $C^{\bullet}_{\gamma_K}(M)$, 
   $C^{\bullet}_{\varphi,\gamma_K}(M)$, $\mathrm{H}^q(K, D)$, $\mathrm{H}^q(K, D[1/t])$, 
   $\mathrm{H}^q(K, \bold{D}^+_{\mathrm{dif}}(D))$, $\mathrm{H}^q(K, \bold{D}_{\mathrm{dif}}(D))$, 
   $\cup$, $<,>$, $\mathrm{ev}$, $f_{\mathrm{tr}}$, $f'_{\mathrm{tr}}$, $\kappa$, $\mathrm{rec}_{\mathbb{Q}_p}$, $C^{\bullet}_{\psi,\gamma_K}(D)$, $\bold{D}^K_{\mathrm{crys}}(D)$, 
   $\bold{D}^K_{\mathrm{dR}}(D)$, $\mathrm{Fil}^i\bold{D}_{\mathrm{dR}}(D)$.
   \item[$\S 2.3:$]$\delta_{1,D}$, $\delta_{2,D}$, $\widetilde{C}^{\bullet}_{\varphi,\gamma_K}(D^{(n)})$, $\widetilde{C}^{\bullet}_{\varphi,\gamma_K}(D^{(n)}[1/t])$, $\widetilde{C}^{\bullet}_{\varphi,\gamma_K}(\bold{D}^+_{\mathrm{dif},n}(D))$, $\widetilde{C}^{\bullet}_{\varphi,\gamma_K}(\bold{D}_{\mathrm{dif},n}(D))$, $\mathrm{exp}_{K, D}$. 
   \item[$\S 2.4:$]$\cup_{\mathrm{dif}}$, $g_D$, $\mathrm{log}(\chi)$, 
   $<,>_{\mathrm{dif}}$, $\mathrm{exp}^*_{K, D^{\lor}(1)}$, $[-,-]_{\mathrm{dR}}$.
   \item[$\S2.5:$]$W$, $W_e$, $W_{\mathrm{dR}}^+$, $W_{\mathrm{dR}}$, $W(V)$, $W_e(D)$, 
   $W_{\mathrm{dR}}(D)$, $W^+_{\mathrm{dR}}(D)$, $W(D)$, $\widetilde{D}^{(n)}(W)$, 
   $\widetilde{D}(W)$, $D(W)$, $C^q(G_K, M)$, $\delta_q$, $C^{\bullet}(G_K, M)$, $C^{\bullet}(G_K, W)$, $\mathrm{H}^1(K, W)$, $\delta_{1, W}$, $\delta_{2, W}$, $\bold{D}^K_{\mathrm{dR}}(W)$, 
   $\mathrm{exp}_{K, W}$.
   \item[$\S 3.1:$]$\Gamma_{K, \mathrm{tor}}$, $\Gamma_{K, \mathrm{free}}$, $\Lambda_n$, 
   $\Lambda_{\infty}$, $\bold{B}^{+}_{\mathrm{rig}, \mathbb{Q}_p}$, $\widetilde{\Lambda}_n$, 
   $\widetilde{\Lambda}_n^{\iota}$, $D\widehat{\otimes}_{\mathbb{Q}_p}\widetilde{\Lambda}_n^{\iota}$, $\bold{H}^q_{\mathrm{Iw}}(K, D)$, $\widehat{\Gamma}_{K,\mathrm{tor}}$, $\eta$, $\alpha_{\eta}$, 
   $M_{\mathrm{tor}}$, $A(\delta)$, $f_{\delta}$, $\mathrm{pr}_{L, D(k)}$, $\delta_L$, 
   $\mathbb{Q}_p[\widetilde{\Gamma_K/\Gamma_L}]^{\iota}$, $f_{D, k}$, $f_k$, $C^{\bullet}_{\psi}(D)$, $\iota_D$, $p_{\Delta_K}$, $\mathrm{log}_0(-)$.
   \item[$\S 3.2:$]$\nabla_0$, $\widehat{\Omega}_{\bold{B}^{\dagger}_{\mathrm{rig}, K}/K_0'}$, 
   $\bold{N}^{(n)}_{\mathrm{rig}}(D)$, $\bold{N}_{\mathrm{rig}}(D)$, $\partial$, $\widetilde{\partial}$. 
   \item[$\S3.3:$]$\nabla_i$, $\mathrm{Exp}_{D, h}$, $T_L$, $m(L)$.
   \item[$\S3.4:$]$\mathrm{char}_{\Lambda_{\infty}}(M)$, $\mathrm{det}_{\Lambda_{\infty}}(f)$, 
   $\mathrm{det}_{\Lambda_{\infty}}(\mathrm{H}^{\bullet}(f))$. 
   \item[$\S3.5:$]$\bold{B}^+_{K,\mathrm{rig}}$, $\mathrm{LA}_h(\mathbb{Z}_p, \mathbb{Q}_p)$, 
   $\mathrm{LA}(\mathbb{Z}_p, \mathbb{Q}_p)$, $|-|_h$, $\mathrm{Col}$, $\mathrm{Res}$, $x^k$, 
   $\widetilde{\Delta}$, $\Omega_{D,h}$.

  \end{itemize}


\begin{thebibliography}{99}
\bibitem[Ben00]{Ben00}
D. Benois,  On Iwasawa theory of crystalline representations. Duke Math. J. 104 (2000) 211-267.

\bibitem[Ber02]{Ber02}
L.Berger, Repr\'esentations $p$-adiques et \'equations diff\'erentielles, Invent. Math. 148 (2002), 219-284.
\bibitem[Ber03]{Ber03}
 L. Berger. Bloch and Kato's exponential map: three explicit formulas. Doc. Math, 99-129 (electronic), 2003. Kazuya Kato's fiftieth birthday.
\bibitem[Ber08a]{Ber08a}
L.Berger, Construction de ($\varphi,\Gamma$)-modules: repr\'esentations $p$-adiques et $B$-paires,  Algebra and Number Theory, 2 (2008), no. 1, 91--120.
\bibitem[Ber08b]{Ber08b}
L. Berger, \'Equations diff\'erentielles $p$-adiques et ($\varphi$,N)-modules filtres, Asterisque (2008), no. 319, 13-38, Repr\'esentations $p$-adiques de groupes $p$-adiques. I. Repr\'esentations galoisiennes et ($\varphi$,N)-modules.
\bibitem[BK90]{BK90}
S. Bloch, K.Kato, $L$-functions and Tamagawa numbers of motives. The Grothendieck Festschrift, Vol. I, 333-400, Progr. Math. 86, Birkh\"auser Boston, Boston, MA 1990.
\bibitem[CC98]{CC98}
F. Cherbonnier,P. Colmez, Repr\'esentations $p$-adiques surconvergentes. Invent. Math. 133 (1998), 581-611.
\bibitem[CC99]{CC99}
F.Cherbonnier,P. Colmez, Th\'eorie d'Iwasawa des repr\'esentations $p$-adiques dun corps local. J. Amer. Math. Soc. 12 (1999), 241-268.
\bibitem[Ch12]{Ch12}
G.Chenevier, Sur la densit\'e des representations cristallines de $\mathrm{Gal}(\overline{\mathbb{Q}}_p/\mathbb{Q}_p)$, Math. Ann. (2012).
\bibitem[Col98]{Col98}
P. Colmez,Th\'eorie d'Iwasawa des repr\'esentations de de Rham d'un corps local. Ann. of Math. 148 (1998), 485-571.
\bibitem[Cr98]{Cr98}
R. Crew, Finiteness theorems for the cohomology of an overconvergent isocrystal on a curve, Ann. Sci. \'Ecole Norm. Sup. (4) 31(6) (1998) 717?763.

\bibitem[Fo90]{Fo90}
J.-M. Fontaine, Repr\'esentations $p$-adiques des corps locaux I. The Grothendieck Festschrift, Vol. 2, Progr. Math. 87, Birkh\"auser, Boston, 1990, 249-309.
\bibitem[Fo94]{Fo94}
J.-M. Fontaine, Le corps des p\'eriodes $p$-adiques,  Ast\'erisque 223 (1994), 59-111.
\bibitem[Fo03]{Fo03}
J.-M. Fontaine,  Presque $\mathbb{C}_p$-repr\'esentations. Kazuya Kato's fifties birthday. Doc. 
Math. 2003, Extra Vol., 285-385 (electronic). 

\bibitem[Her98]{Her98}
 L.Herr, Sur la cohomologie galoisienne des corps $p$-adiques, Bull. Soc. Math. France 126 (1998), no. 4, 563-600.
\bibitem[Her01]{Her01}
 L.Herr,Une approche nouvelle de la dualit\'e locale de Tate, Math. Ann. 320 (2001), no. 2, 307-337.

\bibitem[Ka93a]{Ka93a}
K. Kato, Lectures on the approach to Iwasawa theory for Hasse-Weil $L$-functions via $\bold{B}_{\mathrm{dR}}$. Arithmetic algebraic geometry, Lecture Notes in Mathematics 1553, Springer-
Verlag, Berlin, 1993, 50-63.
\bibitem[Ka93b]{Ka93b}
K.Kato, Lectures on the approach to Iwasawa theory for Hasse-Weil $L$-functions via $\bold{B}_{\mathrm{dR}}$. II, preprint (1993). 
\bibitem[Ka04]{Ka04}
K.Kato, $p$-adic Hodge theory and values of zeta functions of modular forms, Ast\'erisque (2004), no. 295, ix, 117-290, Cohomologies p-adiques et applications arithm\'etiques. III.
\bibitem[KKT96]{KKT96}
K. Kato, M.Kurihara, T.Tsuji, Local Iwasawa theory of Perrin-Riou and syntomic complexes. Preprint, 1996.
\bibitem[Ke04]{Ke04}
K.Kedlaya, A $p$-adic local monodromy theorem, Ann. of Math. (2) 160 (2004), 93-184.
\bibitem[Li08]{Li08}
 R. Liu, Cohomology and duality for ($\varphi,\Gamma$)-modules over the Robba ring, Int. Math. Res. Not. IMRN (2008), no. 3.
 \bibitem[Na09]{Na09}
K.Nakamura, Classification of two dimensional split trianguline representations of $p$-adic fields, Compositio Math. 145 (2009), 865-914.
\bibitem[Na10]{Na10}
 K.Nakamura, Deformations of trianguline $B$-pairs and Zariski density of two dimensional
crystalline representations, preprint arXiv:1006.4891 [math.NT].
\bibitem[Na12]{Na12}
K.Nakamura, A generalization of Kato's local $\varepsilon$-conjecture for 
$(\varphi,\Gamma)$-modules over the Robba ring, in preparation.
\bibitem[Per92]{Per92}
B.Perrin-Riou,Th\'eorie d'Iwasawa et hauteurs $p$-adiques. Invent. Math. 109, 137-185 (1992).
\bibitem[Per94]{Per94}
B. Perrin-Riou, Th\'eorie d'Iwasawa des repr\'esentations $p$-adiques sur un corps
local. Invent. Math. 115 (1994) 81-161.
\bibitem[Per95]{Per95}
B. Perrin-Riou, Fonctions $L$ $p$-adiques des repr\'esentations $p$-adiques. Ast\'erisque
No. 229 (1995), 198 pp.

\bibitem[Po12a]{Po12a}
J. Pottharst, Analytic families of finite-slope selmer groups, preprint on his web page.
\bibitem[Po12b]{Po12b}
J. Pottharst, Cyclotomic Iwasawa theory of motives, preprint on his web page.





\end{thebibliography}
\end{document}